\documentclass[12pt,a4paper,reqno]{amsart}

\usepackage[margin=2.8cm,footskip=1cm]{geometry}
\usepackage[utf8]{inputenc}
\usepackage[T1]{fontenc}
\usepackage[english]{babel}
\usepackage{amsmath}
\usepackage{amsfonts}
\usepackage{amssymb}
\usepackage{amsthm}
\usepackage{ucs}
\usepackage{graphicx}
\usepackage{subfig}
\usepackage{xcolor}
\usepackage{dsfont}
\usepackage{wasysym}
\usepackage{physics}
\usepackage{mathtools}
\usepackage{xifthen}
\usepackage{enumitem}
\usepackage{stmaryrd}
\usepackage{tikz,pgfplots}
\usepackage{float}
\usepackage{soul}
\usepackage{hyperref}
\definecolor{colorlinks}{RGB}{0, 24, 168}
\definecolor{colorcites}{RGB}{124, 10, 2}
\hypersetup{
	colorlinks=true,
	linkcolor=colorlinks,
	citecolor=colorcites,
	urlcolor=colorlinks,
	pdfborder={0 0 0}
}
\usepackage[font={small}]{caption}
\usepackage{algorithm2e}
\usepackage{constants}

\newcommand{\given}{\, |\,}
\newcommand{\bgiven}{\, \big|\,}
\newcommand{\Bgiven}{\, \Big|\,}

\newcommand{\ind}{\mathds{1}}

\newconstantfamily{crmNum}{symbol=\mathrm{c}}

\newcommand{\N}{\mathbb{N}}

\newcommand{\R}{\mathbb{R}}
\newcommand{\Z}{\mathbb{Z}}

\newcommand{\bbE}{\mathbb{E}}
\newcommand{\bbG}{\mathbb{G}}
\newcommand{\bbH}{\mathbb{H}}
\newcommand{\bbL}{\mathbb{L}}

\newcommand{\bbV}{\mathbb{V}}

\newcommand{\calC}{\mathcal{C}}

\newcommand{\calF}{\mathcal{F}}

\newcommand{\calK}{\mathcal{K}}

\newcommand{\calP}{\mathcal{P}}

\newcommand{\calS}{\mathcal{S}}
\newcommand{\calT}{\mathcal{T}}

\newcommand{\calX}{\mathcal{X}}

\newcommand{\rmb}{\mathrm{b}}
\newcommand{\rmd}{\mathrm{d}}
\newcommand{\rme}{\mathrm{e}}

\newcommand{\rmi}{\mathrm{i}}
\newcommand{\rmw}{\mathrm{w}}

\newcommand{\rmC}{\mathrm{C}}

\newcommand{\rmT}{\mathrm{T}}

\newcommand{\mcal}[1]{\mathcal{#1}}
\newcommand{\msf}[1]{\mathsf{#1}}

\newcommand{\mrm}[1]{\mathrm{#1}}
\newcommand{\mfr}[1]{\mathfrak{#1}}

\newcommand{\fcone}{\mathcal{Y}^\blacktriangleleft}
\newcommand{\bcone}{\mathcal{Y}^\blacktriangleright}
\newcommand{\bend}{\mathbf{b}}
\newcommand{\fend}{\mathbf{f}}
\newcommand{\diam}{\mathrm{Diam}}
\newcommand{\DiaEnv}{\mathcal{D}}

\newcommand{\CPts}{\textnormal{CPts}}

\newcommand{\concatenate}{\circ}
\newcommand{\displace}{\mathrm{X}}

\newcommand{\SetRootMarkBackCont}{\mathfrak{B}_L}
\newcommand{\SetRootMarkForwCont}{\mathfrak{B}_R}
\newcommand{\SetRootDiaCont}{\mathfrak{A}}

\newcommand{\MixMeas}{\mathrm{Bnd}}

\newcommand{\cstFinEne}{c_{\mathrm{FE}}}

\newcommand{\OZDec}{\mathrm{OZDec}}
\newcommand{\OZwalk}{\mathrm{OZwalk}}
\newcommand{\OZ}{\mathrm{OZ}}

\newcommand{\OZRVwalk}{\mathcal{W}}
\newcommand{\OZRVchain}{\mathcal{OZ}}
\newcommand{\OZRVcluster}{\Upsilon}

\newcommand{\bndMeas}{\mathfrak{Q}}

\newcommand{\slab}{\mathcal{S}}
\newcommand{\slabCP}{\slab\CPts}
\newcommand{\goodCl}{\mathrm{GCl}}

\newcommand{\FVEdges}{\mathrm{E}}

\newcommand{\scale}{\mathrm{sc}}

\newcommand{\syncTime}{\mathrm{t}}
\newcommand{\HitEvent}{\mathrm{Hit}}
\newcommand{\syncHitEvent}{\mathrm{sHit}}

\newcommand{\syncHitTime}{\mathrm{sHT}}
\newcommand{\IntersectionTime}{\calT}

\newcommand{\connection}{\mathrm{Con}}

\newcommand{\spin}{\mathtt{Spin}}

\newcommand{\atrc}{\mathtt{ATRC}}
\newcommand{\matrc}{\mathtt{mATRC}}
\newcommand{\fk}{\mathtt{FK}}
\newcommand{\potts}{\mathtt{Potts}}
\newcommand{\qfk}{\mathtt{qFK}}

\newcommand{\fkfree}{\mathrm{f}}
\newcommand{\fkwired}{\mathrm{w}}
\newcommand{\atrcfree}{0}
\newcommand{\atrcwired}{1}

\newcommand{\clusters}{\kappa}

\newcommand{\loops}{\mathrm{loop}}

\newcommand{\tvd}{\mathrm{d}_{\mathrm{TV}}}

\newcommand{\svc}{\mathbf{c}}
\newcommand{\svcb}{\mathbf{c}_{\mathrm{b}}}

\newcommand{\qbfree}{{q_\mathrm{b}^{\mathrm{\scriptscriptstyle f}}}}
\newcommand{\qbwired}{{q_\mathrm{b}^{\mathrm{\scriptscriptstyle w}}}}

\newcommand{\Forget}{\mathrm{Forget}}

\newcommand{\condPrimalClusterMeas}{\mathrm{PCl}}
\newcommand{\condDualClusterMeas}{\mathrm{DCl}}
\newcommand{\condPrimalDualClusterMeas}{\mathrm{PDCl}}

\newcommand{\partialex}{\partial^{\mathrm{ex}}}
\newcommand{\partialin}{\partial^{\mathrm{in}}}
\newcommand{\partialedge}{\partial^{\mathrm{edge}}}

\def\Hloop#1#2{
\draw[blue] ({#1 + cos(-45)/(2*sqrt(2))},{#2+0.5 + sin(-45)/(2*sqrt(2))}) arc (-45:-135:{1/(2*sqrt(2))}) ;
\draw[blue] ({#1 + cos(45)/(2*sqrt(2))},{#2-0.5 + sin(45)/(2*sqrt(2))}) arc (45:135:{1/(2*sqrt(2))}) ;
}
\def\Vloop#1#2{
\draw[blue] ({#1 + 0.5 + cos(135)/(2*sqrt(2)) },{#2 + sin(135)/(2*sqrt(2))}) arc (135:225:{1/(2*sqrt(2))}) ;
\draw[blue] ({#1 - 0.5 + cos(-45)/(2*sqrt(2))},{#2 + sin(-45)/(2*sqrt(2))}) arc (-45:45:{1/(2*sqrt(2))}) ;
}

\def\DrawTile#1#2#3{
\draw[#3] ({#1-0.5},{#2}) -- ({#1},{#2+0.5}) -- ({#1+0.5},{#2}) -- ({#1},{#2-0.5}) -- ({#1-0.5},{#2}) ;
}

\def\Hedge#1#2#3{
\draw[#3] ({#1-0.5},#2) -- ({#1+0.5},#2) ;
}

\def\Vedge#1#2#3{
\draw[#3] (#1,{#2-0.5}) -- (#1,{#2+0.5}) ;
}

\def\UParrow#1#2#3{
\draw[#3] ({#1-0.1},{#2-0.1}) -- ({#1},{#2}) -- ({#1+0.1},{#2-0.1}) ;
}
\def\DOWNarrow#1#2#3{
\draw[#3] ({#1-0.1},{#2+0.1}) -- ({#1},{#2}) -- ({#1+0.1},{#2+0.1}) ;
}
\def\LEFTarrow#1#2#3{
\draw[#3] ({#1+0.1},{#2-0.1}) -- ({#1},{#2}) -- ({#1+0.1},{#2+0.1}) ;
}
\def\RIGHTarrow#1#2#3{
\draw[#3] ({#1-0.1},{#2-0.1}) -- ({#1},{#2}) -- ({#1-0.1},{#2+0.1}) ;
}

\theoremstyle{plain}
\newtheorem{theorem}{Theorem}[section]
\newtheorem{lemma}[theorem]{Lemma}

\newtheorem{corollary}[theorem]{Corollary}

\newtheorem{remark}[theorem]{Remark}

\newtheorem{claim}[theorem]{Claim}

\theoremstyle{definition}
\newtheorem{definition}[theorem]{Definition}

\makeatletter
\renewcommand{\section}{\@startsection{section}{3}%
	\z@                     % Indent (none)
	{\baselineskip}         % Space before (exactly one line)
	{0.5\baselineskip}                  % Space after
	{\normalfont\large\scshape\bfseries\centering}} % Bold font
\makeatother

\makeatletter
\renewcommand{\subsection}{\@startsection{subsection}{3}%
	\z@                     % Indent (none)
	{\baselineskip}         % Space before (exactly one line)
	{0.5\baselineskip}                  % Space after
	{\normalfont\large\bfseries}} % Bold font
\makeatother 

\makeatletter
\renewcommand{\subsubsection}{\@startsection{subsubsection}{3}%
	\z@                     % Indent (none)
	{\baselineskip}         % Space before (exactly one line)
	{-1em}                  % Space after (negative = run-in, 1em horizontal gap)
	{\normalfont\normalsize\bfseries}} % Bold font
\makeatother

\newif\ifpic
\pictrue

\title[Order-Order Interface in 2D Potts]{Discontinuous transition in 2D Potts:\\ II. order-order interface convergence}

\author{Moritz Dober}
\address{Fakultät für Mathematik, Universität Wien, Vienna, Austria}
\email{moritz.dober@univie.ac.at}

\author{Alexander Glazman}
\address{Universität Innsbruck, Innsbruck, Austria}
\email{alexander.glazman@uibk.ac.at}

\author{Sébastien Ott}
\address{Institute of Mathematics, EPFL, 1015 Lausanne, Switzerland}
\email{ott.sebast@gmail.com}

\date{\today}

\begin{document}

\begin{abstract}
	The $q$-state Potts model is an archetypical model for various types of phase transitions.
	We consider it on the square grid and focus on the regime where it undergoes a discontinuous transition, that is~$q>4$.
	At the transition point~$T_c(q)$, there are exactly~$q+1$ extremal Gibbs measures (pure phases): $q$ ordered (monochromatic) and one disordered (free).
	This work establishes for the first time the wetting phenomenon in a precise geometric form and in the entire regime of discontinuity~$q>4$: at~$T_c(q)$, between two ordered phases a disordered layer emerges and, in the diffusive scaling, its boundaries converge to a pair of Brownian motions conditioned not to intersect.
	This is starkly different from the subcritical ($T<T_c(q)$) behaviour.
	At~$T_c(q)$, previous results (Bricmont--Lebowitz '87, Messager--Miracle-Sole--Ruiz--Shlosman '91) were limited to the construction and properties of the surface tension for large enough~$q$.
	
	In a companion work, we provide a detailed study of the Potts model under order-disorder Dobrushin conditions.
	That work also develops a ``renewal picture'' \emph{à la} Ornstein-Zernike for a suitable percolation model, which plays a central part in our study of the Potts interfaces.
	The latter is the random-cluster representation of an Ashkin--Teller model (ATRC), and is related to the Potts model via a chain of couplings going through the six-vertex model.
	
	In the current work, we extend the analysis to a pair of interacting order-disorder interfaces forming the separation between the two ordered phases, and couple them to a pair of well-behaved random walks conditioned not to intersect.
	The construction of the coupling is based on rigorously deriving entropic repulsion between the two interfaces.
	We also prove convergence of interfaces in the FK-percolation model at~$p_c(q)$ when~$q>4$.
\end{abstract}

\maketitle

\setcounter{tocdepth}{1}
\tableofcontents

\newpage

\section{Introduction and description of the main result}
\label{sec:intro}

The Potts model is a classical model of statistical mechanics introduced in 1952~\cite{Pot52}.
Each vertex of a graph is assigned one of $q$ states (colours), with states of adjacent vertices interacting with a strength depending on the temperature $T>0$ of the system. 
At~$q=2$, this corresponds to the seminal Ising model.
The Potts model becomes increasingly ordered as the temperature decreases, and a phase transition occurs on lattices $\Z^{d}$ with $d \geq 2$ at some transition temperature \(T_c(q,d)>0\).
We refer an interested reader to our companion work~\cite{DobGlaOtt25} and to a survey~\cite{Dum17a} for a detailed introduction and the historical background.

Our work is restricted to dimension~$d=2$ where more tools are available due to planar duality.
We now list several important results:
\begin{itemize}
	\item $T_c(q)=[\ln (1+\sqrt{q})]^{-1}$~\cite{BefDum12a}, which is the self-dual point;
	\item the transition is continuous when~$q =2,3,4$, in a sense that, at~$T_c(q)$, there is a unique Gibbs measure~\cite{DumSidTas17} (see also~\cite{GlaLam23} for another argument);
	\item the transition is discontinuous when~$q > 4$~\cite{DumGagHar21} (see also~\cite{RaySpi20} for a short proof), and any Gibbs measure can be written as a linear combination of~$q+1$ extremal Gibbs measures ($q$ monochromatic and one free)~\cite{GlaMan23}.
\end{itemize}

We focus on discontinuous transitions ($q>4$) and study interfaces at~$T_c(q)$ separating different states.
The structure of extremal Gibbs measures (described above) leads to two natural definitions of Dobrushin boundary conditions, which is a natural way to model phase coexistence:
\begin{itemize}
	\item {\em order-disorder}: one half of the boundary is of a fixed colour and the other one is free (no colour assigned);
	\item {\em order-order}: both halves of the boundary are assigned different fixed colours.
\end{itemize}
The phenomenology is quite different in the two cases, see Fig.~\ref{Fig:Potts_interface_simul}. 
Indeed, the case of order-disorder phase coexistence has a behaviour similar to the one of order-order interfaces in the Potts or Ising model at \(T<T_c(q)\), where the region separating the two phases has width \(O(1)\) uniformly over the system size. In sharp contrast, at \(T_c(q)\), the order-order phase coexistence undergoes an \emph{interfacial wetting} phenomena: for \emph{microscopic} energetic reasons, a mesoscopic layer of disordered phase spontaneously appears between the two ordered phases. See Section~\ref{subsec:inter_wetting} for more details.

\begin{figure}
	\includegraphics[scale=0.13]{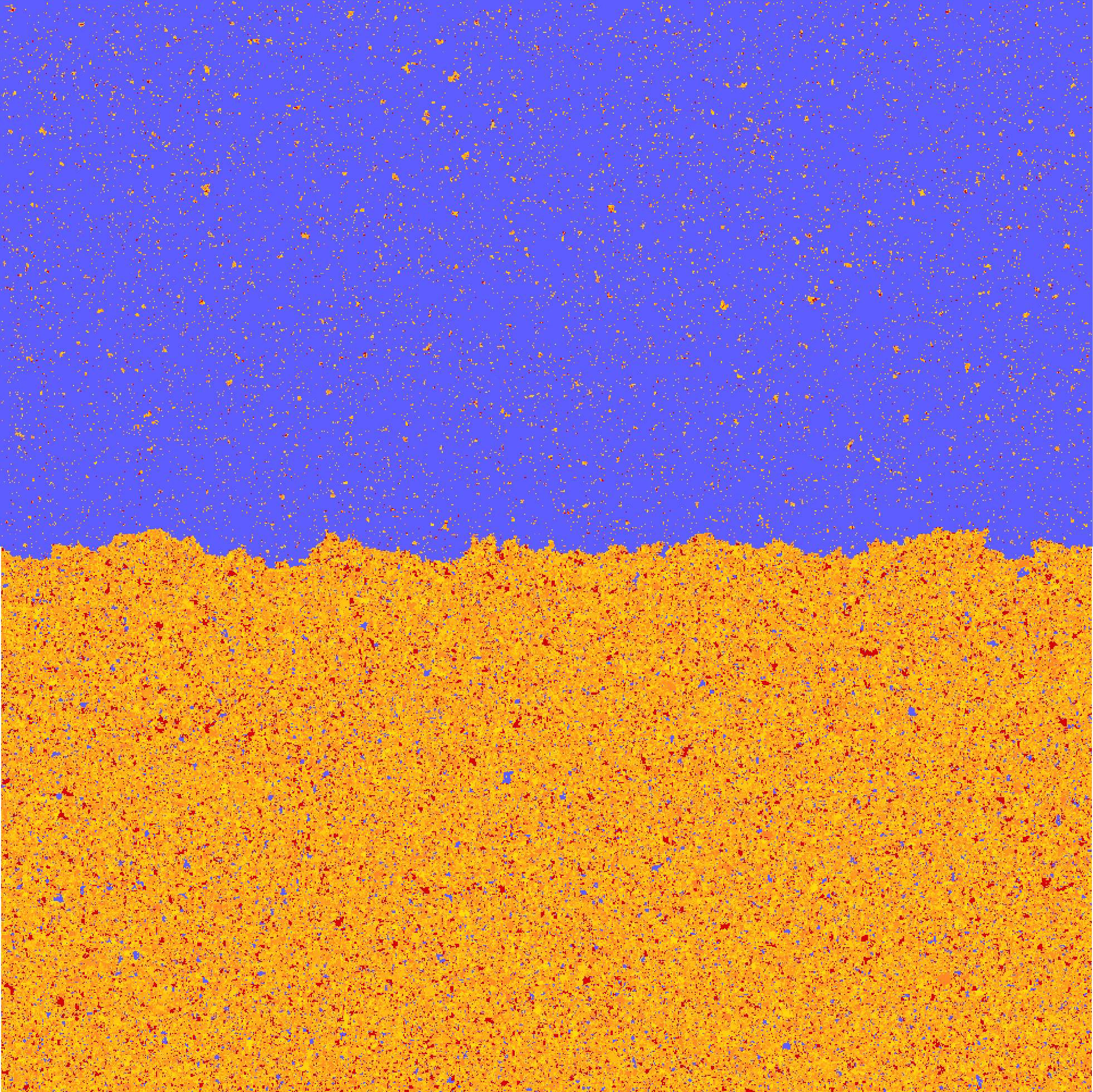}
	\hspace*{0.4cm}
	\includegraphics[scale=0.13]{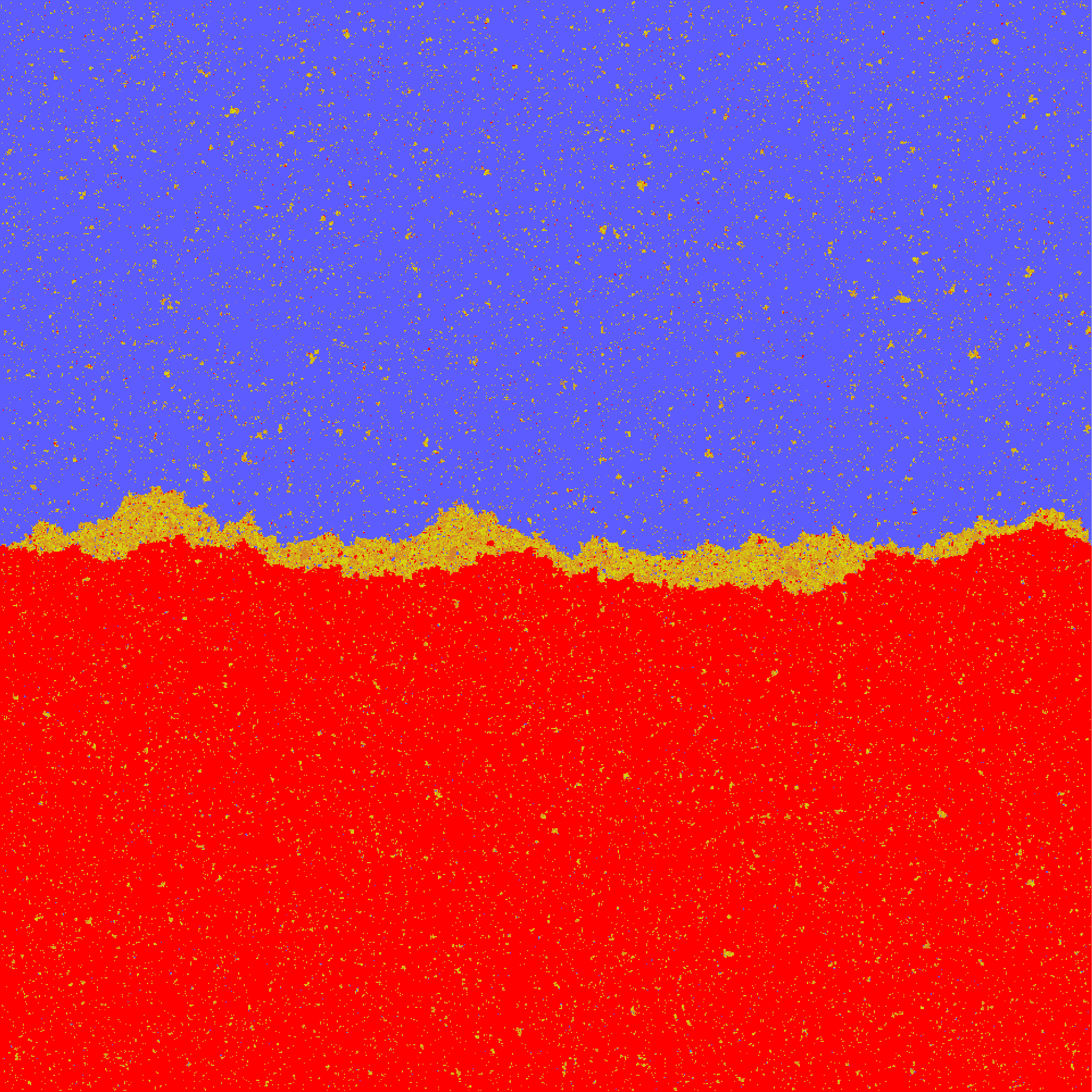}
	\caption{Sample of a 1000x1000 Potts model with 25 colours at \(T_c(25)\) with Dobrushin boundary conditions. Colours are: blue for the first, red for the second and interpolate between yellow and orange for colours 3 to 25. Left: order-disorder interface; upper part has blue b.c., bottom has white b.c. (no colour favoured). Right: order-order interface; upper part has blue b.c., bottom has red b.c..}
	\label{Fig:Potts_interface_simul}
\end{figure}

In the current paper we analyse the order-order boundary conditions on a box~$N\times N$.
Our main result is a precise description of the geometry of the interfacial wetting phenomenon: emergence of a free layer of width~$\sqrt{N}$ between the two ordered phases, and convergence, under diffusive scaling, of its boundaries to two Brownian bridges conditioned not to intersect ({\em Brownian watermelon}).
This behaviour is starkly different from the case~$T<T_c$, where the interface between two different colours was shown to converge to {\em one} Brownian bridge for all~$q\geq 1$~\cite{CamIofVel08}.
The difference is due to the fact that the disordered and ordered phases are simultaneously thermodynamically stable only at~$T_c(q)$ and only when~$q>4$.
As a consequence, the robust Ornstein--Zernike theory, available at \(T<T_c(q)\), does not apply directly to the Potts model at \(T_c(q)\), nor to its seminal graphical representation provided by the FK percolation~\cite{ForKas72,EdwSok88}.
Instead, we build on~\cite{BaxKelWu76} and~\cite{GlaPel23} to map the Potts model to a suitable percolation model that does exhibit a unique infinite-volume Gibbs measure.
This model goes under the name ATRC (Ashkin--Teller random-cluster model) and was introduced in~\cite{PfiVel97}.
Our study provides the first ever rigorous geometric description of interfacial wetting in a lattice spin model.

\subsection{Potts model}

We view~$\Z^2$ both as a set of points on the plane having integer coordinates and as a graph (square grid) with edges linking points at distance one.
Denote by~$\bbE$ the set of edges in~$\Z^2$ and write~$i\sim j$ if~$\{i,j\}\in\bbE$.
Let~$G=(V,E)$ be a subgraph of~$\Z^2$.
Take parameters $T>0,\,q\in \{2,3,4,\dots\}$, and boundary conditions $\eta\in\{0,1,\dots,q\}^{\bbV}$. 
The Potts model on \(G\) with boundary conditions \(\eta\) is the probability measure on \(\{1,\dots,q\}^{V}\) given by
\begin{equation*}
	\potts_{G;T,q}^{\eta}(\sigma):=
	\tfrac{1}{Z_{{\potts}}} \cdot 
	\exp\Bigg[\tfrac1T \cdot \Bigg(
	\sum_{\{i,j\}\in E} \delta(\sigma_i,\sigma_j) + \sum_{i\in V, j\in V^c:\, i\sim j} \delta(\sigma_i,\eta_j)\Bigg)\Bigg],
\end{equation*}
where~$Z_{{\potts}}=Z_{{\potts}}(G,T,q,\eta)$ is the unique normalising constant (called {\em partition function}) that renders the above a probability measure, and~$\delta(x,y) = 1$ if~$x=y$ and~$\delta(x,y) = 0$ otherwise. Value~\(0\) for~\(\eta\) corresponds to free boundary conditions favoring none of the \(q\) possible states of the spins.

\begin{figure}
\centering
\includegraphics[page=1,scale=0.5]{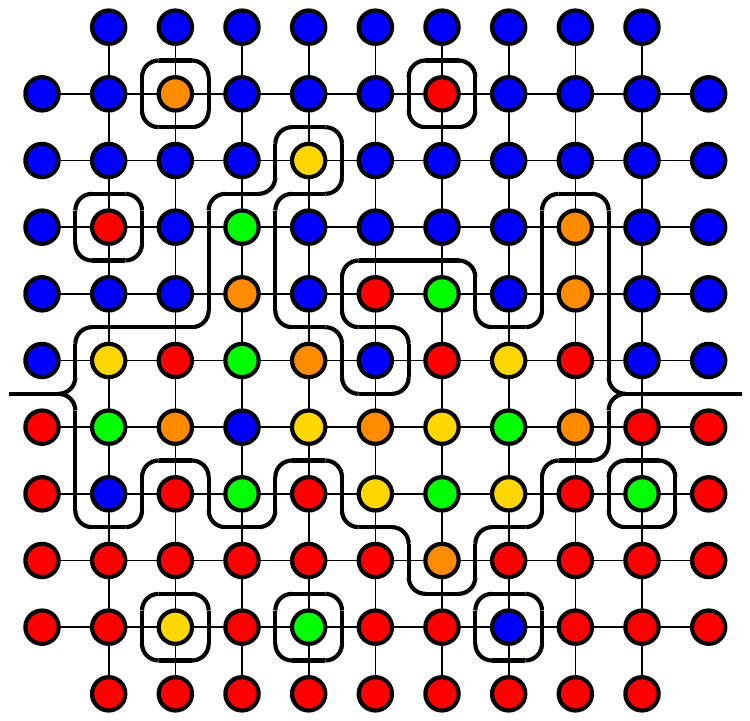}
\caption{\textit{Left:} A configuration of the Potts model on~\(\mathsf{B}_{n,n},\,n=4,\) with order-order Dobrushin b.c. (blue and red represent the constant 1 and 2 b.c. in the upper and lower half-planes, respectively). The `Peierls contours' separate the upper and lower boundary clusters (one in star connectivity) of the blue and red colours from the rest.}
\label{fig:order-order_potts}
\end{figure}

We say that~$\eta$ defines the order-order (1-2) Dobrushin boundary conditions (and denote it by~$1/2$) if~$\eta((x,y))=\ind_{\geq 0}(y) + 2\ind_{< 0}(y)$.
Identify~$\Lambda_n:=\{-n,\dots, n\}^2$ with the induced subgraph of~$\Z^2$ on this set of vertices.
The Dobrushin boundary conditions on~$\Lambda_n$ impose existence of two interfaces: one going along the lower boundary of the cluster of~$1$s and the other going along the upper boundary of the  cluster of~$2$s.
This can be made explicit, but for brevity we choose to define directly the upper and lower discrete envelopes of these interfaces: $\Gamma_{\potts}^{1+}$ and~$\Gamma_{\potts}^{1-}$  respectively for the lower boundary of the cluster of~$1$s and $\Gamma_{\potts}^{2+}$ and~$\Gamma_{\potts}^{2-}$  respectively for the upper boundary of the cluster of~$2$s.
Given~$\sigma\in\{1,\dots,q\}^{\Lambda_n}$, define~$\bar\sigma \in\{1,\dots,q\}^{\Z^2}$ to be its extension outside of~$\Lambda_n$ by the Dobrushin boundary conditions: one on~$\Z\times \Z_{\geq 0}$ and two on~$\Z\times \Z_{< 0}$. 
For~\(k =-n,\dots, n\), define
\begin{align*}
	\Gamma_{\potts}^{1+,n}(k) & := \max\{y\in \Z:\ (k,y-1)\xleftrightarrow{\bar\sigma \neq 1}  (\Z\times \Z_{< 0})\setminus\Lambda_n\},\\
	\Gamma_{\potts}^{1-,n}(k) & := \min\{y\in \Z:\ (k,y+1)\xleftrightarrow{\bar\sigma = 1, \text{ diag}}  (\Z\times \Z_{\geq 0})\setminus\Lambda_n\},\\
	\Gamma_{\potts}^{2+,n}(k) & := \max\{y\in \Z:\ (k,y-1)\xleftrightarrow{\bar\sigma = 2}  (\Z\times \Z_{< 0})\setminus\Lambda_n\},\\
	\Gamma_{\potts}^{2-,n}(k) & := \min\{y\in \Z:\ (k,y+1)\xleftrightarrow{\bar\sigma \neq 2, \text{ diag}}  (\Z\times \Z_{\geq 0})\setminus\Lambda_n\},
\end{align*}
where~$(k,y-1)\xleftrightarrow{\bar\sigma \neq1}  (\Z\times \Z_{<0})\setminus\Lambda_n$ states the existence of a path in~$\Z^2$ in going from~$(k,y-1)$ to~$(\Z\times \Z_{<0})\setminus\Lambda_n$ and 
consisting of vertices where~$\bar\sigma\neq 1$ and~$(k,y)\xleftrightarrow{\bar\sigma = 1, \text{ diag}}  (\Z\times \Z_{\geq 0})\setminus\Lambda_n$ states the existence of a path in~$\Z^2$ {\em with diagonal connectivity} going from~$(k,y)$ to~$(\Z\times \Z_{\geq 0})\setminus\Lambda_n$ and consisting of vertices where~$\bar\sigma= 1$; and similarly for~$\bar\sigma \neq 2$ and~$\bar\sigma =2$.
By~$\Z^2$ with diagonal connectivity we mean a graph with the vertex-set~$\Z^2$ with edges linking vertices at distance at most~$\sqrt{2}$.
Define the rescaled linear interpolation of these envelops by
\begin{align*}
	\tilde{\Gamma}_{\potts}^{s\pm,n}(t) &:= \tfrac{1}{\sqrt{n}} \big((1-\{2tn -n\})\Gamma_{\potts}^{s\pm}(\lfloor 2tn -n\rfloor) + \{2tn -n\}\Gamma_{\potts}^{s\pm}(\lceil 2tn -n\rceil)\big)
\end{align*}
where~$s\in \{1,2\}$, \(\lfloor\,\rfloor\), \(\lceil\, \rceil\), \(\{\,\}\) denote respectively lower and upper roundings and fractional part.

A standard Brownian watermelon with two bridges is a random variable~$(\mathrm{BW}^{(2)}_t)_{t\in[0,1]}$ with values in continuous functions from~$[0,1]$ to~$\R^2$ obtained by conditioning two independent standard Brownian bridges not to intersect when~$t\in (0,1)$.
Although this event has probability zero, one can make sense of the conditioning by means of a Doob transform; we refer to~\cite{DurWac20} for more details.

\begin{theorem}
	\label{thm:invariance_princ_potts}
	Let~$q>4$ be integer and take~$T=T_c(q)$.
	For~$n\in\N$ and~$s\in \{1,2\}$, sample~$\Gamma_{\potts}^{s\pm,n}$ and~$\tilde{\Gamma}_{\potts}^{s\pm,n}$ from~$\potts_{\Lambda_n;T_c(q),q}^{1/2}$ as described above.
	Then, as~$n$ tends to infinity,
	\begin{enumerate}
		\item the envelops~\(\big(\tilde{\Gamma}_{\potts}^{1\pm,n}(t),\tilde{\Gamma}_{\potts}^{2\pm,n}(t)\big)_{t\in [0,1]}\), for any choice of~$+$ and~$-$, converges in law to~$(c_q\mathrm{BW}^(2)_{t\in[0,1]}$, where~\(c_q >0\) is some constant;
		\item the probability that~$\max_{k \leq n, s\in\{1,2\}} \left|\Gamma_{\potts}^{s+,n}(k) -\Gamma_{\potts}^{s-,n}(k)\right|\geq C\ln^{111}(n)$ tends to zero.
	\end{enumerate}
\end{theorem}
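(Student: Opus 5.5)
The plan is to transfer the statement, via the chain of couplings (Potts $\to$ FK at $p_c(q)$ $\to$ six-vertex $\to$ ATRC) developed in the companion work~\cite{DobGlaOtt25}, to a statement about two interfaces in the ATRC model. There the renewal (Ornstein--Zernike) structure is available.

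First, I would encode the order-order Dobrushin conditions in the FK representation. They become wired boundary arcs (colour~1 on top, colour~2 on the bottom) that are conditioned not to be connected. Hence there are two FK interfaces: the lower boundary of the top wired cluster and the upper boundary of the bottom wired cluster, with dual-open paths between them. Through the Baxter--Kelland--Wu and Glazman--Peled couplings, each FK interface corresponds to an order-disorder type interface of the ATRC, whose single-interface structure is described in~\cite{DobGlaOtt25}. The Potts envelopes $\Gamma_{\potts}^{1\pm}$ and $\Gamma_{\potts}^{2\pm}$ differ from these FK/ATRC interfaces by at most the size of the local clusters attached to them. Part~(2), and the identification of all four choices of $\pm$ in part~(1), should follow from exponential tails on the diameter of finite clusters and on the "thickness" of the cluster decomposition (diamonds) from the companion paper, combined with a union bound over $O(n)$ columns. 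This is where the polylogarithmic bound $C\ln^{111} n$ comes from.

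Second, the core step: couple the pair of ATRC interfaces with a pair of independent effective random walks (the Ornstein--Zernike walks of~\cite{DobGlaOtt25}, with diamond increments having exponential tails) conditioned not to intersect. I would take the following route.
\begin{itemize}
\item Show \emph{entropic repulsion}: with high probability the two interfaces are at distance $\geq n^{\varepsilon}$ (indeed $\gg \ln^{C} n$) except within $n^{o(1)}$ of the endpoints. To do this, compare the partition function of the interacting pair with the product of single-interface partition functions, and show that the non-intersection probability of the walks decays only polynomially (like $n^{-\theta}$) while close approaches are penalised.
\item Using the mixing/ratio-weak-mixing property of the ATRC, show that when the interfaces are far apart their interaction is a multiplicative factor $1+O(e^{-c\,\mathrm{dist}})$. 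Then the joint law is, up to total variation $o(1)$, the product law of two renewal walks restricted to the non-intersection event.
\end{itemize}

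Third, conclude by invoking the known invariance principle for random walks with exponentially integrable, aperiodic steps conditioned not to intersect (Denisov--Wachtel type results, giving convergence to the Brownian watermelon as in~\cite{DurWac20}). The constant $c_q$ is the diffusivity of the OZ walk. The discrepancy between the diamond skeleton and the actual interfaces is again $O(\ln^{C} n)$, which is negligible after diffusive scaling.

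The main obstacle is the entropic repulsion step. The non-intersection event has polynomially small probability, so all error terms from the interface interaction and from the coupling must be beaten by polynomial factors. This requires controlling, uniformly, the interaction at short distances near the endpoints, and proving that the interfaces separate to distance $\ln^{C}n$ quickly, with probability bounded below. I would first try a multiscale argument: use finite-energy to push the interfaces apart at cost $e^{-O(\ln^{C} n)}$ per attempt is too expensive, so instead use a local surgery that gains entropy relative to polynomial decay.
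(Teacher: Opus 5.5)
Your architecture matches the paper's: couplings down to the modified ATRC, entropic repulsion, decoupling by mixing when the interfaces are far apart, and the invariance principle for non-intersecting OZ bridges. But the step you yourself single out as the main obstacle is left without the idea that makes it work. What has to be shown is not that close approaches are penalised, but that they are not \emph{favoured}. A priori the short-range interaction between the two interfaces could be attractive; that is exactly what decides between pinning and wetting. In that case neither a comparison of partition functions nor local surgery gives the bound you need: that both crossings occur \emph{and} the clusters come within polylogarithmic distance in the bulk with probability $o(n^{-2}e^{-4\nu_1 n})$, i.e.\ small compared with the probability of both crossings. Mixing only controls the interaction once the interfaces are already far apart, so it cannot supply this. (Also, separation $\ge n^{\varepsilon}$ outside $n^{o(1)}$-neighbourhoods of the endpoints is false for conditioned bridges. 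The paper aims for separation of order $\ln^4 n$ outside $\ln^{100}n$-neighbourhoods of the ends.)

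The missing ingredient is monotonicity. After the coupling, the two interfaces are the $\omega_\tau$-cluster $\mathcal C$ of $v_L$ and the $\omega_{\tau\tau'}^*$-cluster $\mathcal C'$ of $v_L'$ under a single measure $\mathtt{mATRC}_{\mathsf K}$. This measure satisfies the FKG lattice condition (Lemma~\ref{lem:fkg_lattice_mod_atrc}). Since one crossing event is increasing and the other decreasing, the interfaces repel (Corollary~\ref{cor:repulsiveness}): given the lowest dual crossing $\Gamma_-=\gamma^*$, the law above $\gamma^*$ is dominated by $\Phi_n$.
\begin{itemize}
\item The event that $\mathcal C$ crosses above $\gamma^*$ and comes within $2\ln^4 n$ of it is increasing, so its conditional probability is at most its unconditional one.
\item The single-crossing OZ coupling turns this into $\Phi_n(v_R\in\mathcal C)\,\Phi_n(v_R'\in\mathcal C')$ times the probability that two \emph{independent} OZ bridges stay ordered yet come close. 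That probability is $O(n^{-1}\ln^{-32}n)$ by Lemma~\ref{lem:DoubleBridge:RW:UB_avoid_but_close}.
\item Combined with $\Phi_n(v_R\in\mathcal C)\le C\ln^2 n\,n^{-1/2}e^{-2\nu_1 n}$ and the a priori lower bound $\Phi_n(v_R\in\mathcal C,\,v_R'\in\mathcal C')\ge c\,n^{-2}e^{-4\nu_1 n}$, this yields Lemma~\ref{lem:DoubleBridge:entropic_repulsion}.
\item The same trick handles cone points, because ``few cone points together with a crossing'' is an increasing event.
\end{itemize}
The lower bound needs no costly surgery either. Finite energy separates the starting points by a constant $K$ at constant cost. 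Ratio mixing then applies to cluster pairs separated by $K'+d^{1/6}$, with $d$ the distance to the ends, so the mixing errors are summable. Lemma~\ref{lem:DoubleBridge:RW:LB_diam_avoid} gives order $n^{-2}$ for the walks.

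Finally, your first step glosses over a genuine difficulty. The companion paper's proximity argument covers only the upper FK interface. For the lower one, the $\sigma_\bullet$-Peierls contour contained in $\mathcal C'$ lies between the two FK interfaces, not below $\Gamma^2_{\mathtt{FK}}$. To show $\Gamma^2_{\mathtt{FK}}$ does not wander far below it, the paper uses quasi-FK measures with boundary weights $1$ and $q_{\mathrm b}^{\mathrm w}$, stochastic domination, exponential relaxation, and a boundary box-crossing estimate (Lemma~\ref{lem:proxi_lower_int}).
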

The diffusivity constant \(c_q\) has an explicit characterization, see~\cite[Remark~$8.12$]{DobGlaOtt25}.
This Theorem completes the study of the Dobrushin interfaces in the Potts model when~$q>4$. 
Indeed, the regime \(T<T_c(q)\) was treated in~\cite{CamIofVel08}, and the order-disorder case in the regime \(T=T_c(q)\) is the object of the companion paper~\cite{DobGlaOtt25}. Theorem~\ref{thm:invariance_princ_potts} provides the missing, and most involved, case of the order-order interface at \(T_c(q)\). This is also the only one displaying the ``atypical'' wetting phenomenon, and the only one having a non-Gaussian limiting process in the diffusive scaling limit, i.e. the only one not scaling to a Brownian motion.

\subsection{FK percolation}

The main tool in analyzing the Potts model is the Fortuin--Kasteleyn (FK) percolation (or random-cluster) model~\cite{ForKas72}.
Take a finite subgraph $G=(V,E)$ of $\Z^2$, parameters $p\in (0,1),\,q>0$ and boundary conditions $\xi\in\{0,1\}^{\bbE}$. 
We identify any~$\omega\in\{0,1\}^{\bbE}$ with the set of edges~$e\in \bbE$ for which~$\omega_e=1$ ({\em open edges}) and with the spanning subgraph of~$\Z^2$ defined by the open edges.
The FK-percolation model on \(G\) with boundary conditions \(\xi\) is the probability measure on $\{0,1\}^{\bbE}$ given by 
\begin{equation*}
	\fk_{G;p,q}^{\xi}(\eta):=\tfrac{1}{Z_{\fk}}\cdot p^{|\eta\cap E|}(1-p)^{|E\setminus\eta|}\,q^{\clusters_V(\eta)}\, \ind_{\eta=\xi \text{ on } \bbE\setminus E},
\end{equation*}
where $Z_{\fk}=Z_{\fk}(G,p,q,\xi)$ is the partition function and~$\clusters_V(\eta)$ is the number of connected components ({\em clusters}) of $(\Z^2,\eta)$ that intersect $V$.
The \emph{free} and \emph{wired} measures correspond to the choices $\xi\equiv 0$ and $\xi\equiv 1$, respectively, and we simply write $\fkfree$ and $\fkwired$ instead of $\xi$, respectively.
We will be interested in the Dobrushin wired/wired boundary conditions (see Figure~\ref{fig:WiredWired_FK}):
\begin{equation*}
\xi_{1/1}(e)=\mathds{1}_{e\subset\bbH^+}+\mathds{1}_{e\subset\bbH^-},
\end{equation*}
Below we denote these conditions by~$1/1$ for brevity.

For \(n\geq 1\), define the graph \(G_{n}=(V_{n},E_{n})\) by
\begin{equation*}
E_{n}:=\{e\in\bbE: e\subset\Lambda_{n}\}\cup\{e\in\bbE: e\cap\Lambda_{n}\neq\varnothing\},\quad V_{n}:=\bigcup_{e\in E_{n}}e.
\end{equation*}
Define~$\partial_n^+$ and~$\partial_n^-$ as the set of vertices in~$\Z\times\Z_{\geq 0}\setminus \Lambda_n$ and~$\Z\times\Z_{< 0}\setminus \Lambda_n$ respectively incident to at least one edge in~$E_n$. Consider the event~$\partial_n^+ \nleftrightarrow \partial_n^-$ that~$\partial_n^+$ is not connected to~$\partial_n^-$ by a path of open edges.
The Edwards--Sokal coupling~\cite{EdwSok88} states that, when~$q\geq 2$ is integer and~$p=1- \exp[-\tfrac{1}{T}]$, a random sample of~$\potts_{\Lambda_n;T,q}^{1/2}$ can be obtained as follows: sample~$\omega \sim \fk_{G_n;p,q}^{1/1}(\,\cdot\, | \partial_n^+ \nleftrightarrow \partial_n^-)$ and color all of its clusters intersecting~$\Z\times\Z_{\geq 0}\setminus \Lambda_n$ (resp.~$\Z\times\Z_{<0}\setminus \Lambda_n$) in~$1$ (resp.~$2$) and color all other clusters independently in colors~$1,2,\dots,q$ (see Figure~\ref{fig:order-order_potts}).

As in~\cite{DobGlaOtt25}, we define the interfaces in the FK percolation forced by the Dobrushin boundary conditions using planar duality.
Indeed, the lattice dual to~$\Z^2$ is again a square lattice that we denote by~$(\Z^2)^*$; for each edge~$e$ of~$\Z^2$, denote by~$e^*$ the edge of~$(\Z^2)^*$ that is dual to~$e$, i.e. the unique edge of~$(\Z^2)^*$ that intersects~$e$; see Fig.~\ref{fig:midEdgeTiles}.
Given~$\omega\in \{0,1\}^{\bbE}$, define its dual by
\[
	\omega^*_{e^*}:= 1-\omega_e.
\]
The FK percolation at $p_c(q)$, given by
\[
	p_c(q) := 1- \exp[-\tfrac{1}{T_c(q)}] = \tfrac{\sqrt{q}}{\sqrt{q}+1},
\]
is known to enjoy the self-duality: if~$\omega$ is sampled from~$\fk_{G_n;T_c(q),q}^{1/1}(\,\cdot\, | \partial_n^+ \nleftrightarrow \partial_n^-)$ with the non-crossing conditioning mentioned above, then its dual~$\omega^*$ is also distributed as an FK-percolation with parameters~$q$ and~$p_c(q)$, but on a dual graph and under free boundary conditions with the conditioning that~$v_L:=(-n-1/2,-1/2)$ is connected to~$v_R:=(n+1/2,-1/2)$.
We now define the loop representation of~$\omega$ by drawing two arcs next to every primal or dual edge of~$\omega$, see Fig.~\ref{fig:midEdgeTiles}.
These arcs link together into loops separating primal and dual clusters, and one of these loops traces the boundary of the dual cluster containing~$v_L$ and~$v_R$.
Cutting the one arc to the left of~$v_L$ and one arc to the right of~$v_R$, we obtain two interfaces.
The upper interface by~$\Gamma_{\fk}^1$ and the lower interface by~$\Gamma_{\fk}^2$.

As for the Potts model, we define the upper and the lower discrete envelops of~$\Gamma_{{\fk}}^1$ and~$\Gamma_{{\fk}}^2$:
\begin{align*}
	\Gamma_{\fk}^{1+,n}(k) & := \max\{y\in \Z:\ (k\pm\tfrac12,y-\tfrac12)\xleftrightarrow{\omega^*} v_L\},\\
	\Gamma_{\fk}^{1-,n}(k) & := \min\{y\in \Z:\ (k,y+1)\xleftrightarrow{\omega} (\Z\times \Z_{\geq 0})\setminus\Lambda_n\},\\
	\Gamma_{\fk}^{2+,n}(k) & := \max\{y\in \Z:\ (k\pm\tfrac12,y-\tfrac12)\xleftrightarrow{\omega^*} v_L\},\\
	\Gamma_{\fk}^{2-,n}(k) & := \min\{y\in \Z:\ (k,y+1)\xleftrightarrow{\omega} (\Z\times \Z_{< 0})\setminus\Lambda_n\},
\end{align*}
where \(\Lambda_n^*:=[-n,n]^2\cap(\Z^2)^*\).
The rescaled linear interpolations~$\tilde{\Gamma}_{\fk}^{s\pm,n}$ of~$\Gamma_{s\fk}^{\pm,n}$ are defined in the same way as in the Potts model for~$s\in\{1,2\}$.
Our main result for the FK percolation is the following invariance principle for the pair~$(\tilde{\Gamma}_{\fk}^{1\pm,n},\tilde{\Gamma}_{\fk}^{2\pm,n})$:

\begin{theorem}
	\label{thm:invariance_princ_FK}
	Let~$q>4$ be a real number and take~$p=p_c(q)$.
	For~$n\in\N$ and~$s\in \{1,2\}$, sample~$\Gamma_{\fk}^{s\pm,n}$ and~$\tilde{\Gamma}_{\fk}^{s\pm,n}$ from~$\fk_{G_n;p_c(q),q}^{1/1}(\,\cdot\, | \partial_n^+ \nleftrightarrow \partial_n^-)$ as described above.
	Then, as~$n$ tends to infinity, the convergence results from Theorem~\ref{thm:invariance_princ_potts} hold for~$(\Gamma_{\fk}^{1\pm,n},\Gamma_{\fk}^{2\pm,n})$ and~$(\tilde{\Gamma}_{\fk}^{1\pm,n},\tilde{\Gamma}_{\fk}^{2\pm,n})$ in place of~$(\Gamma_{\potts}^{1\pm,n},\Gamma_{\potts}^{2\pm,n})$ and~$(\tilde{\Gamma}_{\potts}^{1\pm,n},\tilde{\Gamma}_{\potts}^{2\pm,n})$.
\end{theorem}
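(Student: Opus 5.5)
Write $\mu_n:=\fk_{G_n;p_c(q),q}^{1/1}(\,\cdot\,|\,\partial_n^+\nleftrightarrow\partial_n^-)$. By self-duality, $\omega^*$ under $\mu_n$ is a free FK measure on the dual box conditioned on $v_L\leftrightarrow v_R$. The plan is to transfer the renewal (Ornstein--Zernike) description of single order-disorder interfaces from~\cite{DobGlaOtt25} to the pair $(\Gamma_{\fk}^1,\Gamma_{\fk}^2)$. The transfer goes through the chain of couplings FK$\to$six-vertex$\to$ATRC used there. Under these couplings the two FK interfaces become the upper and lower boundaries of the dual cluster joining $v_L$ to $v_R$. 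Equivalently, they are two ATRC interfaces, each with the diamond/cone-point renewal structure of~\cite{DobGlaOtt25}, interacting only through the constraint that they do not cross plus a weak interaction decaying exponentially in their distance. The invariance principle then has three ingredients:
\begin{itemize}
\item[(a)] a coupling of the pair of interfaces, with an error of $\ln^{C}n$ in sup-norm, to a pair of independent random walks with i.i.d.\ increments (the OZ walks of~\cite{DobGlaOtt25}) conditioned not to intersect;
\item[(b)] the known convergence of two such walks, conditioned not to intersect, to the Brownian watermelon $c_q\mathrm{BW}^{(2)}$ (Denisov--Wachtel type results, cf.~\cite{DurWac20});
\item[(c)] control of the envelopes.
\end{itemize}

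\emph{Step (c).} Part (2) of the statement reduces to showing that each interface has diamonds (irreducible pieces) of size at most $C\ln^{111} n$, uniformly over the $2n+1$ columns. This follows from the exponential tails of diamond sizes in~\cite{DobGlaOtt25} together with a union bound. The exponent $111$ accumulates from the polylogarithmic losses in the repulsion step below. Given (2), the choice of $+$ or $-$ envelope does not affect the limit. The same bound shows that $\Gamma^{s\pm}_{\fk}$ lies within polylog distance of the cone-point skeleton of $\Gamma_{\fk}^s$.

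\emph{Step (a), main obstacle: entropic repulsion.} First show that, with probability tending to one, the two interfaces are at distance at least $\ln^{K} n$ from each other except in windows of polylogarithmic length near the endpoints $v_L,v_R$. I would try a multi-scale argument. On a slab of width $\ell$, compare the partition function of configurations where the two interfaces stay within $\ln^K n$ of each other with that of configurations where they separate. The comparison uses the mixing and ratio-weak-mixing of the free/ordered phases at $p_c$ (exponential decay in the disordered phase, from~\cite{DobGlaOtt25} and~\cite{DumGagHar21}) and the invariance-principle lower bound for a single interface. It should show that staying close costs a factor $\ell^{-c}$ beyond the random-walk non-intersection cost. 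The key quantitative input, that the surface tensions add (order-order tension equals twice the order-disorder tension, as in wetting), is what guarantees there is no net attraction.

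\emph{Decoupling once separated.} Once the interfaces are separated, resample each interface's diamonds conditionally on the other. Because the interaction between them is exponentially small in the distance, the conditional law is close in total variation to the product of two independent OZ renewals conditioned not to cross. Boundary effects near the endpoints are absorbed by starting the walks from the $\ln^K n$ separation point.

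\emph{Conclusion.} Combining (a) with (b), and using the polylog envelope bound from (c), gives convergence of all four rescaled envelopes to $c_q\mathrm{BW}^{(2)}$, with $c_q$ the single-interface diffusivity. The hardest point is the repulsion estimate, specifically obtaining uniform polylogarithmic bounds for the interaction at short distances, where the ATRC coupling is not a product.
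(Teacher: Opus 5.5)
\textbf{Gap 1: the repulsion mechanism.} There is a genuine gap at exactly the step you flag as the main obstacle. Additivity of surface tensions (Antonov's rule, which is moreover only known for $q$ large) is an exponential-order statement. It would give $\Phi_n(\connection\cap\connection')=e^{-4\nu_1 n+o(n)}$ and says nothing about polynomial factors.

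It is also compatible with a local attraction of critical-pinning type. Such an attraction leaves the free energy unchanged, but the difference of the two interfaces would then scale to a reflected Brownian bridge rather than to the watermelon. Mixing and ratio mixing say nothing precisely when the interfaces are within $O(\ln n)$ of each other. So your multi-scale partition-function comparison has no input that excludes attraction.

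The missing idea is \emph{monotonicity}. The modified ATRC measure satisfies the FKG lattice condition on pairs $(\omega_\tau,\omega_{\tau\tau'})$ (Lemma~\ref{lem:fkg_lattice_mod_atrc}). The two crossings have opposite monotonicity: one is a primal crossing in $\omega_\tau$, the other a dual crossing in $\omega_{\tau\tau'}^*$. Hence, given the lowest dual crossing path $\Gamma_-=\gamma'$, the law above $\gamma'$ is stochastically dominated by $\Phi_n$ (Corollary~\ref{cor:repulsiveness}). Let $A(\gamma')$ be the event that $\omega_\tau$ crosses above $\gamma'$ and comes within $2\scale_n^2$ of it in the bulk; it is increasing, so
\[
\Phi_n(\mathrm{close},\connection,\connection')\le\sum_{\gamma'}\Phi_n(\Gamma_-=\gamma')\,\Phi_n\big(\connection\cap A(\gamma')\big).
\]
The right-hand side involves only single crossings, each coupled to an OZ walk by the single-interface theory. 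It therefore reduces to a random-walk estimate: one walk stays above the other's diamond envelope but approaches it in the bulk with probability $O(1/(n\scale_n^{16}))$. This is compared with the a priori bounds $\Phi_n(\connection\cap\connection')\ge cn^{-2}e^{-4\nu_1 n}$ and $\Phi_n(\connection)\le C\ln^2(n)\,n^{-1/2}e^{-2\nu_1 n}$.

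The same monotonicity gives the cone-point estimate, since ``few cone points'' times the crossing indicator is increasing. This Ioffe--Ott--Velenik--Wachtel mechanism is why the two interfaces must be realised as a primal $\omega_\tau$-crossing and a dual $\omega_{\tau\tau'}^*$-crossing. In FK they bound a single dual cluster, and no such monotone repulsion is available.

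\textbf{Gap 2: the FK interfaces are not ATRC interfaces.} Your sentence ``Equivalently, they are two ATRC interfaces'' is not an equivalence. Under the coupling $\Psi^{1/1}$ one must prove, with probability $1-O(\mathrm{poly}(n)e^{-ck})$ and in one-sided Hausdorff distance, that $\Gamma^1_{\fk}$ is close to the $\omega_\tau$-cluster of $v_L$ and $\Gamma^2_{\fk}$ is close to the $\omega_{\tau\tau'}^*$-cluster of $v_L'$. These are two clusters in two different configurations.

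The upper case follows the order--disorder argument, but the lower one does not: the Peierls contour of $\sigma_\bullet$ lies between the two FK interfaces, not below $\Gamma^2_{\fk}$. The lower case needs four ingredients:
\begin{itemize}
\item the conditional law of $\omega$ below that contour, which is a quasi-FK measure with boundary weights $1$ and $\qbwired$;
\item its stochastic comparison with the homogeneous quasi-wired measure;
\item exponential relaxation of the latter to $\fk^{\fkwired}$;
\item a box-crossing estimate at the boundary, to rule out dual excursions of $\Gamma^2_{\fk}$ running along the contour.
\end{itemize}
Your Step (c) and your conclusion both rest on this proximity, so it must be proved rather than assumed.
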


Note the precision of our control of the interfaces.
We now provide a historical context for the problem of interfacial wetting.

\subsection{Interfacial wetting}
\label{subsec:inter_wetting}

The phenomena of \emph{interfacial wetting} occurs in the case where at least three substances are coexisting, i.e. thermodynamically stable, and one looks at the microscopic interface between two of them. 
Typically, the interface between substances A and B is a microscopic object, and its energetic cost is the sum of the energies of the microscopic contacts between particles of the first substance and the one of the second. Interfacial wetting occurs when the microscopic contact energy between substances A and B is \emph{higher} than the sum of the contact energy between substances A and C and the contact energy between substances B and C. In this case, it is energetically favourable to separate substance A from substance B using a layer of substance C, forming a form of ``double microscopic interface'' between A and B. This phenomena was extensively studied by physicists, see for example~\cite{DerSch86,DieLat89} and references therein. Yet, only modest and partial mathematical results were available. \cite{BriLeb87} provides a picture of wetting in Potts and Blume--Capel models based on low temperature expansions. Their rigorous results are however limited to the construction of the surface tension and some control over it, and no results on the geometry of the separating region between two ordered phases is proved.
Similarly,~\cite{deCMesMirRui88} also provides thermodynamic estimates and does not study the diffusive limit: using correlation inequalities, the surface tension between two ordered phases is shown to be at least twice the one between an ordered and the disordered phases for \(q\) even. Note that they do not construct the surface tensions. The existence of the one between two ordered phases is a direct consequence of FKG for the FK percolation, and sub-additivity. But the order-disorder one is studied by proving an inequality for the quantity whose limit \emph{should} be the order-disorder surface tension. In other words,~\cite{deCMesMirRui88} proves an inequality for a limsup instead of a limit.
In~\cite{LaaMesMirRuiShl91,MesMirRuiShl91}, the Antonov's rule is proven for \(q\) large enough: the surface tension between two ordered phases is equal to twice the one between an ordered and the disordered phase.
These works also provide rigorous ``large \(q\) expansions'' for the interfaces, giving a representation of the order-order ``interface'' as two interacting microscopic contours.
Finally~\cite{HryKot02} obtain a detailed picture of a marker of the wetting phenomenon in the Blume-Capel model at a very low temperature, in a regime of parameter where the disordered (\(0\)) phase is unstable, whilst the \(\pm 1\) phases are both stable. In particular, they obtain that most (very large density) of the contour separating \(+1\) from \(-1\) is made of a pair of \(0/1\) and \(0/-1\) interfaces, rather than direct \(-1/+1\) contact. The expected behaviour is that the mean size of that layer of zeros will blow up when approaching the point at which the disordered phase becomes stable.

As this article is concerned with interfacial wetting, we will not do a detailed review of all the works dealing with \emph{boundary wetting} and refer to the surveys~\cite{BriEMeFro86, Vel06} for references and details.

\subsection{Summary of the paper: what is new?}

In this work and in the companion paper~\cite{DobGlaOtt25}, we put forward the idea to study the two-dimensional Potts model via a percolation model called the ATRC model (see Section~\ref{sec:notations} for a definition).
This is done via the chain of couplings that first appeared in~\cite{GlaPel23} and was used, in particular, to establish exponential decay in an infinite-volume ATRC measure; in~\cite{AouDobGla24}, this infinite-volume measure was shown to be in fact unique, which is the key to our analysis here.
Both works relied on the results for the FK percolation~\cite{DumSidTas17,DumGagHar21}.

In the current work and in~\cite{DobGlaOtt25}, we are going in the reverse direction and for the first time use the Ashkin--Teller model to analyse the Potts model.
Precisely, in~\cite{DobGlaOtt25}, we proved stronger mixing estimates for the ATRC using the previous works and~\cite{Ott25} and then used them develop the Ornstein--Zernike theory for one interface in the ATRC model.
The main result in~\cite{DobGlaOtt25} is that this single interface corresponding to the order-disorder boundary conditions converges to the Brownian bridge.

In the current work, we have to deal with a pair of interacting interfaces.
The main point of mapping the Potts/FK model to the ATRC is that, due to positive correlation properties of the ATRC model, we can show that the interfaces have an effectively repulsive effect on one-another in the ATRC model. This property of effective repulsiveness is what makes our study possible, and is in general very hard to establish, even in perturbative regimes of parameters. See for example~\cite{IofShlTon15} where a similar problem is treated by perturbative methods in the simpler case of a very low temperature Ising interface above a wall.
Using this effectively repulsiveness allows us to make use of the ideas developed in~\cite{IofOttVelWac20} in the case of an interface above a wall. Whilst the underlying strategy is similar, several technical complications arise as the lower interface (playing the role of an ``effective wall'') is non-deterministic, and has unbounded fluctuations.
In short, due to entropic reasons, the interfaces stay far from one another.
This allows to use~\cite{DobGlaOtt25} to develop the renewal theory to each of them separately and then couple their joint law to a pair of random walks conditioned not to intersect.
The latter is known to converge to the Brownian watermelon in a general setting~\cite{DenWac15,DurWac20,DAl24} which finishes the argument.

An important technical issue is that the ATRC model is supported on pairs of percolation configurations and thus has a weaker domain Markov property.
This makes the study of the model significantly more complicated.

\subsection{Other models and boundary conditions}

As described above, most of our work deals with the ATRC model (defined in Section~\ref{sec:notations}), which is related to the Potts model via a chain of couplings. 
An intermediate step in these couplings is the height function of the six-vertex model (defined in Section~\ref{sec:combinatorial_mappings}).
Thus, our results also have implications for this height function and for the Ashkin--Teller model.

{\bf Six-vertex model.}
Originally introduced by Pauling~\cite{Pau35} in~1935 on~$\Z^3$ to describe the residual entropy of ice, the six-vertex model was then extended to~$\Z^2$ where it can be naturally described via a height function: each face of~$[-N,N]^2$ is assigned an integer height in such a way that, between any two adjacent faces, the height differs by exactly one; each vertex surrounded by heights~$(h,h+1,h,h+1)$ for some~$h$ receives a particular weight~$\mathbf{c}$, and the probability distribution is proportional to the product of the weights.
The model has received a lot of attention due to its integrability properties.
The regime of the parameters considered corresponds to~$\mathbf{c}>2$ where the height function is known to localise~\cite{DumGagHar21,GlaPel23,RaySpi20} under flat boundary conditions: the variance of the height at a given face is bounded uniformly in~$N$.
Now consider the boundary conditions making two steps: $h(x,y)\in \{2,3\}$ for all~$y>0$ and~$h(x,y)\in \{0,1\}$ for all~$y<0$.
This forces the existence of two level lines: one between $h=3$ and~$h=1$; and the other between $h=2$ and~$h=0$.
We show that these level lines converge to the Brownian watermelon in the diffusive limit.

{\bf Ashkin--Teller model.} Named after two physicists who introduced~\cite{AshTel43} it in~1943, the Ashkin--Teller model can be viewed as a pair of interacting Ising models~$\tau,\tau'$: coupling constants~$J,U > 0$ describe the strength of the interaction within the models and between them respectively.
The regime of a discontinuous transition in the Potts model corresponds to the part of the self-dual curve~$\sinh (2J) = e^{-2U}$ when~$U>J$.
In this regime  of parameters, each of~$\tau$ and~$\tau'$ each exhibits exponential decay of correlations while~$\tau\tau'$ is ferromagnetically ordered~\cite{GlaPel23,AouDobGla24}.
Pfister and Velenik~\cite{PfiVel97} introduced a graphical representation of the Ashkin--Teller model.
This is called the ATRC model and is supported on pairs of bond percolation configurations $\omega_\tau$ and~$\omega_{\tau\tau'}$ describing correlations in~$\tau$ and in~$\tau\tau'$ respectively.
We consider the ATRC model on a box with wired boundary conditions for~$\omega_{\tau\tau'}$ and a suitably modified boundary coupling constant for~$\omega_\tau$ (see Definition~\ref{def:mATRC}) and condition on the existence of crossings: in~$\omega_\tau$ and~$\omega_{\tau\tau'}^*$ linking the midpoints of the left and right sides of the box.
We show that the pair of the clusters of~$\omega_\tau$ and~$\omega_{\tau\tau'}^*$ containing these crossings converge to the Brownian watermelon in the diffusive limit.

{\bf Potts model at~$T_c$ with tricolor boundary conditions.}
In the current work, we are dealing only with bicolor boundary conditions.
We believe that our analysis can be extended to more general boundary conditions.
For instance, consider the Potts model on a triangular domain of size~$N$ with boundary conditions given by~$1$, $2$, $3$ assigned to sides of the domain respectively.
Then, we expect that, almost the whole domain will look like a sample of a free measure, whilst the monochromatic boundary clusters will be limited to neighbourhoods of the boundary of width~$\sqrt{N}$. 
This should be compared to the case~$T<T_c$ where the same boundary conditions result in the domain being divided into three monochromatic parts.
There are two obstacles in extending our methods to this setting: 1) the BKW coupling to the six-vertex model does not seem to apply directly; 2) one needs to prove that in a suitably modified ATRC model, the surface tension along the boundary is not smaller than in the bulk.

\subsection{Derivation of Theorem~\ref{thm:invariance_princ_potts} and~\ref{thm:invariance_princ_FK}}
\label{subsec:proof-thms}

In this section, we give the proofs of the two main Theorems, relying on the rest of the paper. In particular, this gives a global plan of the paper, and describes the main output of the different sections.

\begin{proof}[Proof of Theorem~\ref{thm:invariance_princ_potts}]
	Theorem~\ref{thm:invariance_princ_potts} follows from Theorem~\ref{thm:invariance_princ_FK}, exponential decay of FK connections under free boundary conditions, and the Edward-Sokal coupling exactly as in~\cite[Section 9.2]{DobGlaOtt25}.
\end{proof}

\begin{proof}[Proof of Theorem~\ref{thm:invariance_princ_FK}]
	The proof goes in three steps:
	
	{\bf Step 1.} First, one can couple \(\omega\) sampled under \(\fk_{\Lambda_n}^{1/1}(\cdot \given \partial^+_n\nleftrightarrow \partial^-_n)\) to a modified ATRC random variable, \((\omega_{\tau},\omega_{\tau\tau'})\), conditioned on two connections events: a primal one, \(\{(-n-1,0)\xleftrightarrow{\omega_{\tau}} (n+1,0)\}\), and a dual one, \(\{(-n-\frac{3}{2}, -\frac{1}{2}) \xleftrightarrow{\omega_{\tau\tau'}^*} (n+\frac{3}{2}, -\frac{1}{2}) \}\). Denoting \(\calC,\calC'\) the primal, resp. dual, connected components realizing the connection events, the coupling is done in such a way that the one-sided Hausdorff distance between \(\Gamma_{\fk}^{1\pm, n} \), and \(\calC\), as well as the one-sided Hausdorff distance between \(\Gamma_{\fk}^{2\pm, n}\) and \(\calC'\), are at most logarithmic in \(n\) with probability \(1-o_n(1)\). This is the content of Lemmas~\ref{lem:proxi_upper_int}, and~\ref{lem:proxi_lower_int}.
	
	{\bf Step 2.} Then, \((\calC,\calC')\) can be coupled to a mixture of random walk bridges \(S,S'\) conditioned on mutual avoidance is such a way that the one-sided Hausdorff distance between \(\calC\) and \(S\), as well as the one-sided Hausdorff distance between \(\calC'\) and \(S'\), are at most \(\ln^{111}(n)\) with probability \(1-o_n(1)\): this is the content of Corollary~\ref{cor:coupling_with_avoiding_bridges}.
	
	{\bf Step 3.} Finally, Theorem~\ref{thm:InvPrinc_AvoidingBridges} implies that the diffusively rescaled linear interpolation of \((S,S')\) converges towards a Brownian watermelon.
\end{proof}

\begin{figure}
	\centering
	\begin{tikzpicture}
		\draw (0,0)--(1,1)--(0,2)--(-1,1)--(0,0);
		\draw[ultra thick] (1,1)--(-1,1);
		\draw[ultra thick, dashed] (0,0)--(0,2);
		\draw (3,0)--(4,1)--(3,2)--(2,1)--(3,0);
		\draw[ultra thick] (3,0)--(3,2);
		\draw[ultra thick, dashed] (4,1)--(2,1);
		\filldraw[fill=white] (0,2) circle(3pt);
		\filldraw[fill=white] (0,0) circle(3pt);
		\filldraw[fill=black] (-1,1) circle(3pt);
		\filldraw[fill=black] (1,1) circle(3pt);
		\filldraw[fill=white] (2,1) circle(3pt);
		\filldraw[fill=white] (4,1) circle(3pt);
		\filldraw[fill=black] (3,2) circle(3pt);
		\filldraw[fill=black] (3,0) circle(3pt);
	\end{tikzpicture}
	\hspace*{1.5cm}
	\begin{tikzpicture}[scale=2]
		\draw (0,-0.5)--(0.5,0)--(0,0.5)--(-0.5,0)--(0,-0.5);
		\draw (1.5,-0.5)--(2,0)--(1.5,0.5)--(1,0)--(1.5,-0.5);
		
		\Vloop{1.5}{0}
		\Hloop{0}{0}
		
		\draw[very thick] (-0.5,0)--(0.5,0); 
		\draw[very thick, dashed] (1.5,-0.5)--(1.5,0.5);
		
		\foreach \i in {0,1.5}{
			\filldraw[fill=black] ({\i-0.5},0) circle(2pt) ;
			\filldraw[fill=black] ({\i+0.5},0) circle(2pt) ;			
			\filldraw[fill=white] (\i,-0.5) circle(2pt) ;
			\filldraw[fill=white] (\i,0.5) circle(2pt) ;
		}
	\end{tikzpicture}
	\caption{Left: Tile associated to a mid-edge. Right: Tile centred at the middle of a horizontal primal edge (solid black) or its associated vertical dual edge (dashed black), with its two possible local loop configurations.}
	\label{fig:midEdgeTiles}
\end{figure}
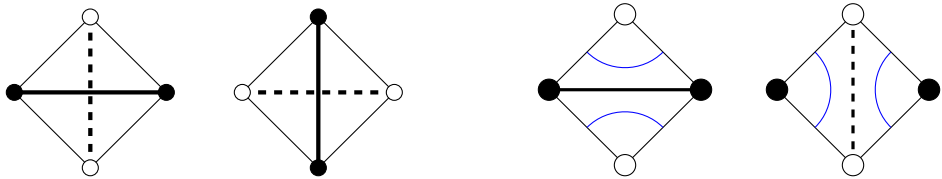

\section{Notations and conventions}
\label{sec:notations}

\vspace{5pt}

\noindent\textbf{General graphs.}
Let $\bbG=(\bbV,\bbE)$ be a graph. We simply write $xy=\{x,y\}$ for an edge $\{x,y\}\in\bbE$.
Given finite subsets $\Lambda\subset \bbV$ and $E\subset\bbE$, define
\begin{gather*}
	\bbE_\Lambda:=\{e\in\bbE :e\subset \Lambda\},\quad
	\bbV_E:=\bigcup_{e\in E}e,\\
	\partialin\Lambda:=\{x\in \Lambda:\exists y\in\bbV\setminus \Lambda,\,xy\in\bbE\},\quad
	\partialex\Lambda:=\{y\in \bbV\setminus \Lambda:\exists x\in \Lambda,\,xy\in\bbE\},\\
	\partialin E:=\{e\in E:\exists f\in\bbE\setminus E, e\cap f\neq\varnothing\},\quad
	\partialex E:=\{e\in\bbE\setminus E:\exists f\in E, e\cap f\neq\varnothing\},\\
	\partialedge\Lambda:=\{xy\in\bbE:x\in\Lambda,y\in\bbV\setminus\Lambda\}.
\end{gather*}
In case of ambiguity, we add \(\bbG\) as a subscript (and write, for example, \(\partialin_\bbG\Lambda\)) to emphasise that the boundary is taken in \(\bbG\).
The \emph{interior} of \(\Lambda\) (in \(\bbG\)) is given by \(\Lambda\setminus\partialin\Lambda\).
The subgraph \emph{induced} by $\Lambda\subset\bbV$ is given by $(\Lambda,\bbE_\Lambda)$, and the subgraph induced by \(E\subset\bbE\) is given by \((\bbV_E,E)\).
\medskip

\noindent\textbf{Lattices.}
We will mainly work on \(\Z^2\) with nearest-neighbour edges, and on its dual. We will denote the \emph{primal lattice} by \(\bbL_{\bullet} = \Z^2\) and its {\em dual} by \(\bbL_{\circ}= (1/2,1/2)+\Z^2\). Denote by \(\bbE^{\bullet}\) the nearest-neighbour edges between sites in \(\bbL_{\bullet}\) (primal edges), by \(\bbE^{\circ}\) the nearest-neighbour edges between sites in \(\bbL_{\circ}\) (dual edges).

\medskip

\noindent\textbf{Duality.}
Each edge~\(e\in\bbE^{\bullet}\) intersects a unique edge of~\(\bbE^{\circ}\), we denote it by~\(e^*\).
For a set of primal (or dual) edges \(E\), define \(*E = \{e^*:\ e\in E\}\).
We say that $E\subset\bbE^\bullet$ is \emph{simply lattice-connected} if both the subgraphs induced by $E$ and by $*(\bbE^\bullet\setminus E)$ are connected.
As a convention, sets of edges or of dual edges will be identified with the corresponding sets of mid-points whenever the meaning is clear from the context.

\medskip

\noindent\textbf{Tiles.}
To each primal-dual pair of edges \(e,e^*\), associate a {\em tile} \(t\) given by the convex hull of their endpoints and define \(e_t:=e\); see Fig.~\ref{fig:midEdgeTiles}.
Define~\(\bbL_{\diamond}\) as the set of all tiles, and let~\(\bbE^\diamond\) be the set of all pairs of adjacent tiles.
Note that~\((\bbL_{\diamond},\bbE^{\diamond})\) is the medial graph of~\((\bbL_{\bullet},\bbE^{\bullet})\), since tiles can be identified with midpoints of edges in~\(\bbE^{\bullet}\).

\medskip

\noindent\textbf{Standard rectangular domains.}
Define the upper and lower half planes by 
\begin{equation*}
	\bbH^+ := \R\times \R_{\geq 0},\quad \bbH^- := \R\times \R_{<0},
\end{equation*}
and, for \(n,m\geq 0\), set (see the left side of Fig.~\ref{fig:FK_tilesDomain})
\begin{align*}
	\mathsf{B}_{n,m} &:=\{-n, \dots, n\}\times\{-m, \dots, m\},\\
	\mathsf{B}'_{n,m} &:=\{-n-\tfrac{1}{2},\dots,n+\tfrac{1}{2}\}\times \{-m-\tfrac{1}{2},\dots,m+\tfrac{1}{2}\}.
\end{align*}

\begin{figure}
	\ifpic
	\begin{tikzpicture}[scale=0.8]
		\foreach \i in {-2,...,2}{
		\foreach \j in {-2,...,2}{
		\filldraw[fill=black] (\i,\j) circle(2pt);
		}
		}
		\foreach \i in {-3,...,2}{
		\foreach \j in {-3,...,2}{
		\filldraw[fill=white] ({\i+0.5},{\j+0.5}) circle(2pt);
		}
		}
		%to align with the right picture:
		{\filldraw[white](0,-3.5) circle(2pt);}
	\end{tikzpicture}
	\hspace{1cm}
	\begin{tikzpicture}[scale=0.8]
		\foreach \i\j in {-2.5/3, -1.5/3, -0.5/3, 0.5/3, 1.5/3, 2.5/3, -2.5/-3, -1.5/-3, -0.5/-3, 0.5/-3, 1.5/-3, 2.5/-3, -3/-2.5, -3/-1.5, -3/-0.5, -3/0.5, -3/1.5, -3/2.5, 3/-2.5, 3/-1.5, 3/-0.5, 3/0.5, 3/1.5, 3/2.5}{
		\filldraw[fill=lightgray] ({\i-0.5},\j)--(\i,{\j+0.5})--({\i+0.5},\j)--(\i,{\j-0.5})--({\i-0.5},\j);
		}
		
		\foreach \i in {-2,...,2}{
		\foreach \j in {-3,...,3}{
		\filldraw[fill=black] (\i,\j) circle(2pt);
		}
		}
		\foreach \i in {-3,3}{
		\foreach \j in {-3,...,3}{
		\filldraw[fill=black] (\i,\j) circle(2pt);
		}
		}
		\foreach \i in {-3,...,2}{
		\foreach \j in {-4,...,3}{
		\filldraw[fill=white] ({\i+0.5},{\j+0.5}) circle(2pt);
		}
		}
		\foreach \i in {-3,...,2}{
		\filldraw[fill=white] (-3.5,{\i+0.5}) circle(2pt);
		\filldraw[fill=white] (3.5,{\i+0.5}) circle(2pt);
		}
	\end{tikzpicture}
	\fi
	\caption{Left: the sets~\(\mathsf{B}_{2,2}\) and~\(\mathsf{B}'_{2,2}\). Right: the boundary tiles \( \partial \mathsf{A}_{2,2}\).}
	\label{fig:FK_tilesDomain}
\end{figure}
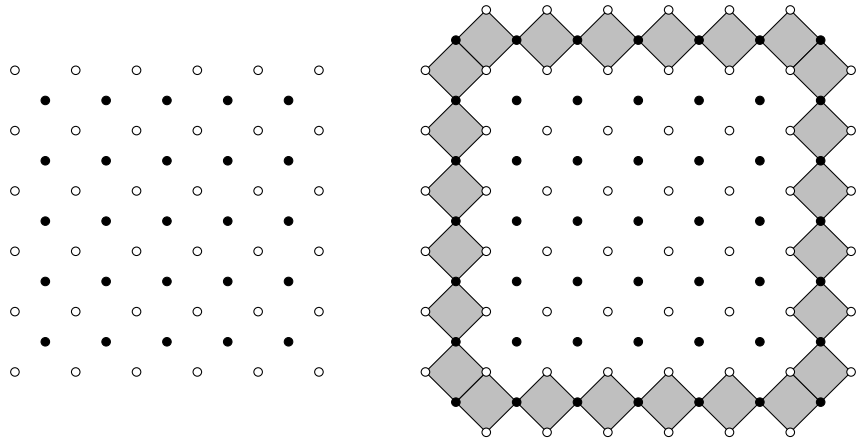

Define the subgraph $\mathsf{G}_{n,m}=(\mathsf{V}_{n,m},\mathsf{E}_{n,m})$ of $\Z^2$ by (see Fig.~\ref{fig:WiredWired_FK})
\begin{equation*}
\mathsf{V}_{n,m}:=\mathsf{B}_{n,m}\cup\partialex\mathsf{B}_{n,m}\quad\text{and}\quad\mathsf{E}_{n,m}=\bbE_{\mathsf{B}_{n,m}}\cup\partialedge\mathsf{B}_{n,m}.
\end{equation*}

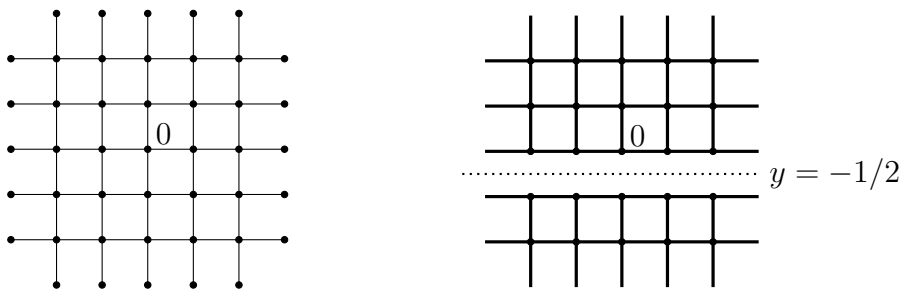
\begin{figure}
	\ifpic
	\begin{tikzpicture}[scale=0.6]
		\foreach \i in {-3,...,3}{
			\foreach \j in {-2,...,2}{
				\filldraw[black] (\i,\j) circle (2pt);
			}
		}
		\foreach \i in {-2,...,2}{
			\foreach \j in {-3,3}{
				\filldraw[black] (\i,\j) circle (2pt);
			}
		}
		\foreach \i in {-2,...,2}{
			\foreach \j in {-3,...,2}{
				\draw[black] (\i,\j) --(\i,{\j+1});
			}
		}
		\foreach \i in {-3,...,2}{
			\foreach \j in {-2,...,2}{
				\draw[black] (\i,\j) --({\i+1},\j);
			}
		}
		\draw (0.35,0.35) node{$0$};
	\end{tikzpicture}
	\hspace{2cm}
	\begin{tikzpicture}[scale=0.6]
		\foreach \i in {-2,...,2}{
			\foreach \j in {-2,...,2}{
				\filldraw[black] (\i,\j) circle (2pt);
			}
		}
		\foreach \i in {-3,...,2}{
			\foreach \j in {-1,-2,0,1,2}{
				\draw[very thick] (\i,\j)--({\i+1},\j) ;
			}
		}
		\foreach \i in {-2,...,2}{
			\foreach \j in {0,1,2}{
				\draw[very thick] (\i,\j)--(\i,{\j+1}) ;
			}
			\foreach \j in {-1,-2}{
				\draw[very thick] (\i,\j)--(\i,{\j-1}) ;
			}
		}
		\draw[thick, dotted] (-3.5,-0.5)--(3,-0.5);
		\draw (3,-0.5) node[right]{$y=-1/2$} ;
		\draw (0.35,0.35) node{$0$};
	\end{tikzpicture}
	\caption{Left: the graph \(\mathsf{G}_{2,2}\). Right: wired-wired Dobrushin boundary condition $\xi_{1/1}$.}
	\label{fig:WiredWired_FK}
\end{figure}

Moreover, set~\(\mathsf{D}_{n,m}:=\mathsf{B}_{n,m}\cup\mathsf{B}'_{n,m}\), and let~\(\mathsf{A}_{n,m}\subset\bbL_\diamond\) be the set of tiles with at least one corner in $\mathsf{D}_{n,m}$ and~\(\partial \mathsf{A}_{n,m}\subset \mathsf{A}_{n,m}\) (boundary tiles) be the set of tiles with precisely one corner in \(\mathsf{D}\); see the right side of Fig.~\ref{fig:FK_tilesDomain}.

Define also (see Figs.~\ref{fig:Kn}~and~\ref{fig:bc_FK_loops_spins_o-o})
\begin{gather*}
	\partial^\pm_{n,m}:=(\partialex \mathsf{B}_{n,m})\cap\bbH^\pm,\\
	v_L(n):= (-n-1,0),\quad v_R(n):=(n+1,0),\\
	v'_L(n):= (-n-\tfrac{3}{2},-\tfrac{1}{2}),\quad v'_R(n):=(n+\tfrac{3}{2},-\tfrac{1}{2}).
\end{gather*}

\medskip

\noindent\textbf{Augmented rectangular domains.}
We also introduce~\(\mathsf{E}_{n,m}\) augmented by some boundary edges. First, define the set of boundary edges by
\begin{equation*}
	\mathsf{E}_{\rmb,n,m}:=\{e_t:t\in\partial \mathsf{A}_{n,m}\}=\bbE_{\mathsf{B}_{n+1,m+1}}\setminus\mathsf{E}_{n,m}.
\end{equation*}
Then define the augmented sets of edges and vertices by
\begin{equation*}
\bar{\mathsf{E}}_{n,m}:=\mathsf{E}_{n,m} \cup \mathsf{E}_{\rmb,n,m}=\bbE_{\mathsf{B}_{n+1,m+1}}\qquad\text{and}\qquad\bar{\mathsf{V}}_{n,m}:=\bbV_{\bar{\mathsf{E}}_{n,m}}=\mathsf{B}_{n+1,m+1},
\end{equation*}
and the corresponding augmented graphs (see Fig.~\ref{fig:Kn}) as follows:
\begin{itemize}
\item set~\(\mathsf{K}_{n,m}:=(\bar{\mathsf{V}}_{n,m},\bar{\mathsf{E}}_{n,m})\), and let $\mathsf{K}_{n,m}^1$ be the graph obtained from $\mathsf{K}_{n,m}$ by identifying the vertices in $\partialin\bar{\mathsf{V}}_{n,m}\cap\bbH^+$ 
and those in $\partialin\bar{\mathsf{V}}_{n,m}\cap\bbH^-$
,
\item set~\(\mathsf{K}_{n,m}':=(\bbV_{*\bar{\mathsf{E}}_{n,m}},*\bar{\mathsf{E}}_{n,m})\), and let $(\mathsf{K}_{n,m}')^1$ be the graph obtained from $\mathsf{K}_{n,m}'$ by identifying the vertices in $\partialin\bbV_{*\bar{\mathsf{E}}_{n,m}}\cap\bbH^+$ and those in $\partialin\bbV_{*\bar{\mathsf{E}}_{n,m}}\cap\bbH^-$.
\end{itemize}

\medskip

\noindent\textbf{Connectivity events.}
Given $\Lambda,\Delta\subset\bbL_\bullet$ and a percolation configuration~$\omega \in \{0,1\}^{\bbE^\bullet}$, we write $\Lambda\xleftrightarrow{\omega}\Delta$ for the event that~$\Lambda$ and $\Delta$ are connected by a path in the graph~$(\bbL_\bullet,\omega)$.
If~$\Lambda=\{i\}$ and~$\Delta=\{j\}$, we simply write~$i \xleftrightarrow{\omega} j$. 
We omit $\omega$ from the notation when it cannot lead to any confusion. 	

\medskip

\noindent\textbf{Agreements of spins along edges.}
For an edge~$e=ij\in \mathbb{E}^\bullet$ and a spin configuration~$\sigma_\bullet\in \{+1,-1\}^{\bbL_\bullet}$, we write~$\sigma_\bullet \sim e$ if~$\sigma_\bullet(i)=\sigma_\bullet(j)$;
for~$\xi\subset\mathbb{E}^\bullet$, we write~$\sigma_\bullet \sim \xi$ if~$\sigma_\bullet\sim e$ for every~$e\in\xi$ (in other words, $\sigma_\bullet$ is constant on clusters of~$\xi$).
We use similar notation for edges in~$\mathbb{E}^\circ$ and~$\sigma_\circ\in \{+1,-1\}^{\bbL_\circ}$.

\medskip

\noindent\textbf{Parameters.}
The couplings go through standard ``expansion--resummation'' of Boltzmann weights combined with extensive use of planarity. As for each step one will write the weight associated with a given model as a sum of weights for a ``more expanded'' model, several parameters will come into play.
Below we list the parameters, as well as the algebraic relations linking them:
\begin{gather}
	q>4,\quad p=p_c(q)=\tfrac{\sqrt{q}}{1+\sqrt{q}},\quad \lambda>0,\quad \svc>2,\quad U>J>0,\nonumber\\
	\sqrt{q} = e^{\lambda} + e^{-\lambda},\quad \svc = e^{\lambda/2} + e^{-\lambda/2} = \coth(2J),\label{eq:parameters_bulk}\\
	\sinh(2J) = e^{-2U}.\nonumber
\end{gather}
The parameter \(\svc\) also has a ``boundary version'':
\begin{equation}
	\svcb = e^{\lambda/2}>1.\label{eq:parameters_bnd}
\end{equation}
For the remainder of the article, we fix \( q > 4 \), along with the corresponding parameters above (which are uniquely determined by \( q \)).

\medskip

\noindent\textbf{Constants.}
Constants like \(c,c_1,C,C_1,C',\dots\) are constants which can change from line to line and which can depend on the parameters unless explicitly stated. They are independent of the system size, \(n\), which will be our main ``variable'' quantity.

\medskip

\noindent\textbf{ATRC model.}
Let~$\xi_\tau,\xi_{\tau\tau'}\in\{0,1\}^{\bbE}$ with $\xi_\tau\subseteq\xi_{\tau\tau'}$.
The ATRC model on~$G$ with parameters~$U>J>0$ under boundary conditions~$(\xi_\tau,\xi_{\tau\tau'})$ is the probability measure on $\{0,1\}^{\bbE}\times\{0,1\}^{\bbE}$ given by 
\begin{equation}
\begin{multlined}
\label{eq:atrc_def}
	\atrc_{G;J,U}^{\xi_\tau,\xi_{\tau\tau'}}(\omega_\tau,\omega_{\tau\tau'})
	= \tfrac{1}{Z} \cdot \ind_{\omega_\tau\subseteq\omega_{\tau\tau'}} \cdot  \rmw_\tau^{\abs{\omega_\tau\cap E}}\, \rmw_{\tau\tau'}^{\abs{(\omega_{\tau\tau'}\setminus\omega_\tau)\cap E}}\,2^{\clusters_{V}(\omega_\tau)+\clusters_{V}(\omega_{\tau\tau'})}\\
	\cdot\ind_{\omega_\tau=\xi_{\tau}\text{ on }\bbE\setminus E}\,\ind_{\omega_{\tau\tau'}=\xi_{\tau\tau'}\text{ on }\bbE\setminus E},
\end{multlined}
\end{equation}
where~$Z=Z_{\atrc}(G,J,U,\xi_\tau,\xi_{\tau\tau'})$ is the partition function, $\clusters_{V}(\cdot)$ is the number of clusters that intersect $V$, and the weights are given by
\begin{equation}\label{eq:atrc_weights}
	\rmw_\tau=e^{2U}(e^{2J}-e^{-2J})\quad\text{and}\quad \rmw_{\tau\tau'}=e^{2(U-J)}-1.
\end{equation}
When $\xi_\tau\equiv 0$ (resp. $\xi_\tau\equiv 1$), we write $\atrcfree$ (resp. $\atrcwired$) instead of $\xi_\tau$ in the superscript, and analogously for $\xi_{\tau\tau'}$.
Given finite subsets $\Lambda\subset\bbV$ and $E\subset\bbE$, we write $\atrc_{\Lambda;J,U}^{\xi_\tau,\xi_{\tau\tau'}}$ and $\atrc_{E;J,U}^{\xi_\tau,\xi_{\tau\tau'}}$ for the measures on the graphs $(\Lambda,\bbE_\Lambda)$ and $(\bbV_E,E)$, respectively.

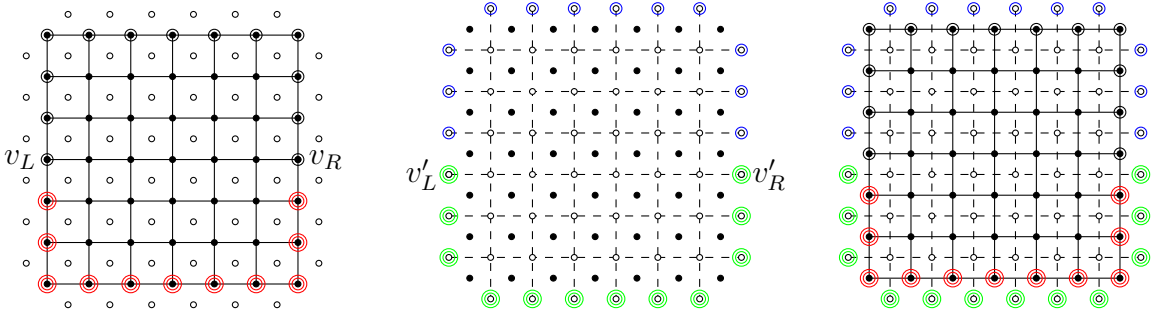
\begin{figure}
	\begin{tikzpicture}[scale=0.55]
		\foreach \j in {-3,...,3}{
			\draw[black] (-3,\j)--(3,\j);
			\draw[black] (\j,3)--(\j,-3);
		}
		
		\foreach \i in {-2,...,2}{
			\draw (\i,3) circle(4pt);
			\draw[red] (\i,-3) circle(4pt);
			\draw[red] (\i,-3) circle(6pt);
		}
		
		\foreach \j in {-3,...,-1}{
			\draw[red] (-3,\j) circle(4pt);
			\draw[red] (3,\j) circle(4pt);
			\draw[red] (-3,\j) circle(6pt);
			\draw[red] (3,\j) circle(6pt);
		}
		
		\foreach \j in {0,...,3}{
			\draw (-3,\j) circle(4pt);
			\draw (3,\j) circle(4pt);
		}
		
		\foreach \i in {-3,...,3}{
			\foreach \j in {-3,...,3}{
				\filldraw[fill=black] (\i,\j) circle(2pt);
			}
		}
		\foreach \i in {-4,...,3}{
			\foreach \j in {-3,...,2}{
				\filldraw[fill=white] ({\i+0.5},{\j+0.5}) circle(2pt);
			}
		}
		\foreach \i in {-3,...,2}{
			\foreach \j in {-4,3}{
				\filldraw[fill=white] ({\i+0.5},{\j+0.5}) circle(2pt);
			}
		}
		\filldraw[fill=black] (-3,0) circle(2pt) node[left]{$v_L$};
		\filldraw[fill=black] (3,0) circle(2pt) node[right]{$v_R$};
	\end{tikzpicture}
	\hspace{8pt}
	\begin{tikzpicture}[scale=0.55]
		\foreach \i in {-3,...,2}{
			\draw[dashed] ({\i+0.5},3.5)--({\i+0.5},-3.5);
		}
		\foreach \j in {-0.5,-1.5,-2.5,0.5,1.5,2.5}{
			\draw[dashed] (-3.5,\j)--(3.5,\j);
		}
		\foreach \j in {}{
			\draw[dashed] (-2.5,\j)--(2.5,\j);
		}
		
		\foreach \i in {-3,...,3}{
			\foreach \j in {-3,...,3}{
				\filldraw[fill=black] (\i,\j) circle(2pt);
			}
		}
		\foreach \i in {-4,...,3}{
			\foreach \j in {-3,...,2}{
				\filldraw[fill=white] ({\i+0.5},{\j+0.5}) circle(2pt);
			}
		}
		\foreach \i in {-3,...,2}{
			\foreach \j in {-4,3}{
				\filldraw[fill=white] ({\i+0.5},{\j+0.5}) circle(2pt);
			}
		}
				
		\foreach \i in {-2.5,...,2.5}{
			\draw[blue] (\i,3.5) circle(4pt);
			\draw[green] (\i,-3.5) circle(4pt);
			\draw[green] (\i,-3.5) circle(6pt);
		}
		
		\foreach \j in {0.5,...,2.5}{
			\draw[blue] (-3.5,\j) circle(4pt);
			\draw[blue] (3.5,\j) circle(4pt);
		}
		
		\foreach \j in {-2.5,...,-0.5}{
			\draw[green] (-3.5,\j) circle(4pt);
			\draw[green] (3.5,\j) circle(4pt);
			\draw[green] (-3.5,\j) circle(6pt);
			\draw[green] (3.5,\j) circle(6pt);
		}
		\filldraw[fill=white] (-3.5,-0.5) circle(2pt) node[left]{$v'_L$};
		\filldraw[fill=white] (3.5,-0.5) circle(2pt) node[right]{$v'_R$};
	\end{tikzpicture}
	\hspace{8pt}
	\begin{tikzpicture}[scale=0.55]
		\foreach \i in {-3,...,2}{
			\draw[dashed] ({\i+0.5},3.5)--({\i+0.5},-3.5);
		}
		\foreach \j in {-0.5,-1.5,-2.5,0.5,1.5,2.5}{
			\draw[dashed] (-3.5,\j)--(3.5,\j);
		}
		\foreach \j in {}{
			\draw[dashed] (-2.5,\j)--(2.5,\j);
		}
		
		\foreach \j in {-3,...,3}{
			\draw[black] (-3,\j)--(3,\j);
			\draw[black] (\j,3)--(\j,-3);
		}
		
		\foreach \i in {-2,...,2}{
			\draw (\i,3) circle(4pt);
			\draw[red] (\i,-3) circle(4pt);
			\draw[red] (\i,-3) circle(6pt);
		}
		
		\foreach \j in {-3,...,-1}{
			\draw[red] (-3,\j) circle(4pt);
			\draw[red] (3,\j) circle(4pt);
			\draw[red] (-3,\j) circle(6pt);
			\draw[red] (3,\j) circle(6pt);
		}
		
		\foreach \j in {0,...,3}{
			\draw (-3,\j) circle(4pt);
			\draw (3,\j) circle(4pt);
		}
		
		\foreach \i in {-3,...,3}{
			\foreach \j in {-3,...,3}{
				\filldraw[fill=black] (\i,\j) circle(2pt);
			}
		}
		\foreach \i in {-4,...,3}{
			\foreach \j in {-3,...,2}{
				\filldraw[fill=white] ({\i+0.5},{\j+0.5}) circle(2pt);
			}
		}
		\foreach \i in {-3,...,2}{
			\foreach \j in {-4,3}{
				\filldraw[fill=white] ({\i+0.5},{\j+0.5}) circle(2pt);
			}
		}
		\foreach \i in {-2.5,...,2.5}{
			\draw[blue] (\i,3.5) circle(4pt);
			\draw[green] (\i,-3.5) circle(4pt);
			\draw[green] (\i,-3.5) circle(6pt);
		}
		
		\foreach \j in {0.5,...,2.5}{
			\draw[blue] (-3.5,\j) circle(4pt);
			\draw[blue] (3.5,\j) circle(4pt);
		}
		
		\foreach \j in {-2.5,...,-0.5}{
			\draw[green] (-3.5,\j) circle(4pt);
			\draw[green] (3.5,\j) circle(4pt);
			\draw[green] (-3.5,\j) circle(6pt);
			\draw[green] (3.5,\j) circle(6pt);
		}
	\end{tikzpicture}
	\caption{The graphs \(\mathsf{K}_{2,2}\) (left), \(\mathsf{K}_{2,2}'\) (middle), and the planar duality relation between their edges (right). Solid vertices surrounded by a black circle and solid vertices surrounded by two red circles are identified in $\mathsf{K}_{2,2}^1$, respectively. Hollow vertices surrounded by a blue circle and hollow vertices surrounded by two green circles are identified in $(\mathsf{K}_{2,2}')^1$, respectively.}
	\label{fig:Kn}
\end{figure}

\section{Couplings}
\label{sec:couplings}

This section is concerned with the construction of a coupling of the FK-percolation and the Ashkin--Teller models, via the six-vertex model, and the derivation of its basic properties. 
The coupling of the FK and six-vertex measures is an adaptation of the Baxter--Kelland--Wu (BKW) coupling to the Dobrushin boundary conditions~\cite{BaxKelWu76}.
The relation of the six-vertex and AT measures has first been noticed in~\cite{Fan72} comparing their critical properties, and it was made explicit in~\cite{Fan72b,Weg72} on a level of partition functions.
We build on~\cite{GlaPel23}, where a coupling of the six-vertex model and a graphical representation of the AT model (a marginal of ATRC) was constructed.

\subsection{Different models and combinatorial mappings}
\label{sec:combinatorial_mappings}

This section provides an overview of the combinatorial objects that will be encountered, as well as a description of their relations.
We first discuss oriented loop configurations, which serve as an intermediate step in the BKW coupling of the FK-percolation and the six-vertex models.
This is followed by a description of two of the representations of the six-vertex model: edge orientations and spin configurations.

In Section~\ref{sec:intro}, we saw that percolation configurations are in bijection with configurations of (unoriented) loops and bi-infinite paths. To make the correspondence \(\omega\leftrightarrow \omega^* \leftrightarrow \ell=\loops(\omega)\) explicit, we can regard each of these models as an assignment of a local piece of drawing of an edge and two \emph{arcs} to lozenge tiles centred at the mid-edges as depicted in Fig.~\ref{fig:midEdgeTiles}. Clearly, retaining only either the primal or dual edges, or the arcs, provides complete information about all three.

\medskip

\noindent\textbf{Oriented loop configurations} are obtained from unoriented ones by assigning an orientation to each loop; formally, this is done by means of a sequence of independent uniform random variables on \([0,1]\) indexed by the set \(\mathcal{L}\) of all loops.
Alternatively, one can assign orientations to the loop arcs on each tile, subject to the constraint that the orientations of neighbouring arcs match. The eight local configurations that can occur at a tile are referred to as \emph{types}, see Fig.~\ref{fig:oriented_loop_arcs}.

\medskip

\noindent\textbf{The edge orientations of the six-vertex model}~\cite{Pau35,Rys63} are assignments of orientations to the edges in \(\bbE^\diamond\), obtained from oriented loop configurations via the natural  surjection; see Fig.~\ref{fig:oriented_loop_arcs}.
These edge orientations satisfy the {\em ice rule}: at every vertex, there are two incoming and two outgoing edges of~\(\bbE^\diamond\).
This constraint permits six possible local configurations at a tile, which are also called types; see Fig.~\ref{fig:oriented_loop_arcs}.
The local inverse operation can be considered as \emph{splitting} the oriented edges into two oriented loop arcs.
While tiles of types 1-4 permit a unique reconstruction of the loop arcs based on the edge orientations, there are two possibilities for tiles of types 5-6, giving the latter a special role. 

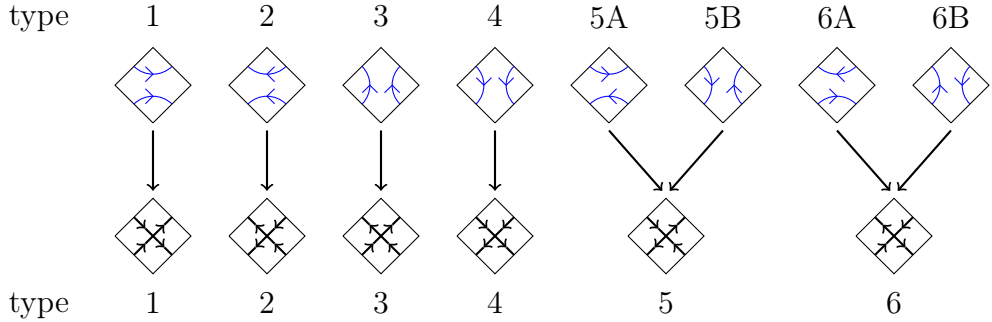
\begin{figure}
\centering
\ifpic
\begin{tikzpicture}
	\draw (-1.5,1.2) node[below]{type};
	\draw (0,1.2) node[below]{1};
	\draw (1.5,1.2) node[below]{2};
	\draw (3,1.2) node[below]{3};
	\draw (4.5,1.2) node[below]{4};
	\draw (6,1.2) node[below]{5A};
	\draw (7.5,1.2) node[below]{5B};
	\draw (9,1.2) node[below]{6A};
	\draw (10.5,1.2) node[below]{6B};

	\foreach \i in {0,1.5,3,4.5,6,7.5,9,10.5}{
		\draw (\i,-0.5)--(\i+0.5,0)--(\i,0.5)--(\i-0.5,0)--(\i,-0.5);
	}
	\foreach \i in {0,1.5,6,9}{
		\Hloop{\i}{0}
	}
	\foreach \i in {3,4.5,7.5,10.5}{
		\Vloop{\i}{0}
	}
		
	\RIGHTarrow{0}{0.5-1/(2*sqrt(2))}{blue}
	\RIGHTarrow{0}{1/(2*sqrt(2))-0.5}{blue}
	\LEFTarrow{1.5}{0.5-1/(2*sqrt(2))}{blue}
	\LEFTarrow{1.5}{1/(2*sqrt(2))-0.5}{blue}
	
	\UParrow{2.5+1/(2*sqrt(2))}{0}{blue}
	\UParrow{3.5-1/(2*sqrt(2))}{0}{blue}
	\DOWNarrow{4+1/(2*sqrt(2))}{0}{blue}
	\DOWNarrow{5-1/(2*sqrt(2))}{0}{blue}
	
	\LEFTarrow{6}{1/(2*sqrt(2))-0.5}{blue}
	\RIGHTarrow{6}{0.5-1/(2*sqrt(2))}{blue}	
	\DOWNarrow{7+1/(2*sqrt(2))}{0}{blue}
	\UParrow{8-1/(2*sqrt(2))}{0}{blue}
			
	\RIGHTarrow{9}{1/(2*sqrt(2))-0.5}{blue}
	\LEFTarrow{9}{0.5-1/(2*sqrt(2))}{blue}
	\UParrow{10+1/(2*sqrt(2))}{0}{blue}
	\DOWNarrow{11-1/(2*sqrt(2))}{0}{blue}	
	
	\foreach \i in {0,1.5,3,4.5,6.75,9.75}{
		\DrawTile{\i}{-2}{}
	}
	
	%% mapping
	\foreach \i in {0,1.5,3,4.5}{
		\draw[->,thick] (\i,-0.6)--(\i,-1.4);
	}
	\draw[->,thick] (6,-0.6)--(6.7,-1.4);
	\draw[->,thick] (7.5,-0.6)--(6.8,-1.4);
	\draw[->,thick] (9,-0.6)--(9.7,-1.4);
	\draw[->,thick] (10.5,-0.6)--(9.8,-1.4);
	
	%%types	
	
	\draw (-1.5,-2.6) node[below]{type};
	\draw (0,-2.6) node[below]{1};
	\draw (1.5,-2.6) node[below]{2};
	\draw (3,-2.6) node[below]{3};
	\draw (4.5,-2.6) node[below]{4};
	\draw (6.75,-2.6) node[below]{5};
	\draw (9.75,-2.6) node[below]{6};
	
	%% 6V arrows
	\foreach \i in {0,3,9.75}{
		\draw[thick] (\i,-2)--({\i-0.1},{-2-0.1});
		\draw[thick, <-] (\i-0.1,-2-0.1)--({\i-0.25},{-2-0.25});
	}

	\foreach \i in {1.5,4.5,6.75}{
		\draw[thick] (\i-0.15,-2-0.15)--({\i-0.25},{-2-0.25});
		\draw[thick, ->] (\i,-2)--({\i-0.15},{-2-0.15});
	}
	
	\foreach \i in {0,4.5,6.75}{
		\draw[thick] (\i,-2)--({\i-0.1},{-2+0.1});		
		\draw[thick, <-] (\i-0.1,-2+0.1)--({\i-0.25},{-2+0.25});
	}

	\foreach \i in {1.5,3,9.75}{
		\draw[thick] (\i-0.15,-2+0.15)--({\i-0.25},{-2+0.25});
		\draw[thick, ->] (\i,-2)--({\i-0.15},{-2+0.15});
	}
	
	\foreach \i in {1.5,3,6.75}{
		\draw[thick] (\i,-2)--({\i+0.1},{-2-0.1});
		\draw[thick, <-] (\i+0.1,-2-0.1)--({\i+0.25},{-2-0.25});
	}

	\foreach \i in {0,4.5,9.75}{
		\draw[thick] (\i+0.15,-2-0.15)--({\i+0.25},{-2-0.25});
		\draw[thick, ->] (\i,-2)--({\i+0.15},{-2-0.15});
	}
	
	\foreach \i in {1.5,4.5,9.75}{
		\draw[thick] (\i,-2)--({\i+0.1},{-2+0.1});
		\draw[thick, <-] (\i+0.1,-2+0.1)--({\i+0.25},{-2+0.25});
	}

	\foreach \i in {0,3,6.75}{
		\draw[thick] (\i+0.15,-2+0.15)--({\i+0.25},{-2+0.25});
		\draw[thick, ->] (\i,-2)--({\i+0.15},{-2+0.15});
	}
\end{tikzpicture}
\fi
\caption{Tiles of the oriented loop model and their types and weights, and the mapping from oriented loop arcs to six-vertex edge orientations.}
\label{fig:oriented_loop_arcs}
\end{figure}
\medskip

\noindent\textbf{The six-vertex spin representation.}
The six-vertex model may be represented by pairs of spin-configurations \((\sigma_\bullet,\sigma_\circ)\in\{\pm1\}^{\bbL_\bullet}\times\{\pm1\}^{\bbL_\circ}\)~\cite{Wu71,KadWeg71,Lis22,GlaPel23}, obtained from the edge orientations via the following two-valued mapping.
Fix the value of \(\sigma_\bullet\) or \(\sigma_\circ\) at some arbitrary fixed vertex and proceed iteratively: given an edge \(e\in\bbE^\diamond\), denote the vertices that it separates by~$i\in\bbL_\bullet$ from a vertex~$u\in\bbL_\circ$; we impose~\(\sigma_\bullet(i)=\sigma_\circ(u)\) if~\(i\) is to the left side of \(e\) (with respect to its assigned orientation) and we impose~\(\sigma_\bullet(i)=-\sigma_\circ(u)\) otherwise; see Fig.~\ref{fig:six-vertex_arrows_spins_heights}.
The ice rule ensures that this mapping is well-defined.

This mapping is two-valued due to the liberty to choose the value of \(\sigma_\bullet\) or \(\sigma_\circ\) at one vertex, and the two images are related to each other by a global spin flip.
Furthermore, it is injective, meaning that the edge orientations can be reconstructed from the spins.
The type of a tile with respect to \((\sigma_\bullet,\sigma_\circ)\) is given by the type of the corresponding edge orientations; see Fig.~\ref{fig:six-vertex_arrows_spins_heights}.
It should also be noted that the ice rule can be translated as follows: for any tile \(t\in\bbL_\diamond\), either~\(\sigma_\bullet\) is constant on the endpoints of \(e_t\) or \(\sigma_\circ\) is constant on the endpoints of \(e_t^*\). Formally,
\begin{equation}\label{eq:ice-rule_spins}
\big(\sigma_\bullet(i)-\sigma_\bullet(j)\big)\big(\sigma_\circ(u)-\sigma_\circ(v)\big)=0\qquad\text{for any }t\in\bbL_\diamond\text{ with }e_t=ij,\,e_t^*=uv.
\end{equation}
In the context of spins, this property will henceforth be referred to as the ice rule. 

\begin{figure}
\centering
\includegraphics[scale=0.7]{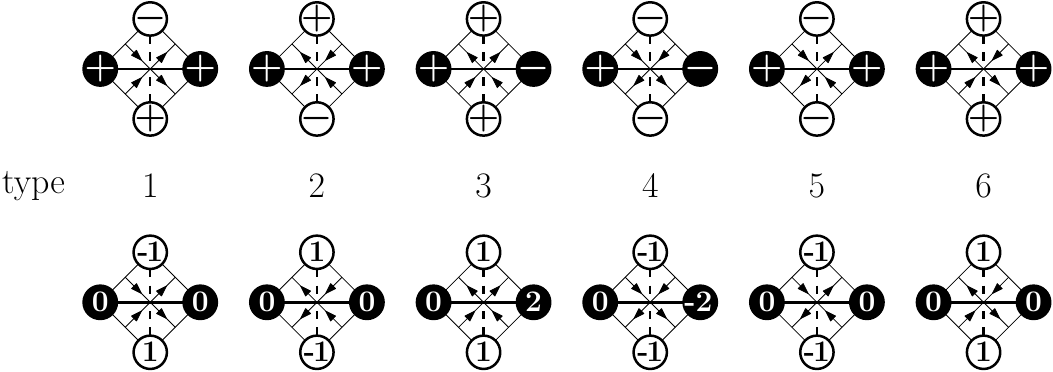}
\caption{The six-vertex types for all representations at a tile corresponding to a horizontal primal edge \(e\). Top: the spin at the left endpoint of \(e\) is fixed to be $+$. Bottom: the height at the left endpoint of \(e\) is fixed to be $0$.}
\label{fig:six-vertex_arrows_spins_heights}
\end{figure}

\medskip

\noindent\textbf{Baxter--Kelland--Wu correspondence~\cite{BaxKelWu76}.}
Given a measure on oriented loops, taking its pushforwards with respect to the above mappings, one obtains measures on the six-vertex edge orientations and spin configurations.

\subsection{Coupling under Dobrushin conditions}
\label{sec:coupling_under_dobr}

The idea is analogous to the order-disorder case in~\cite[Section 4.2]{DobGlaOtt25}. Take the FK measure with wired-wired Dobrushin boundary conditions and construct from it first a measure on pairs of spin configurations on \(\bbL_\bullet\) and \(\bbL_\circ\) that satisfy the ice rule and then a measure on ATRC configurations. We will then identify these two measures as the six-vertex and the ATRC measures, respectively, under suitable versions of Dobrushin boundary conditions. This gives a coupling between the FK and a modified ATRC, which will allow us to transfer the study of the former to the study of the latter.

Recall the definition of the subgraphs \(\mathsf{G}_{n,m}=(\mathsf{V}_{n,m},\mathsf{E}_{n,m})\) of \((\bbL_{\bullet},\bbE^{\bullet} )\) and the upper and lower boundaries~\(\partial^\pm_{n,m}\) given in Section~\ref{sec:notations}.
As \(n,m\) will be fixed in this section, we will omit them in the notation and simply write \(\mathsf{G}=(\mathsf{V},\mathsf{E})\) and~\(\partial^\pm\).
Recall also the definition of the FK measure~\(\fk_{\mathsf{G},p_c(q),q}^{1/1}\) on~\(\mathsf{G}\) with~\(q>4\) and under wired-wired Dobrushin boundary conditions in Section~\ref{sec:intro} (see Fig.~\ref{fig:WiredWired_FK}). Since~\(q\) and~\(p_c(q)\) will also be fixed, we will simply refer to it as~\(\fk_{\mathsf{G}}^{1/1}\).
Finally, recall the loop representation described in Section~\ref{sec:intro}.
For~\(\omega\in\{0,1\}^{\mathbb{E}^\bullet}\), define~\(\loops_{\mathsf{G}_{n,m}}(\omega)\) as the set of loops surrounding vertices in~\(\mathsf{V}_{n,m}\) obtained from~\(\omega\) by the mapping in Figure~\ref{fig:midEdgeTiles}.
\begin{lemma}\label{lem:FK_loop_expression}
If \(p = p_c(q) = \frac{\sqrt{q}}{1+\sqrt{q}}\), one has that
\begin{equation*}
\fk_{\mathsf{G}_{n,m};p,q}^{1/1}(\omega) \propto \sqrt{q}^{|\loops_{\mathsf{G}_{n,m}}(\omega)|}\,\mathds{1}_{\omega=\xi_{1/1}\text{ on }\bbE^\bullet\setminus \mathsf{E}_{n,m}}.
\end{equation*}
\end{lemma}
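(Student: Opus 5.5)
We follow the standard derivation of the loop representation of FK percolation. It combines Euler's formula for planar graphs with the self-dual value of $p$, and keeps track of the boundary conditions $\xi_{1/1}$. Fix $\omega$ agreeing with $\xi_{1/1}$ outside $\mathsf{E}=\mathsf{E}_{n,m}$. Write $o(\omega)=|\omega\cap\mathsf{E}|$, $k(\omega)=\clusters_{\mathsf{V}}(\omega)$, and let $f(\omega)$ be the number of dual clusters of $\omega^*$ that are surrounded by loops in $\loops_{\mathsf{G}}(\omega)$. With $x:=p/(1-p)=\sqrt q$ at $p=p_c(q)$, the FK weight is proportional to $\sqrt q^{\,o(\omega)}\,q^{k(\omega)}$, since $(1-p)^{|\mathsf{E}|}$ is a constant.

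\textbf{Step 1: loops are cluster boundaries.} Each loop in $\loops_{\mathsf{G}}(\omega)$ separates a primal cluster from a dual cluster. It is either the outer boundary of a primal cluster or an inner boundary, i.e.\ the outer boundary of a dual cluster enclosed by primal edges. Since every primal cluster meeting $\mathsf{V}$ has exactly one outer boundary, we get $|\loops_{\mathsf{G}}(\omega)| = k(\omega) + f(\omega) + \mathrm{const}$. The constant accounts for the two boundary clusters (the clusters of $\partial^+$ and of $\partial^-$) and for the dual cluster between them. Their boundaries are not closed loops: they are the arcs cut at $v_L,v_R$, forming the interfaces. Whether they count is fixed by the configuration outside $\mathsf{E}$, so it contributes a constant that does not depend on $\omega$.

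\textbf{Step 2: Euler's formula.} Apply Euler's relation to the planar graph $(\mathsf{V},\omega)$, with the boundary wiring taken into account by viewing the upper and lower boundary arcs as fixed open paths. This gives $f(\omega) = o(\omega) - |\mathsf{V}| + k(\omega) + \mathrm{const}$. Here the faces of the planar map correspond to the bounded dual clusters counted in Step 1, and the constant absorbs the fixed boundary edges and the two boundary clusters.

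\textbf{Step 3: combine.} From Steps 1 and 2, $|\loops_{\mathsf{G}}(\omega)| = 2k(\omega) + o(\omega) + \mathrm{const}$. Therefore
\[
\sqrt q^{\,|\loops_{\mathsf{G}}(\omega)|} \propto q^{k(\omega)}\sqrt q^{\,o(\omega)},
\]
which is exactly the FK weight at $p_c(q)$.

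\textbf{Main obstacle.} The only delicate point is the bookkeeping of the constants. One has to check that the boundary clusters, the two open arcs at $v_L,v_R$, and the outermost dual face all contribute amounts independent of $\omega$, given $\omega=\xi_{1/1}$ on $\bbE^\bullet\setminus\mathsf{E}$. I would first verify this on the fully closed configuration inside $\mathsf{E}$. I would then check that opening or closing a single edge $e\in\mathsf{E}$ changes $o(\omega)+2k(\omega)$ and $|\loops_{\mathsf{G}}(\omega)|$ by the same amount, namely $\pm1$. This local check handles all boundary cases uniformly: toggling an edge either merges or splits a loop, or rewires one loop into itself (when $k$ changes, $f$ does not, and vice versa).
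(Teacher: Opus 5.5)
Your route is the standard Euler-formula computation, which the paper relies on implicitly (it states the lemma without proof). Step 2 is fine as written, as long as $k(\omega)=\clusters_{\mathsf V}(\omega)$ counts the actual boundary clusters; when the two dual half-lines are disconnected, both are absorbed into the outer face. The gap is in Step 1. The number of boundary clusters is \emph{not} independent of $\omega$. On $\{\partial^+\nleftrightarrow\partial^-\}$ there are two infinite primal clusters with one infinite dual cluster between them. On $\{\partial^+\leftrightarrow\partial^-\}$ there is a single infinite primal cluster and two infinite dual clusters, containing the left and right dual half-lines along $\R\times\{-\tfrac12\}$. None of these has a closed outer boundary. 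So in fact $|\loops_{\mathsf G}(\omega)|=k(\omega)+f(\omega)-2+\mathds{1}_{\partial^+\leftrightarrow\partial^-}$. Step 3 then becomes $|\loops_{\mathsf G}(\omega)|=2k(\omega)+o(\omega)+\mathds{1}_{\partial^+\leftrightarrow\partial^-}+\mathrm{const}$, that is, $\fk^{1/1}_{\mathsf G}(\omega)\propto\sqrt{q}^{\,|\loops_{\mathsf G}(\omega)|-\mathds{1}_{\partial^+\leftrightarrow\partial^-}}$.

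The local check you propose would have exposed this. Your case list omits an edge $e$ whose two arcs lie on the two \emph{distinct} bi-infinite paths. Opening such an edge merges the two boundary clusters, so $o+2k$ drops by one. Meanwhile the arcs only rewire the two interfaces into a left and a right U-turn path, and no closed loop appears or disappears. Concretely, take $n=m=0$ and let $e_N,e_S$ be the edges from $0$ to $(0,\pm1)$. Both $\{e_N\}$ and $\{e_N,e_S\}$ have no closed loops, yet their FK weights are $\sqrt{q}\,q^2$ and $q^2$. So the identity as stated for the unconditioned measure is false. Your argument does prove it on the event $\{\partial^+\nleftrightarrow\partial^-\}$, where the boundary contributions really are constant. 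That is all the paper uses, since $\Psi^{1/1}_{\mathsf G}$ is built from $\fk^{1/1}_{\mathsf G}(\,\cdot\given\partial^+\nleftrightarrow\partial^-)$ in Lemma~\ref{lem:fk11_to_sixv}. You should state this restriction explicitly, or include the correction factor, rather than assert that the boundary terms are constant.
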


\noindent\textbf{Coupling measure.}
To couple \(\fk_\mathsf{G}^{1/1}(\,\cdot\given\partial^+\nleftrightarrow\partial^-)\) with both a six-vertex spin measure and a modified version of the ATRC measure, we augment our probability space to incorporate independent uniform \([0,1]\) random variables assigned to every loop and every tile.
Formally, let~\(\mathcal{L}\) be the set of all unoriented loops drawn on the tiles of~\(\bbL_{\diamond}\) (see the right of Fig.~\ref{fig:midEdgeTiles}).
Define~\(\Omega^{1/1} := \{0,1\}^{\bbE^\bullet}\times[0,1]^{\mathcal{L}}\times[0,1]^{\bbL_{\diamond}}\) equipped with the product of Borel sigma algebras.
Define \(Q\) and \(Q'\) respectively as the product measures on~\([0,1]^{\mathcal{L}}\) and~\([0,1]^{\bbL_{\diamond}}\).
Finally, the coupling measure is defined by
\[
	\Psi_\mathsf{G}^{1/1} := \fk_\mathsf{G}^{1/1}(\,\cdot\given\partial^+\nleftrightarrow\partial^-)\otimes Q\otimes Q'.
\]
In Lemmata~\ref{lem:fk11_to_sixv} and~\ref{lem:6V_spins_to_AT} below, we describe how to obtain the following two measures as push-forwards of~\(\Psi_\mathsf{G}^{1/1}\): the six-vertex spin measure and the modified ATRC measure, both under suitable Dobrushin boundary conditions (Definitions~\ref{def:six-vertex} and~\ref{def:mATRC}).

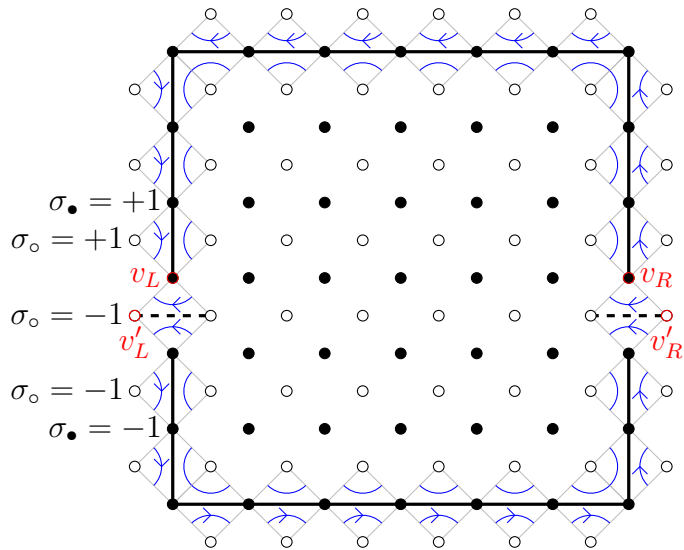
\begin{figure}
	\centering
	\begin{tikzpicture}[scale=1]
		%% vertical primal
		\foreach \i in {-3,3}{
			\foreach \j in {-3,-2,0,1,2}{
				\Vedge{\i}{\j+0.5}{very thick}
				\DrawTile{\i}{\j+0.5}{gray!50}
				\Vloop{\i}{\j+0.5}
			}
		}
		%% horizontal upper primal
		\foreach \i in {-3,...,2}{
			\foreach \j in {-3,3}{
				\Hedge{\i+0.5}{\j}{very thick}
				\DrawTile{\i+0.5}{\j}{gray!50}
				\Hloop{\i+0.5}{\j}
			}
		}
		%% interface insertion
		\foreach \i\j in {-3/-0.5, 3/-0.5}{
			\Hedge{\i}{\j}{dashed, very thick}
			\DrawTile{\i}{\j}{gray!50}
			\Hloop{\i}{\j}
			\LEFTarrow{\i}{\j-0.5+1/(2*sqrt(2))}{blue}
			\LEFTarrow{\i}{\j+0.5-1/(2*sqrt(2))}{blue}
		}
		
		%%%%% Orientations bc: half in
		\foreach \i in { -3,..., 2}{
			\LEFTarrow{\i+0.5}{3+0.5-1/(2*sqrt(2))}{blue}
			\RIGHTarrow{\i+0.5}{-3-0.5+1/(2*sqrt(2))}{blue}
		}
		\foreach \i in {-3,-2,0,1,2}{
			\UParrow{3+0.5-1/(2*sqrt(2))}{\i +0.5}{blue}
			\DOWNarrow{-3-0.5+1/(2*sqrt(2))}{\i +0.5}{blue}
		}

		%% primal sites
		\foreach \i in {-3,...,3}{
			\foreach \j in {-3,...,3} {
				\filldraw[black] (\i,\j) circle (2pt);
			}
		}
		%% dual sites
		\foreach \i in {-3,...,2}{
			\foreach \j in {-3,...,2} {
				\filldraw[fill=white] ({\i+0.5},{\j+0.5}) circle (2pt);
			}
		}
		\foreach \i in {-3,...,2}{
			\filldraw[fill=white] ({\i+0.5},3.5) circle (2pt);
			\filldraw[fill=white] ({\i+0.5},-3.5) circle (2pt);
			\filldraw[fill=white] (-3.5,{\i+0.5}) circle (2pt);
			\filldraw[fill=white] (3.5,{\i+0.5}) circle (2pt);
		}
		\draw (-3,1) node[left]{$\sigma_\bullet=+1$};
		\draw (-3.5,0.5) node[left]{$\sigma_\circ=+1$};
		\draw (-3.5,-0.5) node[left]{$\sigma_\circ=-1$};
		\draw (-3,-2) node[left]{$\sigma_\bullet=-1$};
		\draw (-3.5,-1.5) node[left]{$\sigma_\circ=-1$};
		\draw[red] (-3,0) circle(2pt) node[left]{$v_L$};
		\draw[red] (3,0) circle(2pt) node[right]{$v_R$};
		\draw[red] (-3.5,-0.5) circle(2pt) node[below]{$v'_L$};
		\draw[red] (3.5,-0.5) circle(2pt) node[below]{$v'_R$};
	\end{tikzpicture}
	\caption{Boundary conditions on oriented loops and on spins.}
	\label{fig:bc_FK_loops_spins_o-o}
\end{figure}

\subsubsection{From FK to six-vertex}
\label{sec:coupling:fk_to_sixv}
Recall the parameters~\(\svc\), \(\svcb\), and~\(\lambda\) from~\eqref{eq:parameters_bulk} and the standard rectangular domains from Section~\ref{sec:notations}. Below we omit~\(n,m\) everywhere.
\begin{definition}\label{def:six-vertex}
	The six-vertex spin model under \emph{double} Dobrushin conditions is a probability measure on \(\sigma=(\sigma_\bullet,\sigma_\circ)\in \{\pm 1\}^{\bbL_{\bullet}}\times \{\pm 1\}^{\bbL_{\circ}}\) defined by
	\begin{equation}
		\label{eq:def_6v-spin_dobrushin}
		\spin_\mathsf{D}^{+-,+-}(\sigma) \propto
		\svc^{|T_{5,6}^{\rmi}(\sigma)|}\,\svcb^{|T_{5,6}^{\rmb}(\sigma)|}\,\mathds{1}_{\Sigma_\mathsf{B}^{+-}\times\Sigma_{\mathsf{B}'}^{+-}}(\sigma)\,\mathds{1}_{\mathrm{ice}}(\sigma),
	\end{equation}
	where $\svc,\svcb$ are defined by~\eqref{eq:parameters_bulk}-\eqref{eq:parameters_bnd}, \(T_{5,6}^{\rmi}\) and \(T_{5,6}^{\rmb}\) are the sets of tiles of types 5-6 in \(\mathsf{A}^\rmi\) and \(\partial \mathsf{A}\), respectively (see Figures~\ref{fig:FK_tilesDomain} and~\ref{fig:six-vertex_arrows_spins_heights}), \(\mathds{1}_{\mathrm{ice}}\) is the indicator imposing the ice rule~\eqref{eq:ice-rule_spins}, and~\(\Sigma_{\mathsf{B}}^{+-}\) and~\(\Sigma_{\mathsf{B}'}^{+-}\) are the boundary conditions defined by
	\begin{align*}
	\Sigma_\mathsf{B}^{+-}&:=\{\sigma_\bullet\in\{\pm 1\}^{\bbL_{\bullet}}:\sigma_\bullet(i)=\mathds{1}_{\bbH^+}(i)-\mathds{1}_{\bbH^-}(i)\,\forall i\in\bbL_\bullet\setminus\mathsf{B}\},\\
	\Sigma_{\mathsf{B}'}^{+-}&:=\{\sigma_\circ\in\{\pm 1\}^{\bbL_{\circ}}:\sigma_\circ(u)=\mathds{1}_{\bbH^+}(u)-\mathds{1}_{\bbH^-}(u)\,\forall u\in\bbL_\circ\setminus\mathsf{B}'\}.
	\end{align*}	
\end{definition}
We now describe how to obtain~\(\spin_\mathsf{D}^{+-,+-}\) as a push-forward of~\(\Psi_\mathsf{G}^{1/1}\).

\begin{lemma}
\label{lem:fk11_to_sixv}
	Let \((\omega,U,U')\) be distributed according to \(\Psi_\mathsf{G}^{1/1}\), and let~\(\ell\) be the unoriented loop configuration associated to~$\omega$.
	Orient the loops of~\(\ell\) as follows (see Fig.~\ref{fig:bc_FK_loops_spins_o-o}):
	\begin{itemize}
		\item each loop~\(l\in \ell\) outside of~\(\mathsf{G}\) (i.e. surrounding a vertex in~\(\bbL_\circ\setminus\mathsf{B}'\)) is oriented clockwise;
		\item the two bi-infinite paths are oriented from right to left;
		\item each loop~\(l\in \ell\) inside of~\(\mathsf{G}\) (i.e. surrounding a vertex in \(\mathsf{B}\cup\mathsf{B}'\)) is oriented 
		clockwise if \(U_l<e^{\lambda}/\svc\) and counter-clockwise otherwise.
	\end{itemize}
	Denote by \(\ell_\shortrightarrow\) the obtained oriented loop configuration.
	Recall the combinatorial mappings introduced above, and let \((\sigma_\bullet,\sigma_\circ)\) be the associated six-vertex spin configurations with \(\sigma_\bullet((n+1,0))=+1\).
	Then, the law of \((\sigma_\bullet,\sigma_\circ)\) is given by \(\spin_\mathsf{D}^{+-,+-}\).
\end{lemma}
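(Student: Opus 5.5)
The plan is the standard Baxter--Kelland--Wu computation: I will compute the joint law of the oriented loop configuration, push it forward to spins, and identify the weights tile by tile. Start from Lemma~\ref{lem:FK_loop_expression}. It gives $\fk_{\mathsf{G}}^{1/1}(\omega)\propto\sqrt{q}^{|\loops_{\mathsf{G}}(\omega)|}$, restricted to $\omega=\xi_{1/1}$ outside $\mathsf{E}$. Under the conditioning $\partial^+\nleftrightarrow\partial^-$, the loop configuration consists of loops inside $\mathsf{G}$, the fixed loops outside, and exactly two bi-infinite paths separating the dual cluster of $v'_L,v'_R$ from the two boundary clusters.

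The first step is to write $\sqrt{q}=e^{\lambda}+e^{-\lambda}$ and split each inner loop. A loop oriented clockwise receives weight $e^{\lambda}$, and one oriented counter-clockwise receives $e^{-\lambda}$. By the rule $U_l<e^{\lambda}/\sqrt{q}$, the law of $(\omega,\ell_\shortrightarrow)$ is then proportional to $\prod_{l}e^{\pm\lambda}$ over inner loops. Here I note that the statement writes $e^{\lambda}/\svc$, but the threshold must really be $e^{\lambda}/\sqrt{q}$; I would check this against the normalisation. The orientations of the outer loops and the two paths are deterministic, so they contribute constant factors.

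The second step localises the weight. For any oriented closed loop, the total turning equals $\pm2\pi$. Assigning to each arc the weight $e^{\lambda\theta/(2\pi)}$, where $\theta=\pm\pi/2$ is its turning angle, gives $e^{\pm\lambda}$ per loop, with the sign convention fixed so that clockwise corresponds to $e^{\lambda}$. Hence the weight of $\ell_\shortrightarrow$ is a product over tiles of local weights. Tiles of types 1--4 get weight 1, since their two arcs turn in opposite directions. A type 5/6 tile in either of its splittings A/B gets $e^{\pm\lambda/2}$.

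Next, push forward to six-vertex orientations. Summing over the two splittings of each type 5/6 tile in the bulk gives $e^{\lambda/2}+e^{-\lambda/2}=\svc$, and types 1--4 have a unique preimage, which yields the factor $\svc^{|T_{5,6}^\rmi|}$. On boundary tiles $\partial\mathsf{A}$ the arcs belong to the fixed outer loops or the boundary paths. Only one splitting is compatible with the forced boundary structure, so a type 5/6 boundary tile contributes only $e^{\lambda/2}=\svcb$. The turning of the bi-infinite paths also has to be accounted for: the turning contributed between $v'_L$ and $v'_R$ is fixed by the endpoints, up to a constant. Finally, the spin mapping is a bijection onto ice-rule configurations once $\sigma_\bullet((n+1,0))=+1$ is fixed. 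The fixed orientations (clockwise outer loops, paths running right to left) translate exactly into $\Sigma_\mathsf{B}^{+-}\times\Sigma_{\mathsf{B}'}^{+-}$. Conversely, every spin configuration in this set satisfying the ice rule arises from some $\omega$ satisfying the conditioning, because the $\pm$ domain walls force the two interfaces.

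I expect the main obstacle to be the boundary bookkeeping. Three things must be shown: that the boundary tile weights are exactly $\svcb$ rather than $\svc$; that the angle contributions of the bi-infinite paths and the outer loops are configuration-independent constants; and that the spin boundary set corresponds bijectively to the event $\partial^+\nleftrightarrow\partial^-$. I would handle these by a case check of tile types along $\partial\mathsf{A}$, mirroring \cite[Section 4.2]{DobGlaOtt25}.
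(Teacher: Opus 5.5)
Your proposal is correct and is essentially the paper's argument: split $\sqrt{q}=e^{\lambda}+e^{-\lambda}$ over loop orientations, localise the loop weights into quarter-turns per tile (types 1--4 neutral, types 5--6 giving $e^{\pm\lambda/2}$), sum the two splittings of bulk type 5--6 tiles to get $\svc$, and use the forced splitting on $\partial\mathsf{A}$ to get $\svcb$; you are also right that the threshold must be $e^{\lambda}/\sqrt{q}$, which is what the paper's proof actually uses. One wording fix: on a boundary tile only one arc lies on a fixed clockwise outer loop (so it is always a right turn), while the other arc lies on an inner loop, so type 5--6 there means the inner arc also turns right, and this is exactly why such a tile carries the factor $e^{\lambda/2}=\svcb$ rather than a constant.
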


\begin{proof}
We adapt the proof of the order-disorder case, following the ideas of~\cite{BaxKelWu76}.
One has to examine which values of \((\omega, U)\) result in a given \((\sigma_\bullet,\sigma_\circ)\in \{\pm1\}^{\bbL_\bullet}\times\{\pm1\}^{\bbL_\circ}\).
The probability to obtain \((\sigma_\bullet,\sigma_\circ)\in\Sigma_\mathsf{B}^{+-}\times\Sigma_{\mathsf{B}'}^{+-}\) is the sum of the probabilities of all oriented loop configurations \(\ell_\shortrightarrow\) that induce the edge orientations corresponding to \((\sigma_\bullet,\sigma_\circ)\). The probability of a given oriented loop configuration \(\ell_\shortrightarrow\) satisyfing the boundary conditions in Fig.~\ref{fig:bc_FK_loops_spins_o-o} is proportional to
\begin{equation}\label{eq:prf:bkw3}
\sqrt{q}^{\,|\loops(\ell_\shortrightarrow)|}\cdot\Big(\tfrac{e^\lambda}{e^\lambda+e^{-\lambda}}\Big)^{|\loops_{\circlearrowright}(\ell_\shortrightarrow)|- |\loops_{\circlearrowleft}(\ell_\shortrightarrow)|}
\propto e^{\lambda(|\loops_{\circlearrowright}(\ell_\shortrightarrow)|- |\loops_{\circlearrowleft}(\ell_\shortrightarrow)|)},
\end{equation}
where \(\loops_{\circlearrowright}(\ell_\shortrightarrow)\) and \(\loops_{\circlearrowleft}(\ell_\shortrightarrow)\) are respectively the sets of clockwise and counter-clockwise oriented loops in \(\ell_\shortrightarrow\) not imposed by boundary conditions, and \(\loops(\ell_\shortrightarrow)\) is their union.
Indeed, by Lemma~\ref{lem:FK_loop_expression}, the first factor on the left side is proportional to \(\fk_\mathsf{G}^{1/1}(\omega(\ell))\), where \(\omega(\ell)\in\{0,1\}^{\bbE^\bullet}\) is the percolation configuration associated to the unoriented loop configuration \(\ell\) corresponding to \(\ell_\shortrightarrow\). The second factor comes from the values of the uniforms necessary to obtain the correct orientations of loops in~\(\loops(\ell_\shortrightarrow)\). The proportionality holds since \(\sqrt{q}=e^\lambda+e^{-\lambda}\) due to the choice of \(\lambda\).

Notice that each loop which is oriented clockwise does 4 more right quarter-turns than left quarter-turns, that the converse holds for counter-clockwise oriented loops, and that the numbers of left and right quarter-turns in the two bi-infinite paths differ by a universal constant.
Thus, the expression on the right side of~\eqref{eq:prf:bkw3} is proportional to
\begin{equation*}
\exp(\lambda (\#_{\curvearrowright}(\ell_\shortrightarrow)- \#_{\curvearrowleft}(\ell_\shortrightarrow))/4),
\end{equation*}
where \(\#_{\curvearrowright}(\ell_\shortrightarrow)\) and \(\#_{\curvearrowleft}(\ell_\shortrightarrow)\) are respectively the number of right and left quarter-turns in \(\ell_\shortrightarrow\).
The key idea is to count these oriented loop arcs locally at each tile in \(\mathsf{A}=\mathsf{A}^\rmi\cup \mathsf{A}^\rmb\).
Observe that, for tiles in \(\mathsf{A}^\rmi\), types 5B,6A correspond to a pair of right-oriented loop arcs and types 5A,6B to a pair of left-oriented loop arcs, whereas types 1-4 correspond to one right-oriented and one left-oriented loop arc each.
Moreover, due to the boundary conditions (see Fig.~\ref{fig:bc_FK_loops_spins_o-o}), a tile in \(\mathsf{A}^\rmb\) contains a right turn precisely if it is of type 5,6, and it contains a left turn otherwise.
We deduce that the probability of \(\ell_\shortrightarrow\) is proportional to
\begin{equation}\label{eq:prf:bkw4}
\prod_{t\in T_{5,6}(\ell_\shortrightarrow)\cap \mathsf{A}^\rmi} \big(e^{\lambda/2}\,\mathds{1}_{T_{5B,6A}(\ell_\shortrightarrow)}(t) + e^{-\lambda/2}\,\mathds{1}_{T_{5A,6B}(\ell_\shortrightarrow)}(t)\big)
\cdot \big(e^{\lambda/2}\big)^{|T_{5,6}(\ell_\shortrightarrow)\cap \partial \mathsf{A}|},
\end{equation}
where \(T_{5B,6A}(\ell_\shortrightarrow)\) and \(T_{5A,6B}(\ell_\shortrightarrow)\) are respectively the sets of tiles of types 5B and 6A and the set of tiles of types 5A and 6B in \(\ell_\shortrightarrow\), and \(T_{5,6}(\ell_\shortrightarrow)\) is their union.

Finally, fix a pair \((\sigma_\bullet,\sigma_\circ)\in\Sigma_\mathsf{B}^{+-}\times\Sigma_{\mathsf{B}'}^{+-}\) that satisfies the ice-rule, and consider its associated edge orientations.
It remains to identify all oriented loop configurations \(\ell_\shortrightarrow\) that satisfy the boundary conditions in Fig.~\ref{fig:bc_FK_loops_spins_o-o} and that induce these edge orientations. Observe that the boundary conditions and the spins \((\sigma_\bullet,\sigma_\circ)\) uniquely determine the oriented loop arcs at tiles in \((\bbL_\diamond\setminus \mathsf{A})\cup \mathsf{A}^\rmb\) and at tiles in \(\mathsf{A}^\rmi\setminus T_{5,6}^\rmi(\sigma_\bullet,\sigma_\circ)\). For a tile in \(T_{5,6}^\rmi(\sigma_\bullet,\sigma_\circ)\), one can split the oriented edges either into a pair of right-oriented loops arcs (types 5B,6A) or into a pair of left-oriented loop arcs (types 5A,6B). Summing the probabilities~\eqref{eq:prf:bkw4} of all oriented loop configurations obtained in that way, we obtain that the probability of \((\sigma_\bullet,\sigma_\circ)\) is proportional to
\begin{equation*}
\big(e^{\lambda/2}+e^{-\lambda/2}\big)^{|T_{5,6}^\rmi(\sigma_\bullet,\sigma_\circ)|}\cdot\big(e^{\lambda/2}\big)^{|T_{5,6}^\rmb(\sigma_\bullet,\sigma_\circ)|}.
\end{equation*}
Recalling that \(\svc=e^{\lambda/2}+e^{-\lambda/2}\) and \(\svcb=e^{\lambda/2}\) finishes the proof. 
\end{proof}

\medskip

\subsubsection{From six-vertex to FK}
\label{sec:coupling:sixv_to_fk}
When proving that the respective interfaces in the FK model are close to those in the six-vertex and AT models in Section~\ref{sec:proxi_int}, we will also need the `reverse direction' of the procedure described in Lemma~\ref{lem:fk11_to_sixv}. In other words, we will need to describe how to obtain the FK measure~\(\fk_\msf{G}^{1/1}(\,\cdot\given\partial^+\nleftrightarrow\partial^-)\) from the six-vertex spin measure~\(\spin_\msf{D}^{+-,+-}\).
Since percolation configurations are in bijection with unoriented loop configurations, it suffices to construct an oriented loop configuration and then project it onto the corresponding unoriented one.

\begin{lemma}
\label{lem:6V_spins_to_FK}
Let \((\omega,U,U')\) be distributed according to \(\Psi_\mathsf{G}^{1/1}\), and let \((\sigma_\bullet,\sigma_\circ)\) with law~\(\spin_\msf{D}^{+-,+-}\) be as constructed in Lemma~\ref{lem:fk11_to_sixv}. Then~\(U'\) is independent of \((\sigma_\bullet,\sigma_\circ)\).
Consider the edge orientations corresponding to \((\sigma_\bullet,\sigma_\circ)\) (see Fig.~\ref{fig:six-vertex_arrows_spins_heights}), and construct a compatible oriented loop configuration~\(\ell_\shortrightarrow\) from them as follows:
\begin{itemize}
\item each vertex in~\(\bbL_\circ\setminus(\mathsf{B}'\cup (\R\times\{-\tfrac{1}{2}\}))\) is surrounded by a clockwise oriented loop of minimal length,
\item every tile~\(t\in\bbL_\diamond\setminus\msf{A}^\rmi\) with~\(e_t^*\subset \R\times\{-\tfrac{1}{2}\}\) is of type 2 (see Figures~\ref{fig:oriented_loop_arcs} and~\ref{fig:bc_FK_loops_spins_o-o}), so that the resulting infinite paths are oriented from right to left,
\item for \(t\in\msf{A}^\rmi\) of type 1-4, there is a unique way to construct the loop arcs (see Fig.~\ref{fig:oriented_loop_arcs}); for \(t\in\msf{A}^\rmi\) of type 5-6, split the oriented edges into two right-oriented arcs (types 5B,6A) if \(U'_t<e^{\lambda/2}/\svc\) and into two left-oriented arcs (types 5A,6B) otherwise (see Fig.~\ref{fig:6V_to_oriented_loops}).
\end{itemize}
Then the law of the unique percolation configuration in~\(\{0,1\}^{\bbE^\bullet}\) associated to \(\ell_\shortrightarrow\) is given by~\(\fk_{\msf{G}}^{1/1}(\,\cdot\given\partial^+\nleftrightarrow\partial^-)\), which is the law of~\(\omega\).

\end{lemma}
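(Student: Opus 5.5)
The plan is to verify the statement by a direct computation of the probability of each percolation configuration, reversing the bookkeeping from the proof of Lemma~\ref{lem:fk11_to_sixv}.

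\textbf{Independence of \(U'\).} This is immediate. In Lemma~\ref{lem:fk11_to_sixv}, \((\sigma_\bullet,\sigma_\circ)\) is a measurable function of \((\omega,U)\) only. Under \(\Psi_\mathsf{G}^{1/1}\) the variable \(U'\) is independent of \((\omega,U)\) by the product structure.

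\textbf{Law of the constructed configuration.} Fix \(\omega\) compatible with \(\xi_{1/1}\) and with \(\partial^+\nleftrightarrow\partial^-\), and let \(\ell\) be its loop configuration. The first step is to check that the outputs of the procedure are exactly the oriented loop configurations satisfying the boundary conditions of Fig.~\ref{fig:bc_FK_loops_spins_o-o}. The boundary rules force the minimal clockwise loops outside \(\mathsf{G}\) and the two bi-infinite paths oriented right to left. A resulting bi-infinite path is precisely the statement that the dual cluster of \(v'_L,v'_R\) separates \(\partial^\pm\), so the support of the law is indeed \(\{\partial^+\nleftrightarrow\partial^-\}\).

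Each orientation \(\ell_\shortrightarrow\) of \(\ell\) then induces a spin pair, and it is produced from that spin pair with probability
\[
\prod_{t\in T_{5,6}^{\rmi}} \frac{e^{\pm\lambda/2}}{\svc},
\]
where the sign is \(+\) for types 5B,6A and \(-\) for types 5A,6B. By Definition~\ref{def:six-vertex}, the spin pair itself has probability proportional to \(\svc^{|T_{5,6}^\rmi|}\svcb^{|T_{5,6}^\rmb|}\). Multiplying the two, the factors of \(\svc\) cancel, and the probability of \(\ell_\shortrightarrow\) is proportional to
\[
\prod_{t\in T^\rmi_{5,6}} e^{\pm\lambda/2}\cdot e^{\lambda|T^\rmb_{5,6}|/2}.
\]
This is expression~\eqref{eq:prf:bkw4}.

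\textbf{Summing over orientations.} Next I re-run the turning-number argument from the proof of Lemma~\ref{lem:fk11_to_sixv} in reverse.
\begin{itemize}
\item Tiles of types 1--4 in \(\mathsf{A}^\rmi\) contribute one left and one right turn, hence no net weight.
\item Boundary tiles contribute a right turn exactly when they are of type 5--6.
\end{itemize}
So the product above equals \(\exp(\lambda(\#_\curvearrowright-\#_\curvearrowleft)/4)\) up to a constant. Since each clockwise loop makes four more right quarter-turns than left ones, and the bi-infinite paths contribute a universal constant, this equals \(e^{\lambda(|\loops_\circlearrowright|-|\loops_\circlearrowleft|)}\) up to a constant.

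Summing over the \(2^{|\loops(\ell)|}\) orientations of the free loops gives \((e^\lambda+e^{-\lambda})^{|\loops|}=\sqrt q^{|\loops|}\). By Lemma~\ref{lem:FK_loop_expression}, this is proportional to \(\fk_\mathsf{G}^{1/1}(\omega)\) restricted to \(\partial^+\nleftrightarrow\partial^-\). This proves the claimed law.

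An alternative, equivalent route is to show that the joint law of \((\sigma,\ell_\shortrightarrow)\) under the construction agrees with the joint law under \(\Psi_\mathsf{G}^{1/1}\). This amounts to showing that, conditionally on \(\sigma\), the splittings at type 5--6 tiles are independent with probabilities \(e^{\pm\lambda/2}/\svc\), which is exactly the factorization in~\eqref{eq:prf:bkw4}.

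\textbf{Main obstacle.} The delicate point is the boundary bookkeeping. One must confirm that the boundary tiles in \(\partial\mathsf{A}\) and the tiles on the line \(y=-\tfrac12\) outside \(\mathsf{A}^\rmi\) are deterministic given the spins and the prescribed rules. One must also check that no boundary tile admits a two-way split. Otherwise the \(\svcb\) weight would not match a single deterministic right turn. I would verify this case by case, using Fig.~\ref{fig:bc_FK_loops_spins_o-o}.
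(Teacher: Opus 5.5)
Your proposal is correct and is essentially the paper's argument, which is stated only as ``the proof of Lemma~\ref{lem:fk11_to_sixv} in reverse order''. You get independence of \(U'\) from the product structure of \(\Psi_\mathsf{G}^{1/1}\). The splitting probabilities \(e^{\pm\lambda/2}/\svc\) cancel the \(\svc\)-weights to give~\eqref{eq:prf:bkw4}. The turning-number count and the sum over loop orientations then yield \(\sqrt q^{|\loops|}\) via Lemma~\ref{lem:FK_loop_expression}. Your boundary check is also the right detail: the boundary edges on \(\partial\mathsf{A}\) are fixed open and carry the outer clockwise minimal loops, which forces the pairing there, so no two-way split arises.
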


\begin{figure}
	\centering
	\ifpic
	\begin{tikzpicture}[scale=1]
		\draw (0.5,0)--(0,0.5)--(-0.5,0)--(0,-0.5)--(0.5,0);
		\draw (2.5,1)--(2,1.5)--(1.5,1)--(2,0.5)--(2.5,1);
		\draw (2.5,-1)--(2,-0.5)--(1.5,-1)--(2,-1.5)--(2.5,-1);
		
		\draw[thick] (-0.25,-0.25)--(-0.15,-0.15);
		\draw[<-, thick] (-0.15,-0.15)--(0,0);
		\draw[thick] (0.25,0.25)--(0.15,0.15);
		\draw[<-, thick] (0.15,0.15)--(0,0);
		\draw[thick] (0.1,-0.1)--(0,0);
		\draw[->, thick] (0.25,-0.25)--(0.1,-0.1);
		\draw[thick] (-0.1,0.1)--(0,0);		
		\draw[->, thick] (-0.25,0.25)--(-0.1,0.1);
			
		\Hloop{2}{1}
		\Vloop{2}{-1}
		\DOWNarrow{2-0.5+1/(2*sqrt(2))}{-1}{blue}
		\UParrow{2+0.5-1/(2*sqrt(2))}{-1}{blue}
		\RIGHTarrow{2}{1+0.5-1/(2*sqrt(2))}{blue}
		\LEFTarrow{2}{1-0.5+1/(2*sqrt(2))}{blue}
		
		\draw[->] (0.6,0.1)--(1.4,0.9);
		\draw (0.3,0.5) node[above]{$U'_t\geq \frac{e^{\lambda/2}}{\svc}$};
		\draw[->] (0.6,-0.1)--(1.4,-0.9);
		\draw (0.3,-0.5) node[below]{$U'_t < \frac{e^{\lambda/2}}{\svc} $};
		
		\draw (5,0)--(4.5,0.5)--(4,0)--(4.5,-0.5)--(5,0);
		\draw (7,1)--(6.5,1.5)--(6,1)--(6.5,0.5)--(7,1);
		\draw (7,-1)--(6.5,-0.5)--(6,-1)--(6.5,-1.5)--(7,-1);
		
		\draw[thick] (4.4,-0.1)--(4.5,0);
		\draw[->, thick] (4.25,-0.25)--(4.4,-0.1);
		\draw[thick] (4.6,0.1)--(4.5,0);
		\draw[->, thick] (4.75,0.25)--(4.6,0.1);
		\draw[thick] (4.75,-0.25)--(4.65,-0.15);		
		\draw[<-, thick] (4.65,-0.15)--(4.5,0);
		\draw[thick] (4.25,0.25)--(4.35,0.15);
		\draw[<-, thick] (4.35,0.15)--(4.5,0);
	
		\Hloop{6.5}{1}
		\Vloop{6.5}{-1}

		\LEFTarrow{6.5}{1+0.5-1/(2*sqrt(2))}{blue}
		\RIGHTarrow{6.5}{1-0.5+1/(2*sqrt(2))}{blue}
		\UParrow{6.5-0.5+1/(2*sqrt(2))}{-1}{blue}
		\DOWNarrow{6.5+0.5-1/(2*sqrt(2))}{-1}{blue}

		\draw[->] (5.1,0.1)--(5.9,0.9);
		\draw (4.8,0.5) node[above]{$U'_t < \frac{e^{\lambda/2}}{\svc} $};
		\draw[->] (5.1,-0.1)--(5.9,-0.9);
		\draw (4.8,-0.5) node[below]{$U'_t\geq \frac{e^{\lambda/2}}{\svc}$};
	\end{tikzpicture}
	\fi
	\caption{Splitting rule for tiles in \(\msf{A}^\rmi\) of types 5-6.}
	\label{fig:6V_to_oriented_loops}
\end{figure}
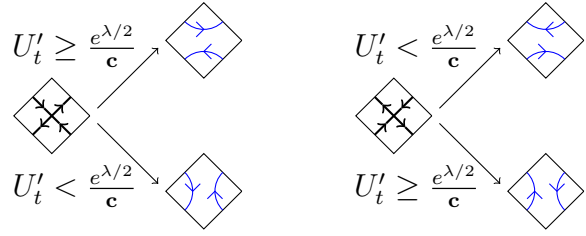

The proof is that of Lemma~\ref{lem:fk11_to_sixv} in reverse order.

\subsubsection{From six-vertex to modified ATRC}
\label{sec:coupling:sixv_to_matrc}
We adapt the coupling described in~\cite{DobGlaOtt25} to the double Dobrushin boundary conditions.
In particular, we need to define the associated modified ATRC measure.
Recall the parameters from~\eqref{eq:parameters_bulk} and the (augmented) rectangular domains from Section~\ref{sec:notations}. Again, we omit~\(n,m\) from the notation. 

\begin{definition}
	\label{def:mATRC}
	The modified ATRC model \(\matrc_{n,m} \equiv \matrc_\mathsf{K}\) is a probability measure on \(\{0,1\}^{\bar{\mathsf{E}}}\times\{0,1\}^{\bar{\mathsf{E}}}\) defined by
	\begin{equation}\label{eq:matrc_def}
	\begin{multlined}
\matrc_\mathsf{K}(\omega_\tau,\omega_{\tau\tau'})\\
\propto\mathds{1}_{\omega_\tau\subseteq\omega_{\tau\tau'}}\,\mathds{1}_{\omega_{\tau\tau'}\setminus\omega_{\tau}\subseteq \mathsf{E}}\,
2^{|\omega_\tau\cap\mathsf{E}|}\,\big(\tfrac{2}{\svcb-1}\big)^{|\omega_\tau\cap\mathsf{E}_\rmb|}\,(\svc-2)^{|\omega_{\tau\tau'}\setminus\omega_\tau|}\,2^{\clusters_{\mathsf{K}}(\omega_\tau)+\clusters_{\mathsf{K}^1}(\omega_{\tau\tau'})}.
	\end{multlined}
	\end{equation}	
	where~\(\clusters_{\mathsf{K}}(\omega_\tau)\) and~\(\clusters_{\mathsf{K}^1}(\omega_{\tau\tau'})\) are the numbers of clusters of~\(\omega_\tau\) and~\(\omega_{\tau\tau'}\) viewed as spanning subgraphs of~\(\mathsf{K}\) and~\(\mathsf{K}^1\), respectively.
	We say that a cluster of~\(\omega_\tau\) or~\(\omega_{\tau\tau'}\) is an {\em inner cluster} if it is entirely contained in~$\mathsf{B}$ and a {\em boundary cluster} otherwise. 
\end{definition}

We now describe how to obtain~\(\matrc_\mathsf{K}\) as a push-forward of the coupling measure~\(\Psi^{1/1}_\mathsf{G}\). 

\begin{lemma}
\label{lem:6V_spins_to_AT}
	Let~\((\omega,U,U')\) be distributed according to \(\Psi_\mathsf{G}^{1/1}\).
	Define~\((\sigma_\bullet,\sigma_\circ)\) as described in Lemma~\ref{lem:fk11_to_sixv}.
	Now define \(\omega_\tau,\omega_{\tau\tau'}\in \{0,1\}^{\bar{\mathsf{E}}}\) as follows using the notation~\(e_t=ij\in \bar{\mathsf{E}}\) and \(e_t^*=uv\) for each tile \(t\in \mathsf{A}= \mathsf{A}^\rmi\sqcup\partial \mathsf{A}\):
	\begin{itemize}
		\item if \(\sigma_{\circ}(u)\neq \sigma_{\circ}(v)\), set \(\omega_\tau(e_t) = \omega_{\tau\tau'}(e_t) = 1\);
		\item if \(\sigma_{\bullet}(i) \neq \sigma_{\bullet}(j)\), set \(\omega_\tau(e_t) = \omega_{\tau\tau'}(e_t) = 0\);
		\item if both \(\sigma_{\circ}(u)= \sigma_{\circ}(v)\) and \(\sigma_{\bullet}(i) = \sigma_{\bullet}(j)\) hold (types 5-6), let
		\[(\omega_\tau(e_t),\omega_{\tau\tau'}(e_t))= \begin{cases}
			\mathds{1}_{[0,\frac{1}{\svc})}(U_t')\,{\cdot}\,(1,1)+\mathds{1}_{[\frac{1}{\svc},\frac{2}{\svc})}(U_t')\,{\cdot}\,(0,0)+\mathds{1}_{[\frac{2}{\svc},1]}(U_t')\,{\cdot}\,(0,1) & \text{if }t\in \mathsf{A}^\rmi,\\
			\mathds{1}_{[0,\frac{1}{\svcb})}(U_t')\,{\cdot}\,(1,1)+\mathds{1}_{[\frac{1}{\svcb},1]}(U_t')\,{\cdot}\,(0,0)& \text{if }t\in\partial \mathsf{A}.
		\end{cases}
		\]
	\end{itemize}
	Then, the law of~\((\omega_\tau,\omega_{\tau\tau'})\) is~$\matrc_{\mathsf{K}}(\, \cdot \given v_L\xleftrightarrow{\omega_\tau} v_R,\, v'_L\xleftrightarrow{\omega_{\tau\tau'}^*} v'_R)$.
	In particular, we have \(\omega_\tau\subseteq\omega_{\tau\tau'}\) and \(\omega_{\tau\tau'}\setminus\omega_\tau\subseteq \{e_t:t\in \mathsf{A}^\rmi\}=\mathsf{E}\); also~\(\sigma_\bullet\sim \omega_{\tau\tau'}\) and \(\sigma_\circ \sim \omega_\tau^*\).
\end{lemma}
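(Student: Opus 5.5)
The plan is to compute the joint law of $(\sigma_\bullet,\sigma_\circ,\omega_\tau,\omega_{\tau\tau'})$ by combining Lemma~\ref{lem:fk11_to_sixv} with the fact that $U'$ is independent of $(\sigma_\bullet,\sigma_\circ)$ (Lemma~\ref{lem:6V_spins_to_FK}), and then to sum out the spins. We start from $\spin_\mathsf{D}^{+-,+-}(\sigma)\propto \svc^{|T^\rmi_{5,6}|}\svcb^{|T^\rmb_{5,6}|}$. For each tile of type 5--6 we then split according to $U'_t$. In the bulk, each of the three outcomes $(1,1),(0,0),(0,1)$ has probability $1/\svc,1/\svc,(\svc-2)/\svc$, so the factor $\svc$ is converted into weights $1,1,\svc-2$. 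On the boundary, the outcomes $(1,1),(0,0)$ have probabilities $1/\svcb$ and $(\svcb-1)/\svcb$, so the factor $\svcb$ becomes $1$ and $\svcb-1$. Tiles of types 1--4 contribute weight $1$ and a deterministic outcome. The joint weight is therefore
\[
W(\sigma,\omega_\tau,\omega_{\tau\tau'})=(\svcb-1)^{|\{t\in\partial\mathsf{A}:\ \omega_{\tau\tau'}(e_t)=0,\ \sigma_\bullet\sim e_t\}|}(\svc-2)^{|\omega_{\tau\tau'}\setminus\omega_\tau|}\,\mathds{1}_{\text{compat}},
\]
where compatibility means $\sigma_\bullet\sim\omega_{\tau\tau'}$, $\sigma_\circ\sim\omega_\tau^*$, $\omega_\tau\subseteq\omega_{\tau\tau'}$, $\omega_{\tau\tau'}\setminus\omega_\tau\subseteq\mathsf{E}$, together with the Dobrushin boundary spins. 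Note that compatibility forces the ice rule automatically. The inclusion properties stated at the end of the lemma can be read off directly from the construction.

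Next we sum over spins for fixed $(\omega_\tau,\omega_{\tau\tau'})$. The key point is that, given the edges, the constraints on $\sigma_\bullet$ and on $\sigma_\circ$ decouple, because the edge rules already encode all the ice-rule interactions. Consider first $\sigma_\bullet$, which must be constant on $\omega_{\tau\tau'}$-clusters. The boundary vertices in $\partialin\bar{\mathsf{V}}\cap\bbH^\pm$ have the fixed values $\pm1$, so the admissible $\sigma_\bullet$ number $2^{\clusters_{\mathsf{K}^1}(\omega_{\tau\tau'})-2}$ when $\omega_{\tau\tau'}$ does not connect the $+$ and $-$ boundary arcs, and $0$ otherwise. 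The latter case is exactly the event $v'_L\xleftrightarrow{\omega^*_{\tau\tau'}}v'_R$ by planar duality. Now consider $\sigma_\circ$, which must be constant on $\omega_\tau^*$-clusters with the boundary values on $\mathsf{B}'$. Its count is $2^{\#\text{inner dual clusters of }\omega_\tau}$, and it is nonzero exactly when $v_L\xleftrightarrow{\omega_\tau}v_R$. Using Euler's formula on $\mathsf{K}$, we rewrite the number of dual clusters of $\omega_\tau^*$ in terms of $\clusters_\mathsf{K}(\omega_\tau)$ and $|\omega_\tau|$; this produces $2^{|\omega_\tau|}2^{\clusters_\mathsf{K}(\omega_\tau)}$ up to constants.

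The remaining issue is the boundary factor $(\svcb-1)$. The condition $\sigma_\bullet\sim e_t$ on a boundary tile whose edge is not in $\omega_{\tau\tau'}$ is not determined by the edges alone, so this is the main obstacle. The plan is to note that on $\partial\mathsf{A}$ the boundary spins fix $\sigma_\bullet$ at the outer endpoint. For a closed boundary edge, the ice rule with the fixed $\sigma_\circ$ on the boundary dual edge forces $\sigma_\bullet$ to agree (a type 5--6 tile) except at the two tiles at $v_L,v_R$, where the Dobrushin switch occurs; this should follow from the prescribed boundary types in Fig.~\ref{fig:bc_FK_loops_spins_o-o}. Hence the factor equals $(\svcb-1)^{|\mathsf{E}_\rmb\setminus\omega_\tau|}\propto(\tfrac{1}{\svcb-1})^{|\omega_\tau\cap\mathsf{E}_\rmb|}$. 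Combining this with $2^{|\omega_\tau|}$ gives exactly the weights $2^{|\omega_\tau\cap\mathsf{E}|}(\tfrac{2}{\svcb-1})^{|\omega_\tau\cap\mathsf{E}_\rmb|}$ in~\eqref{eq:matrc_def}. The two indicator constraints yield the conditioning on the connection events, which completes the identification. I expect the careful check of the boundary tiles and the Euler-formula bookkeeping on $\mathsf{K}$ versus $\mathsf{K}'$ to be the delicate part, and I would mirror the order-disorder computation of~\cite{DobGlaOtt25} for it.
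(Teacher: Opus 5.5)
Your route is the same as the paper's. You form the joint law of $(\sigma_\bullet,\sigma_\circ,\omega_\tau,\omega_{\tau\tau'})$ from $\spin_\mathsf{D}^{+-,+-}$ and the independent $U'$, convert $\svc$ into weights $1,1,\svc-2$ and $\svcb$ into $1,\svcb-1$, and note that the compatibility indicators imply the ice rule. You then count spins cluster by cluster and use Euler's formula.

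The one step you single out as the main obstacle is argued incorrectly, so as written it does not go through, although its conclusion is true. For $t\in\partial\mathsf{A}$ the edge $e_t$ lies on the outer ring of $\mathsf{B}_{n+1,m+1}$. Hence \emph{both} endpoints of $e_t$ lie in $\bbL_\bullet\setminus\mathsf{B}$, and $\sigma_\bullet$ is prescribed at both. By contrast, $e_t^*$ has one endpoint in $\mathsf{B}'$, so $\sigma_\circ$ is \emph{not} fixed on the boundary dual edge. The inference ``the ice rule with the fixed $\sigma_\circ$ forces $\sigma_\bullet$ to agree'' also fails on its own terms. The ice rule \eqref{eq:ice-rule_spins} only says that at least one of the two spins is constant across the tile. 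For a closed edge, compatibility already gives $\sigma_\circ\sim e_t^*$, so the ice rule says nothing about $\sigma_\bullet$.

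The correct reason is immediate and removes the obstacle altogether. Since $\Sigma^{+-}_{\mathsf{B}}$ fixes $\sigma_\bullet$ at both endpoints, $\sigma_\bullet\sim e_t$ holds for every $t\in\partial\mathsf{A}$ except the two tiles $t_1,t_2$ whose edges cross $\R\times\{-\tfrac12\}$ (the ring edges incident to $v_L$ and $v_R$). At those two tiles, $\sigma_\bullet\sim\omega_{\tau\tau'}$ forces $e_{t_1},e_{t_2}$ to be closed in every compatible configuration. So your exponent $|\{t\in\partial\mathsf{A}:\omega_{\tau\tau'}(e_t)=0,\ \sigma_\bullet\sim e_t\}|$ equals $|\mathsf{E}_\rmb\setminus\omega_\tau|-2$, independently of the spins. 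This is exactly how the paper obtains the factor $(\svcb-1)^{|\mathsf{E}_\rmb\setminus\omega_\tau|-2}$.

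There is also a wording slip in the $\sigma_\bullet$ count. It is the \emph{former} case that coincides with $v'_L\xleftrightarrow{\omega^*_{\tau\tau'}}v'_R$, namely the case where $\omega_{\tau\tau'}$ does not join the two identified boundary vertices of $\mathsf{K}^1$.
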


\begin{remark}
The above sampling rule for \(t\in \mathsf{A}^\rmi\) applied to a six-vertex spin or height measure without modified boundary weight~\(\svcb\) yields an ATRC measure, as defined in Secton~\ref{sec:notations}.
\end{remark}

\begin{proof}
	We adapt the proof of the order-disorder case, building on~\cite[Proof of Lemma~7.1]{GlaPel23}.
	One has to examine which values of \((\sigma_{\bullet},\sigma_{\circ}, U)\) result in a given pair \(\omega_\tau,\omega_{\tau\tau'} \in \{0,1\}^{\bar{\mathsf{E}}}\). Recall that \(\sigma_{\bullet} \sim \omega_{\tau\tau'}\) and~\(\sigma_{\circ}\sim \omega_\tau^*\).
	By Lemma~\ref{lem:fk11_to_sixv} and~\eqref{eq:def_6v-spin_dobrushin}, the probability of a quadruplet \((\sigma_{\bullet},\sigma_{\circ},\omega_\tau,\omega_{\tau\tau'})\) is
	\begin{align}
		\label{eq:prf:6Vspins_to_AT:proba3}
		\nonumber
		&\spin_{\mathsf{D}}^{+-,+-}(\sigma_\bullet,\sigma_\circ)\,\mathds{1}_{\sigma_{\bullet}\sim \omega_{\tau\tau'}}\,\mathds{1}_{\sigma_{\circ}\sim \omega_\tau^*}\,\mathds{1}_{\omega_\tau\subseteq\omega_{\tau\tau'}}\,\mathds{1}_{\omega_{\tau\tau'}\setminus\omega_\tau\subseteq \mathsf{E}}
		\\
		\nonumber 
		&\qquad\cdot\prod_{t\in T_{5,6}^{\rmi}}\big(\tfrac{1}{\svc}\,(\mathds{1}_{t\in\omega_\tau}+\mathds{1}_{t\in\omega_{\tau\tau'}^*})+\tfrac{\svc-2}{\svc}\,\mathds{1}_{t\in\omega_{\tau\tau'}\setminus\omega_\tau}\big)
		\prod_{t\in T_{5,6}^{\rmb}}\big(\tfrac{1}{\svcb}\,\mathds{1}_{t\in\omega_\tau}+\tfrac{\svcb-1}{\svcb}\,\mathds{1}_{t\in\omega_{\tau}^*}\big)
		\\
		\nonumber &\propto\mathds{1}_{\Sigma_\mathsf{B}^{+-}}(\sigma_{\bullet})\mathds{1}_{\Sigma_{\mathsf{B}'}^{+-}}(\sigma_{\circ})\,\mathds{1}_{\sigma_{\bullet}\sim \omega_{\tau\tau'}}\,\mathds{1}_{\sigma_{\circ}\sim \omega_\tau^*}\,\mathds{1}_{\omega_\tau\subseteq\omega_{\tau\tau'}}\,\mathds{1}_{\omega_{\tau\tau'}\setminus\omega_\tau\subseteq \mathsf{E}}
		\\
		&\qquad\cdot\prod_{t\in T_{5,6}^{\rmi}}\big(\mathds{1}_{t\in\omega_\tau}+\mathds{1}_{t\in\omega_{\tau\tau'}^*}+(\svc-2)\mathds{1}_{t\in\omega_{\tau\tau'}\setminus\omega_\tau}\big)
		\prod_{t\in T_{5,6}^{\rmb}}\big(\mathds{1}_{t\in\omega_\tau}+(\svcb-1)\mathds{1}_{t\in\omega_{\tau}^*}\big),
	\end{align}
	where we used the shorthand \(T_{5,6}^{\#}\equiv T_{5,6}^{\#}(\sigma_{\bullet},\sigma_{\circ})\) and the fact that, for~\(\sigma_{\bullet}\in\Sigma_\mathsf{B}^{+-}\) and~\(\sigma_{\circ}\in\Sigma_{\mathsf{B}'}^{+-}\),
	\begin{equation*}
		\mathds{1}_{\mathrm{ice}}(\sigma_{\bullet},\sigma_{\circ})\,\mathds{1}_{\sigma_{\bullet}\sim \omega_{\tau\tau'}}\,\mathds{1}_{\sigma_{\circ}\sim \omega_\tau^*}\,\mathds{1}_{\omega_\tau\subseteq\omega_{\tau\tau'}} = \mathds{1}_{\sigma_{\bullet}\sim \omega_{\tau\tau'}}\,\mathds{1}_{\sigma_{\circ}\sim \omega_\tau^*}\,\mathds{1}_{\omega_\tau\subseteq\omega_{\tau\tau'}}.
	\end{equation*}
	Observe that, on the event \(\{\sigma_\bullet\sim\omega_{\tau\tau'},\,\sigma_\circ\sim\omega_\tau^*,\,\omega_\tau\subseteq\omega_{\tau\tau'}\}\), for any tile~\(t\in \mathsf{A}^\rmi\),
	\begin{equation*}
		\mathds{1}_{T_{5,6}^{\rmi}}(t)=\mathds{1}_{\omega_\tau}(t)\mathds{1}_{\sigma_{\circ}\sim e_t^*}+\mathds{1}_{\omega_{\tau\tau'}^*}(t)\mathds{1}_{\sigma_{\bullet}\sim e_t}+\mathds{1}_{\omega_{\tau\tau'}\setminus\omega_\tau}(t).
	\end{equation*}
	Let~\(t_1,t_2\in \partial \mathsf{A}\) be the two boundary tiles that intersect both~\(\bbH^+\) and~\(\bbH^-\). Then, on the same event as above intersected with \(\{\sigma_\bullet\in\Sigma_\mathsf{B}^{+-},\,\sigma_\circ\in\Sigma_{\mathsf{B}'}^{+-},\,\omega_{\tau\tau'}\setminus\omega_\tau\subseteq \mathsf{E}\}\), for any tile~\(t\in\partial \mathsf{A}\setminus\{t_1,t_2\}\),
	\begin{equation*}
		\mathds{1}_{T_{5,6}^{\rmb}}(t)=\mathds{1}_{\omega_\tau}(t)\mathds{1}_{\sigma_{\circ}\sim e_t^*}+\mathds{1}_{\omega_{\tau}^*}(t).
	\end{equation*}
	Notice also that, due to the boundary condition~\(\sigma_\bullet\in\Sigma_\mathsf{B}^{+-}\), the two boundary tiles~\(t_1,t_2\) cannot be of type 5,6. Moreover, since~\(\sigma_\bullet\sim\omega_{\tau\tau'}\), the associated edges~\(e_{t_1},e_{t_2}\in\mathsf{E}_\rmb\) must be closed in~\(\omega_{\tau\tau'}\) (and~\(\omega_\tau\), as they coincide on~\(\mathsf{E}_\rmb\)).
	Altogether,~\eqref{eq:prf:6Vspins_to_AT:proba3} becomes 
	\begin{align*}
		&\mathds{1}_{\Sigma_\mathsf{B}^{+-}}(\sigma_{\bullet})\mathds{1}_{\Sigma_{\mathsf{B}'}^{+-}}(\sigma_{\circ})\mathds{1}_{\sigma_{\bullet}\sim \omega_{\tau\tau'}}\mathds{1}_{\sigma_{\circ}\sim \omega_\tau^*}\mathds{1}_{\omega_\tau\subseteq\omega_{\tau\tau'}}\mathds{1}_{\omega_{\tau\tau'}\setminus\omega_\tau\subseteq \mathsf{E}}\\
		&\cdot\prod_{t\in \mathsf{A}^{\rmi}}\big(\mathds{1}_{t\in\omega_\tau}+\mathds{1}_{t\in\omega_{\tau\tau'}^*}+(\svc-2)\mathds{1}_{t\in\omega_{\tau\tau'}\setminus\omega_\tau}\big)^{\mathds{1}_{T_{5,6}^{\rmi}}(t)}
		\prod_{t\in \partial \mathsf{A}}\big(\mathds{1}_{t\in\omega_\tau}+(\svcb-1)\mathds{1}_{t\in\omega_{\tau}^*}\big)^{\mathds{1}_{T_{5,6}^{\rmb}}(t)}
		\\
		=\ 
		&\mathds{1}_{\Sigma_\mathsf{B}^{+-}}(\sigma_{\bullet})\mathds{1}_{\Sigma_{\mathsf{B}'}^{+-}}(\sigma_{\circ})\mathds{1}_{\sigma_{\bullet}\sim \omega_{\tau\tau'}}\mathds{1}_{\sigma_{\circ}\sim \omega_\tau^*}\mathds{1}_{\omega_\tau\subseteq\omega_{\tau\tau'}}\mathds{1}_{\omega_{\tau\tau'}\setminus\omega_\tau\subseteq \mathsf{E}}\,(\svc-2)^{|\omega_{\tau\tau'}\setminus\omega_\tau|}\,(\svcb-1)^{|\mathsf{E}_\rmb\setminus\omega_{\tau}|-2}.
	\end{align*}
	Observe that \(\sigma_\circ\in\Sigma_{\mathsf{B}'}^{+-}\) and \(\sigma_\circ\sim\omega_{\tau}^*\) imply that \(v_L\) and \(v_R\) are connected in \(\omega_\tau\) (see Fig.~\ref{fig:bc_FK_loops_spins_o-o}).
	To obtain the probability of a pair \((\omega_{\tau},\omega_{\tau\tau'})\) satisfying \(v_L\xleftrightarrow{\omega_{\tau}} v_R\), we need to sum the last expression over \((\sigma_\bullet,\sigma_\circ)\).
	The configurations \(\sigma_\circ\in\Sigma_{\mathsf{B}'}^{+-}\) with \(\sigma_\circ\sim\omega_\tau^*\) are in bijective correspondence with assignments of $\pm1$ to the clusters of \(\omega_\tau^*\) in \((\mathsf{K}')^1\) that are contained in \(\mathsf{B}'\), whence there exist~\(2^{\clusters_{(\mathsf{K}')^1}(\omega_\tau^*)-2}\) of them. By Euler's formula,
	\begin{equation*}
		\clusters_{(\mathsf{K}')^1}(\omega_\tau^*) = \clusters_{\mathsf{K}}(\omega_\tau) + |\omega_{\tau}| - \mathrm{const}(\mathsf{K'}).
	\end{equation*}
	Similarly, if~\(v'_L\) and~\(v'_R\) are connected in~\(\omega_{\tau\tau'}^*\), then there exist~\(2^{\clusters_{\mathsf{K}^1}(\omega_{\tau\tau'})-2}\) spin configurations~\(\sigma_\bullet\in\Sigma_\mathsf{B}^{+-}\) with~\(\sigma_\bullet\sim\omega_{\tau\tau'}\).
	Therefore, the probability of a tuple \((\omega_{\tau},\omega_{\tau\tau'})\) with \(v_L\leftrightarrow v_R\) in~\(\omega_\tau\) and~\(v'_L\leftrightarrow v'_R\) in~\(\omega_{\tau\tau'}^*\) is proportional to
	\begin{equation*}
		\mathds{1}_{\omega_\tau\subseteq\omega_{\tau\tau'}}\mathds{1}_{\omega_{\tau\tau'}\setminus\omega_\tau\subseteq \mathsf{E}}\,
		(\svc-2)^{|\omega_{\tau\tau'}\setminus\omega_\tau|}(\svcb-1)^{|\mathsf{E}_\rmb\setminus\omega_{\tau}|}\,2^{\clusters_{\mathsf{K}}(\omega_\tau)+|\omega_{\tau}|}\,2^{\clusters_{\mathsf{K^1}}(\omega_{\tau\tau'})},
	\end{equation*}
	which is proportional to~$\matrc_\mathsf{K}(\omega_{\tau},\omega_{\tau\tau'})$ as defined in~\eqref{eq:matrc_def}, and the proof is complete.
\end{proof}

\subsection{Positive association and repulsiveness}
\label{sec:fkg-repulsion}
We continue in the setting of the previous section. 
The measure $\matrc_\mathsf{K}$ is strongly positively associated. 
\begin{lemma}\label{lem:fkg_lattice_mod_atrc}
$\matrc_\mathsf{K}$ satisfies the FKG-lattice condition: for any $a,b,a',b'\in\{0,1\}^{\bar{\mathsf{E}}}$, 
\begin{equation}\label{eq:fkg_lattice_matrc}
\matrc_\mathsf{K}(a\cup a',b\cup b')\,\matrc_\mathsf{K}(a\cap a',b \cap b')\geq \matrc_\mathsf{K}(a,b)\,\matrc_\mathsf{K}(a',b').
\end{equation}
\end{lemma}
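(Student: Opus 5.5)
Because of the hard constraints $\omega_\tau\subseteq\omega_{\tau\tau'}$ and $\omega_{\tau\tau'}\setminus\omega_\tau\subseteq\mathsf{E}$, I would first check that the support of $\matrc_\mathsf{K}$ is a distributive sublattice of $\{0,1\}^{\bar{\mathsf{E}}}\times\{0,1\}^{\bar{\mathsf{E}}}$ with the coordinatewise order. If $a\subseteq b$ and $a'\subseteq b'$, then $a\cup a'\subseteq b\cup b'$ and $a\cap a'\subseteq b\cap b'$. Moreover $(b\cup b')\setminus(a\cup a')\subseteq (b\setminus a)\cup(b'\setminus a')\subseteq \mathsf{E}$, and likewise $(b\cap b')\setminus(a\cap a')\subseteq(b\setminus a)\cup(b'\setminus a')\subseteq\mathsf{E}$. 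So whenever the right side of~\eqref{eq:fkg_lattice_matrc} is positive, both configurations on the left lie in the support. It remains to verify the inequality for the unnormalised weight $W(a,b)$ on the support.

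By the standard criterion (e.g.\ for FK/ATRC-type measures, as in~\cite{PfiVel97,GlaPel23}), a strictly positive weight on a product of distributive lattices satisfies the lattice condition iff it satisfies it for pairs of configurations differing in two coordinates. On our restricted support I would run the argument directly, by factorising $W$ and checking each factor's log-supermodularity. Rewrite
\[
2^{|a\cap\mathsf{E}|}\big(\tfrac{2}{\svcb-1}\big)^{|a\cap\mathsf{E}_\rmb|}(\svc-2)^{|b\setminus a|}=(\svc-2)^{|b\cap \mathsf{E}|}\Big(\tfrac{2}{\svc-2}\Big)^{|a\cap\mathsf{E}|}\big(\tfrac{2}{\svcb-1}\big)^{|a\cap\mathsf{E}_\rmb|},
\]
using $a\subseteq b$ and $b\setminus a\subseteq\mathsf{E}$. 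Here $\svc>2$ and $\svcb>1$, so every constant is positive. The three terms are modular in $(a,b)$: $|b\cap\mathsf{E}|$ and $|a\cap\cdot|$ are additive, so they satisfy the inequality with equality.

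The remaining factor $2^{\clusters_{\mathsf{K}}(a)+\clusters_{\mathsf{K}^1}(b)}$ splits into a function of $a$ times a function of $b$. Since both $(a\cup a', b\cup b')$ and $(a\cap a', b\cap b')$ act coordinatewise, it suffices to show that $a\mapsto 2^{\clusters_{\mathsf{K}}(a)}$ and $b\mapsto 2^{\clusters_{\mathsf{K}^1}(b)}$ are each log-supermodular. This is the classical fact that the number of clusters satisfies $\clusters(\eta\cup\eta')+\clusters(\eta\cap\eta')\ge\clusters(\eta)+\clusters(\eta')$ on any fixed finite graph. It is the FKG lattice condition for FK percolation with cluster weight $2\ge1$ (Fortuin--Kasteleyn), and it holds on the graphs $\mathsf{K}$ and $\mathsf{K}^1$ with identified vertices. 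I would prove it via the two-edge reduction: adding an edge $e$ decreases $\clusters$ by $\ind\{e \text{ joins two clusters}\}$, and this indicator is non-increasing in the configuration.

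The main obstacle is the interplay between the factorisation and the support constraint. The reduction to single-coordinate supermodularity is legitimate only because the support is a sublattice, and the rewriting of $(\svc-2)^{|b\setminus a|}$ uses $a\subseteq b$. I will carry both checks out carefully, including on $\mathsf{E}_\rmb$, where $a$ and $b$ coincide. Multiplying the factor inequalities then yields~\eqref{eq:fkg_lattice_matrc}. When the right side vanishes, the inequality is trivial.
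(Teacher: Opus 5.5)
Your proof is correct and follows essentially the same route as the paper. Both arguments check that the hard constraints define a sublattice, then verify the inequality factor by factor, with the cluster factor handled by the classical supermodularity of $\clusters$; the only cosmetic difference is that you get modularity of the $(\svc-2)^{|b\setminus a|}$ factor by rewriting $|b\setminus a|=|b\cap\mathsf{E}|-|a\cap\mathsf{E}|$ on the support, whereas the paper verifies it through an explicit decomposition of the four set differences.
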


\begin{proof}
First observe that the indicators $\mathds{1}_{\omega_\tau\subseteq\omega_{\tau\tau'}},\,\mathds{1}_{\omega_{\tau\tau'}\setminus\omega_\tau\subseteq \mathsf{E}}$ satisfy~\eqref{eq:fkg_lattice_matrc}. Indeed, 
\begin{itemize}
\item $a\subseteq b$ and $a'\subseteq b'$ imply $a\cup a'\subseteq b\cup b'$ and $a\cap a'\subseteq b\cap b'$,
\item $b\setminus a\subseteq \mathsf{E}$ and $b'\setminus a'\subseteq \mathsf{E}$ imply $(b\cup b')\setminus (a\cup a')\subseteq \mathsf{E}$ and $(b\cap b')\setminus (a\cap a')\subseteq \mathsf{E}$.
\end{itemize}
From now on, assume that $a\subseteq b$ and $a'\subseteq b'$ (otherwise, the right side of~\eqref{eq:fkg_lattice_matrc} is zero). We now check~\eqref{eq:fkg_lattice_matrc} for each factor of~\eqref{eq:matrc_def}.
For the factor $2^{|\omega_\tau\cap\mathsf{E}|}$ in~\eqref{eq:matrc_def}, the exponent on the left side of~\eqref{eq:fkg_lattice_matrc} is
\begin{align*}
|(a\cup a')\cap\mathsf{E}|+|(a\cap a')\cap\mathsf{E}|&=|(a\cap\mathsf{E})\cup(a'\cap\mathsf{E})|+|(a\cap\mathsf{E})\cap(a'\cap\mathsf{E})|\\
&=|a\cap\mathsf{E}|+|a'\cap\mathsf{E}|,
\end{align*}
which is the exponent on the right side. Replacing $\mathsf{E}$ by $\mathsf{E}_\rmb$ in the above computation gives the same for the factor $(2/(\svcb-1))^{|\omega_\tau\cap\mathsf{E}_\rmb|}$ in~\eqref{eq:matrc_def}. For the factor $(\svc-2)^{|\omega_{\tau\tau'}\setminus\omega_\tau|}$ in~\eqref{eq:matrc_def}, the exponent on the left side of~\eqref{eq:fkg_lattice_matrc} is the sum of the following two:
\begin{align*}
|(b\cup b')\setminus (a\cup a')|&=|b\setminus (a\cup b')|+|b'\setminus (b\cup a')|+|(b\cap b')\setminus (a\cup a')|,\\
|(b\cap b')\setminus (a\cap a')|&=|(b\cap b')\setminus (a\cup a')|+|(a\cap b')\setminus a'|+|(b\cap a')\setminus a|.
\end{align*}
On the other hand, the exponent on the right side is the sum of the following two
\begin{align*}
|b\setminus a|&=|b\setminus (a\cup b')|+|(b\cap b')\setminus (a\cup a')|+|(b\cap a')\setminus a|,\\
|b'\setminus a'|&=|b'\setminus (b\cup a')|+|(b\cap b')\setminus (a\cup a')|+|(a\cap b')\setminus a'|.
\end{align*}
Clearly, these sums coincide. It remains to check the inequality for the last factor $2^{\kappa_{\mathsf{K}}(\omega_\tau)+\kappa_{\mathsf{K}^1}(\omega_{\tau\tau'})}$ in~\eqref{eq:matrc_def}. It is classical (see, e.g.,~\cite[Proof of Theorem 3.8]{Gri06}) that
\begin{equation*}
\kappa_{\mathsf{K}}(a\cup a')+\kappa_{\mathsf{K}}(a\cap a')\geq\kappa_{\mathsf{K}}(a)+\kappa_{\mathsf{K}}(a'),
\end{equation*}
and analogously for $\kappa_{\mathsf{K}^1}(\omega_{\tau\tau'})$. This completes the proof.
\end{proof}
This lemma implies that connections in $\omega_\tau$ induce `maximal boundary conditions'. Let us first define the notion of sets being above and below each other. 

\begin{definition}\label{def:above}
Given a bi-infinite connected set \(C\subset\bbL_\bullet\cap  (\R\times [-c,c])\), we say that a subset \(R\subset\R^2\) is (weakly) \emph{above} \(C\) if it is contained in (the closure of) the connected component of the point~\((0,c+1)\) in \(\R^2\setminus C\), where we identify \(C\) with the union of line segments between the endpoints of the edges in \(\bbE_C\). We say that \(R\) is (weakly) \emph{below} \(C\) if it is contained in (the closure of) the connected component of the point~\((0,-c-1)\) in \(\R^2\setminus C\). We make the analogous definitions for finite connected sets by extending them to bi-finite connected sets by attaching left-infinite horizontal lines to all the leftmost points of the finite set and right-infinite horizontal lines to all the rightmost points; and, analogously, for connected subsets \(C\subset\bbL_\circ\).
\end{definition}

\begin{corollary}
	\label{cor:repulsiveness}
	Let $\gamma$ be a deterministic set of edges forming a simple path that connects $v_L$ to $v_R$.
	Let ${\gamma_\shortuparrow}$ and ${\gamma_\shortdownarrow}$ be the sets of edges in $\bar{\mathsf{E}}\setminus \gamma$ that are above and below $\gamma$, respectively. For any $a_0,b_0\in\{0,1\}^{\gamma},\;a_{\shortuparrow},b_{\shortuparrow}\in\{0,1\}^{{\gamma_\shortuparrow}}$ with $a_0\cup a_{\shortuparrow}\subseteq b_0\cup b_{\shortuparrow}$,
	\begin{align*}
		&\matrc_{\mathsf{K}}\big( (\omega_\tau|_{\gamma_\shortdownarrow},\omega_{\tau\tau'}|_{\gamma_\shortdownarrow}) \in\cdot \bgiven (\omega_\tau|_{\gamma},\omega_{\tau\tau'}|_{\gamma})=(a_0,b_0),\, (\omega_\tau|_{\gamma_\shortuparrow},\omega_{\tau\tau'}|_{\gamma_\shortuparrow}) = (a_{\shortuparrow},b_{\shortuparrow})\big)
		\\
		&\preccurlyeq
		\matrc_{\mathsf{K}}\big( (\omega_\tau|_{\gamma_\shortdownarrow},\omega_{\tau\tau'}|_{\gamma_\shortdownarrow})\in\cdot \bgiven \omega_\tau|_{\gamma} \equiv 1,\,(\omega_\tau|_{\gamma_\shortuparrow},\omega_{\tau\tau'}|_{\gamma_\shortuparrow})=(a_{\shortuparrow},b_{\shortuparrow})\big)
		\\
		&=
		\matrc_{\mathsf{K}}\big( (\omega_\tau|_{\gamma_\shortdownarrow},\omega_{\tau\tau'}|_{\gamma_\shortdownarrow})\in\cdot \bgiven \omega_\tau|_{\gamma}\equiv 1\big).
	\end{align*}
	In the same fashion, for \(\gamma^*\) a path of dual edges connecting \(v_L'\) to \(v_R'\),
	\begin{align*}
		&\matrc_{\mathsf{K}}\big( (\omega_\tau|_{\gamma^*_\shortuparrow},\omega_{\tau\tau'}|_{\gamma^*_\shortuparrow}) \in\cdot \bgiven (\omega_\tau|_{*\gamma^*},\omega_{\tau\tau'}|_{*\gamma^*})=(a_0,b_0),\, (\omega_\tau|_{\gamma^*_\shortdownarrow},\omega_{\tau\tau'}|_{\gamma^*_\shortdownarrow}) = (a_{\shortdownarrow},b_{\shortdownarrow})\big)
		\\
		&\succcurlyeq
		\matrc_{\mathsf{K}}\big( (\omega_\tau|_{\gamma^*_\shortuparrow},\omega_{\tau\tau'}|_{\gamma^*_\shortuparrow})\in\cdot \bgiven \omega_{\tau\tau'}|_{*\gamma^*} \equiv 0,\,(\omega_\tau|_{\gamma^*_\shortdownarrow},\omega_{\tau\tau'}|_{\gamma^*_\shortdownarrow})=(a_{\shortdownarrow},b_{\shortdownarrow})\big)
		\\
		&=
		\matrc_{\mathsf{K}}\big( (\omega_\tau|_{\gamma^*_\shortuparrow},\omega_{\tau\tau'}|_{\gamma^*_\shortuparrow})\in\cdot \bgiven \omega_{\tau\tau'}|_{*\gamma^*}\equiv 0\big).
	\end{align*}
\end{corollary}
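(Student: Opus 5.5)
The plan has two parts: the stochastic domination, which is the FKG lattice condition of Lemma~\ref{lem:fkg_lattice_mod_atrc} applied to the conditioned measure, and the equality, which is a domain Markov (screening) property of $\matrc_\mathsf{K}$ across an open $\omega_\tau$-path. The dual statement then follows by the same argument with the roles of $\omega_\tau$ and $\omega_{\tau\tau'}^*$ exchanged.

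\textbf{Domination.} The conditional law of $(\omega_\tau,\omega_{\tau\tau'})$ restricted to $\gamma_\shortdownarrow$, given any fixed values on $\gamma\cup\gamma_\shortuparrow$, is again a measure on $\{0,1\}^{\gamma_\shortdownarrow}\times\{0,1\}^{\gamma_\shortdownarrow}$. Its weights are the restriction of~\eqref{eq:matrc_def}, and it satisfies the FKG lattice condition because~\eqref{eq:fkg_lattice_matrc} holds for all configurations. The order is the product order on pairs $(\omega_\tau,\omega_{\tau\tau'})$.

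By the standard Holley criterion, it then suffices to show that the conditional measures are increasing in the boundary data. Concretely, for boundary data $\xi\le\xi'$ on $\gamma\cup\gamma_\shortuparrow$ (with both admissible, i.e.\ $\omega_\tau\subseteq\omega_{\tau\tau'}$), the Holley inequality holds between the two conditional laws. This is verified factor by factor exactly as in Lemma~\ref{lem:fkg_lattice_mod_atrc}, by viewing the full configuration $(\xi\vee\eta,\ldots)$ and using supermodularity of $\kappa$.

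The data $(a_0,b_0,a_\shortuparrow,b_\shortuparrow)$ are dominated by the data in which $a_0$ is replaced by $\gamma$, $b_0$ is replaced by $\gamma$, and the values above are kept. Two points need care here:
\begin{itemize}
\item $\omega_\tau|_\gamma\equiv1$ forces $\omega_{\tau\tau'}|_\gamma\equiv1$.
\item The constraint $\omega_{\tau\tau'}\setminus\omega_\tau\subseteq\mathsf{E}$ is preserved.
\end{itemize}
This gives the inequality $\preccurlyeq$, where on the right the conditioning on $\omega_{\tau\tau'}|_\gamma$ is implicit.

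\textbf{Equality (screening).} Once $\gamma$ is fully open in $\omega_\tau$, and hence in $\omega_{\tau\tau'}$, the weight factorises into a part depending on $\gamma_\shortuparrow$ and a part depending on $\gamma_\shortdownarrow$. The edge-count factors are manifestly additive over edges.

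For the cluster counts, every cluster of $\omega_\tau$ (in $\mathsf{K}$) that touches both sides must meet $\gamma$ by planarity, since $\gamma$ separates above from below between $v_L$ and $v_R$. It is therefore part of the single cluster containing $\gamma$. Hence
\[
\kappa_\mathsf{K}(\omega_\tau)=\kappa^\shortuparrow+\kappa^\shortdownarrow-1,
\]
where $\kappa^\shortuparrow$ and $\kappa^\shortdownarrow$ count clusters of $\gamma\cup(\omega_\tau\cap\gamma_\shortuparrow)$ and $\gamma\cup(\omega_\tau\cap\gamma_\shortdownarrow)$, respectively.

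The same decomposition holds for $\kappa_{\mathsf{K}^1}(\omega_{\tau\tau'})$. Here one must check that the identification of boundary vertices in $\mathsf{K}^1$ does not create connections bypassing $\gamma$. The upper wired arc lies above $\gamma$ (which ends at $v_L,v_R$ on height $0$), the lower arc lies below, and $\gamma$'s endpoints $v_L,v_R$ are themselves in the upper identified set. So any cluster meeting both sides again contains $\gamma$.

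\textbf{Main obstacle.} I expect the main obstacle to be this planar/boundary bookkeeping, in particular at $v_L,v_R$, where $\gamma$ touches the identified boundary. One must make sure the correction term is a constant independent of the configuration above. The factorisation then shows the conditional law below is independent of $(a_\shortuparrow,b_\shortuparrow)$, which gives the equality.

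For the dual part, replace the open primal path by a closed dual path: $\omega_{\tau\tau'}|_{*\gamma^*}\equiv0$ forces $\omega_\tau\equiv0$ there. Minimality of this data gives $\succcurlyeq$ by the same Holley argument. For screening, use Euler's formula as in the proof of Lemma~\ref{lem:6V_spins_to_AT}, rewriting $\kappa$ in terms of dual clusters, which are separated by $\gamma^*$.
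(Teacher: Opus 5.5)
Your proposal is correct and follows essentially the paper's proof. Both use the same two steps:
\begin{itemize}
\item The domination comes from the FKG lattice condition of Lemma~\ref{lem:fkg_lattice_mod_atrc} via Holley's criterion (the paper cites \cite[Theorem 2.27]{Gri06}).
\item The equality comes from factorising the weight once $\gamma$ is open, using $\kappa_{\mathsf{K}}(a)=\kappa_{\mathsf{K}_\shortdownarrow}(a_\shortdownarrow)+\kappa_{\mathsf{K}_\shortuparrow}(a_\shortuparrow)-1$ and the analogous identity for $\mathsf{K}^1$.
\end{itemize}
For the dual statement, the Euler-formula detour is unnecessary; also, the dual clusters are joined by $\gamma^*$, not separated by it. Since $*\gamma^*$ is closed in both $\omega_\tau$ and $\omega_{\tau\tau'}$, and the upper (resp.\ lower) identified boundary vertices lie above (resp.\ below) $\gamma^*$, no primal cluster in $\mathsf{K}$ or $\mathsf{K}^1$ can cross $\gamma^*$. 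The cluster counts are therefore simply additive, with no $-1$ correction.
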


\begin{remark}
In regard to Lemma~\ref{lem:6V_spins_to_AT}, the corollary implies that the connection imposed by $v_L\xleftrightarrow{\omega_\tau}v_R$ has a repulsive effect on the dual connection imposed by $v'_L\xleftrightarrow{\omega_{\tau\tau'}^*}v'_R$.
\end{remark}

\begin{proof}
We only prove the first display, the second is identical. Since $\matrc_{\mathsf{K}}(\omega_\tau\subseteq\omega_{\tau\tau'})=1$, the stochastic domination statement is a consequence of Lemma~\ref{lem:fkg_lattice_mod_atrc} and~\cite[Theorem 2.27]{Gri06}.
The equality follows if, for $a_\shortuparrow,b_\shortuparrow\in\{0,1\}^{p_\shortuparrow}$ and $a_\shortdownarrow,b_\shortdownarrow\in\{0,1\}^{p_\shortdownarrow}$, the probability
\begin{equation*}
\matrc_{\mathsf{K}}\big((\omega_\tau|_{p_\shortdownarrow},\omega_{\tau\tau'}|_{p_\shortdownarrow})=(a_\shortdownarrow,b_\shortdownarrow),\,\omega_\tau|_{p}\equiv 1,\,(\omega_\tau|_{p_\shortuparrow},\omega_{\tau\tau'}|_{p_\shortuparrow})=(a_{\shortuparrow},b_{\shortuparrow})\big)
\end{equation*}
factorises into two parts that depend only on $(a_\shortdownarrow,b_\shortdownarrow)$ and $(a_\shortuparrow,b_\shortuparrow)$, respectively.

Recall that $\matrc_{\mathsf{K}}(\omega_\tau\subseteq\omega_{\tau\tau'})=1$, and set
\begin{equation*}
(a,b)=(a_\shortdownarrow\sqcup p\sqcup a_\shortuparrow,b_\shortdownarrow\sqcup p\sqcup b_\shortuparrow).
\end{equation*}
Then, by the definition~\eqref{eq:matrc_def} of the measure, the above probability equals 
\begin{multline*}
\matrc_{\mathsf{K}}\big((\omega_\tau,\omega_{\tau\tau'})=(a,b)\big)\propto
2^{|a\cap\mathsf{E}|}\big(\tfrac{2}{\svcb-1}\big)^{|a\cap\mathsf{E}_\rmb|}(\svc-2)^{|b\setminus a|}2^{\kappa_{\mathsf{K}}(a)+\kappa_{\mathsf{K}^1}(b)}\\
=2^{|a_\shortdownarrow\cap\mathsf{E}|+|p\cap\mathsf{E}|+|a_\shortuparrow\cap\mathsf{E}|}\big(\tfrac{2}{\svcb-1}\big)^{|a_\shortdownarrow\cap\mathsf{E}_\rmb|+|p\cap\mathsf{E}_\rmb|+|a_\shortuparrow\cap\mathsf{E}_\rmb|}(\svc-2)^{|b_\shortdownarrow\setminus a_\shortdownarrow|+|b_\shortuparrow\setminus a_\shortuparrow|}2^{\kappa_{\mathsf{K}}(a)+\kappa_{\mathsf{K}^1}(b)}.
\end{multline*}
For $\#\in\{\varnothing,1\}$, let $\mathsf{K}^\#_\shortuparrow$ (respectively, $\mathsf{K}^\#_\shortdownarrow$) be the graph obtained from $\mathsf{K}^\#$ by identifying all vertices on and below $p$ (respectively, on and above $p$). Since $p$ divides $\mathsf{K}^\#$ into two disjoint parts and since $p\subseteq a,b$, we clearly have 
\begin{equation*}
\kappa_{\mathsf{K}}(a)=\kappa_{\mathsf{K}_\shortdownarrow}(a_\shortdownarrow)+\kappa_{\mathsf{K}_\shortuparrow}(a_\shortuparrow)-1
\qquad\text{and}\qquad
\kappa_{\mathsf{K}^1}(b)=\kappa_{\mathsf{K}^1_\shortdownarrow}(b_\shortdownarrow)+\kappa_{\mathsf{K}^1_\shortuparrow}(b_\shortuparrow)-1.
\end{equation*}
Therefore, the probability factorises, and the proof is complete.
\end{proof}

\section{Proximity of interfaces}
\label{sec:proxi_int}
Recall the setting of Section~\ref{sec:coupling_under_dobr}.
In this section, we prove that in the coupling~\(\Psi_\msf{G}^{1/1}\), the respective `interfaces' in the FK random variable~\(\omega\) and the modified ATRC random variables~\((\omega_\tau,\omega_{\tau\tau'})\) are close to each other. Let us first define the relevant objects and distances.
Recall the interfaces~$\Gamma_\fk^1$ and~$\Gamma_\fk^2$ above Theorem~\ref{thm:invariance_princ_FK}.
Define
\begin{equation}\label{eq:interfaces_def}
\begin{aligned}
\mcal{C}_{v_L}&:=\text{ cluster of }v_L\text{ in }\omega_\tau,\\
\mcal{C}'_{v'_L}&:=\text{ cluster of }v'_L\text{ in }\omega_{\tau\tau'}^*.
\end{aligned}
\end{equation}
Recall the notion of sets being above each other, introduced in Definition~\ref{def:above}. Observe that~\(\Gamma_\fk^1\) is deterministically above~\(\Gamma_\fk^2\), and~\(\mcal{C}_{v_L}\) is deterministically above~\(\mcal{C}'_{v'_L}\), whence we will call them the upper and lower FK and AT interfaces, respectively. 
The goal is to prove that the upper FK interface~\(\Gamma_\fk^1\) is close to the upper AT interface~\(\mcal{C}_{v_L}\), and that the lower FK interface~\(\Gamma_\fk^2\) is close to the lower AT interface~\(\mcal{C}'_{v'_L}\).
We will work with the \emph{one-sided Hausdorff distance}, defined by
\begin{equation}\label{eq:hausdorff}
\rmd_{\mathrm{H}}(R,S):=\sup_{x\in R}\inf_{y\in S}\rmd_\infty(x,y),\qquad R,S\subset\R^2.
\end{equation}
Although the statements are morally similar, the proofs that the distances~\(\rmd_{\mathrm{H}}(\Gamma_\fk^1,\mcal{C}_{v_L})\) and~\(\rmd_{\mathrm{H}}(\Gamma_\fk^2,\mcal{C}'_{v'_L})\) are small turn out to be quite different.
They are given in Sections~\ref{sec:proxi_upper_int} and~\ref{sec:proxi_lower_int}, respectively, and are outlined below.
\medskip

\noindent\textbf{Upper interfaces.} The fact that~\(\rmd_{\mathrm{H}}(\Gamma_\fk^1,\mcal{C}_{v_L})\) is small is proved in a way exactly analogous to the proof of the proximity statement~\cite[Lemma 9.1]{DobGlaOtt25}. Let us informally sketch the argument here. In the coupling~\(\Psi_\msf{G}^{1/1}\), the Peierls contour between~\(\pm1\) in the six-vertex spin~\(\sigma_\circ\) is contained in~\(\mcal{C}_{v_L}\). Since~\(\sigma_\circ=-1\) is below~\(\Gamma_\fk^1\), this contour must be above~\(\Gamma_\fk^1\). Conditional on a realisation of~\(\Gamma_\fk^1\), the distribution of the six-vertex spins above have~`\(+,+\)' boundary conditions, and paths of~\(\sigma_\circ=-1\) induce circuits in~\(\omega_\tau\), which has uniform exponential decay by previous results in~\cite{DobGlaOtt25}.
\medskip

\noindent\textbf{Lower interfaces.} On the other hand, while the Peierls contour between~\(\pm1\) in the six-vertex spin~\(\sigma_\bullet\) is contained in~\(\mcal{C}'_{v'_L}\), it is not true that it is below~\(\Gamma_\fk^2\). In fact, the contour lies between~\(\Gamma_\fk^1\) and~\(\Gamma_\fk^2\). For this reason, proving that~\(\rmd_{\mathrm{H}}(\Gamma_\fk^2,\mcal{C}'_{v'_L})\) is small requires a different strategy, which we will also outline informally. Conditional on a realisation of the contour, the distribution of~\(\omega\) below is a modified FK measure, which by our results from~\cite{DobGlaOtt25} relaxes exponentially to the wired infinite-volume FK measure~\(\fk^\fkwired\). This rules out connections of the dual~\(\omega^*\) in the bulk below the contour. To rule out dual connections near the boundary for these modified FK measures, we prove a \emph{box-crossing} property at the boundary.

\subsection{Upper interfaces}\label{sec:proxi_upper_int}
In this section, we prove that the upper FK interface~\(\Gamma_\fk^1\) is close to the upper AT interface~\(\mcal{C}_{v_L}\) in the one-sided Hausdorff distance~\eqref{eq:hausdorff}.

\begin{lemma}\label{lem:proxi_upper_int}
There exist constants~\(C,c>0\) such that, for any~\(n,m,k\geq 1\),
\begin{equation*}
	\Psi^{1/1}\big(\rmd_{\mathrm{H}}(\Gamma_\fk^1,\mcal{C}_{v_L})>k\big)\leq Cnme^{-ck}.
\end{equation*}
\end{lemma}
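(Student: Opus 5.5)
We follow the sketch given above for the upper interfaces, mirroring \cite[Lemma 9.1]{DobGlaOtt25}. The plan has three steps.

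\textbf{Step 1: a deterministic inclusion.} Work on the coupling $\Psi^{1/1}$. The Peierls contour of $\sigma_\circ$ separates the $+1$ boundary region (above) from the $-1$ region (below); call $\gamma_\circ$ the set of primal edges of tiles whose dual edge carries a $\pm$ disagreement of $\sigma_\circ$ along the top of the $-1$ cluster attached to the lower boundary. By Lemma~\ref{lem:6V_spins_to_AT}, such tiles have $\omega_\tau(e_t)=1$, so $\gamma_\circ\subset\mcal{C}_{v_L}$. By Lemma~\ref{lem:fk11_to_sixv}, $\sigma_\circ$ only changes sign when crossing an oriented loop arc. The bi-infinite path $\Gamma_\fk^1$ carries the orientation that forces $\sigma_\circ=-1$ immediately below it and $+1$ immediately above it. Hence every point of $\Gamma_\fk^1$ sits next to a dual vertex with $\sigma_\circ=-1$ connected (in $\sigma_\circ=-1$) to the lower boundary, so it lies weakly below $\gamma_\circ$.

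It therefore suffices to show the following: if $x\in\Gamma_\fk^1$ has $\rmd_\infty(x,\mcal{C}_{v_L})>k$, then there is an $\omega_\tau$-dual (equivalently $\sigma_\circ=-1$) path of length at least $k$ starting near $x$, inside the region above $\Gamma_\fk^1$. Concretely, the vertical segment from $x$ upward to $\gamma_\circ$ has length more than $k$. It lies in the $\sigma_\circ=-1$ region, and that region's upper boundary is contained in $\mcal{C}_{v_L}$. So the $-1$ cluster of $\sigma_\circ$ adjacent to $x$ contains a $\sigma_\circ$-path of diameter at least $k$ that stays in the region $R$ between $\Gamma_\fk^1$ and $\gamma_\circ$. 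By $\sigma_\circ\sim\omega_\tau^*$, this path lies in $\omega_\tau^*$ and avoids $\mcal{C}_{v_L}$ except at its ends.

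\textbf{Step 2: conditioning on $\Gamma_\fk^1$.} Explore $\Gamma_\fk^1$ from below, revealing $\omega$ and the loop orientations on and below it. The remaining configuration above is an FK measure with wired boundary on the upper arc, and wired along $\Gamma_\fk^1$ via the loop arcs. This holds because the unexplored region sees only primal boundary, the lower boundary cluster having been separated. The orientation rule then shows the six-vertex spins above have `$+,+$' type boundary conditions, with the boundary weights $\svcb$ on $\partial\mathsf{A}$ and $\sigma_\circ=-1$ forced only on the explored side.

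Applying Lemma~\ref{lem:6V_spins_to_AT} locally (the remark after it) converts this into a modified ATRC measure on the region above $\Gamma_\fk^1$ with $\omega_\tau$ wired on $\Gamma_\fk^1$ and on the upper boundary. In this measure, a $\sigma_\circ=-1$ path of diameter $k$ starting at $x$ yields an $\omega_\tau^*$ crossing of diameter $k$. Equivalently, it yields an $\omega_\tau$-disconnection, hence a dual $\omega_\tau$ path of length at least $k$.

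\textbf{Step 3: exponential decay and union bound.} The uniform exponential decay of $\omega_\tau^*$ connections under wired-type (maximal) boundary conditions, proved in \cite{DobGlaOtt25}, gives probability at most $Ce^{-ck}$ for each $x$. This decay must hold uniformly in the domain, including near the modified boundary with weight $\tfrac{2}{\svcb-1}$. Monotonicity via Lemma~\ref{lem:fkg_lattice_mod_atrc} and Corollary~\ref{cor:repulsiveness}-type comparisons lets us replace the unknown conditional boundary by the fully wired one, which only decreases dual connections. A union bound over the at most $Cnm$ possible positions of $x$ gives $Cnme^{-ck}$.

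\textbf{Main obstacle.} The hard part is Step 2: justifying rigorously that, after conditioning on $\Gamma_\fk^1$ and everything below it, the induced six-vertex/ATRC law above is dominated by a measure with wired $\omega_\tau$ boundary. This requires a domain Markov property through the BKW and ATRC couplings, which is weaker for ATRC. I would handle it exactly as in \cite[Section 9]{DobGlaOtt25}, by rebuilding the coupling lemmas on the random domain above $\Gamma_\fk^1$.
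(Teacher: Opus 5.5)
Your skeleton matches the paper's sketch: the $\sigma_\circ$ Peierls contour lies in $\mcal{C}_{v_L}$ and weakly above $\Gamma_\fk^1$, you condition on $\Gamma_\fk^1$ to get `$+,+$' conditions above it, and you take a union bound over $O(nm)$ positions. The probabilistic core of Steps 1--3, however, is inverted and does not work.

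\textbf{The estimate you invoke is false.} In this regime it is $\omega_\tau$ that is subcritical; this is why a single $\omega_\tau$-crossing costs $e^{-2\nu_1 n}$. By contrast $\omega_\tau^*$ percolates, so long $\omega_\tau^*$ paths are typical. There is therefore no uniform exponential decay of $\omega_\tau^*$ connections to use in Step 3.

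\textbf{The reduction to an $\omega_\tau^*$ path is unjustified.} The relation $\sigma_\circ\sim\omega_\tau^*$ only says that $\sigma_\circ$ is constant on $\omega_\tau^*$-clusters. A $\sigma_\circ=-1$ path can cross type 5--6 tiles with $\omega_\tau(e_t)=1$, so it need not be an $\omega_\tau^*$ path.

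\textbf{The monotonicity step runs the wrong way.} A dual connection is a decreasing event. Replacing the conditional law by the maximal (wired) one can only lower its probability, so it gives no upper bound.

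What works is the reverse implication; this is what the paper means by `paths of $\sigma_\circ=-1$ induce circuits in $\omega_\tau$'.
\begin{enumerate}
\item Above $\Gamma_\fk^1$ the boundary is effectively $+$ for $\sigma_\circ$. That is the content of the `$+,+$' conditions: the fixed arcs of $\Gamma_\fk^1$ enter as boundary weights favouring $\sigma_\circ=+1$, even though $\sigma_\circ=-1$ just below $\Gamma_\fk^1$.
\item A $\sigma_\circ=-1$ region of diameter $\gtrsim k$ in that domain must be separated from the $+$ boundary by $\sigma_\circ$-disagreement edges. Every one of these is $\omega_\tau$-open, so you get an $\omega_\tau$ circuit, or boundary-to-boundary $\omega_\tau$ path, of diameter $\gtrsim k$.
\item This event is increasing in $\omega_\tau$. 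Domination by the wired measure now goes in the right direction.
\item The uniform exponential decay of $\omega_\tau$ from \cite{DobGlaOtt25}, plus the union bound, gives $Cnme^{-ck}$.
\end{enumerate}
A related point: with both spins effectively fixed on the boundary, it is $\omega_{\tau\tau'}$ that is wired, while $\omega_\tau$ carries a modified boundary as in Definition~\ref{def:mATRC}. This is harmless once the event is increasing.

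Your Step 1 also has to be recast along these lines.
\begin{itemize}
\item The vertical segment above $x$ may cross $+1$ islands not attached to the top boundary.
\item The $-1$ cluster adjacent to $x$ is the $\omega^*$-crossing region \emph{below} $\Gamma_\fk^1$, not the region between $\Gamma_\fk^1$ and the contour.
\end{itemize}
So a long $-1$ path inside that region is not automatic. What $\rmd_\infty(x,\mcal{C}_{v_L})>k$ does force is a $-1$ layer separating the vicinity of $x$ from the top $+1$ cluster. The disagreement edges bounding that layer are what should supply the large $\omega_\tau$ structure.
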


The proof~\cite[Lemma 9.1]{DobGlaOtt25} extends readily to this case and we omit the details.

\subsection{Lower interfaces}\label{sec:proxi_lower_int}
In this section, we prove that the lower FK interface~\(\Gamma_\fk^2\) is close to the lower AT interface~\(\mcal{C}'_{v'_L}\) in the one-sided Hausdorff distance~\eqref{eq:hausdorff}.

\begin{lemma}\label{lem:proxi_lower_int}
There exist constants~\(C,c>0\) such that, for any~\(n,m,k\geq 1\),
\begin{equation*}
\Psi^{1/1}\big(\rmd_{\mathrm{H}}(\Gamma_\fk^2,\mcal{C}'_{v'_L})>k\big)\leq Cnm\max\{n,m\}^2e^{-c k}.
\end{equation*}
\end{lemma}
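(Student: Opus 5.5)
The strategy follows the outline given above for the lower interfaces. First we locate the Peierls contour of $\sigma_\bullet$ and explore it from above; below it, the FK configuration is a modified FK measure. For this measure we exclude long dual connections in two regimes: in the bulk using relaxation to $\fk^{\fkwired}$, and near the boundary using a boundary box-crossing estimate.

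\textbf{Step 1 (the contour and what it sees).} Let $\gamma$ be the lowest path of the $\sigma_\bullet$ Peierls contour separating the $\sigma_\bullet=-1$ boundary cluster from the rest. It joins the two sides of the box near $v'_L$ and $v'_R$. Since $\sigma_\bullet\sim\omega_{\tau\tau'}$, the dual edges crossing $\gamma$ are closed in $\omega_{\tau\tau'}$, so $\gamma\subset\mcal{C}'_{v'_L}$. Hence it suffices to show two things:
\begin{itemize}
\item every point of $\Gamma_\fk^2$ lies within distance $k$ of $\gamma$;
\item equivalently, any dual $\omega^*$-path attached to $v'_L$ that goes below $\gamma$ has diameter at most $k$.
\end{itemize}
By construction, $\Gamma_\fk^2$ is bounded by the dual cluster of $v'_L$, so it cannot be far above $\gamma$ either. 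We condition on $\gamma$ and on $(\sigma_\bullet,\sigma_\circ)$ above and on $\gamma$. Using Lemma~\ref{lem:6V_spins_to_FK}, the oriented loops below $\gamma$ are built from the six-vertex spins below $\gamma$, which have boundary values $\sigma_\bullet\equiv-1$ along $\gamma$. Reversing the BKW computation of Lemma~\ref{lem:fk11_to_sixv} in the region below $\gamma$ then shows that the conditional law of $\omega$ below $\gamma$ is an FK measure at $p_c(q)$. It is wired along $\gamma$ (through the six-vertex $-$ boundary) and along the lower part of $\partial\mathsf{B}$, with modified boundary weights $\svcb$. This is the class of modified FK measures treated in~\cite{DobGlaOtt25}.

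\textbf{Step 2 (bulk).} On the event $\rmd_{\mathrm H}(\Gamma_\fk^2,\mcal{C}'_{v'_L})>k$, there is a point $x$ below $\gamma$ and a dual path in $\omega^*$ from $x$ of diameter at least $k$. We take a union bound over the $O(nm)$ possible points $x$.
\begin{itemize}
\item \emph{Far case.} If $x$ is at distance at least $k/2$ from $\gamma\cup\partial\mathsf{B}$, the relaxation result of~\cite{DobGlaOtt25} lets us compare the modified measure in $\Lambda_{k/2}(x)$ with $\fk^{\fkwired}$, up to error $e^{-ck}$. Under $\fk^{\fkwired}$ at $p_c(q)$ with $q>4$, dual connections decay exponentially~\cite{DumGagHar21}. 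This gives a bound $e^{-ck}$ for each such $x$.
\item \emph{Near case.} If $x$ is close to $\gamma$ or to $\partial\mathsf{B}$, the long dual path must travel a distance of order $k$ while staying near a wired boundary arc.
\end{itemize}

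\textbf{Step 3 (boundary, main obstacle).} The near case requires a boundary box-crossing estimate. Consider the modified FK measure on a half-annulus centred on a boundary point, with wired boundary carrying weight $\svcb$. We want to show that a primal circuit, closed up by the boundary, separates the two halves of the annulus with probability bounded below uniformly. Combined with FKG and the domain Markov property over roughly $k$ disjoint scales, this gives a decay bound $e^{-ck}$ for boundary-adjacent dual arms.

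We plan to obtain the uniform lower bound by comparing with the bulk wired measure. Monotonicity in boundary conditions applies, since $\svcb$ acts like an additional field on boundary edges. Because $\gamma$ is random and rough, we must either fix a local geometry by taking a union bound over starting boundary points, or use a domain-independent RSW argument. We expect this step, and in particular handling an arbitrary, irregular shape of $\gamma$, to be the hardest part. The extra factor $\max\{n,m\}^2$ in the statement accounts for a union bound over pairs of boundary points and scales in this argument. Summing the two cases gives the bound $Cnm\max\{n,m\}^2e^{-ck}$.
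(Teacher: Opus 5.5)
Your architecture matches the paper's: a $\sigma_\bullet$-contour lies in $\omega^*\cap\omega_{\tau\tau'}^*$, hence in $\mcal{C}'_{v'_L}$ and above $\Gamma_{\fk}^2$; then a Markov property below it, the bulk by relaxation, and the boundary by a crossing estimate. But Step~1 fails as written, in two ways.

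First, the contour you pick (the lowest path of the boundary of the $-1$ boundary cluster) is revealed by exploring from \emph{below}. The event $\{\gamma=E\}$ depends on the spins below $E$: the vertices just below $E$ must be joined to the bottom boundary by $-1$ spins, and no lower such path may exist. This is a non-local constraint on exactly the region whose law you want. So conditioning on $\gamma$ and everything above it does not leave the six-vertex measure with $-$ boundary condition below $\gamma$. You need a contour explored from \emph{above}. The paper takes the path $\mcal{E}'\subset *\partialedge_{\msf{K}}\mcal{C}_+$, where $\mcal{C}_+$ is the $+1$ cluster of the upper boundary. The event $\{\mcal{E}'=E'\}$ is measurable with respect to the spins on and above $E'$ together with the $-1$ spins just below it, and this gives Claim~\ref{claim:prf:proxi_lower_int1}.

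Second, reversing BKW does not produce a measure wired along the contour. It produces $\qfk_{\msf{G}_{E'}}^{\mfr{b}_1,\mfr{b}_2;1,\qbwired}$ (Claim~\ref{claim:prf:proxi_lower_int2}): weight $1$ only on the part $\mfr{b}_1$ along the box boundary (where the $\svcb$-tiles sit), and quasi-wired weight $\qbwired=e^{-\lambda}\sqrt q>1$ along the contour. This is the whole difficulty. A measure wired on its entire boundary would dominate the restriction of $\fk^{\fkwired}$ to the domain. The lemma would then follow at once from exponential decay of dual connections under $\fk^{\fkwired}$, with no boundary analysis at all. Since the actual measure is not comparable to $\fk^{\fkwired}$ by monotonicity, the paper dominates it from below by the homogeneous quasi-wired measure $\qfk^{\mfr{b}_1\cup\mfr{b}_2;\qbwired}$ (Lemma~\ref{lem:qfk_stoch_dom}). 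Relaxation (Lemma~\ref{lem:qfk_relax}) is available for that measure, and dual-arm events are decreasing. Your Step~2 needs exactly this reduction.

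The boundary estimate, which you leave open, is the other missing ingredient, and the mechanism you sketch would not give $e^{-ck}$. A uniformly positive blocking probability per scale yields only polynomial decay over dyadic scales. Half-annuli of bounded width and growing radius have blocking probability tending to zero, by finite energy. And FKG bounds intersections of the decreasing ``no blocking arc'' events from below, not above.

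What is needed is a \emph{single-scale, high-probability} statement, Lemma~\ref{lem:box_crossing_qfk}. Under the quasi-wired measure on any weak $\bbL_\bullet$-domain, a rectangle of size $\lfloor\varepsilon k/2\rfloor$ by $k$ with a long side on the boundary is crossed in the short direction by a primal path with probability at least $1-Ck^4e^{-c\varepsilon k}$. Its proof has four steps:
\begin{enumerate}
\item relaxation gives a long primal crossing of a parallel strip at distance $\varepsilon k/2$ from the boundary;
\item finite energy wires the ends of that crossing at cost $e^{\cstFinEne\varepsilon k}$;
\item FKG reduces to a rectangle with weight $1$ on three sides and $\qbwired$ on the boundary side;
\item FKG and rotation invariance on a square with quasi-wired boundary bound the fourth power of the failure probability by the probability of a dual circuit around an inner square, which relaxation controls.
\end{enumerate}

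Your worry about the irregular shape of $\gamma$ is misplaced. Also, your split by the position of one point $x$ is not exhaustive, since a dual path starting near the boundary need not stay there. The paper splits along the interface instead.
\begin{itemize}
\item Either some vertex $i$ just below $\Gamma_{\fk}^2$ has $i+\msf{B}_r\subseteq\msf{V}_{E'}$ with $r=\lfloor\varepsilon k/2\rfloor$. This gives a bulk dual arm of length $r/2$, ruled out by domination plus relaxation.
\item Or $\Gamma_{\fk}^2$ stays within $r$ of $\partial\msf{V}_{E'}$. Then, near a point at distance $>k$ from $E'$, it runs for a length of order $k$ within $r$ of the \emph{flat} box boundary $\mfr{b}_1$. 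This produces a long dual crossing of a thin rectangle with a side in $\mfr{b}_1$.
\end{itemize}
Being close to $E'$ never needs to be excluded, since that is what is to be proved. Finally, the factor $\max\{n,m\}^2$ comes from applying Lemma~\ref{lem:qfk_relax} to an edge set with $|\bbV_F|\lesssim nm$ and diameter $\lesssim\max\{n,m\}$. It does not come from a union bound over pairs of boundary points.
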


Before we can proceed with the proof as sketched above, we must first to introduce the quasi-FK measures, some notions of graphs and notation, and to prove and import some properties of the former that we will require.

\subsubsection{Quasi-FK measures.}
\label{sec:quasi_fk_def}
A modification of the classical BKW coupling (see Section~\ref{sec:combinatorial_mappings}) gives rise to a natural version of FK measures with a modified \emph{boundary cluster-weight}, introduced in~\cite{GlaPel23}, which can be viewed as a continuous interpolation between the free and wired FK measures. In the proof of Lemma~\ref{lem:proxi_lower_int}, we will encounter such measures with different cluster-weights on different parts of the boundary, whence we introduce them in this generality. 

Let $G=(V,E)$ be a finite graph, and let $p\in (0,1),\,q>0$.  
For the above reason, we consider these quasi-FK measures on $G$ together with two disjoint boundary parts $\mfr{b}_1,\mfr{b}_2\subseteq V$. Given in addition $q_1,q_2\in(0,\infty)$, the corresponding quasi-FK probability measure on $\{0,1\}^E$ is defined by
\[
\qfk_{G;p,q}^{\mfr{b}_1,\mfr{b}_2;q_{1},q_{2}}(\omega)\propto p^{\abs{\omega}}\,(1-p)^{\abs{E\setminus\omega}}\,q^{\clusters_\mrm{i}(\omega)}\,q_{1}^{\clusters_{\mfr{b}_1}(\omega)}\,q_{2}^{\clusters_{\mfr{b}_2}(\omega)},
\]
where $\clusters_{\mfr{b}_1}(\omega)$ is the number of clusters of $\omega$ that intersect $\mfr{b}_1$, $\clusters_{\mfr{b}_2}(\omega)$ is the number of clusters of $\omega$ that intersect $\mfr{b}_2$ but not $\mfr{b}_1$, and $\clusters_\mrm{i}(\omega)$ is the number of clusters that do not intersect $\mfr{b}_1 \cup \mfr{b}_2$.

If $\mfr{b}_2$ is empty, then we omit $\mfr{b}_2$ and $q_2$ from the superscript and simply write $\qfk_{G;p,q}^{\mfr{b}_1;q_{1}}$. 
Notice that in this case, if additionally~\(\mfr{b}_1=\partialin V\), the choices $q_1=q$ and $q_1=1$ lead to free and wired boundary conditions, respectively. 
For $q>4$ and $p=p_c(q)$, there are two more values of importance that originate from the BKW coupling. These are the \emph{quasi-free} and \emph{quasi-wired} weights given by
\[
\qbfree:=e^\lambda\sqrt{q}\qquad\text{and}\qquad\qbwired:=e^{-\lambda}\sqrt{q},
\]
respectively, where $\lambda>0$ is defined by~\eqref{eq:parameters_bulk}.
On the square lattice, it was shown in~\cite[Theorem 2.13]{GlaPel23} that $\qfk_{G;p,q}^{\partialin V;\qbfree}$ and $\qfk_{G;p,q}^{\partialin V;\qbwired}$ converge to the free and wired infinite-volume measures, respectively, as $G$ exhausts $\Z^2$.
We will require quantitative versions of these statements.

\subsubsection{Rotated square lattice and domains.}
\label{sec:rotated_lattice}
Let us introduce the rotated square lattice and a class of subgraphs of~\(\bbL_\bullet\), called weak~\(\bbL_\bullet\)-domains, on which we can control the quasi-free and quasi-wired FK measures by means of the BKW coupling.
\medskip

\noindent\textbf{Rotated square lattice.}
Consider the rotated square lattice $\bbL$ with vertex-set $\bbL_\bullet\cup\bbL_\circ$ and edges between nearest neighbours, that is, between vertices of Euclidean distance $1/\sqrt{2}$ (see the left side of Fig.~\ref{fig:rotated_lattice}).
Given $\Delta\subseteq\bbL$, we write $\Delta_\bullet=\Delta\cap\bbL_\bullet$ and $\Delta_\circ=\Delta\cap\bbL_\circ$.
The augmented graph $\overline{\bbL}$ has the same vertex-set as $\bbL$ and all edges of $\bbL,\,\bbL_\bullet$ and $\bbL_\circ$ (see the center of Fig.~\ref{fig:rotated_lattice}).
We restrict the notion of simple circuits in $\overline{\bbL}$ to those that do not traverse both $e$ and $e^*$ for any $e\in\bbE^\bullet$, so that they can be embedded in $\R^2$.
We identify such circuits with their planar embedding. 

\begin{figure}
\includegraphics[scale=0.45,page=1]{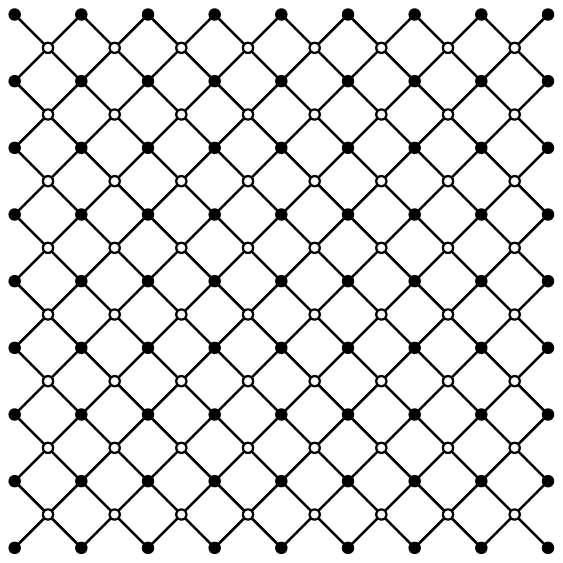}
\quad
\includegraphics[scale=0.45,page=2]{rotated_lattice_domains}
\quad
\includegraphics[scale=0.45,page=3]{rotated_lattice_domains}
\caption{Left: part of the rotated square lattice $\bbL$. Its faces are the tiles in $\bbL_\diamond$. Center: part of the augmented lattice $\overline{\bbL}$. Right: $\bbL$-domains given by the vertices strictly within simple circuits in $\overline{\bbL},\,\bbL_\bullet,\,\bbL_\circ$, respectively. The lower left and right $\bbL$-domains are even and odd, respectively.}
\label{fig:rotated_lattice}
\end{figure}

\medskip

\noindent\textbf{$\bbL$-domains.}
A finite subset $\mathcal{D}\subset\bbL$ is called an $\bbL$-domain if there exists a simple circuit $C$ in $\overline{\bbL}$ such that $\mathcal{D}$ is given by the vertices of $\bbL$ strictly within $C$, that is, in the bounded connected component of $\R^2\setminus C$ (see the right side of Fig.~\ref{fig:rotated_lattice}).
It is called \emph{even} (respectively, \emph{odd}) if $\partialex_{\bbL}\mathcal{D}\subset\bbL_\bullet$ (respectively, $\partialex_{\bbL}\mathcal{D}\subset\bbL_\circ$).
\medskip

\noindent\textbf{Weak $\bbL_\bullet$-domains.}
Fix an $\bbL$-domain $\mcal{D}$, and let $E$ be the set of edges $e\in\bbE^\bullet$ such that $e\cup e^*$ intersects $\mcal{D}$. Define $G_{\mcal{D}}$ to be the subgraph of $\bbL_\bullet$ with edge-set $E$ and vertex-set given by the endpoints of these edges. 
We call $G_{\mcal{D}}$ the \emph{weak $\bbL_\bullet$-domain} corresponding to $\mcal{D}$. Its $\mcal{D}$-boundary is defined by
\[
\partial_{\mcal{D}} G_{\mcal{D}}:=G_{\mcal{D}}\setminus \mcal{D}.
\]
Notice that there exist $\bbL$-domains $\mcal{D},\mcal{D}'$ such that $G_{\mcal{D}}=G_{\mcal{D}'}$ but $\partial_{\mcal{D}}G_{\mcal{D}}\neq\partial_{\mcal{D}'}G_{\mcal{D}'}$.

\subsubsection{Properties of the quasi-FK measures}
\label{sec:quasi-FK_prop}
In this section, we establish certain properties of the quasi-FK measures that we will need in the proof of Lemma~\ref{lem:proxi_lower_int}.

\subsubsection*{Positive association and stochastic domination.} 
The quasi-FK measures are positively associated if the cluster-weights are ordered in a specific way. 
\begin{lemma}\label{lem:qfk_fkg}
Let~\(G=(V,E)\) be a finite graph, and let~\(\mfr{b}_1,\mfr{b}_2\subset V\) be disjoint. For~\(p\in (0,1)\) and~\(1\leq q_1\leq q_2\leq q\), the measure~\(\qfk_{G;p,q}^{\mfr{b}_1,\mfr{b}_2;q_1,q_2}\) satisfies the strong FKG property.
\end{lemma}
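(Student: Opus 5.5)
The plan is to verify the FKG lattice condition
\[
\mu(\omega\cup\omega')\,\mu(\omega\cap\omega')\geq\mu(\omega)\,\mu(\omega'),\qquad \mu:=\qfk_{G;p,q}^{\mfr{b}_1,\mfr{b}_2;q_1,q_2},
\]
for all \(\omega,\omega'\in\{0,1\}^E\). Since \(\mu\) has full support, this is equivalent to strong FKG (Holley's criterion, as in~\cite[Theorem 2.19]{Gri06}). The factors \(p^{|\omega|}(1-p)^{|E\setminus\omega|}\) are modular and drop out.

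The cluster weight can be rewritten in a form where monotonicity of single-edge conditional probabilities is transparent. Add two ghost vertices \(g_1,g_2\) and connect every vertex of \(\mfr{b}_j\) to \(g_j\) by a permanently open edge. The number of clusters in \(G\) then splits into \(\clusters_\mrm{i}\), \(\clusters_{\mfr{b}_1}\) and \(\clusters_{\mfr{b}_2}\). It is cleanest, however, to use the equivalent criterion of monotone single-edge conditionals: for every edge \(e=xy\) and every configuration \(\xi\) off \(e\), the quantity \(\mu(\omega_e=1\mid\xi)\) should be nondecreasing in \(\xi\). This is equivalent to the lattice condition for strictly positive measures.

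Opening \(e\) changes the weight by \(\frac{p}{1-p}\) if \(x\) and \(y\) are already connected in \(\xi\). Otherwise it changes the weight by \(\frac{p}{1-p}\,R\), where \(R\) is the ratio of cluster weights after and before merging the cluster \(A\ni x\) with the cluster \(B\ni y\). Give each cluster a type \(\mrm{i}\), \(1\) or \(2\) (meeting \(\mfr{b}_1\); meeting \(\mfr{b}_2\) but not \(\mfr{b}_1\); meeting neither), with weights \(w_\mrm{i}=q\), \(w_1=q_1\), \(w_2=q_2\). The merged cluster has the "highest priority" type, with priority \(1>2>\mrm{i}\). Hence
\[
R=\frac{w_{\max(A,B)}}{w_A w_B}=\frac{1}{w_{\min(A,B)}}.
\]
For example, merging types \(1\) and \(2\) gives \(q_1/(q_1q_2)=1/q_2\), and merging types \(\mrm{i}\) and \(\mrm{i}\) gives \(1/q\). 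So \(R=1/w_{\text{lower-priority type}}\), and the conditional probability of opening \(e\) is \(\frac{p}{p+(1-p)\,w_{\min}}\) when \(x\not\leftrightarrow y\) (with \(w_{\min}\) read off \(\xi\)), and \(p\) when \(x\leftrightarrow y\).

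The monotonicity check is the key step. Increase \(\xi\) to \(\xi'\supseteq\xi\).

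If \(x\) and \(y\) are connected in \(\xi'\) but not in \(\xi\), the probability moves from \(\frac{p}{p+(1-p)w_{\min}}\) to \(p\). This is an increase precisely when \(w_{\min}\geq1\), which holds because \(1\leq q_1\).

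If they are disconnected in both, clusters only grow, and a cluster's type can only move upward in priority (\(\mrm{i}\to2\to1\)). So \(\min(A,B)\) in priority can only increase. We need \(w\) to be nonincreasing along the priority order, i.e.
\[
w_1\leq w_2\leq w_\mrm{i},\qquad\text{that is,}\qquad q_1\leq q_2\leq q,
\]
which is exactly the hypothesis. The \(\min\) must be taken with respect to priority; and since \(w\) is monotone along that order, \(w_{\min}=\max(w_A,w_B)\).

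The main obstacle is bookkeeping. Clusters meeting both \(\mfr{b}_1\) and \(\mfr{b}_2\) count as type \(1\) only, and the merge rule must be justified for every combination of types. Once the identity \(R=1/w_{\min}\) is verified case by case, the ordering \(1\leq q_1\leq q_2\leq q\) gives monotonicity immediately, and strong FKG follows.
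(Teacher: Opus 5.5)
Your proof is correct and is essentially the paper's argument. The paper also uses Holley's criterion to reduce strong FKG to monotonicity of the single-edge conditional probability, writes its reciprocal as $1+\frac{1-p}{p}w$ with $w\in\{1,q_1,q_2,q\}$ according to whether the endpoints are already connected and which boundary parts their clusters meet, and concludes from $1\le q_1\le q_2\le q$; your priority rule $R=1/w_{\min}$ is a compact encoding of that four-case formula, and the ghost-vertex remark is not needed.
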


\begin{proof}
Let~\(e=xy\in E\) and~\(\eta\in\{0,1\}^{E\setminus\{e\}}\), and let~\(\eta_e,\eta^e\in\{0,1\}^{E}\) coincide with~\(\eta\) on~\(E\setminus\{e\}\) while~\(\eta_e(e)=0\) and~\(\eta^e(e)=1\).
By the Holley criterion~\cite{Hol74} (see also~\cite[Chapter 2]{Gri06}), it suffices to show that
\begin{equation*}
\qfk_{G;p,q}^{\mfr{b}_1,\mfr{b}_2;q_1,q_2}(\omega(e)=1\given\omega=\eta\text{ off }e)
=\frac{1}{1+\tfrac{1-p}{p}\,q^{\clusters_\mrm{i}(\eta_e)-\clusters_\mrm{i}(\eta^e)}\,q_{1}^{\clusters_{\mfr{b}_1}(\eta_e)-\clusters_{\mfr{b}_1}(\eta^e)}\,q_{2}^{\clusters_{\mfr{b}_2}(\eta_e)-\clusters_{\mfr{b}_2}(\eta^e)}}
\end{equation*}
is increasing in~\(\eta\).
Let~\(C_x\) and~\(C_y\) be the clusters of~\(x\) and~\(y\) in~\(\eta\), respectively, and set~\(\mfr{b}=\mfr{b}_1\cup\mfr{b}_2\). Then, taking the reciprocal,
\begin{equation}\label{eq:prf:qfk_fkg}
\begin{multlined}
\frac{1}{\qfk_{G;p,q}^{\mfr{b}_1,\mfr{b}_2;q_1,q_2}(\omega(e)=1\given\omega=\eta\text{ off }e)}\\
=\begin{cases}
1+\tfrac{1-p}{p} &\text{if } C_x=C_y,\\
1+\tfrac{1-p}{p}q_1 &\text{if } C_x\neq C_y,\,C_x\cap\mfr{b}_1\neq\varnothing,\,C_y\cap\mfr{b}_1\neq\varnothing,\\
1+\tfrac{1-p}{p}q_2 &\text{if } C_x\neq C_y,\,C_x\cap\mfr{b}_1=\varnothing,\,C_x\cap\mfr{b}_2\neq\varnothing,\,C_y\cap\mfr{b}\neq\varnothing,\,\text{or vice versa},\\
1+\tfrac{1-p}{p}q &\text{if } C_x\neq C_y,\,C_x\cap\mfr{b}=\varnothing\text{ or }C_y\cap\mfr{b}=\varnothing,
\end{cases}
\end{multlined}
\end{equation}
which is decreasing in~\(\eta\) since~\(1\leq q_1\leq q_2\leq q\) by assumption.
\end{proof}

In the proof of Lemma~\ref{lem:proxi_lower_int}, we will encounter quasi-FK measures with boundary-weights~\(q_1=1\) and~\(q_2=\qbwired\).
The following stochastic domination statement will allow us to compare with the measure with homogeneous quasi-wired boundary-weight.
\begin{lemma}\label{lem:qfk_stoch_dom}
Let~\(G=(V,E)\) be a finite graph, and let~\(\mfr{b}_1,\mfr{b}_2\subset V\) be disjoint. For any~\(p\in (0,1)\) and~\(1\leq q_\mrm{b}\leq q\),
\begin{equation*}
\qfk_{G;p,q}^{\mfr{b}_1,\mfr{b}_2;1,q_\mrm{b}}\geq_\mrm{st}
\qfk_{G;p,q}^{\mfr{b}_1,\mfr{b}_2;q_\mrm{b},q_\mrm{b}}=
\qfk_{G;p,q}^{\mfr{b}_1\cup \mfr{b}_2;q_\mrm{b}}.
\end{equation*}
\end{lemma}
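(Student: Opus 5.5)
The equality is a direct check from the definition. When $q_1=q_2=q_\mrm{b}$, a configuration $\omega$ receives the weight $q_\mrm{b}^{\clusters_{\mfr{b}_1}(\omega)+\clusters_{\mfr{b}_2}(\omega)}$. By definition, $\clusters_{\mfr{b}_2}$ counts only clusters meeting $\mfr{b}_2$ but not $\mfr{b}_1$. Hence $\clusters_{\mfr{b}_1}(\omega)+\clusters_{\mfr{b}_2}(\omega)$ is exactly the number of clusters meeting $\mfr{b}_1\cup\mfr{b}_2$. The interior count $\clusters_\mrm{i}$ is the same in both descriptions, so the two weights coincide pointwise and the normalised measures are equal.

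For the stochastic domination I will use the Holley criterion, as in the proof of Lemma~\ref{lem:qfk_fkg}. Write $\mu_1:=\qfk^{\mfr{b}_1,\mfr{b}_2;1,q_\mrm{b}}$ and $\mu_2:=\qfk^{\mfr{b}_1\cup\mfr{b}_2;q_\mrm{b}}$. Both measures have full support on $\{0,1\}^E$ since $p\in(0,1)$. By Lemma~\ref{lem:qfk_fkg}, applied with $(q_1,q_2)=(q_\mrm{b},q_\mrm{b})$ and using $1\le q_\mrm{b}\le q$, the measure $\mu_2$ is strongly positively associated. Given this, it suffices to check the single-edge criterion: for every edge $e=xy$ and all $\eta\le\eta'$ in $\{0,1\}^{E\setminus\{e\}}$,
\[
\mu_1(\omega(e)=1\given \omega=\eta'\text{ off }e)\ \ge\ \mu_2(\omega(e)=1\given\omega=\eta\text{ off }e).
\]
Monotonicity of $\mu_2$ in the boundary configuration, also from Lemma~\ref{lem:qfk_fkg}, reduces this to the case $\eta=\eta'$. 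So the task is to compare the two measures' conditional probabilities of opening $e$ given the same configuration off $e$.

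For that comparison I will use the reciprocal formula \eqref{eq:prf:qfk_fkg}. Under $\mu_1$ the inverse conditional probability equals $1+\tfrac{1-p}{p}w_1$, with $w_1\in\{1,1,q_\mrm{b},q\}$ according to the four cases listed there. The cases are: $C_x=C_y$; both clusters meet $\mfr{b}_1$; one misses $\mfr{b}_1$ but meets $\mfr{b}_2$ while the other meets $\mfr{b}=\mfr{b}_1\cup\mfr{b}_2$; and at least one misses $\mfr{b}$. Under $\mu_2$ the same cases give $w_2\in\{1,q_\mrm{b},q_\mrm{b},q\}$. Since $q_\mrm{b}\ge1$, we have $w_1\le w_2$ case by case, so $\mu_1$ opens $e$ with at least the probability $\mu_2$ does. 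The one point needing care is the third case, where the clusters meet the boundary through different parts. There the merged cluster meets $\mfr{b}_1$, which changes the counts in a way that must be checked against the definition of $\clusters_{\mfr{b}_2}$. I expect this bookkeeping, rather than anything conceptual, to be the only obstacle; it is routine. Holley's theorem then gives $\mu_1\ge_\mrm{st}\mu_2$.
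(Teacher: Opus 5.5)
Your proof is correct and follows the paper's argument. The equality is read off from the definition, and the domination comes from Holley's criterion applied to the explicit single-edge formula \eqref{eq:prf:qfk_fkg}, which is increasing in $\eta$ and decreasing in $q_1\in[1,q_2]$. Your version additionally spells out two steps the paper leaves implicit: that you use the FKG property of $\qfk^{\mfr{b}_1\cup\mfr{b}_2;q_\mrm{b}}$ (Lemma~\ref{lem:qfk_fkg}) to reduce to equal boundary configurations, and that the case-by-case bound $w_1\le w_2$ is exactly this monotonicity in $q_1$.
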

\begin{proof}
The equality is a simple consequence of the definition. 
It remains to prove the stochastic domination statement.
In the notation of the proof of Lemma~\ref{lem:qfk_fkg}, by the Holley criterion~\cite{Hol74} (see also~\cite[Chapter 2]{Gri06}), it suffices to show that
\begin{equation*}
\qfk_{G;p,q}^{\mfr{b}_1,\mfr{b}_2;q_1,q_2}(\omega(e)=1\given\omega=\eta\text{ off }e)
\end{equation*}
is increasing in~\(\eta\) and decreasing in~\(q_1\in [1,q_2]\). This follows directly from~\eqref{eq:prf:qfk_fkg}.

\end{proof}

\subsubsection{Exponential relaxation of quasi-wired measures.} 

\begin{lemma}\label{lem:qfk_relax}
Let $q>4$ and $p=p_c(q)$. There exist constants $C,c>0$ such that the following holds. Let $\mcal{D}$ be an $\bbL$-domain, and consider the corresponding weak $\bbL_\bullet$-domain $G_\mcal{D}=(V,E)$. Let~\(\mfr{b}=\partial_{\mcal{D}}G_{\mcal{D}}\) be the $\mcal{D}$-boundary of~\(G_\mcal{D}\). Then, for any $F\subseteq E$, 
\begin{equation*}
\rmd_{\mrm{TV}}\big(\qfk_{G_{\mcal{D}};p,q}^{\mfr{b};\qbwired}\vert _F,\fk_{p,q}^{\fkwired}\vert_F\big)\leq C\,\abs{\bbV_F}\,\big(\mrm{diam}(\bbV_F)+\rmd_\infty(\bbV_F,V^c)\big)^2\,e^{-c\,\rmd_\infty(\bbV_F,V^c)}.
\end{equation*}
\end{lemma}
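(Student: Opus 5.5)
We will prove exponential relaxation of the quasi-wired measure on a weak $\bbL_\bullet$-domain to the infinite-volume wired measure by sandwiching it between two measures that both converge exponentially fast to $\fk^\fkwired_{p,q}$, and then using a coupling argument for increasing events. Set $r:=\rmd_\infty(\bbV_F,V^c)$ and let $\Lambda$ be the box of radius of order $\mrm{diam}(\bbV_F)+r$ around $\bbV_F$, which contains $G_\mcal{D}$ near $F$.

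\textbf{Upper bound.} Since $\qbwired\geq 1$, the quasi-wired measure is dominated by the wired FK measure on $G_\mcal{D}$. This follows from the Holley criterion as in Lemma~\ref{lem:qfk_stoch_dom}, comparing $q_{\mrm b}=1$ with $q_{\mrm b}=\qbwired$. The wired measure on $G_\mcal{D}$ is in turn dominated by $\fk^\fkwired_{p,q}$ restricted to $G_\mcal{D}$, by monotonicity in boundary conditions. Hence every increasing event on $F$ has $\qfk$-probability at most its $\fk^\fkwired$-probability.

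\textbf{Lower bound.} This is the main obstacle, because $\qbwired<q$ and so free boundary conditions give too weak a lower bound. Here I would use the BKW/six-vertex coupling of~\cite{GlaPel23}, adapted as in Section~\ref{sec:coupling_under_dobr}. On an $\bbL$-domain, $\qfk^{\mfr b;\qbwired}$ is the image of a six-vertex spin measure with constant $+$ boundary spins on $\sigma_\bullet$ and suitable boundary weights. That spin measure is in turn coupled with an ATRC-type measure with wired $\omega_{\tau\tau'}$ boundary. By the unique-Gibbs/exponential-mixing results for ATRC imported from~\cite{AouDobGla24,DobGlaOtt25}, this measure agrees with its infinite-volume limit on $\bbV_F$ up to an error $C|\bbV_F|(\mrm{diam}+r)^2e^{-cr}$.

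Concretely, I would carry out the following steps.
\begin{enumerate}
\item Show that with probability at least $1-C(\mrm{diam}+r)^2 e^{-cr}$ there is an $\omega_{\tau\tau'}$ circuit, or equivalently a $\sigma_\bullet$-constant circuit, separating $\bbV_F$ from $V^c$. This uses the exponential decay of dual $\omega_{\tau\tau'}^*$ connections, with a union bound over starting points, which produces the polynomial prefactor.
\item Conditionally on the outermost such circuit, use the domain Markov property of the six-vertex/BKW coupling. Inside the circuit, the six-vertex spins are distributed as in the same domain with flat boundary conditions, independently of whether the original boundary weight was quasi-wired.
\item Do the same for the infinite-volume wired measure, which by~\cite{GlaPel23} is the BKW image of the $+$ flat infinite-volume six-vertex measure.
\item Couple the two inside configurations through the common circuit. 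Taking the union bound over edges of $F$ then gives the factor $|\bbV_F|$.
\end{enumerate}

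The delicate point is step 2: the BKW map is nonlocal through loop orientations. I would handle it by exploring the outermost circuit from the outside, and by noting that the loops crossing it are fixed once the six-vertex configuration on the circuit is fixed. This requires a careful choice of the circuit in $\overline{\bbL}$ so that the interior is again an $\bbL$-domain on which the quasi-wired weight is exact.
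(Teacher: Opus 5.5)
There is a genuine gap, and the overall architecture of your argument does not close either.

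\textbf{The sandwich.} In your upper bound the monotonicity goes the wrong way. The wired measure on $G_{\mcal D}$ \emph{dominates} $\fk^{\fkwired}_{p,q}$ restricted to $G_{\mcal D}$, because the infinite-volume wired measure is a decreasing limit of finite-volume wired measures. So $\qfk^{\mfr b;\qbwired}_{G_{\mcal D}}\preccurlyeq\fk^{\fkwired}_{G_{\mcal D}}$ gives no comparison at all between the quasi-wired measure and the infinite-volume wired measure. Moreover, a sandwich yields a total-variation bound on $F$ only via a measure lying below both of them, together with a monotone coupling in which top and bottom agree on $F$. Your lower-bound paragraph never produces such a measure. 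Instead it switches to a direct coupling, which, if it worked, would prove the lemma outright and make the sandwich superfluous.

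\textbf{The direct coupling.} This is where the argument actually fails. Step~2 claims that, given an outermost $\omega_{\tau\tau'}$-open (hence $\sigma_\bullet$-constant) primal circuit, the six-vertex spins inside are distributed as in the enclosed domain with flat boundary conditions. This is false.
\begin{itemize}
\item Each tile whose primal edge lies on the circuit has one dual corner inside and one outside. Its weight is $\svc$ or $1$ according to whether these two values of $\sigma_\circ$ agree, and nothing fixes $\sigma_\circ$ along the circuit. So inside and outside remain coupled.
\item In ATRC terms, an $\omega_{\tau\tau'}$-circuit does not decouple the $\omega_\tau$ cluster counts across it; this is exactly the weak Markov property of the model.
\item A flat boundary condition requires a circuit in $\overline{\bbL}$ on which both $\sigma_\bullet$ and $\sigma_\circ$ are constant. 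Compare the contour $\mcal E'$ in the proof of Lemma~\ref{lem:proxi_lower_int}, where the ice rule forces $\sigma_\circ$ to be constant along it.
\item The existence of such circuits with exponentially high probability does not follow from the exponential decay of $\omega^*_{\tau\tau'}$ used in your step~1, since that only controls $\sigma_\bullet$.
\end{itemize}

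\textbf{The missing idea.} The difficulty you flag, non-locality of BKW, is misplaced. In the direction you need, six-vertex to FK (Lemma~\ref{lem:6V_spins_to_FK}), the map is local: the state of $e_t$ is determined by the spins at the four corners of $t$ and an independent uniform $U'_t$. The paper's proof uses exactly this.
\begin{enumerate}
\item $\qfk^{\mfr b;\qbwired}_{G_{\mcal D}}$ is the image under this local rule of the flat six-vertex measure on $\mcal D$, and $\fk^{\fkwired}_{p,q}$ is the image of its infinite-volume limit \cite[Section 3]{GlaPel23}.
\item Hence the total-variation distance on $F$ is at most that of the six-vertex measures on the tiles of $F$.
\item That distance is controlled by the imported six-vertex relaxation result \cite[Proposition 5.8]{DobGlaOtt25}.
\end{enumerate}
No sandwich and no circuit exploration are needed.
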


\begin{proof}
The statement follow directly from the analogue~\cite[Proposition 5.8]{DobGlaOtt25} for the six-vertex height function measures and the fact that the quasi-FK measures are obtained from them by local operations. See Section~\ref{sec:coupling:sixv_to_fk} and Figure~\ref{fig:6V_to_oriented_loops} for the local sampling rule, and~\cite[Section 3]{GlaPel23} for a proof.
\end{proof}

\begin{remark}
An analogue of the statement for the measures~\(\qfk_{G_{\mcal{D}};p,q}^{\mfr{b}_1,\mfr{b}_2;1,\qbwired}\) with inhomogeneous boundary weights~\(1\) and~\(\qbwired\) on partitions~\(\mfr{b}_1,\mfr{b}_2\) of~\(\mfr{b}\) can be proved in exactly the same way. However, the coupling in~\cite[Section 3]{GlaPel23} is not formulated in this generality.  
\end{remark}

\subsubsection{Box crossing property at the boundary.}
Consider the quasi-wired FK measure on a weak $\bbL_\bullet$-domain with $q>4$ and $p=p_c$. As a consequence of the relaxation statement, Lemma~\ref{lem:qfk_relax}, we can find connections in the bulk of the domain with high probability. In Section~\ref{sec:proof_proxi_lower_int}, we will require this property also close to the boundary.

A \emph{rectangle of size}~\(k\) by~\(l\) is a set of the form $[a,b]\times[c,d]$ with $b-a=k$ and $d-c=l$.
We say that a rectangle $R:=[a,b]\times[c,d]$ in $\bbL$ is \emph{crossed horizontally} in $\omega\in\{0,1\}^{\bbE_R}$ if there exists a path in $\omega$ connecting 
\[
\mrm{L}_R:=\{a\}\times [c,d]\quad\text{and}\quad \mrm{R}_R:=\{b\}\times [c,d].
\] 
We denote this event by $\mcal{H}(R)$. Similarly, $R$ is \emph{crossed vertically} in $\eta\in\{0,1\}^{\bbE_R}$ if there exists a path in $\eta$ connecting
\[
\mrm{T}_R:=[a,b]\times\{d\}\quad\text{and}\quad \mrm{B}_R:=[a,b]\times\{c\},
\] 
and we denote this event by $\mcal{V}(R)$.
Recall the definition of $\bbL$-domains and weak $\bbL_\bullet$-domains in Section~\ref{sec:rotated_lattice} and the quasi-FK measures introduced in Section~\ref{sec:quasi_fk_def}.

\begin{lemma}\label{lem:box_crossing_qfk}
Let $q>4$ and $p=p_c(q)$. There exist $C,c>0$ such that the following holds for $\varepsilon>0$ sufficiently small. 
Let $\mcal{D}$ be an $\bbL$-domain, and consider the corresponding weak $\bbL_\bullet$-domain $G=G_\mcal{D}=(V,E)$ with its $\mcal{D}$-boundary $\mfr{b}:=\partial_{\mcal{D}}G_{\mcal{D}}$.
Let $k>0$, and let $R$ be a rectangle of size $\lfloor\varepsilon k\rfloor$ by $k$ such that $\mrm{L}_R\subset\mfr{b}$. Let $R'\subset R$ be the unique rectangle of size $\lfloor\varepsilon k/2\rfloor$ by $k$ such that $\mrm{L}_{R'}=\mrm{L}_{R}$. Then,  
\[
\qfk_{G;p,q}^{\mfr{b};\qbwired}[\mcal{H}(R')]\geq 1-Ck^4\,e^{-c\varepsilon k}.
\]
\end{lemma}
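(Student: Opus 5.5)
The plan is to combine the bulk relaxation of Lemma~\ref{lem:qfk_relax} with a duality argument near the boundary. The complement of $\mcal{H}(R')$ is, by planar duality, the event that $R'$ is crossed vertically by a dual path in $\omega^*$ from $\mrm{T}_{R'}$ to $\mrm{B}_{R'}$. Since $\mrm{L}_{R'}\subset\mfr{b}$ lies on the wired-like boundary, it suffices to show that dual paths of length of order $k$ inside the thin strip $R'$ are unlikely. We will use this to reduce to dual connections of length at least $\varepsilon k/4$ in a region where we can compare with $\fk_{p,q}^{\fkwired}$. In that measure, dual connections decay exponentially, because the free measure at $p_c(q)$ with $q>4$ has exponential decay.

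\textbf{Step 1.} We partition the vertical side of $R'$ into $O(1/\varepsilon)$ subsegments of height $\lfloor\varepsilon k/4\rfloor$. A dual vertical crossing of $R'$ must produce, inside one of the sub-rectangles $Q_j$ of size $\lfloor\varepsilon k/2\rfloor\times\lfloor\varepsilon k/4\rfloor$, a dual crossing of $Q_j$ between its top and bottom sides. Such a crossing is a dual path of diameter at least $\varepsilon k/4$ that stays within distance $\varepsilon k/2$ of $\mfr{b}$.

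\textbf{Step 2.} We control these short dual paths near the boundary. The key difficulty is that Lemma~\ref{lem:qfk_relax} only gives a good bound when $\rmd_\infty(\bbV_F,V^c)$ is large, whereas here the paths may touch $\mfr{b}$. We would handle this by monotonicity: dual paths are decreasing events, so by Lemma~\ref{lem:qfk_fkg} and the domain Markov property we can condition on the configuration outside a larger box. The monotone bound then points the wrong way, so instead we use the BKW/six-vertex representation behind Lemma~\ref{lem:qfk_relax}. A dual path in $\omega$ starting at distance $\le\varepsilon k$ from the quasi-wired boundary corresponds to a level line (a path of constant height/spin) in the six-vertex height function. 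The height function with the relevant boundary conditions is localised; the input is \cite[Proposition~5.8]{DobGlaOtt25}, including its boundary version. Its exponential decay of long level lines yields a probability at most $C k^2 e^{-c\varepsilon k}$ that a given dual point is connected to distance $\varepsilon k/4$.

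\textbf{Step 3.} We conclude with a union bound. There are $O(k\cdot\varepsilon k)$ possible starting points of the dual crossing, each of probability at most $Ck^2e^{-c\varepsilon k}$, giving $1-\qfk[\mcal{H}(R')]\le Ck^4e^{-c\varepsilon k}$ after adjusting constants. The condition that $\varepsilon$ be small is used so that the doubled rectangle $R$ fits the geometry of the domain: the points of $R'$ then see the boundary $\mrm{L}_R$ within distance $\varepsilon k/2$, and $R$ remains in $G$.

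\textbf{Main obstacle.} Step 2 is the hard part: obtaining exponential decay of dual connections \emph{at} the quasi-wired boundary rather than in the bulk. The first thing to try is to rerun the six-vertex relaxation argument with the boundary included. If that fails, the fallback is a comparison: reflecting the domain locally around $\mrm{L}_R$, and using that weight $\qbwired\le q$ dominates free, to push the boundary into the bulk of an enlarged domain where Lemma~\ref{lem:qfk_relax} applies directly.
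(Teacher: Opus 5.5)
There is a genuine gap, and it sits where you put your ``main obstacle'': Step~2 is the whole content of the lemma, and the proposal assumes it rather than proves it. Steps~1 and~3 are routine bookkeeping. But they need a bound $Ck^2e^{-c\varepsilon k}$ on dual connections of length $\varepsilon k/4$ that \emph{touch} $\mfr{b}$, and nothing available gives this.

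\begin{itemize}
\item Lemma~\ref{lem:qfk_relax} only controls events supported at distance $d$ from $V^c$, with error $e^{-cd}$. It says nothing about paths adjacent to the boundary.
\item The ``boundary version'' of Proposition~5.8 of \cite{DobGlaOtt25} that you invoke is not available. It is essentially the statement you are trying to prove.
\item The translation to the six-vertex picture loses the needed information. In the BKW coupling a dual path of $\omega$ only forces $\sigma_\circ$ to be constant along it. In the localised regime the spins are ordered, so long constant-spin paths are typical, not exponentially rare.
\item The fallback compares in the wrong direction. The measures $\qfk_{G}^{\mfr{b};q_{\mathrm b}}$ are stochastically decreasing in $q_{\mathrm b}\in[1,q]$. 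So $\qbwired\le q$ only says that the quasi-wired measure dominates the free one, which yields \emph{lower} bounds for decreasing events such as dual crossings. An upper bound needs a measure dominated by $\qfk_{G}^{\mfr{b};\qbwired}$ that still has wired-phase behaviour at $\mrm{L}_R$. The free measure cannot serve, since its dual percolates at $p_c(q)$, $q>4$. A reflected domain does not provide the required domination either.
\end{itemize}

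The missing idea is how to make monotonicity point the right way using only \emph{bulk} relaxation. The paper proceeds as follows.
\begin{itemize}
\item It takes adjacent strips $R_1\subset R'$ and $R_2\subset R\setminus R'$, each shortened by $\varepsilon k/2$ at both ends.
\item It gets a primal vertical crossing of $R_2$ from Lemma~\ref{lem:qfk_relax}. This works because $R_2$ is at distance $\varepsilon k/2$ from $\mrm{L}_R$.
\item It pays a finite-energy cost $e^{\cstFinEne\varepsilon k}$ to wire the top and bottom sides of $R_1\cup R_2$ from outside.
\item It then uses FKG (Lemma~\ref{lem:qfk_fkg}). This bounds the probability of $\mcal{H}(R_1)^c$ by its probability $r$ under the measure on $R_1\cup R_2$ that is wired on three sides and quasi-wired only on $\mrm{L}_{R_1}$.
\item A square-root trick removes the remaining boundary issue. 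In a square $Q$ of side $(1-\varepsilon)k$ with homogeneous quasi-wired boundary, invariance under rotation by $\pi/2$ together with FKG gives that $r^4$ is at most the probability that an inner square $Q'$ is surrounded by a dual circuit. The paper then bounds this via Lemma~\ref{lem:qfk_relax}.
\end{itemize}
This is also where ``$\varepsilon$ sufficiently small'' is actually used: it keeps the finite-energy cost $e^{\cstFinEne\varepsilon k}$ affordable against the decay. It is not needed for the geometric reason you give.
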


\begin{proof}
Let $0<\varepsilon<1/9$, and take $q,p,G,k,R,R'$ as in the statement. The constants $C,C',c,c'>0$ that appear throughout the proof depend on $q$ only and may change from line to line. 
To lighten notation, we omit the floor and ceiling brackets, and we omit $p$ and $q$ from the subscripts. We can assume without loss of generality that 
\[
R=[0,\varepsilon k]\times[0,k]\quad\text{so that}\quad R'=\left[0,\tfrac{\varepsilon k}{2}\right]\times[0,k].
\]
Define the rectangles (see Figure~\ref{fig:box_crossing})
\begin{align*}
R_1:=\left[0,\tfrac{\varepsilon k}{2}\right]&\times \left[\tfrac{\varepsilon k}{2},(1-\tfrac{\varepsilon}{2})k\right]\subset R',\\
R_2:=\left[\tfrac{\varepsilon k}{2},\varepsilon k\right]&\times \left[\tfrac{\varepsilon k}{2},(1-\tfrac{\varepsilon}{2})k\right]\subset R\setminus R'.
\end{align*}

\begin{figure}
\includegraphics[scale=0.5,page=1]{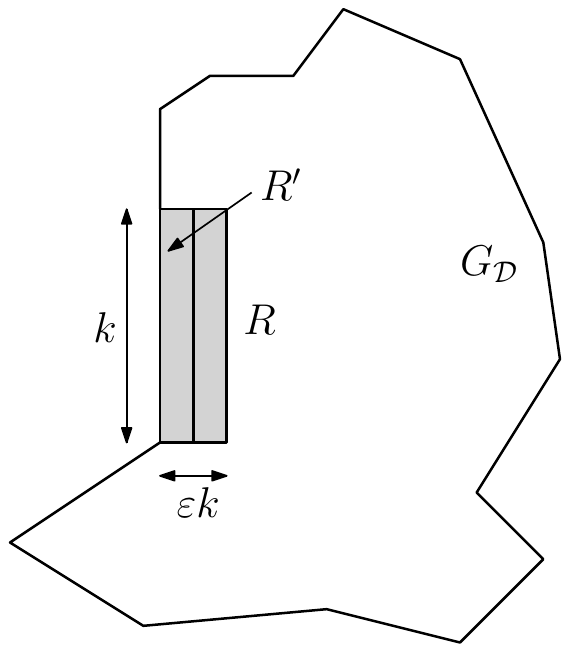}
\qquad
\includegraphics[scale=0.5,page=2]{box-crossing_areas}
\caption{Left: a weak \(\bbL_\bullet\)-domain~\(G_\mcal{D}\) and a rectangle~\(R\) (light grey) of size~\(\lfloor\varepsilon k\rfloor\) by~\(k\) with~\(\mrm{L}_R\subset\partial_\mcal{D} G_\mcal{D}\). Right: the rectangle~\(R\) (light grey) containing the rectangles~\(R_1\) and~\(R_2\), the annulus~\(Q\setminus Q'\) (dark grey rising lines) and the square~\(Q'\) (white) inside the square~\(Q\).}
\label{fig:box_crossing}
\end{figure}

Notice that $\mcal{H}(R_1)\subset\mcal{H}(R')$. Moreover, by Lemma~\ref{lem:qfk_relax}, we have
\begin{equation}\label{eq:box_cr_1}
\qfk_{G}^{\mfr{b};\qbwired}[\mcal{V}(R_2)^c]\leq C\varepsilon k^4 e^{-c\varepsilon k}+C'ke^{-c'\varepsilon k}.
\end{equation}
Hence, we can assume that $\mcal{V}(R_2)$ occurs.
Now, consider the event $\mcal{A}$ that all vertices on the top $\mrm{T}_{R_1\cup R_2}$ are connected to each other outside $R_1\cup R_2$ and that all vertices on the bottom $\mrm{B}_{R_1\cup R_2}$ are connected to each other outside $R_1\cup R_2$. By the finite energy property, there exists $c_\mrm{FE}>0$ depending on $q$ only such that
\begin{equation}\label{eq:box_cr_2}
\qfk_{G}^{\mfr{b};\qbwired}[\mcal{H}(R_1)^c\cap\mcal{V}(R_2)]\leq 
\qfk_{G}^{\mfr{b};\qbwired}\left[\mcal{H}(R_1)^c\cap\mcal{V}(R_2)\cap \mcal{A}\right]e^{c_\mrm{FE}\varepsilon k}.
\end{equation}
Conditioning on $\mcal{V}(R_2)\cap \mcal{A}$, positive association (Lemma~\ref{lem:qfk_fkg}) implies 
\begin{equation}\label{eq:box_cr_3}
\qfk_{G}^{\mfr{b};\qbwired}\left[\mcal{H}(R_1)^c\cap\mcal{V}(R_2)\cap \mcal{A}\right]\leq \qfk_{R_1\cup R_2}^{\mfr{b}_1,\mfr{b}_2;1,\qbwired}[\mcal{H}(R_1)^c]=:r,
\end{equation} 
where $\mfr{b}_1=(\partialin (R_1\cup R_2))\setminus \mrm{L}_{R_1}$ and $\mfr{b}_2=\mrm{L}_{R_1}$.
Now, consider the nested squares
\[
Q:=[0,(1-\varepsilon)k]\times \left[\tfrac{\varepsilon k}{2},(1-\tfrac{\varepsilon}{2})k\right]\quad\text{and}\quad
Q':=\left[\varepsilon k, (1-2\varepsilon)k\right]\times\left[\tfrac{3\varepsilon k}{2},(1-\tfrac{3\varepsilon}{2})k\right].
\]
Then, 
\begin{equation}\label{eq:box_cr_4}
r^4\leq\qfk_{Q}^{\qbwired}[Q'\text{ is surrounded by a dual circuit}]\leq Ck^4 e^{-c\varepsilon k},
\end{equation}
where we used positive association (Lemma~\ref{lem:qfk_fkg}) and symmetry (invariance of the measure~\(\qfk_{Q}^{\qbwired}\) under a rotation of~\(\pi/2\) about the centre of~\(Q\)) for the first inequality, and Lemma~\ref{lem:qfk_relax} for the second one. Combining equations~\eqref{eq:box_cr_1}--\eqref{eq:box_cr_4}, we obtain 
\[
\qfk_{G}^{\mfr{b};\qbwired}[\mcal{H}(R')^c]\leq 
C k^4 e^{-c \varepsilon k}+C'ke^{-c' k}e^{c_\mrm{FE}\varepsilon k}.
\]
Taking $\varepsilon<\min\{1/4,c'/c_\mrm{FE}\}$ finishes the proof.
\end{proof}

\subsubsection{Proof of Lemma~\ref{lem:proxi_lower_int}}
\label{sec:proof_proxi_lower_int}
Recall the setting of Section~\ref{sec:coupling_under_dobr} and the definitions of the interfaces in~\eqref{eq:interfaces_def}. Since \(q,p_c,\svc,\svcb\) as in~\eqref{eq:parameters_bulk}-\eqref{eq:parameters_bnd} are again fixed, we omit them from the subscripts of the measures.
As described at the beginning of Section~\ref{sec:proxi_int}, we aim to show that the lower FK interface~\(\Gamma_\fk^2\) is close to the Peierls contour between~\(\pm1\) in the six-vertex spin configuration~\(\sigma_\bullet\).

Consider the cluster~\(\mcal{C}_+\subset\bar{\msf{V}}\subset\bbL_\bullet\) of the boundary~\(\partialin\bar{\msf{V}}\) in the subgraph of~\(\msf{K}\) induced by the vertices~\(i\in\bar{\msf{V}}\) for which~\(\sigma_\bullet(i)=+1\).
Let~\(\mcal{E}'\subset *\partialedge_{\msf{K}}\mcal{C}_+\subset*\bar{\msf{E}}\subset\bbE^\circ\) be the path connecting~\(v'_L\) and~\(v'_R\).
First observe that~\(\sigma_\bullet(i)\neq\sigma_\bullet(j)\) for all~\(ij\in *\mcal{E}'\). Therefore, as~\(\sigma_\bullet\) is constant on edges in~\(\omega_{\tau\tau'}\) in the coupling~\(\psi^{1/1}_\msf{G}\), it holds that~\(\mcal{E}'\subset\omega_{\tau\tau'}^*\). This implies~\(\bbV_{\mcal{E}'}\subset\mcal{C}'_{v'_L}\), whence
\begin{equation}\label{eq:prf:proxi_lower_int1}
\rmd_\mrm{H}(\Gamma_\fk^2,\mcal{C}'_{v'_L})\leq
\rmd_\mrm{H}(\Gamma_\fk^2,\bbV_{\mcal{E}'}).
\end{equation}
Moreover, since~\(\sigma_\bullet\) is also constant on the FK random variable~\(\omega\), it also holds that~\(\mcal{E}'\subset\omega^*\), whence the lower FK interfaces~\(\Gamma_\fk^2\) lies below~\(\bbV_{\mcal{E}'}\). Therefore, it suffices to show that~\(\Gamma_\fk^2\) does not go far below~\(\mcal{E}'\).

\begin{figure}
\includegraphics[scale=0.35,page=1]{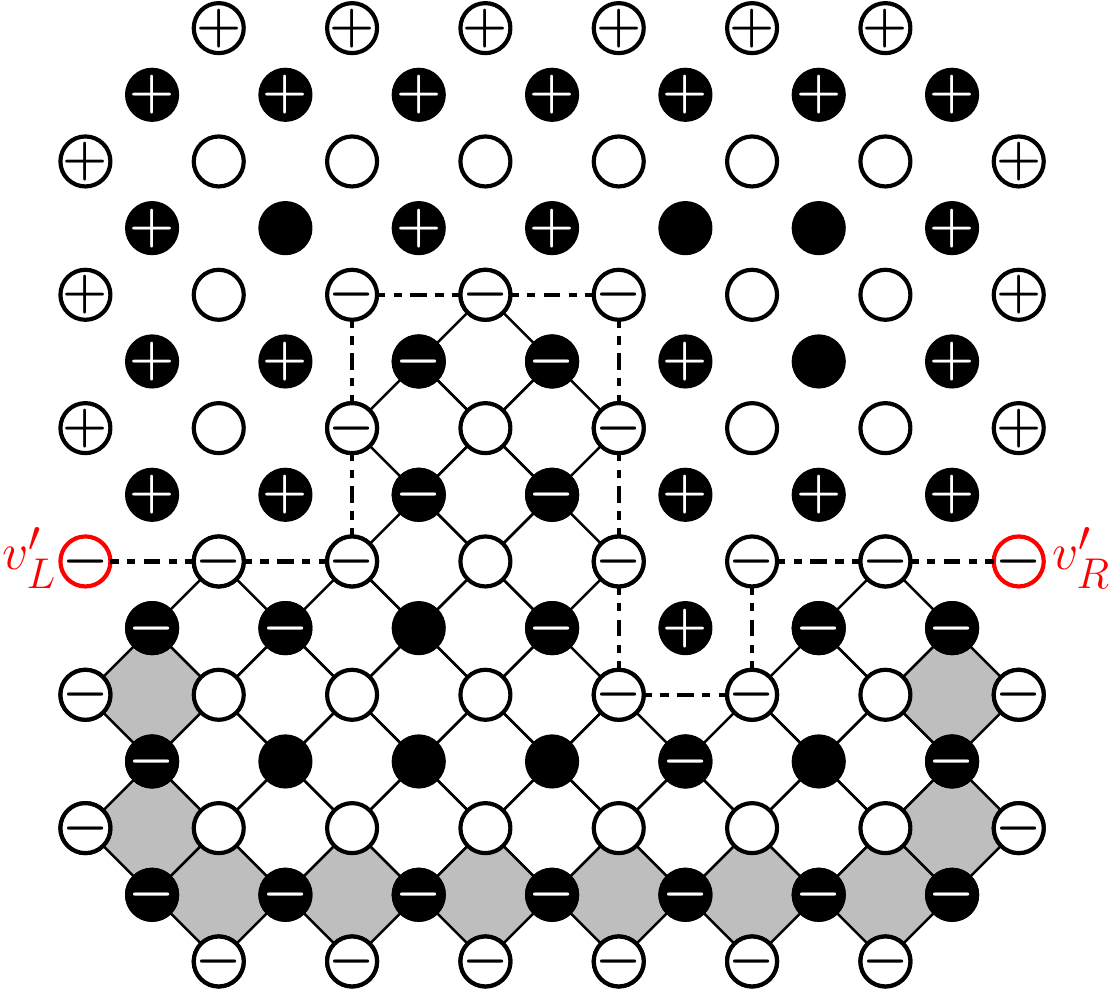}
\quad
\includegraphics[scale=0.35,page=2]{order-order_prox_lower_int_domain}
\caption{Left: A realisation~\(E'\) (dashed edges) of the Peierls contour~\(\mcal{E}'\) from~\(v'_L\) to~\(v'_R\) separating~\(\sigma_\bullet=+\) and~\(\sigma_\bullet=-\), and the tiles in~\(\msf{A}_{E'}^\rmi\) (white) and in~\(\msf{A}_{E'}^\rmb\) (grey). Right: The associated graph~\(\msf{G}_{E'}=(\msf{V}_{E'},\msf{E}_{E'})\) on which the quasi-FK random variable is defined. Boundary vertices in~\(\mfr{b}_1\) and~\(\mfr{b}_2\) are marked with their assigned boundary-weights~\(1\) and~\(\qbwired\), respectively.}
\label{fig:prox_lower_int_domain}
\end{figure}

We will now determine the conditional distributions of~\(\sigma\) and~\(\omega\) below~\(\mcal{E}'\). First, we need to define the relevant graphs and measures. Fix a realisation~\(E'\subset*\bar{\msf{E}}\) of~\(\mcal{E}'\). Let~\(\msf{D}_{E'}\) be the \(\bbL\)-domain given by the vertices in~\(\msf{D}\setminus\bbV_{E'\cup *E'}\) below~\(E'\) (see the left side of Figure~\ref{fig:prox_lower_int_domain}).
Moreover, let~\(\msf{A}_{E'},\msf{A}_{E'}^\rmi,\msf{A}_{E'}^\rmb\) be the tiles in~\(\msf{A},\msf{A}^\rmi,\msf{A}^\rmb\), respectively, that intersect~\(\msf{D}_{E'}\).
Let~\(\spin_{\msf{D}_{E'}}^{-,-}\) be the probability measure on~\(\sigma\in\{\pm1\}^{\bbL_\bullet}\times\{\pm1\}^{\bbL_\circ}\) defined by
\begin{equation*}
\spin_{\msf{D}_{E'}}^{-,-}(\sigma)
\propto
\svc^{|T_{5,6}(\sigma)\cap\msf{A}_{E'}^\rmi|}\,\svcb^{|T_{5,6}(\sigma)\cap\msf{A}_{E'}^\rmb|}\,\mathds{1}_{\sigma(x)=-1\,\forall x\in\bbL\setminus\msf{D}_{E'}}\,\mathds{1}_{\mathrm{ice}}(\sigma),
\end{equation*}
where~\(T_{5,6}(\sigma)\) is again the set of tiles of types 5-6 in~\(\sigma\).
Then, the spatial Markov property of the six-vertex spin measures implies the following.

\begin{claim}\label{claim:prf:proxi_lower_int1}
The conditional law of~\(\sigma\) below~\(E'\) is given by
\begin{equation*}
\Psi^{1/1}_\msf{G}\big(\sigma|_{\msf{D}_{E'}}\in\cdot\bgiven\mcal{E}'=E'\big)
=\spin_{\msf{D}_{E'}}^{-,-}(\sigma|_{\msf{D}_{E'}}\in\cdot\,).
\end{equation*}
\end{claim}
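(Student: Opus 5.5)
We plan to prove the claim directly from the product structure of the weight in Definition~\ref{def:six-vertex}, using Lemma~\ref{lem:fk11_to_sixv} to identify the law of \(\sigma\) under \(\Psi^{1/1}_\msf{G}\) with \(\spin_\msf{D}^{+-,+-}\).

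The first step is to describe the event \(\{\mcal{E}'=E'\}\) in terms of \(\sigma_\bullet\) only. We claim it is determined by the values of \(\sigma_\bullet\) on the primal vertices weakly above \(E'\), namely the endpoints of \(*E'\) and the region above it. Indeed, \(\mcal{E}'=E'\) means the following: the upper endpoints of the edges in \(*E'\) lie in \(\mcal{C}_+\), the lower endpoints have \(\sigma_\bullet=-1\), and these lower endpoints separate \(\mcal{C}_+\) from the region below. Exploring \(\mcal{C}_+\) from the upper boundary only reveals spins inside \(\mcal{C}_+\) and on its outer edge-boundary. On this event, \(\mcal{C}_+\) lies weakly above \(E'\), and the lower endpoints of \(*E'\) carry spin \(-1\). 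Hence \(\{\mcal{E}'=E'\}=\{\sigma_\bullet|_{U}\in\mathcal{S}_{E'}\}\) for some set \(U\) of primal vertices weakly above \(E'\) and some set of configurations \(\mathcal{S}_{E'}\). In particular, the event forces \(\sigma_\bullet=-1\) on the primal vertices of \(\bbL\setminus\msf{D}_{E'}\) adjacent to \(\msf{D}_{E'}\).

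The second step fixes the dual spins along \(E'\) through the ice rule~\eqref{eq:ice-rule_spins}. Every tile crossed by \(E'\) has \(\sigma_\bullet\) non-constant on \(e_t\), so \(\sigma_\circ\) is constant along each dual edge of \(E'\). Since \(E'\) runs from \(v'_L\) to \(v'_R\), and \(\sigma_\circ(v'_L)=-1\) by the boundary condition \(\Sigma^{+-}_{\msf{B}'}\), we get \(\sigma_\circ\equiv-1\) on \(\bbV_{E'}\). Outside \(\msf{D}\), below \(E'\), the boundary condition already gives \(-1\) for both spins. Altogether, on \(\{\mcal{E}'=E'\}\) we have \(\sigma\equiv-1\) on \(\partialex_\bbL\msf{D}_{E'}\), and this is exactly the boundary condition of \(\spin_{\msf{D}_{E'}}^{-,-}\).

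The third step is the factorisation. The weight is \(\svc^{|T^\rmi_{5,6}|}\svcb^{|T^\rmb_{5,6}|}\mathds{1}_{\mathrm{ice}}\), a product over tiles. Split the tiles into those intersecting \(\msf{D}_{E'}\), namely \(\msf{A}_{E'}\), and the rest. A tile in \(\msf{A}_{E'}\) has its corners in \(\msf{D}_{E'}\cup\partialex_\bbL\msf{D}_{E'}\), so once the boundary values \(-1\) are fixed, its factor depends only on \(\sigma|_{\msf{D}_{E'}}\). The remaining tiles lie weakly above \(E'\) and do not see \(\sigma|_{\msf{D}_{E'}}\). Both the indicator of \(\{\mcal{E}'=E'\}\) and the remaining boundary indicators are also functions of the spins outside \(\msf{D}_{E'}\). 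Therefore the joint weight factorises as \(\Phi(\sigma|_{\msf{D}_{E'}})\cdot\Phi'(\sigma|_{\bbL\setminus\msf{D}_{E'}})\). Here \(\Phi\) is precisely the unnormalised weight of \(\spin_{\msf{D}_{E'}}^{-,-}\); the tiles of \(\msf{A}_{E'}^\rmb\) carry \(\svcb\) and those of \(\msf{A}_{E'}^\rmi\) carry \(\svc\), consistently with the definition. Summing over the outer configuration and normalising gives the claim.

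The main obstacle is the first step: verifying carefully that \(\{\mcal{E}'=E'\}\) is measurable with respect to the spins weakly above \(E'\). The cases to check are the tiles meeting the boundary of \(\msf{D}\) near \(v'_L\) and \(v'_R\), and the possible diagonal contacts where \(\msf{D}_{E'}\) pinches. We would handle these by checking that the definition of \(\msf{D}_{E'}\) as an \(\bbL\)-domain strictly below \(E'\) puts every such tile either in \(\msf{A}_{E'}\) with known outer corners or entirely above \(E'\).
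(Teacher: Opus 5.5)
Your proposal is correct and is essentially the paper's argument. The paper simply invokes the spatial Markov property of the six-vertex spin measure, and your three steps (measurability of $\{\mcal{E}'=E'\}$ with respect to $\sigma_\bullet$ weakly above $E'$, ice-rule propagation giving $\sigma_\circ\equiv-1$ on $\bbV_{E'}$, and the tile-by-tile factorisation) are an explicit proof of that property in this setting.

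There is one harmless slip. A tile of $\msf{A}_{E'}$ need not have all its corners in $\msf{D}_{E'}\cup\partialex_{\bbL}\msf{D}_{E'}$: if $E'$ bulges around a single primal site $j$, the tile whose dual edge closes the bulge has the corner $j\in\bbV_{*E'}$, all four of whose dual neighbours lie on $E'$. However, such a corner is a lower endpoint of $*E'$, hence equals $-1$ on the event by your first step, so the factorisation is unaffected.
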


Now, let~\(\msf{E}_{E'}=\{e_t:t\in\msf{A}_{E'}^\rmi\}\) and~\(\msf{V}_{E'}=\bbV_{\msf{E}_{E'}}\) (see the right side of Figure~\ref{fig:prox_lower_int_domain}), and set~\(\msf{G}_{E'}=(\msf{V}_{E'},\msf{E}_{E'})\).
Moreover, let~\(\mfr{b}_1\) be the vertices in~\(\partialin_{\bbL_\bullet}\msf{V}_{E'}\) that belong to a tile in~\(\msf{A}_{E'}^\rmb\), and let~\(\mfr{b}_2\) be the vertices in~\(\partialin_{\bbL_\bullet}\msf{V}_{E'}\) that do not belong to a tile in~\(\msf{A}_{E'}^\rmb\) (see again the right side of Figure~\ref{fig:prox_lower_int_domain}). 
Using Claim~\ref{claim:prf:proxi_lower_int1}, the `reverse sampling procedure' of the BKW coupling (see Section~\ref{sec:coupling:sixv_to_fk}) implies the following.

\begin{claim}\label{claim:prf:proxi_lower_int2}
The conditional law of~\(\omega\) below~\(E'\) is given by
\begin{equation*}
\Psi^{1/1}_\msf{G}\big(\omega|_{\msf{E}_{E'}}\in\cdot\bgiven\mcal{E}'=E'\big)
=\qfk_{\msf{G}_{E'}}^{\mfr{b}_1,\mfr{b_2};1,\qbwired}.
\end{equation*}
\end{claim}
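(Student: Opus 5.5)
We obtain Claim~\ref{claim:prf:proxi_lower_int2} by combining Claim~\ref{claim:prf:proxi_lower_int1} with the reverse BKW procedure of Lemma~\ref{lem:6V_spins_to_FK}. The argument has three steps.

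\textbf{Step 1: locality of the reconstruction.} In Lemma~\ref{lem:6V_spins_to_FK}, $\omega$ is rebuilt from $(\sigma_\bullet,\sigma_\circ)$ and the tile uniforms $U'$. The state of each edge $e_t$ depends only on the spins at the four corners of $t$ and on $U'_t$. Moreover, $U'$ is independent of $\sigma$. For $t\in\msf{A}_{E'}^\rmi$ the corners lie in $\msf{D}_{E'}$ or on the contour. On $\{\mcal{E}'=E'\}$ the contour spins are fixed: $\sigma_\bullet=-1$ on the lower side, and the dual vertices of $E'$ carry $\sigma_\circ=-1$, since they are $-$ boundary values in $\spin^{-,-}_{\msf{D}_{E'}}$. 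Hence $\omega|_{\msf{E}_{E'}}$ is a measurable function of $(\sigma|_{\msf{D}_{E'}},U'|_{\msf{A}^\rmi_{E'}})$ alone. By Claim~\ref{claim:prf:proxi_lower_int1} and independence, its conditional law is the push-forward of $\spin_{\msf{D}_{E'}}^{-,-}\otimes Q'$ under this local rule.

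\textbf{Step 2: identifying the push-forward.} We compute the push-forward by running the proof of Lemma~\ref{lem:fk11_to_sixv} backwards on the domain $\msf{D}_{E'}$. A spin configuration with weight $\svc^{|T_{5,6}\cap\msf{A}^\rmi_{E'}|}\svcb^{|T_{5,6}\cap\msf{A}^\rmb_{E'}|}$ is split on interior type 5--6 tiles with probabilities $e^{\pm\lambda/2}/\svc$. This yields oriented loop configurations on $\msf{A}_{E'}$ weighted by $e^{\lambda(\#_{\curvearrowright}-\#_{\curvearrowleft})/4}$ over interior tiles. On the boundary tiles of $\msf{A}^\rmb_{E'}$, the $-$ boundary values make a tile of type 5--6 exactly when it carries a right turn, which accounts for the factor $\svcb=e^{\lambda/2}$.

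We then regroup the turns by loops.
\begin{itemize}
\item A loop lying entirely in the interior collects total weight $e^{\pm\lambda}$ from its orientation. Summing over orientations gives $e^\lambda+e^{-\lambda}=\sqrt q$ per loop, the bulk weight at $p_c$ (Lemma~\ref{lem:FK_loop_expression}).
\item A loop that touches the boundary has its orientation partially forced. Along arcs using tiles of $\msf{A}^\rmb_{E'}$ it is forced in a way that forces the wired connection, and the resulting weight is absorbed into the constant. This gives weight $1$ to clusters meeting $\mfr{b}_1$.
\item Along the contour $E'$ itself the tiles are interior tiles with the bulk weight $\svc$, and the orientation is forced by the $\sigma_\circ=-1$ values only on the contour side. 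The loop is then forced clockwise around the boundary cluster, with net turning weight $e^{-\lambda}$ relative to the free summation. Via Euler's formula this translates into the factor $e^{-\lambda}\sqrt q=\qbwired$ per cluster touching $\mfr{b}_2$ but not $\mfr{b}_1$. This is precisely the mechanism identifying quasi-wired measures in \cite[Section 3]{GlaPel23}.
\end{itemize}

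\textbf{Step 3: matching the normalisation.} Collecting the factors $p^{|\omega|}(1-p)^{|\msf{E}_{E'}\setminus\omega|}q^{\clusters_\mrm{i}}1^{\clusters_{\mfr{b}_1}}\qbwired^{\clusters_{\mfr{b}_2}}$ identifies the law as $\qfk^{\mfr{b}_1,\mfr{b}_2;1,\qbwired}_{\msf{G}_{E'}}$. Here we use the standard loop/cluster identity $|\text{loops}|=2\clusters+|\omega|-\text{const}$ on $\msf{G}_{E'}$ with $p=p_c$.

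\textbf{Main obstacle.} The delicate step is the boundary bookkeeping in Step 2. We must check that each cluster touching both $\mfr{b}_1$ and $\mfr{b}_2$ is counted once, with weight $1$; that no spurious weight arises at the two junctions where $E'$ meets the box boundary; and that the turning count along $E'$ gives exactly $e^{-\lambda}$ per boundary loop rather than a weight depending on the geometry of $E'$. I would handle this by verifying the local turn count tile by tile along $E'$, as in the proof of Lemma~\ref{lem:fk11_to_sixv} (where the tiles $t_1,t_2$ were treated separately), and then applying Euler's formula on the graph with $\mfr{b}_1$ wired.
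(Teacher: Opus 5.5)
Your route is the paper's: Claim~\ref{claim:prf:proxi_lower_int1} followed by the reverse BKW sampling of Section~\ref{sec:coupling:sixv_to_fk}, which the paper does not spell out further. However, Step~2 gets wrong the one sign that decides between quasi-wired and quasi-free.

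In the paper's conventions, a clockwise loop carries $e^{\lambda}$ (four more right than left quarter-turns). Moreover, $\sigma_\bullet(i)=\sigma_\circ(u)$ exactly when $i$ lies to the left of the oriented arc separating $i$ from $u$. On $\{\mcal{E}'=E'\}$ one has $\sigma_\circ=-1$ on the vertices of $E'$: by the ice rule $\sigma_\circ$ is constant along $E'$, since $\sigma_\bullet$ flips across every edge of $*E'$, and $\sigma_\circ(v'_L)=-1$. One also has $\sigma_\bullet=-1$ at the lower endpoint of every edge of $*E'$. Hence, on every tile $t$ with $e_t\in *E'$, the arc cutting off the lower primal corner has that corner on its left, i.e.\ it is a left turn. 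So the outer loop of a cluster meeting $\mfr{b}_2$ but not $\mfr{b}_1$ has the cluster on its left: it is forced \emph{counter}-clockwise. This is already visible for an isolated vertex of $\mfr{b}_2$. It therefore contributes $e^{-\lambda}$ instead of $e^{\lambda}+e^{-\lambda}=\sqrt q$, which is the cluster weight $q\,e^{-\lambda}/\sqrt q=\qbwired$. A forced clockwise loop, as you write, would contribute $e^{\lambda}$ and give $e^{\lambda}\sqrt q=\qbfree$, the quasi-free weight. Your bullet is thus self-contradictory exactly where the answer is decided, and $\qbwired$ is read off from the target rather than derived.

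The bookkeeping you defer as the main obstacle is settled by the same observation. The weight of $\spin^{-,-}_{\msf{D}_{E'}}$ only sees tiles of $\msf{A}_{E'}$, while loops along the contour also use the tiles $t$ with $e_t\in *E'$, which are not in $\msf{A}_{E'}$. Each such tile has all four corners fixed and is of type 1--4. Its lower arc is the forced left turn above, and lies on exactly one curve: the outer loop of the cluster of its lower corner, or $\Gamma_{\fk}^{2}$. A loop using $k$ such arcs therefore has net turning $-4+k$ on $\msf{A}_{E'}$. The extra factors $e^{\lambda k/4}$, multiplied over all these loops and over $\Gamma_{\fk}^{2}$ (whose total turning is fixed), give $e^{\lambda N/4}$, where $N$ is the number of tiles crossing $E'$. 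This is a constant given $E'$.

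Finally, in Step~1, $\omega$ is not a function of $(\sigma,U')$ under $\Psi^{1/1}_{\msf{G}}$. There $U'$ is independent of $\omega$, and Lemma~\ref{lem:6V_spins_to_FK} only builds a configuration with the same marginal law. What you need is that the conditional law of $\omega$ given $\sigma$ is the reverse-BKW kernel. This is what the proof of Lemma~\ref{lem:fk11_to_sixv} shows: given $\sigma$, the splittings of the type 5--6 tiles of $\msf{A}^{\rmi}$ are independent, right-split with probability $e^{\lambda/2}/\svc$. Together with the $\sigma$-measurability of $\{\mcal{E}'=E'\}$ and Claim~\ref{claim:prf:proxi_lower_int1}, this gives your push-forward description.
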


By Lemma~\ref{lem:qfk_stoch_dom}, the measure on the right side stochastically dominates that with homogeneous quasi-wired boundary weight: 
\begin{equation}\label{eq:prf:proxi_lower_int2}
\qfk_{\msf{G}_{E'}}^{\mfr{b}_1,\mfr{b_2};1,\qbwired}\geq_\mrm{st}\qfk_{\msf{G}_{E'}}^{\mfr{b}_1\cup\mfr{b_2};\qbwired}.
\end{equation}
Now, condition on~\(\mcal{E}'=E'\) and assume that
\begin{equation}\label{eq:prf:proxi_lower_int3}
\rmd_{\mrm{H}}(\Gamma_\fk^2,E')>k.
\end{equation}
We distinguish between the cases when~\(\Gamma_\fk^2\) enters the bulk of the domain or when it remains at the boundary.
Set $r:=\lfloor\varepsilon k/2\rfloor$, where $\varepsilon>0$ is as in Lemma~\ref{lem:box_crossing_qfk}.
\smallskip

\noindent\textbf{Case 1.} There exists $i\in\Z^2$ just below~$Gamma_{\fk}^2$ such that $i+\msf{B}_r\subseteq\msf{V}_{E'}$.

Then, we must have that $u\in i+\{(\tfrac{1}{2},\tfrac{1}{2}),(-\tfrac{1}{2},\tfrac{1}{2}),(\tfrac{1}{2},-\tfrac{1}{2}),(-\tfrac{1}{2},-\tfrac{1}{2})\}$ is connected to $u+\partialin\msf{B}_{r/2}$ in $\omega^*$ (see Figure~\ref{fig:prox_lower_int_cases}). Denote the event of the existence of such $u$ by $\mcal{A}$. As~\(\mcal{A}\) is a decreasing event, the stochastic domination statement~\eqref{eq:prf:proxi_lower_int2} and exponential relaxation (Lemma~\ref{lem:qfk_relax}) of the measure on the right side of~\eqref{eq:prf:proxi_lower_int2} imply
\begin{align*}
\qfk_{\msf{G}_{E'}}^{\mfr{b}_1,\mfr{b_2};1,\qbwired}(\mcal{A})\leq\qfk_{\msf{G}_{E'}}^{\mfr{b}_1\cup\mfr{b}_2;\qbwired}(\mcal{A})&\leq \rmd_{\mrm{TV}}\big(\qfk_{\msf{G}_{E'}}^{\mfr{b}_1\cup\mfr{b}_2;\qbwired}\vert _F,\fk^{\fkwired}\vert_F\big)+\fk^\fkwired(\mcal{A})\\
&\leq Cnm\max\{n,m\}^2e^{-c \frac{r}{2}}+C'nm e^{-c' \frac{r}{2}},
\end{align*}
where~\(F=\{e\in\msf{E}_{E'}:\rmd_\infty(e,\msf{V}_{E'}^c)\geq r/2\}\) is the support of~\(\mcal{A}\).

\begin{figure}
\includegraphics[scale=0.7]{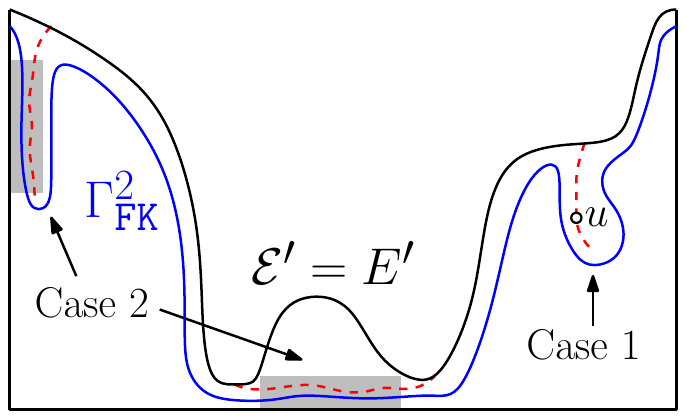}
\caption{A realisation~\(E'\) of~\(\mcal{E}'\) (that will not occur with high probability as it touches the bottom part of the boundary) and a realisation of~\(\Gamma_\fk^2\) (solid blue) for which the consequences of both Case 1 and 2 occur. Dual connections (\(\omega^*\)) are drawn in dashed red.}
\label{fig:prox_lower_int_cases}
\end{figure}

\smallskip

\noindent\textbf{Case 2.} For all $i\in\Z^2$ just below~$\Gamma_{\fk}^2$, we have~$(i+\msf{B}_r)\cap \msf{V}_{E'}^c\neq\varnothing$. 

Then, by assumption~\eqref{eq:prf:proxi_lower_int3} and the definition of the one-sided Hausdorff distance~\eqref{eq:hausdorff}, there must exist a rectangle of size $r$ by $k$ whose left or right side is contained in $\mfr{b}_1$ and that is crossed vertically in $\omega^*$, or there exists a rectangle of size $k$ by $r$ whose bottom side is contained in $\mfr{b}_1$ and that is crossed horizontally in $\omega^*$ (see again Figure~\ref{fig:prox_lower_int_cases}). By~\eqref{eq:prf:proxi_lower_int2} and Lemma~\ref{lem:box_crossing_qfk}, this happens with probability at most $Cnmk^4e^{-c\varepsilon k}$.

This completes the proof of Lemma~\ref{lem:proxi_lower_int}.

\section{Geometry of double-crossing}
\label{sec:DoubleBridge}

We now investigate the stochastic geometric properties of the two crossing induced by the conditioning in $\matrc_{\mathsf{K}}(\, \cdot \given v_R\in \calC_{v_L},\, v'_R\in \calC'_{v_L'})$. To lighten notations, let \(m_n\) be an increasing sequence of integers going to \(+\infty\), and set
\begin{equation*}
	\Phi_{n} \equiv \matrc_{\mathsf{K}} \equiv \matrc_{n,m_n},
	\quad
	\Phi \equiv \atrc
\end{equation*}for this section. Constraints on the sequence \(m_n\) will be explicitly given in the concerned results. The general strategy is similar to the one of~\cite[section 8]{DobGlaOtt25}: we first use mixing to relate the typical geometry of crossing clusters to the geometry of infinite volume ATRC long connections; then we use (again) mixing to couple the crossing clusters to infinite volume long clusters, which are themselves coupled to random walks using OZ theory. The main difference compared to~\cite[section 8]{DobGlaOtt25} is that we are studying two simultaneous crossings, and therefore need to study the interaction between them. This will be done using adaptation of the method introduced in~\cite{IofOttVelWac20} to study entropic repulsion of interfaces in the Potts model at \(\beta>\beta_c\). We will in particular import part of the analysis~\cite{DAl24}, which generalized the arguments of~\cite{IofOttVelWac20} to the case of a family of crossing high temperature FK clusters conditioned on non-intersection.

Also, to phrase various results and do some computations without having the size of displays becoming unbounded, it will be convenient to alternate between talking about probability of events under given measures, and the behaviour of random variables, with law given by the said measures, defined on a common space. We let \((\calX,\calF,P)\) be an abstract probability space, supposed large enough, on which we will define all the wanted random variables. Denote by \(E\) the expectation in that space.

\subsection{Cones and infinite volume OZ theory}
\label{subsec:DoubleBridge:Cones_OZ}

We start by introducing several standard definitions in the same way as in ~\cite[section 8]{DobGlaOtt25}. In particular, we consider cones of opening angle~$\pi/2$:
\begin{equation*}
	\fcone = \{x\in \R^2:\ x_1\geq |x_2|\},
	\quad
	\bcone = -\fcone.
\end{equation*}Then, for \(V\subset \Z^2\), say that \(v\in V\) is a \emph{cone-point} of \(V\) if
\begin{equation*}
	V\subset (v+\fcone)\cup (v+\bcone).
\end{equation*}Denote \(\CPts(V)\) the set of cone-points of \(V\). Also introduce the strip and the restriction of \(\CPts\) to a strip: for \(a\leq b\),
\begin{equation}
	\label{eq:def:slab_CP}
	\slab_{a,b} = [a,b]\times \R,
	\quad
	\slabCP_{a,b}(V) = \slab_{a,b}\cap \CPts(V).
\end{equation}

Recall that, see~\cite{DobGlaOtt25} for more details, \(\nu\) is the norm on \(\R^2\) such that \(\Phi(0\leftrightarrow x) = e^{-\nu(x) + o(|x|)}\). Let \(\rme_1 = (1,0)\), \(\rme_2=(0,1)\) be the canonical basis of \(\R^2\). Introduce also the shorthand
\begin{equation*}
	\nu_1 = \nu(\rme_1).
\end{equation*}Introduce the following objects.
\begin{itemize}
	\item A \emph{marked graph} is a pair \((\eta,v_*)\) with \(\eta\) a connected subgraph of \(\Z^2\) and \(v_*\) a vertex in \(\eta\).
	\item A connected graph \(\eta\) is called: \emph{forward confined} if there is \(u\in\eta\), denoted \(\fend(\eta)\), with \(\eta\subset u+\fcone\); \emph{backward confined} if there is \(v\in\eta\), denoted \(\bend(\eta)\), with \(\eta\subset v+\bcone\); \emph{diamond confined} if it is both forward and backward confined.
	\item \(\SetRootMarkBackCont\) is the set of marked graphs of the form \((\eta,0)\) with \(\eta\) backward confined.
	\item \(\SetRootMarkForwCont\) is the set of marked graphs of the form \((\eta,v_*)\) with \(\eta\) forward confined with \(\fend(\eta) = 0 \).
	\item \(\SetRootDiaCont\) is the set of diamond confined graphs \(\eta\) with \(\fend(\eta)=0\). It can be seen as a subset of \(\SetRootMarkBackCont\) via the marking of \(\fend(\eta) = 0\), or as a subset of \(\SetRootMarkForwCont\) via the marking of \(\bend(\eta)\).
	\item The \emph{displacement} \(\displace(\eta)\) is defined as follows: \(\displace(\eta) := \bend(\eta)\) if \((\eta,0)\in \SetRootMarkBackCont\); \(\displace(\eta) := v_*\) if \((\eta,v_*)\in \SetRootMarkForwCont\); \(\displace(\eta) := \bend(\eta)-\fend(\eta)\) if \(\eta\) is diamond confined.
	\item The \emph{concatenation} \(\concatenate\) of \((\eta,0)\in \SetRootMarkBackCont\) and \((\eta',v_*)\in \SetRootMarkForwCont\) is
	\begin{equation*}
		\eta\concatenate\eta' = \eta \cup (\displace(\eta) + \eta')
	\end{equation*}with marked vertex \(0\).
\end{itemize}
To lighten notations, we will usually omit mentioning the marked points. Also, to prevent displays from reaching uncontrolled width, we sill use the following notations for sequences of backward/forward/diamond-confined graphs: for \(m\geq 0\), \(\gamma_0\in \SetRootMarkBackCont\), \(\gamma_{m+1}\in \SetRootMarkForwCont\), \(\gamma_1,\dots,\gamma_{m}\in \SetRootDiaCont\), define
\begin{equation}
	\label{eq:def:sequences_concatenation_short}
	\gamma_{i}^j = (\gamma_i,\gamma_{i+1},\dots,\gamma_j),
	\quad
	\bar{\gamma}_{i}^j = \gamma_i\concatenate\gamma_{i+1}\concatenate\dots\concatenate\gamma_j,
	\
	0\leq i \leq j\leq m+1.
\end{equation}

We will then use the same ``infinite volume OZ theory'' of ATRC clusters as in~\cite[Section 8]{DobGlaOtt25}. Let us recall the statement that we will use. This is a particular case of~\cite[Theorem~7.1]{DobGlaOtt25}, see~\cite[Section 8]{DobGlaOtt25}.

\begin{theorem}[Infinite volume OZ decomposition]
	\label{thm:OZ_atrc_inf_vol}
	Let \(0<J<U\) satisfy \(\sinh 2J=e^{-2U}\).
	 Then, there exist \(n_0\geq 1,c>0,c'>0,\epsilon>0,\rmC>0\), and probability measures \(p,p_L,p_R\) on \(\SetRootDiaCont,\SetRootMarkBackCont,\SetRootMarkForwCont\) respectively such that for any \(x=(x_1,x_2)\in \Z^2\) with \(x_1 \geq n_0\), and \(|x_2|\leq \epsilon x_1\), and any function \(f\) on subsets of~$\Z^2$
	\begin{multline*}
		\Big| \sum_{m\geq 1}\sum_{\gamma_0\in \SetRootMarkBackCont}\sum_{\gamma_{m+1}\in \SetRootMarkForwCont} \sum_{\gamma_1,\dots,\gamma_m\in \SetRootDiaCont} \OZDec(m;\gamma_0^{m+1}) f\big(\bar{\gamma}_0^{m+1}\big) \mathds{1}_{\displace(\bar{\gamma}_0^{m+1}) = x}
		\\
		- e^{\nu_1 x_1}\Phi\big[f(\calC_0) \mathds{1}_{x\in \calC_0}\big] \Big|
		\leq \norm{f}_{\infty} e^{-c x_1},
	\end{multline*}where \(\calC_0\) is the connected component of \(0\) in \(\omega_{\tau}\), and
	\begin{equation}
		\label{eq:def:OZDec}
		\OZDec(m;\gamma_0^{m+1}) \coloneqq \rmC p_L(\gamma_L)p_R(\gamma_R)\prod_{i=1}^{m}p(\gamma_i).
	\end{equation}Moreover, \(p\) inherits the invariance under reflection through \(\R\times \{0\}\) from \(\Phi\) and, for any \(r>0\) and \(\gamma'\in \SetRootDiaCont\) such that \(\CPts(\gamma') = \{0,\displace(\gamma')\}\),
	\begin{equation}
		\label{eq:OZ:fin_ene_exp_dec}
		\sum_{\gamma:\norm{\displace(\gamma)}_{\infty} \geq r} p(\gamma) \leq e^{-cr}
		\quad
		\text{and}
		\quad
		p(\gamma') \geq e^{-c'|\gamma'|}.
	\end{equation}
	Similar statements hold also for \(p_L,p_R\) and \(\gamma'\in \SetRootMarkBackCont, \SetRootMarkForwCont\) respectively.
\end{theorem}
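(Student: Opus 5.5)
This statement is a special case of~\cite[Theorem~7.1]{DobGlaOtt25}, so the plan is to specialise that result rather than rebuild the renewal theory. I will verify its hypotheses for the infinite-volume measure \(\Phi=\atrc\) at \(\sinh 2J=e^{-2U}\), \(U>J\), and then read off the conclusion in the present notation.

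The hypotheses to check are the following.
\begin{enumerate}[label=(\roman*)]
\item Uniqueness of the infinite-volume ATRC measure. This is~\cite{AouDobGla24}.
\item Exponential decay of \(\omega_\tau\)-connections, with a norm \(\nu\) given by \(\Phi(0\leftrightarrow x)=e^{-\nu(x)+o(|x|)}\). This follows from~\cite{GlaPel23} together with submultiplicativity via FKG (Lemma~\ref{lem:fkg_lattice_mod_atrc} in its infinite-volume form).
\item Ratio weak mixing of \(\Phi\), in the strong form proved in~\cite{DobGlaOtt25} via~\cite{Ott25}.
\item Finite energy.
\end{enumerate}
With these inputs, the abstract OZ machinery runs as follows. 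Long clusters \(\calC_0\ni x\) have a positive density of cone points inside the strip \(\slab_{0,x_1}\), by a coarse-graining plus skeleton argument. Mixing then decouples the configuration between consecutive cone points. This produces the renewal representation of \(e^{\nu_1 x_1}\Phi[f(\calC_0)\ind_{x\in\calC_0}]\) as a sum over concatenations \(\bar\gamma_0^{m+1}\), with weights \(\OZDec\) of product form and an error \(\norm{f}_\infty e^{-cx_1}\). The restriction \(|x_2|\leq\epsilon x_1\) is needed to keep \(x\) inside the cone of directions where the decomposition is uniform. The normalisation \(\sum p=1\) comes from the choice of exponential tilt \(\nu_1\) in direction \(\rme_1\), since \(\rme_1\) is a direction where the tilt vector is dual to \(\rme_1\) by symmetry.

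The additional properties are then obtained separately. Reflection invariance of \(p\) through \(\R\times\{0\}\) follows because \(\Phi\) is reflection invariant and the tilt \(e^{\nu_1 x_1}\) does not see \(x_2\). The exponential tail bound \(\sum_{\norm{\displace(\gamma)}_\infty\geq r}p(\gamma)\leq e^{-cr}\) is the "mass gap" part of Theorem~7.1: the probability of no cone point in a long stretch decays exponentially faster than the tilt. The lower bound \(p(\gamma')\geq e^{-c'|\gamma'|}\) for irreducible \(\gamma'\) uses finite energy: we force \(\gamma'\) open and close its edge boundary, at a cost exponential in \(|\gamma'|\), and the mixing factors contribute only bounded multiplicative constants. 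The same arguments give the analogous statements for \(p_L\) and \(p_R\).

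The main obstacle is the mixing input (iii). In the ATRC, the domain Markov property is weak because of the coupled pair \((\omega_\tau,\omega_{\tau\tau'})\), so uniform ratio mixing conditional on cluster explorations is nontrivial. I will not reprove it; I will cite it from~\cite[Section~8]{DobGlaOtt25}, where exactly this specialisation is carried out.
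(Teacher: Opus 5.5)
Your proposal matches the paper, which gives no independent argument for this statement and simply imports it as a particular case of \cite[Theorem~7.1]{DobGlaOtt25}, referring to \cite[Section~8]{DobGlaOtt25} for the specialisation; your hypothesis checklist and sketch of the renewal machinery are consistent with that source but go beyond what the citation requires. One wording slip: FKG gives supermultiplicativity of \(\Phi(0\leftrightarrow x)\) (equivalently, subadditivity of \(-\ln\Phi(0\leftrightarrow x)\)), not submultiplicativity, and that is what yields the norm \(\nu\).
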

We will also use the probability measures
\begin{equation}
	\label{eq:def:OZ_OZwalk}
	\OZ \coloneqq p^{\otimes \N},
	\quad
	\OZwalk \coloneqq (p\circ \displace^{-1})^{\otimes \N}.
\end{equation}
For \((X_1,X_2,\dots) \sim \OZwalk\), and \(v\in \Z^2\), consider the {\em hitting time}
\begin{equation}
	\label{eq:def:hitting_times}
	T_v = \min\Big\{k\geq 1:\ \sum_{i=1}^k X_i = v \Big\},
\end{equation}where the min of an empty set is \(+\infty\) by convention. Note that these hitting times are also well-defined under \(\OZ\) by replacing \(X_i\) by \(\displace(\gamma_i)\). Define the bridge measures
\begin{equation}
	\label{eq:def:bridge_measures}
	\OZ_v = \OZ( \gamma_{1}^{T_v}\in \cdot \given T_v<\infty),
	\quad
	\OZwalk_v = \OZ( (X_1,\dots,X_{T_v})\in \cdot \given T_v<\infty).
\end{equation}
For \(\gamma_1^M\sim \OZ_v\), there is a naturally associated cluster: \(\OZRVcluster^v = \gamma_1\concatenate \dots\concatenate \gamma_M\).

Finally, for a sequence \(x_0,x_1,\dots,x_m\) with \(x_{k}\in x_{k-1}+\fcone\), define the \emph{diamond} and \emph{diamond envelop}:
\begin{equation}
	\label{eq:def:diamond_envelop}
	\diam(u,v) = (u+\fcone)\cap (v+\bcone),
	\quad
	\DiaEnv(x_0,x_1,\dots,x_m) \coloneqq \bigcup_{k=1}^{m}\diam(x_{k-1},x_k).
\end{equation}
The part of the boundary of \(\DiaEnv(x_0,x_1,\dots,x_m)\) staying above (resp. below) the linear interpolation between \(x_0,x_1,\dots,x_m\) is called the upper (resp. lower) boundary of the envelop and denoted by \(\partial_+\DiaEnv(x_0,x_1,\dots,x_m)\) (resp. \(\partial_-\DiaEnv(x_0,x_1,\dots,x_m)\)).

\subsection{Definitions and preliminaries on the percolation measure}
\label{subsec:DoubleBridge:Def_Prelim}

We start by introducing the notations and objects that we will use throughout this section. Our goal will be to investigate the joint geometry of the clusters of \(v_L\) in \(\omega_{\tau}\) and of \(v_L'\) in \(\omega_{\tau\tau'}^*\), with \((\omega_{\tau},\omega_{\tau\tau'})\sim \Phi_n(\, \cdot \given v_L\xleftrightarrow{\omega_\tau} v_R,\, v'_L\xleftrightarrow{\omega_{\tau\tau'}^*} v'_R)\).

\subsubsection*{Crossing clusters and paths}

 Introduce the following objects and notations. The general idea is that objects with a \('\) refer to the crossing from \(v_L'\) to \(v_R'\).
\begin{itemize}
	\item \(\calC\) is the cluster of \(v_L\) in \(\omega_{\tau}\), and is \(\calC'\) the cluster of \(v_L'\) in \(\omega_{\tau\tau'}^*\). 
	\item \(\Gamma_+ \equiv \Gamma_+(\calC) \subset \calC\) the top most path connecting \(v_L\) to \(v_R\) in \(\omega_{\tau}\) when it exists, and \(\varnothing\) otherwise.
	\item \(\Gamma_-\equiv \Gamma_-(\calC')\subset \calC'\) the bottom most path connecting \(v_L'\) to \(v_R'\) in \(\omega_{\tau\tau'}^*\) when it exists and \(\varnothing\) otherwise.
	\item For a simple path \(\gamma\) formed of edges in \(\FVEdges_n\) from \(v_L\) to \(v_R\), denote \({\gamma_\shortuparrow}\), \({\gamma_\shortdownarrow}\) the edges in $\FVEdges_n\setminus \gamma$ that are above and below $\gamma$, respectively.
	\item For a simple path \(\gamma'\) formed of edges in \(*\FVEdges_n\) from \(v_L'\) to \(v_R'\), denote \({\gamma_\shortuparrow'}\), \({\gamma_\shortdownarrow'}\) the edges in $*\FVEdges_n\setminus \gamma'$ that are above and below $\gamma'$, respectively.
\end{itemize}

\medskip

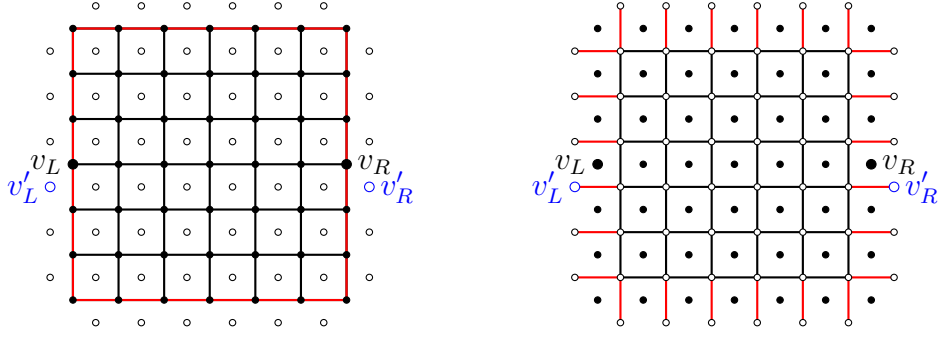
\begin{figure}
	\begin{tikzpicture}[scale=0.6]
		\foreach \j in {-3,...,3}{
			\draw[thick, black] (-3,\j)--(3,\j);
			\draw[thick, black] (\j,3)--(\j,-3);
		}
		
		\draw[red, thick] (-3,-3)--(3,-3);
		\draw[red, thick] (-3,-3)--(-3,3);
		\draw[red, thick] (3,3)--(3,-3);
		\draw[red, thick] (3,3)--(-3,3);
		
		\foreach \i in {-3,...,3}{
			\foreach \j in {-3,...,3}{
				\filldraw[fill=black] (\i,\j) circle(2pt);
			}
		}
		\foreach \i in {-4,...,3}{
			\foreach \j in {-3,...,2}{
				\filldraw[fill=white] ({\i+0.5},{\j+0.5}) circle(2pt);
			}
		}
		\foreach \i in {-3,...,2}{
			\foreach \j in {-4,3}{
				\filldraw[fill=white] ({\i+0.5},{\j+0.5}) circle(2pt);
			}
		}
		\filldraw[fill=black] (-3,0) circle(3pt) node[left]{$v_L$};
		\filldraw[fill=black] (3,0) circle(3pt) node[right]{$v_R$};
		\filldraw[fill=white, draw=blue] (-3.5,-0.5) circle(3pt) node[blue, left]{$v_L'$};
		\filldraw[fill=white, draw=blue] (3.5,-0.5) circle(3pt) node[blue, right]{$v_R'$};
	\end{tikzpicture}
	\hspace{1cm}
	\begin{tikzpicture}[scale=0.6]
		\foreach \j in {-3,...,2}{
			\draw[thick, black] (-2.5,{\j+0.5})--(2.5,{\j+0.5});
			\draw[thick, black] ({\j+0.5},2.5)--({\j+0.5},-2.5);
		}
		
		\foreach \j in {-3,...,2}{
			\draw[thick, red] (-3.5,{\j+0.5})--(-2.5,{\j+0.5});
			\draw[thick, red] (2.5,{\j+0.5})--(3.5,{\j+0.5});
			\draw[thick, red] ({\j+0.5},-3.5)--({\j+0.5}, -2.5);
			\draw[thick, red] ({\j+0.5},3.5)--({\j+0.5}, 2.5);
		}
		
		\foreach \i in {-3,...,3}{
			\foreach \j in {-3,...,3}{
				\filldraw[fill=black] (\i,\j) circle(2pt);
			}
		}
		\foreach \i in {-4,...,3}{
			\foreach \j in {-3,...,2}{
				\filldraw[fill=white] ({\i+0.5},{\j+0.5}) circle(2pt);
			}
		}
		\foreach \i in {-3,...,2}{
			\foreach \j in {-4,3}{
				\filldraw[fill=white] ({\i+0.5},{\j+0.5}) circle(2pt);
			}
		}
		\filldraw[fill=black] (-3,0) circle(3pt) node[left]{$v_L$};
		\filldraw[fill=black] (3,0) circle(3pt) node[right]{$v_R$};
		\filldraw[fill=white, draw=blue] (-3.5,-0.5) circle(3pt) node[blue, left]{$v_L'$};
		\filldraw[fill=white, draw=blue] (3.5,-0.5) circle(3pt) node[blue, right]{$v_R'$};
	\end{tikzpicture}
	\caption{\(n= m_n = 2\). Left: the edges of \(\mathsf{E}\) in black, the edges of \(\FVEdges_n^{\rmb} = \bar{\mathsf{E}}_{n,m_n}\setminus \mathsf{E}_{n,m_n}\) in red. Right: the edges of \(*\FVEdges_n^{\rmi}\) in black, the edges of \(*\FVEdges_n^{\rmb}\) in red.}
	\label{fig:domain_mATRC}
\end{figure}

\subsubsection*{Measures}

Let \(0<J<U\) satisfy \(\sinh 2J=e^{-2U}\). Recall that the measure \(\Phi_n\) is supported on configurations \((a,b), a,b\in \{0,1\}^{\FVEdges_n}\) with \(a\leq b\) and is given by
\begin{equation*}
		\Phi_n(a,b)
		\propto
		\mathds{1}_{a\subseteq b}\,\mathds{1}_{b\setminus a\subseteq \FVEdges_n}\,
		2^{|a\cap\FVEdges_n|}\,\big(\tfrac{2}{\svcb-1}\big)^{|a\cap\FVEdges_n^\rmb|}\,(\svc-2)^{|b\setminus a|}\,2^{\clusters_{\mathsf{K}}(a)+\clusters_{\mathsf{K}^1}(b)},
	\end{equation*}where~\(\clusters_{\mathsf{K}}(a)\) and~\(\clusters_{\mathsf{K}^1}(\omega_{\tau\tau'})\) are the numbers of clusters of~\(a\) and~\(b\) viewed as spanning subgraphs of~\(\mathsf{K}\) and~\(\mathsf{K}^1\), respectively; recall~$\svc, \svcb$ defined by~\eqref{eq:parameters_bulk}-\eqref{eq:parameters_bnd}.
	
The measure \(\Phi_n\) satisfies the following properties:
\begin{enumerate}
	\item \label{property:FKG} {\em FKG lattice} condition on \(\{0,1\}^{\FVEdges_n}\times\{0,1\}^{\FVEdges_n}\) where \(\FVEdges_n = \bbE_{n+1,m_n+1}\), see Figure~\ref{fig:domain_mATRC} and Lemma~\ref{lem:fkg_lattice_mod_atrc}.
	\item \label{property:FinEne} {\em Finite Energy:} there is \(\cstFinEne>0\) such that for any \(e\in \FVEdges_n\), one has that \(\Phi_n\)-a.s.:
	\begin{gather*}
		\Phi_n\big(\omega_{\tau}(e) = \omega_{\tau\tau'}(e) = 1 \given \calF_{\FVEdges_n\setminus e}\big)
		\geq
		\cstFinEne,
		\\
		\Phi_n\big(\omega_{\tau}(e) = \omega_{\tau\tau'}(e) = 0 \given \calF_{\FVEdges_n\setminus e}\big)
		\geq
		\cstFinEne,
	\end{gather*}where \(\calF_{F}\) is the sigma-algebra generated by the states of the edges in \(F\).
	\item \label{property:condATRC} {\em Conditionally ATRC:}
	\begin{gather*}
		\Phi_n\big(\cdot \given (\omega_{\tau},\omega_{\tau\tau'})|_{\mathsf{E}\setminus \bbE_{n,m_n}} = (1,1)\big) = \atrc_{ \bbE_{n,m_n}}^{1,1},
		\\
		\Phi_n\big(\cdot \given (\omega_{\tau},\omega_{\tau\tau'})|_{\mathsf{E}\setminus \bbE_{n,m_n}} = (0,0)\big) = \atrc_{ \bbE_{n,m_n}}^{0,0}.
	\end{gather*}
\end{enumerate}
In particular, \(\Phi_n\) satisfies the Hypotheses (I) to (V) of~\cite[Section 8]{DobGlaOtt25} and we can apply~\cite[Theorem 8.1]{DobGlaOtt25} to obtain a fine control over single crossings.

\begin{corollary}[Finite volume OZ for a single crossing]
	\label{cor:fin_vol_cluster_bridge_coupling}
	Let \(0<J<U\) satisfy \(\sinh 2J=e^{-2U}\). There are \(c_0,c>0\) such that for any sequence \(\frac{\sqrt{n}}{c_0}\geq k_n\geq c_0\ln(n)\), there exists \(n_0\geq 1\) such that for \(n\geq n_0\), \(m_n \geq c_0n\), there exists a probability measure \(\bndMeas_n\) on \(\SetRootMarkBackCont \times \SetRootMarkForwCont\) satisfying
	\begin{itemize}
		\item \(\bndMeas_n\) is supported on pairs \((\eta_L,\eta_R)\) with \(\norm{\displace(\eta_L)}_{\infty},\norm{\displace(\eta_R)}_{\infty} \leq k_n^2\),
		\item defining \(v(\eta_L,\eta_R) = v_R-v_L-\displace(\eta_L)-\displace(\eta_R)\), for any \(f\) function of \(\calC\),
		\begin{multline*}
			\Big|\sum_{m\geq 0}\sum_{\eta_L,\eta_R} \bndMeas_n(\eta_L,\eta_R) \OZ_{v(\eta_L,\eta_R)}\big[f(v_L + \eta_L\concatenate\bar{\eta}_1^M \concatenate \eta_R)\big]
			\\
			-\Phi_{n}\big[f(\calC)\bgiven v_R\in \calC\big]\Big|
			\leq
			\norm{f}_{\infty}e^{-ck_n},
		\end{multline*}where \(\eta_1^M \sim \OZ_{v(\eta_L,\eta_R)}\).
	\end{itemize}
	The same result holds for \(\calC', v_L',v_R'\).
\end{corollary}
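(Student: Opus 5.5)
The plan is to obtain the statement as a direct application of \cite[Theorem 8.1]{DobGlaOtt25} to the measure $\Phi_n$, once its hypotheses are checked, and then to transfer the result to the dual crossing by a symmetry argument. \cite[Theorem 8.1]{DobGlaOtt25} is formulated for finite-volume percolation measures satisfying Hypotheses (I)–(V) of \cite[Section 8]{DobGlaOtt25}, which are essentially: FKG lattice condition, uniform finite energy, comparison with the infinite-volume ATRC measure under extremal boundary conditions, and mixing/exponential relaxation to $\Phi$ away from the boundary. Its conclusion is exactly a coupling of the crossing cluster, conditioned on $v_L\leftrightarrow v_R$, with a boundary measure $\bndMeas_n$ concatenated with an OZ bridge $\OZ_{v}$, with error $e^{-ck_n}$.

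First I verify the hypotheses for $\Phi_n$. Property~\eqref{property:FKG} is Lemma~\ref{lem:fkg_lattice_mod_atrc}. Finite energy, property~\eqref{property:FinEne}, follows from the explicit weights in~\eqref{eq:matrc_def}: flipping one edge changes the weight by a factor bounded above and below by constants depending on $\svc,\svcb$, since the cluster counts change by at most one. Property~\eqref{property:condATRC} follows because fixing the boundary strip to $(1,1)$ or $(0,0)$ turns the boundary factors into constants and the remaining weights into~\eqref{eq:atrc_def} with wired/free conditions, via the parameter identities~\eqref{eq:parameters_bulk}. Sandwiching by FKG between these two conditional measures, together with uniqueness and exponential mixing of the infinite-volume ATRC \cite{AouDobGla24,DobGlaOtt25}, gives the relaxation hypothesis. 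The conditions $m_n\geq c_0 n$ and $c_0\ln n\le k_n\le \sqrt n/c_0$ are those required there: the first keeps the top and bottom boundaries at distance of order $n\gg\sqrt n$, so they are not felt by the crossing.

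Second, I apply \cite[Theorem 8.1]{DobGlaOtt25} to the primal crossing $\calC$, with Theorem~\ref{thm:OZ_atrc_inf_vol} providing the infinite-volume input. This yields $\bndMeas_n$, supported on pieces of displacement at most $k_n^2$ (the boundary effects are confined to $k_n^2$-neighbourhoods of $v_L,v_R$), and the stated total-variation bound.

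Third, for $\calC'$ I use the duality of the ATRC: $\omega_{\tau\tau'}^*$ under $\Phi_n$ is again a modified ATRC-type measure with the roles of $\tau$ and $\tau\tau'$ exchanged. On the self-dual curve $\sinh 2J=e^{-2U}$ the infinite-volume measure is self-dual, so the same OZ structure holds with the same $p$, up to relabelling. The boundary weights differ, but they only affect the hypotheses through FKG, finite energy and relaxation, and these hold symmetrically since increasing events for $\omega_{\tau\tau'}^*$ are decreasing for $\Phi_n$.

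I expect the main obstacle to be this last step: checking that the dual measure satisfies the conditional-ATRC hypothesis with the appropriate dual boundary conditions. There $\mathsf K^1$ wires the $\omega_{\tau\tau'}$ boundary halves, which in the dual means a non-crossing constraint. I would handle it by conditioning on the boundary strip being closed in $\omega_{\tau\tau'}$ and invoking the dual statement of~\eqref{property:condATRC}.
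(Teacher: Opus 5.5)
Your proposal is correct and takes the paper's route. The paper records the FKG lattice condition (Lemma~\ref{lem:fkg_lattice_mod_atrc}), finite energy and the conditionally-ATRC property of $\Phi_n$, notes that these give Hypotheses (I)--(V) of \cite[Section 8]{DobGlaOtt25}, and applies \cite[Theorem 8.1]{DobGlaOtt25}. The paper leaves the dual case implicit; your self-duality argument $(\omega_\tau,\omega_{\tau\tau'})\mapsto(\omega_{\tau\tau'}^*,\omega_\tau^*)$ on the curve $\sinh 2J=e^{-2U}$ (where $\rmw_\tau=2$), combined with the invariance of the FKG lattice condition under order reversal, is a sound way to fill it in.
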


Note that cone-points of \(\calC\) are cone-points of every path from \(v_L\) to \(v_R\). Moreover, for a realisation \(C\ni v_L,v_R\) of \(\calC\), and \(v,w\in \CPts(C)\), one has that \(\Gamma_+(C)\cap \diam(v,w)\) is the highest path in \(\calC\cap \diam(v,w)\) linking \(v\) to \(w\). Similar considerations hold for \(\Gamma_-,\calC'\). Letting \(\tilde{p}_{\pm}\) be the pushforward of \(p\) by the mapping assigning to a diamond-confined graph the highest/lowest path linking its endpoints, and setting
\begin{equation*}
	\tilde{\OZ}^{\pm} = (\tilde{p}_{\pm})^{\otimes \N},
	\quad
	\tilde{\OZ}^{\pm}_v = \tilde{\OZ}^{\pm}( (\gamma_1,\dots,\gamma_{T_v})\in \cdot \given T_v <\infty),
\end{equation*}one has the following consequence of Corollary~\ref{cor:fin_vol_cluster_bridge_coupling}.
\begin{corollary}[Finite volume OZ for a single cluster boundary]
	\label{cor:fin_vol_paths_bridge_coupling}
	Let \(0<J<U\) satisfy \(\sinh 2J=e^{-2U}\). There exist \(c_0,c>0\) such that, for any sequence \(\frac{\sqrt{n}}{c_0}\geq k_n\geq c_0\ln(n)\), there exists\(n_0\geq 1\) such that for any \(n\geq n_0\), \(m_n \geq c_0n\), there exists a probability measure \(\tilde{\bndMeas}_n\) on \(\SetRootMarkBackCont \times \SetRootMarkForwCont\) satisfying
	\begin{itemize}
		\item \(\tilde{\bndMeas}_n\) is supported on pairs of simple paths \((\eta_L,\eta_R)\) with \(\norm{\displace(\eta_L)}_{\infty},\norm{\displace(\eta_R)}_{\infty} \leq k_n^2\),
		\item defining \(v(\eta_L,\eta_R) = v_R-v_L-\displace(\eta_L)-\displace(\eta_R)\), for any \(f\) function of \(\Gamma_+\),
		\begin{multline*}
			\Big|\sum_{m\geq 0}\sum_{\eta_L,\eta_R} \tilde{\bndMeas}_n(\eta_L,\eta_R) \tilde{\OZ}_{v(\eta_L,\eta_R)}^+\big[f(v_L + \eta_L\concatenate\bar{\eta}_1^M \concatenate \eta_R)\big]
			\\
			-\Phi_{n}\big[f(\Gamma_+)\bgiven v_R\in \calC\big]\Big|
			\leq
			\norm{f}_{\infty}e^{-ck_n},
		\end{multline*}where \(\eta_1^M \sim \tilde{\OZ}_{v(\eta_L,\eta_R)}^+\).
	\end{itemize}
	The same result holds for \(\calC',\Gamma_-, v_L',v_R'\).
\end{corollary}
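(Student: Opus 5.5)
The plan is to deduce the statement from Corollary~\ref{cor:fin_vol_cluster_bridge_coupling} by pushing both sides forward under the deterministic map \(\Pi_+\) that sends a cluster \(C\ni v_L,v_R\) to its top-most path from \(v_L\) to \(v_R\). Given a bounded test function \(f\) of \(\Gamma_+\), set \(g = f\circ \Pi_+\). Then \(g\) is a bounded function of \(\calC\) with \(\norm{g}_\infty\leq \norm{f}_\infty\), and \(\Phi_n[f(\Gamma_+)\given v_R\in\calC] = \Phi_n[g(\calC)\given v_R\in\calC]\). Applying Corollary~\ref{cor:fin_vol_cluster_bridge_coupling} to \(g\) gives an error \(\norm{f}_\infty e^{-ck_n}\), with the same constants \(c_0,c,n_0\). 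It then remains to identify \(\Pi_+(v_L+\eta_L\concatenate\bar\eta_1^M\concatenate\eta_R)\) in law, under \(\bndMeas_n\otimes\OZ_{v(\eta_L,\eta_R)}\), with the concatenated path measure.

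The key step is a locality property of \(\Pi_+\) along cone-points. Every \(\calC\) produced by the concatenation has its junction points \(v_L+\displace(\eta_L)+\sum_{i\le j}\displace(\eta_i)\) as cone-points. Since cone-points of \(\calC\) are cone-points of every path from \(v_L\) to \(v_R\), the top-most path must pass through each of them. Between two consecutive junction points \(v,w\), the path \(\Gamma_+\cap\diam(v,w)\) is the highest path in \(\calC\cap\diam(v,w)=v+\eta_i\) linking \(v\) to \(w\), as noted before the statement. Hence \(\Pi_+\) factorizes: it is the concatenation of \(\tilde\Pi_+(\eta_L),\tilde\Pi_+(\eta_1),\dots,\tilde\Pi_+(\eta_M),\tilde\Pi_+(\eta_R)\), where \(\tilde\Pi_+\) takes the highest path between the two endpoints (for \(\eta_L\), between \(0\) and \(\bend(\eta_L)\); for \(\eta_R\), between \(\fend\) and the mark). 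To justify this I would check two facts. First, a path from \(v\) to \(w\) inside \(\calC\) cannot leave \(\diam(v,w)\), since by the cone-point property \(\calC\setminus\diam(v,w)\) touches \(\diam(v,w)\) only at \(v,w\). Second, "top-most" is decided piece by piece.

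The displacements are unchanged by \(\tilde\Pi_+\), because the endpoints are kept. The hitting-time conditioning \(T_v<\infty\) and the stopping index depend only on displacements, so the image of \(\OZ_v\) under coordinatewise \(\tilde\Pi_+\) is exactly \(\tilde\OZ^+_v\). Likewise \(\tilde\bndMeas_n := \bndMeas_n\circ(\tilde\Pi_+\times\tilde\Pi_+)^{-1}\) is supported on simple paths with the same displacement bound \(k_n^2\), and \(v(\eta_L,\eta_R)\) is preserved. Combining these gives the display. For \(\calC',\Gamma_-\) the same argument applies with bottom-most paths and \(\tilde p_-\).

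The main obstacle I expect is the rigorous version of the factorization. One has to confirm that the boundary pieces \(\eta_L,\eta_R\) behave like the diamond pieces, which requires that the relevant extremal vertex is a cone-point of the whole concatenation. One also has to handle the marked-point conventions of \(\SetRootMarkForwCont\), so that the path ends exactly at \(v_R\). I would settle both using the confinement definitions: \(\eta_L\subset \bend(\eta_L)+\bcone\) and the rest lies in the forward cone at that point.
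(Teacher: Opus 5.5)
Your proposal is correct and follows the paper's own route: the paper derives this corollary from Corollary~\ref{cor:fin_vol_cluster_bridge_coupling} by pushing forward under the top-most-path map. It uses the same two facts you do, namely that cone-points of \(\calC\) lie on every path from \(v_L\) to \(v_R\), and that \(\Gamma_+\cap\diam(v,w)\) is the highest path in \(\calC\cap\diam(v,w)\), so that \(p\) becomes \(\tilde p_+\) and \(\bndMeas_n\) becomes its image \(\tilde{\bndMeas}_n\). You also spell out details the paper leaves implicit: the junction points are cone-points of the concatenation, and displacements and the hitting-time conditioning are preserved by the map.
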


Recall $\gamma_{\shortdownarrow}$ and $\gamma_{\shortuparrow}$ defined in Corollary~\ref{cor:repulsiveness}.
For realizations \(\gamma,\gamma'\) of \(\Gamma_+,\Gamma_-\), define
\begin{equation*}
	\Phi_n^{\gamma} \equiv \Phi_n(\cdot \given \Gamma_+= \gamma)|_{\gamma_{\shortdownarrow}},
	\quad
	\Phi_n^{\gamma'} \equiv \Phi_n(\cdot \given \Gamma_-= \gamma')|_{\gamma'_{\shortuparrow}},
\end{equation*}where \(\gamma_{\shortdownarrow}, \gamma'_{\shortuparrow}\) are the edges of \(\FVEdges_n\) below \(\gamma\), above \(\gamma'\) respectively. Note that there is no ambiguity in the notation: \(\gamma\) is a path on primal edges, and \(\gamma'\) on dual edges.

From the results of Section~\ref{sec:fkg-repulsion}, one has that for any (admissible) path \(\gamma\) from \(v_L\to v_R\), and dual path \(\gamma'\) from \(v_L'\) to \(v_R'\),
\begin{equation}
	\label{eq:pathwise_stoch_dom}
	\Phi_n^{\gamma} \succcurlyeq \Phi_{n},
	\qquad
	\Phi_n^{\gamma'} \preccurlyeq \Phi_{n},
\end{equation}seen as, respectively, the law of \((\omega_{\tau},\omega_{\tau\tau'})|_{\gamma_{\shortdownarrow}}\), and \((\omega_{\tau},\omega_{\tau\tau'})|_{\gamma_{\shortuparrow}'}\).

\subsubsection*{Mixing}

Finally, we gather the relaxation properties of \(\Phi_n\) and of various conditional versions of it that we shall need later.

\begin{lemma}[Mixing]
	\label{lem:DoubleBridge:relax_mATRC}
	Let \(0<J<U\) satisfy \(\sinh 2J=e^{-2U}\). There exist \(c,C\in (0,+\infty)\) such that for any \(n,m_n\geq 1\), \(F\subset F'\subset \bbE_{n-1,m_n-1}\), and any events \(A\) supported on edges in \(F\), and \(B\) supported on the edges of \((F')^c\),
	\begin{equation*}
		\Big|\frac{\Phi_n(A\given B)}{\Phi(A)} -1 \Big|\leq C \sum_{e\in F} e^{-c\rmd(e,(F')^c)},
	\end{equation*}as soon as the R.H.S. is less than \(0.1\).
	In particular, for any dual simple path \(\gamma^*\) from \(v_L'\) to \(v_R'\) using only edges in \(*\FVEdges_n\), any \(F\subset\gamma'_{\shortuparrow}\cap \bbE_{n-1,m_n-1}\), and any event \(A\) supported on edges in \(F\),
	\begin{equation*}
		\Big|\frac{\Phi_n^{\gamma'}(A)}{\Phi(A)} -1 \Big|\leq C \sum_{e\in F} e^{-c\rmd(e,(\bbE_{n,m_n}\setminus (\gamma'\cup \gamma'_{\shortdownarrow}))^c)},
	\end{equation*}as soon as the R.H.S. is less than \(0.1\). The equivalent claim holds for \(\Phi_n^{\gamma}\) with a path \(\gamma\) from \(v_L\) to \(v_R\).
\end{lemma}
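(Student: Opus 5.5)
The plan is to reduce both parts of the statement to a ratio weak mixing estimate for the bulk ATRC measure that is uniform in boundary conditions, which we take from the companion work~\cite{DobGlaOtt25} (built on~\cite{AouDobGla24,Ott25}).

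\textbf{Step 1: domain Markov reduction.} The first step is a domain Markov observation for $\Phi_n$. Fix a configuration $\zeta$ on $(F')^c$. Since $F'\subset\bbE_{n-1,m_n-1}\subset\mathsf{E}$, the weights in~\eqref{eq:matrc_def} attached to edges of $F'$ are the bulk ATRC weights, namely $\mathrm{w}_\tau=2\cdot 2$ per edge of $\omega_\tau$ and $\svc-2$ per edge of $\omega_{\tau\tau'}\setminus\omega_\tau$, up to the relation~\eqref{eq:parameters_bulk}. The cluster counts $\clusters_{\mathsf{K}}$ and $\clusters_{\mathsf{K}^1}$ then reduce, up to a $\zeta$-dependent constant, to the cluster counts of an ATRC measure on $F'$. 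The boundary condition of this measure is the pair of connectivity patterns that $\zeta$ induces on the vertices of $\partialex F'$; in $\omega_{\tau\tau'}$ this pattern includes the identifications of $\mathsf{K}^1$.

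Hence $\Phi_n(\,\cdot\given\zeta)|_{F'}=\atrc^{\xi(\zeta)}_{F'}$ for a generalized boundary condition $\xi(\zeta)$. Consequently $\Phi_n(A\given B)$ is an average over $\zeta\in B$ of the quantities $\atrc^{\xi(\zeta)}_{F'}(A)$. By the FKG lattice property (Lemma~\ref{lem:fkg_lattice_mod_atrc}, and its bulk analogue) these measures are sandwiched between $\atrc^{0,0}_{F'}$ and $\atrc^{1,1}_{F'}$. It therefore suffices to prove
\[
\Big|\frac{\atrc^{\xi}_{F'}(A)}{\Phi(A)}-1\Big|\leq C\sum_{e\in F}e^{-c\,\rmd(e,(F')^c)}
\]
uniformly in $\xi$, whenever the right-hand side is at most $0.1$.

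\textbf{Step 2: uniform ratio mixing (the main obstacle).} This is the step I expect to be hardest, because $A$ is not monotone, so the sandwich alone only controls increasing events. I would first import from~\cite{DobGlaOtt25} the exponential relaxation between the two extreme boundary conditions: the total variation between $\atrc^{1,1}_{F'}$ and $\atrc^{0,0}_{F'}$ restricted to $F$ is bounded by $\sum_{e\in F}e^{-c\,\rmd(e,(F')^c)}$. This follows from monotone coupling together with uniqueness and exponential decay of the infinite-volume ATRC measure. Second, I would upgrade this to ratio mixing via the standard argument of Alexander and~\cite{Ott25}. In that argument one couples two copies with different boundary conditions, explores dual/primal separating circuits from the outside, and uses the finite energy property (Property~\ref{property:FinEne}) to pass from additive to multiplicative errors. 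I would simply cite the ratio mixing statement of~\cite{DobGlaOtt25} if it is formulated uniformly in boundary conditions. Averaging over $\zeta\in B$ then gives the first display.

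\textbf{Step 3: the conditional versions.} For the second display, I would observe that the event $\{\Gamma_-=\gamma'\}$ is measurable with respect to the edges of $*\gamma'$ together with those below $\gamma'$. Indeed, the lowest dual crossing equals $\gamma'$ if and only if $\gamma'$ is $\omega_{\tau\tau'}^*$-open and no open dual crossing exists strictly below it, and both conditions depend only on edges on or below $\gamma'$. The law $\Phi_n^{\gamma'}$ is the restriction to $\gamma'_{\shortuparrow}$ of $\Phi_n(\,\cdot\given B)$ with $B=\{\Gamma_-=\gamma'\}$.

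I then apply the first part with $F'=\bbE_{n,m_n}\setminus(\gamma'\cup\gamma'_{\shortdownarrow})$. The condition $F\subset F'$ holds since $F\subset\gamma'_{\shortuparrow}\cap\bbE_{n-1,m_n-1}$, and $B$ is supported on $(F')^c$. This yields exactly the stated distance. The same argument applies to $\Phi_n^{\gamma}$ with the topmost primal path $\Gamma_+$, whose event $\{\Gamma_+=\gamma\}$ depends only on the edges on and above $\gamma$.
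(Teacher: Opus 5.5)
Your proposal follows the paper's route. The paper's proof, borrowed from the companion work, is exactly this: the conditional measures on $F'$ are ATRC measures sandwiched between the extremal ones $\atrc^{0,0}_{F'}$ and $\atrc^{1,1}_{F'}$, combined with exponential ratio mixing for ATRC; your Step 3 correctly spells out the specialisation to $\{\Gamma_-=\gamma'\}$ that the paper leaves implicit.

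Two minor corrections. First, on the self-dual curve $\rmw_\tau=2$ (not $2\cdot 2$) and $\rmw_{\tau\tau'}=\svc-2$. Second, your choice $F'=\bbE_{n,m_n}\setminus(\gamma'\cup\gamma'_{\shortdownarrow})$ is not contained in $\bbE_{n-1,m_n-1}$, as the first part requires. This is harmless: the domain Markov step only uses $F'\subset\bbE_{n,m_n}$, or you can intersect $F'$ with $\bbE_{n-1,m_n-1}$, which lowers the relevant distances by at most one and is absorbed into $C$.
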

\begin{proof}
	The proof is identical to the one of~\cite[Lemma 8.3]{DobGlaOtt25}: the two measures are ``stochastically sandwiched'' between two ATRC measures with extremal boundary conditions on \(F'\), and the ATRC model has exponential ratio mixing.
\end{proof}

\subsection{Main result: coupling of a pair of clusters with a pair of non-intersecting random walks bridges}
\label{subsec:DoubleBridge:coupling_avoiding_bridges}

We can now state the main result of the section, Theorem~\ref{thm:coupling_with_avoiding_bridges}. The remainder of the section is devoted to its proof. The statement is a total variation bound between the law of \(\calC,\calC'\) under \(\Phi_n(\ \given v_R\in \calC,\, v_R'\in \calC')\), and a pair of \emph{independent} chains of microscopic clusters, as encountered in Theorem~\ref{thm:OZ_atrc_inf_vol}, conditioned on non-intersection. In particular, see Corollary~\ref{cor:coupling_with_avoiding_bridges}, it implies that there exists a coupling between \(\calC,\calC'\) and a pair of random walk bridges conditioned on non-intersection such that the following holds: the geometry of \(\calC,\calC'\) is entirely controlled by the geometry of the walks with probability \(1-o_n(1)\). 
This is Step 2 in the proof of Theorem~\ref{thm:invariance_princ_FK}, see Section~\ref{subsec:proof-thms}.

The ideas behind the proof of Theorem~\ref{thm:coupling_with_avoiding_bridges} are the following. First, the lattice FKG property implies that \(\calC,\calC'\) are not ``attracted'' to one another, that is them being close brings no energetic gain. Thus, they experience \emph{entropic repulsion}: for entropic reasons, it is better for them to be far apart one another. Second, when they are far apart, mixing implies that they locally look like two independent infinite volume long clusters, which geometry is perfectly understood through Theorem~\ref{thm:OZ_atrc_inf_vol}. 
The rigorous implementation of these ideas is strongly inspired by the treatment of a similar problem in~\cite{IofOttVelWac20}: the main conceptual difference is that we are dealing with two clusters instead of a single one and a static object; and the main technical difference is that we are dealing with the ATRC model, which is less Markovian than FK percolation, so the arguments are more involved. 

As it is the main scale to be considered in most of this section, we introduce the shorthand
\begin{equation*}
	\scale_n = \lceil\ln^2(n) \rceil.
\end{equation*}
In the next Theorem, \(111\) and \(22\) are just constants that work. They are by no mean optimal: we anyway lose powers of log everywhere in the proof, so we did not try to optimize much.

\begin{theorem}[Total variation bound between two clusters and two bridges]
	\label{thm:coupling_with_avoiding_bridges}
	Let \(0<J<U\) satisfy \(\sinh 2J=e^{-2U}\). There exist \(n_0\geq 1, c_0\geq 0, c>0\) such that for any \(n\geq n_0\), \(m_n\geq c_0n\), there exists a probability measure \(\MixMeas_n\) on \((\SetRootMarkBackCont\times \SetRootMarkForwCont)^2\) such that the following holds.
	\begin{itemize}
		\item \(\MixMeas_n\) is supported on quadruplets \((\eta_L,\eta_R,\eta_L',\eta_R')\) with
		\begin{equation*}
			\max(\norm{\displace(\eta_L)}_{\infty}, \norm{\displace(\eta_L')}_{\infty}, \norm{\displace(\eta_R)}_{\infty}, \norm{\displace(\eta_R')}_{\infty} ) \leq \ln^{111}(n),
		\end{equation*}and with \(\displace(\eta_L) \cdot \rme_2 - \displace(\eta_L')\cdot \rme_2\geq \scale_n\), \(\displace(\eta_L')\cdot \rme_2 - \displace(\eta_R) \cdot \rme_2 \geq \scale_n\).
		\item Let \((\OZRVchain^v)_{v\in \Z^2}, (\widetilde{\OZRVchain^v})_{v\in \Z^2}\) be independent and identically distributed with law \(\otimes_{v\in \Z^2} \OZ_v\).
		Denote by \((\OZRVcluster^v)_{v\in \Z^2},(\widetilde{\OZRVcluster^v})_{v\in \Z^2}\) the associated clusters.
		Sample \linebreak \((\Upsilon_L,\Upsilon_L',\Upsilon_R,\Upsilon_R')\) from~\(\MixMeas_n\) independently from \((\OZRVchain^v)_{v\in \Z^2}\) and \((\widetilde{\OZRVchain^v})_{v\in \Z^2}\).
		Introduce random variables \(V := v_R-v_L-\displace(\Upsilon_L)-\displace(\Upsilon_R)\) and \(V' := v_R'-v_L'-\displace(\Upsilon_L')-\displace(\Upsilon_R')\).
		Then, for any function \(f\) of \(\calC,\calC'\),
		\begin{multline*}
			\Big| E\Big[ f(v_L+ \Upsilon_L\concatenate \OZRVcluster^{V} \concatenate \Upsilon_R, v_L' + \Upsilon_L'\concatenate \widetilde{\OZRVcluster}^{V'} \concatenate \Upsilon_R') \bgiven \mathrm{Ord} \Big]
			\\
			- \Phi_n\big[ f(\calC,\calC') \bgiven v_R\in \calC,\, v_R'\in \calC'\big]\Big|
			\leq
			\frac{1}{\ln^{22}(n)}\norm{f}_{\infty}
		\end{multline*}where \(\mathrm{Ord}\) is the event
		\begin{equation*}
			\mathrm{Ord} = \big\{ \rmd_{\infty}\big(v_L+\displace(\Upsilon_L) + \OZRVcluster^{V}, v_L'+\displace(\Upsilon_L') + \widetilde{\OZRVcluster}^{V'}\big)\geq \scale_n \big\}.
		\end{equation*}
	\end{itemize}
\end{theorem}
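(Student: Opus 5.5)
We follow the strategy of~\cite{IofOttVelWac20} as adapted in~\cite{DAl24}, replacing the static wall by the random lower crossing and using Corollary~\ref{cor:repulsiveness} and~\eqref{eq:pathwise_stoch_dom} as the substitute for Markovian decoupling. The proof has three parts. First, an \emph{entropic repulsion} estimate: under \(\Phi_n(\cdot\given v_R\in\calC,\,v_R'\in\calC')\), with probability at least \(1-\ln^{-C}(n)\), the two crossings are at distance at least \(\scale_n\) in the bulk slab \(\slab_{-n+\ln^{111}(n),\,n-\ln^{111}(n)}\). Second, near each end (within horizontal distance \(\ln^{111}(n)\) of \(v_L,v_R\)), the clusters separate to vertical distance \(\scale_n\) at cone-points, with the required ordering. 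Third, on the resulting ``separated'' event, the joint law of \((\calC,\calC')\) factorises up to small error into two independent OZ chains conditioned on \(\mathrm{Ord}\).

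\textbf{Step 1 (lower bound on the partition function).} We first bound from below
\[
\Phi_n(v_R\in\calC,\,v_R'\in\calC')\;\geq\; n^{-C}\,\Phi_n(v_R\in\calC)\,\Phi_n(v_R'\in\calC').
\]
By FKG and Corollary~\ref{cor:repulsiveness}, we condition on \(\Gamma_+=\gamma\). Then \(\Phi_n^{\gamma}\succcurlyeq\Phi_n\) restricted below \(\gamma\), and the dual connection is decreasing, so a naive FKG bound points the wrong way. Instead we restrict to \(\gamma\) staying above height \(\sqrt n\) in the bulk, and \(\gamma'\) below \(-\sqrt n\). By Corollary~\ref{cor:fin_vol_paths_bridge_coupling} together with the random-walk ballot estimates (probability \(\asymp n^{-1/2}\) per walk), each such event has probability at least \(n^{-C}\). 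Lemma~\ref{lem:DoubleBridge:relax_mATRC} then shows that the dual connection, given such a \(\gamma\), has conditional probability comparable to its unconditional one, up to a polynomial loss.

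\textbf{Step 2 (upper bound on closeness).} For each point \(x\) in the bulk, we bound the probability that the two clusters come within distance \(\scale_n\) of each other near \(x\). We condition on \(\Gamma_+\) restricted to the columns left of \(x\) and on \(\Gamma_-\) restricted to the columns right of \(x\), and apply the pathwise stochastic domination~\eqref{eq:pathwise_stoch_dom} to compare with independent crossings. The event that two independent OZ walks meet, given that they avoid each other elsewhere, costs an extra factor, in the spirit of the ballot-theorem estimates of~\cite{DAl24,DenWac15}. Summing over \(x\), and using that closeness at scale \(\scale_n\) forces the two local OZ chains to come near each other (which is polynomially unlikely relative to the partition-function lower bound), yields the repulsion bound.

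We expect Step 2 to be the main obstacle, because the lower ``wall'' is random and the ATRC model lacks a clean domain Markov property. The plan is to work with the extremal paths \(\Gamma_\pm\), for which Corollary~\ref{cor:repulsiveness} gives exact factorisation once \(\omega_\tau|_\gamma\equiv1\), and to absorb the cost of modifying boundary conditions using finite energy over \(O(\scale_n)\) edges.

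\textbf{Step 3 (factorisation on the separated event).} On the good event, we choose a common pair of cone-points for \(\calC\) and \(\calC'\) at the boundary of the slab, at horizontal distance at most \(\ln^{111}(n)\) from the ends. This defines \((\Upsilon_L,\Upsilon_R,\Upsilon_L',\Upsilon_R')\), whose law we take to be \(\MixMeas_n\). Between these cone-points, we resample each cluster given the other. The mixing Lemma~\ref{lem:DoubleBridge:relax_mATRC} with \(F\) the \(\scale_n/2\)-neighbourhood of one cluster then gives a ratio error of \(Cn\,e^{-c\,\scale_n}=o(n^{-10})\). Combined with Theorem~\ref{thm:OZ_atrc_inf_vol} and Corollary~\ref{cor:fin_vol_cluster_bridge_coupling}, this shows that the joint law equals the product of two OZ bridge measures restricted to \(\mathrm{Ord}\), up to a relative error of \(n^{-c}\). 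Adding the probabilities of the bad events from Steps 1 and 2, with all polynomial-in-\(\ln n\) losses tracked, gives the \(\ln^{-22}(n)\) total-variation bound.
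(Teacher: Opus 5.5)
Your proposal has a genuine gap: the repulsion step does not close quantitatively, and the conditioning mechanism in Step~2 is not valid.

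\textbf{Step~1 must be sharp, and your mechanism cannot make it sharp.} Entropic repulsion gains only a \emph{polylogarithmic} factor. Two bridges conditioned to avoid each other come within $\scale_n$ of each other in the slab at distance $\ln^{111}(n)$ (or $\scale_n^{50}$) from the ends with probability that is only polylogarithmically small; this is why the theorem's error is $\ln^{-22}(n)$. So your claim in Step~2 that closeness is ``polynomially unlikely'' is false. What can be proved is $\Phi_n(\text{close},\connection,\connection')\le C n^{-1}\scale_n^{-15}\,\Phi_n(\connection)\Phi_n(\connection')$, where $n^{-1}$ is the price of mutual avoidance. Hence you need $\Phi_n(\connection\cap\connection')\ge c\,n^{-1}\ln^{-C}(n)\,\Phi_n(\connection)\Phi_n(\connection')$. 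A bound $n^{-C}$ with unspecified $C$, or anything ``up to a polynomial loss'', makes Step~2 useless.

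Your route cannot reach this order. If you enforce the ordering path by path (each crossing on its own side of a fixed level), each bridge pays its own ballot factor and you lose a full power of $n$. If you constrain only the bulk, the whole $n^{-1}$ is paid near $v_L,v_R$. There the two crossings are at distance $O(1)$, so Lemma~\ref{lem:DoubleBridge:relax_mATRC} gives nothing.

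The paper's Lemma~\ref{lem:DoubleBridge:apriori_lower_bound} handles exactly this region:
\begin{itemize}
\item Finite energy on $O(K)$ edges moves the endpoints to $v_L+(K,K)$, $v_L'+(K,-K)$, and similarly on the right.
\item Mixing passes to infinite volume.
\item One restricts to diamond-confined pairs separated by $K'+d^{1/6}$, with $d$ the distance to the nearest end. The growing separation makes the infinite-volume ratio-mixing errors summable, so the pair factorises.
\item Via the OZ decomposition, Lemma~\ref{lem:DoubleBridge:RW:LB_diam_avoid} shows two independent walks achieve this separation with probability $\ge c/n^2$, the same order as bare non-intersection.
\end{itemize}
Combined with the single-crossing bound of Lemma~\ref{lem:DoubleBridge:apriori_upper_bound_single_crossing}, this is what makes Lemma~\ref{lem:DoubleBridge:entropic_repulsion} meaningful.

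\textbf{Step~2's conditioning does not give a domination, and the finite-energy repair is too expensive.} Corollary~\ref{cor:repulsiveness} and~\eqref{eq:pathwise_stoch_dom} apply when conditioning on an \emph{entire} extremal crossing; this works because $\{\Gamma_-=\gamma'\}$ is measurable with respect to the edges on and below $\gamma'$. Conditioning on the part of $\Gamma_+$ left of $x$ and of $\Gamma_-$ right of $x$ does not split the domain, and no domination follows. Repairing boundary conditions by finite energy on $O(\scale_n)$ edges costs $e^{-c\scale_n}=n^{-c\ln n}$, which swamps any polylogarithmic gain.

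No surgery is needed:
\begin{itemize}
\item Use the abundance of cone points (Lemma~\ref{lem:DoubleBridge:cone_points}) to reduce to the distance between $\calC$ and $\Gamma_-$.
\item Condition on the whole $\Gamma_-=\gamma'$. For fixed $\gamma'$, the event that $\calC$ contains a $v_L$--$v_R$ path above $\gamma'$ and comes within $2\scale_n^2$ of it in the bulk is increasing. So $\Phi_n^{\gamma'}$ may be replaced by $\Phi_n$; this is the precise sense in which FKG rules out attraction.
\item Replace $\calC$ and $\Gamma_-$ by \emph{independent} OZ objects via Corollaries~\ref{cor:fin_vol_cluster_bridge_coupling} and~\ref{cor:fin_vol_paths_bridge_coupling}.
\item Conclude with the two-walk estimate of Lemma~\ref{lem:DoubleBridge:RW:UB_avoid_but_close}.
\end{itemize}

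\textbf{Step~3} matches the paper's density swapping and OZ comparison. It additionally needs the forgetting step, and a lower bound on the probability of good clusters under the product measures to turn total-variation bounds into conditional ones.
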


As a corollary of Theorem~\ref{thm:coupling_with_avoiding_bridges}, there exists a coupling of \((\calC,\calC')\) under \(\Phi_n(\cdot \given v_R\in \calC,\, v_{R}'\in \calC')\) with a pair of random walks conditioned on some form of mutual avoidance.
\begin{corollary}[Coupling two clusters with two bridges.]
	\label{cor:coupling_with_avoiding_bridges}
	There exist \(n_0\geq 1, c\in (0,+\infty)\), such that for any \(n\geq n_0\), there are random variables \(T,S_0, S_1,\dots, S_{T},V,\Upsilon\) and \(T', S_0',S_1',\dots,S_{T'}',V',\Upsilon'\) defined on a common space satisfying
	\begin{enumerate}
		\item \((\Upsilon,\Upsilon')\) has the law of \((\calC,\calC')\) under \(\Phi_n(\cdot \given v_R\in \calC,\, v_R'\in \calC')\);
		\item the start and end are ordered, \(S_0\cdot \rme_2 - S_0'\cdot \rme_2 \geq \scale_n \), \(V\cdot \rme_2- V' \cdot \rme_2 \geq \scale_n \), and close to \(v_L,v_R,v_L',v_R'\): with probability \(1\),
		\begin{equation*}
			\max(\norm{v_L-S_0}_{\infty},\norm{v_L'-S_0'}_{\infty},\norm{v_R-V}_{\infty},\norm{v_R'-V'}_{\infty}) \leq \ln^{111}(n);
		\end{equation*}
		\item conditionally on \((S_0,S_0',V,V')\), \((S_0,\dots,S_{T})\) and \((S_0',\dots, S_{T'}')\) have law of two independent bridges, \(\OZwalk^{S_0,V}\otimes \OZwalk^{S_0',V'}\), conditioned on non-intersection of their diamond-envelop, defined in~\eqref{eq:def:diamond_envelop}: \(\DiaEnv(S_0,\dots,S_T)\cap \DiaEnv(S_{0}',\dots,S_{T'}') = \varnothing\);
		\item with probability \(1-o_n(1)\), \(\Upsilon\subset (S_0+\bcone)\cup (\DiaEnv(S_0,\dots,S_T))\cup (V+\fcone)\), and \(\Upsilon'\subset (S_0'+\bcone)\cup (\DiaEnv(S_0',\dots,S_{T'}'))\cup (V'+\fcone)\).
	\end{enumerate}
\end{corollary}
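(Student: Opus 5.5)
Corollary~\ref{cor:coupling_with_avoiding_bridges} should follow from Theorem~\ref{thm:coupling_with_avoiding_bridges} by the standard maximal-coupling argument. Total variation closeness gives equality with probability $1-o_n(1)$. What remains is to check that the event $\mathrm{Ord}$ is comparable to non-intersection of diamond envelopes of the associated walks, and that the chains of clusters stay inside those envelopes.

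\textbf{Step 1: the walks.} Sample $(\Upsilon_L,\Upsilon_R,\Upsilon_L',\Upsilon_R')\sim\MixMeas_n$ and independent chains $\OZRVchain^{V}$, $\widetilde{\OZRVchain}^{V'}$. Set $S_0=v_L+\displace(\Upsilon_L)$ and $S_0'=v_L'+\displace(\Upsilon_L')$. Let $S_k$ be the successive cone-points, i.e. $S_k=S_0+\sum_{i\le k}\displace(\gamma_i)$, and define $S_k'$ in the same way. The endpoints are $S_0+V$ and $S_0'+V'$; relabelling, write $V$ for the endpoint. By construction, conditionally on the boundary pieces, $(S_k)$ and $(S'_k)$ are independent $\OZwalk$ bridges. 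Each diamond-confined $\gamma_i$ lies in $\diam(S_{i-1},S_i)$, and $\Upsilon_L,\Upsilon_R$ are backward/forward confined. Hence the concatenated cluster lies in $(S_0+\bcone)\cup\DiaEnv(S_0,\dots,S_T)\cup(V+\fcone)$. Property (2) is exactly the support properties of $\MixMeas_n$.

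\textbf{Step 2: replacing $\mathrm{Ord}$.} Theorem~\ref{thm:coupling_with_avoiding_bridges} conditions on $\mathrm{Ord}$, not on envelope avoidance. Write $\mathrm{NI}=\{\DiaEnv(S)\cap\DiaEnv(S')=\varnothing\}$. Since clusters sit in envelopes, $\mathrm{NI}$ essentially implies disjointness of the chains but not distance $\scale_n$. So I would show that under the product measure
\[
P(\mathrm{Ord}\,\triangle\,\mathrm{NI}\mid \text{boundary})=o\big(P(\mathrm{NI}\mid\text{boundary})\big).
\]
On $\mathrm{NI}\setminus\mathrm{Ord}$, the two envelopes come within $\scale_n$ of each other, or a cluster has a large increment. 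On $\mathrm{Ord}\setminus\mathrm{NI}$, the envelopes overlap although the clusters are $\scale_n$ apart, which forces some increment of size at least $\scale_n$. That event has probability $n e^{-c\ln^2 n}$ by~\eqref{eq:OZ:fin_ene_exp_dec}, negligible against $P(\mathrm{NI})\ge n^{-C}$, using that the starts are separated by $\scale_n$.

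For two walks both starting and ending $\scale_n$ apart, the probability that they avoid each other yet come within $\scale_n$ at some time in the bulk is a small fraction of $P(\mathrm{NI})$. This is entropic repulsion for non-intersecting random walk bridges. I would import it from~\cite{DAl24,DenWac15}: the ratio of the non-intersection probability under shifted starting points is controlled by a harmonic function $h(y)\asymp y$. Near the ends, within $\ln^{C}n$ steps, the pair is already separated by the choice of boundary. The bulk contact probability is then $O(\scale_n/\sqrt n)$ relative.

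This comparison step is the main obstacle. It needs uniform-in-boundary estimates on random walk bridges in cone-restricted steps, and it is where I expect to use the known invariance principle theory (Theorem~\ref{thm:InvPrinc_AvoidingBridges}'s setting).

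\textbf{Step 3: conclusion.} Combining Steps 1 and 2, the law of the walk-built clusters conditioned on $\mathrm{NI}$ is within $o_n(1)$ total variation of their law conditioned on $\mathrm{Ord}$. By the theorem, the latter is within $\ln^{-22}n$ of the law of $(\calC,\calC')$. A maximal coupling of $(\Upsilon,\Upsilon')$ with these clusters then agrees with probability $1-o_n(1)$. On that event, Step 1 gives property (4), and properties (1)–(3) hold by construction.
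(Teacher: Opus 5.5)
\textbf{Verdict: the overall route is right, but the comparison of \(\mathrm{Ord}\) with \(\mathrm{NI}\) in Step~2 fails near the ends for the boundary data the theorem allows.}

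The paper gives no separate proof of this corollary; it presents it as a direct consequence of Theorem~\ref{thm:coupling_with_avoiding_bridges}. Your skeleton is the natural one: maximally couple the cluster laws, take the cone points of the concatenated chains as the walks, and replace the conditioning on \(\mathrm{Ord}\) by non-intersection \(\mathrm{NI}\) of the envelopes. Your treatment of \(\mathrm{Ord}\setminus\mathrm{NI}\) is fine. Every point of \(\diam(S_{i-1},S_i)\) is within sup-distance \(\norm{S_i-S_{i-1}}_\infty\) of \(S_{i-1}\), so overlapping envelopes together with \(\mathrm{Ord}\) force a step of size at least \(\scale_n/2\). The gap is in the other direction, \(\mathrm{NI}\setminus\mathrm{Ord}\). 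It sits exactly at the two ends, which you dismiss with ``the pair is already separated by the choice of boundary''.

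\textbf{Why the comparison fails.} From the statement of the theorem alone, all you know about the junction points is a vertical separation \(s\geq\scale_n\). For such boundary data the comparison you want is false.
\begin{itemize}
\item Under \(\mathrm{NI}\), the gap of the synchronized walks behaves like the \(h\)-transform with \(h(y)\asymp y\) that you invoke. From an initial gap \(s\), it drops below a level \(a<s\) with probability of order \(a/s\).
\item With \(s=\scale_n+\tfrac12\) and \(a=\scale_n\) this is close to one. So, conditionally on \(\mathrm{NI}\), the walks come within \(\scale_n\) of each other right after the start with probability close to one.
\item \(\mathrm{Ord}\) forbids this from the first step, because the conditioned chains contain \(S_0\) and \(S_0'\).
\end{itemize}
Hence \(P(\cdot\mid\mathrm{Ord},b)\) and \(P(\cdot\mid\mathrm{NI},b)\) can be at total variation distance close to \(1\). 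No uniform-in-boundary estimate \(P(\mathrm{NI}\setminus\mathrm{Ord}\mid b)=o(P(\mathrm{NI}\mid b))\) holds.

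Relatedly, the relative cost of a contact is not \(O(\scale_n/\sqrt n)\). A contact at distance \(t\) from an end costs roughly \(\scale_n/\sqrt t\) once \(t\gg s^2\), so the ends dominate. This is why Lemma~\ref{lem:DoubleBridge:RW:UB_avoid_but_close} only controls contacts inside \(\slab_{-a_n,a_n}\) with \(a_n=n-\scale_n^{50}\).

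\textbf{How to close the gap.} You cannot use the theorem as a black box; you need its proof.
\begin{itemize}
\item There, the boundary weights \(r_1,r_2\) are induced by \(\goodCl_n\). The junction points \(W_L,W_R\), and the endpoints of the \(\xi\)-pieces (within \(\scale_n^{110}\ll n\) of the sides), lie in \(\calS_{\mathrm{in}}\).
\item In \(\calS_{\mathrm{in}}\), \(\goodCl_n\) forces \(\rmd_\infty\)-distance at least \(\scale_n^2\) between the clusters. So the actual separation at the junctions is of order \(\scale_n^2\), not \(\scale_n\).
\item With \(s\gtrsim\scale_n^2\), ballot-type estimates in the spirit of Lemmas~\ref{lem:DoubleBridge:sync_trans_kernels:ord_walkUB} and~\ref{lem:DoubleBridge:sync_trans_kernels:ord_bridgeLB} show that, under \(\mathrm{NI}\), a contact within \(\scale_n\) near the ends has conditional probability \(O(\scale_n/s)=O(1/\scale_n)\).
\item A version of Lemma~\ref{lem:DoubleBridge:RW:UB_avoid_but_close} with starting points near the junctions handles the bulk.
\end{itemize}
Your mixture-level total variation comparison and the gluing via maximal coupling then go through, and item (2) still holds with the stronger separation. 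Alternatively, rerun the last step of the theorem's proof conditioning directly on \(\mathrm{NI}\) instead of \(\mathrm{Ord}\).
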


We now focus on proving Theorem~\ref{thm:coupling_with_avoiding_bridges}.

\subsection{Results on system of directed Random Walks}
\label{subsec:DoubleBridge:RW_system}

We will need some results about random walks conditioned on mutual avoidance for the proof of the invariance principle and of entropic repulsion. We gather them here in the form we will use them. There are two kinds of results that are needed: up to constants (or even, up-to-log) bounds, and sharp asymptotics for the probability of certain events. Only up to constants/log bounds are needed for the proof of Theorem~\ref{thm:coupling_with_avoiding_bridges}.
Whilst they are mostly small variations of known arguments, we try to be relatively self-contained for these estimates. Indeed, the reduction from the study of the order-order Potts interface to a random walk problem is the main contribution of this work, the invariance principle for a pair of walks conditioned on non-intersection having already been studied in~\cite{DAl24}, based on results of~\cite{DurWac20}. We still allow ourselves to only sketch certain classical arguments or to refer to similar computations in other works to keep the proofs short.

\subsubsection*{Pair of directed random walks}

For this section, let \(X_1,X_2,\dots, Y_1,Y_2,\dots \) be family of i.i.d. random variables with law \(p\circ \displace^{-1}\). From Theorem~\ref{thm:OZ_atrc_inf_vol}, they satisfy:
\begin{itemize}
	\item \(X_1\) is supported on \(\fcone \cap \Z^2\setminus\{0\}\), and has positive density on that set;
	\item \(X_1\) has exponential tails;
	\item \(p\) inherits lattice symmetries from \(\atrc\), so \(X_1\cdot \rme_2\) is symmetric around \(0\) conditionally on \(X_1\cdot \rme_1\).
\end{itemize}In particular, our increments fit in the setup of~\cite[Section 5]{DAl24}. Let \(S_0,S_0'\) be independent standard Gaussian random variables on \(\R^2\). Let \(S_k = S_0 + \sum_{i=1}^k X_i\), \(S_{k}'= S_0' + \sum_{i=1}^k Y_i\) be the associated random walks. For \(x\in \R^2\), define
\begin{equation}
	T_x = \inf\Big\{k\geq 1:\ \sum_{i=1}^k X_i = x\Big\},
	\quad
	T_x' = \inf\Big\{k\geq 1:\ \sum_{i=1}^k Y_i = x\Big\}
\end{equation}where \(\inf \varnothing = +\infty\) by convention. In particular, $T_x = T'_x =\infty$ a.s. for all~$x\not \in \Z^2$.
We will use the shorthands
\begin{equation*}
	X_{i,1} \equiv X_{i}\cdot \rme_1,\quad X_{i,2} \equiv X_{i}\cdot \rme_2,\quad i\geq 1,
\end{equation*}and similarly for the \(Y_i\)'s, \(S_i\)'s, and \(S_i'\)'s.

For \(x,x',y,y'\in \R^2\), introduce the events
\begin{equation}
	\label{eq:def:hit_events_pair_of_walks}
	\HitEvent_{y,y'} = \{T_{y-S_0}<\infty\}\cap \{T_{y'-S_0'}' <\infty\},
\end{equation}and the probability measures
\begin{equation*}
	P_{x,x'} \coloneqq P(\cdot \given S_0=x,\, S_0'=x'),
	\quad
	P_{x,x'}^{y,y'} \coloneqq P_{x,x'}(\cdot \given \HitEvent_{y,y'}).
\end{equation*}

\subsubsection*{Synchronized pair of directed random walks}

We recall the setup of \emph{walk synchronization}, and refer to~\cite{DAl24,OttVel19} for more details. Define the \emph{synchronized times}: say that \((k,k')\) is a \emph{synchronized times pair} if
\begin{equation*}
	S_{k}\cdot \rme_1 = S_{k'}'\cdot \rme_1.
\end{equation*}
Since the walks are directed, these pairs of times are strictly ordered. Denote by \(\{(\syncTime_0,\syncTime_0'), (\syncTime_1,\syncTime_1'),\dots \}\) the sequence of synchronized times, and
\begin{equation}
	\label{eq:def:synchro_walks}
	\big((\mathbf{T}_0,\mathbf{S}_0, \mathbf{S}_0'),(\mathbf{T}_1,\mathbf{S}_1, \mathbf{S}_1'),\dots \big)
	\coloneqq
	\big((S_{\syncTime_0,1},S_{\syncTime_0,2},S_{\syncTime_0',2}'),(S_{\syncTime_1,1},S_{\syncTime_1,2},S_{\syncTime_1',2}'),\dots\big)
\end{equation}the walks seen at synchronized times.
They satisfy the following properties, which we will use throughout the text.
\begin{enumerate}
	\item The increments \((\mathbf{T}_{i}-\mathbf{T}_{i-1},\mathbf{S}_i-\mathbf{S}_{i-1}, \mathbf{S}_i'-\mathbf{S}_{i-1}')\), \(i\geq 1\), are i.i.d. under \(P_{x,x'}\) for any \(x,x'\in \Z^2\).
	\item \(|\mathbf{S}_{i+1} - \mathbf{S}_i|,|\mathbf{S}_{i+1}' - \mathbf{S}_i'| \leq |\mathbf{T}_{i+1}- \mathbf{T}_i|\) as the \(X_i,Y_i\)'s are supported on \(\fcone\).
	\item For any \(j>i\), \(\mathbf{T}_{j}- \mathbf{T}_i\geq j-i\) a.s., as \(X_{1}\cdot \rme_1\geq 1\) a.s..
	\item For \(x,x'\in \Z^2\), conditionally on \(S_0=x,S_0'=x'\), \(\mathbf{T}_0-\max(x_1,x_1')\) has exponential tails. This follows from the exponential tails of \(X_i\), and the fact that \(P(X_1\cdot \rme_1 = 1) >0\) (irreducibility suffices). See for example~\cite[Section 2.3.2]{OttVel19}.
	\item In the same fashion, the ``time'' increments \(\mathbf{T}_{i}-\mathbf{T}_{i-1}\) have exponential tails. By the cone constraint, this yields exponential tails on the other coordinates as well.
\end{enumerate}
We also stress that \textbf{\(\mathbf{S},\mathbf{S}'\) are \(\Z\)-valued walks, and \(S,S'\) are \(\Z^2\)-valued}. Forgetting this can easily lead to notational confusion.

Define
\begin{equation*}
	\IntersectionTime \coloneqq \inf\big\{k\geq 0:\ \mathbf{S}_k \leq \mathbf{S}_k'\big\},
\end{equation*}the first synchronized time at which \(S'\) goes above \(S\).

For \(i,i',j,j'\in \Z\), \(n\geq 0\), define
\begin{equation}
\label{eq:def:sync_hit_times}
\begin{aligned}
	\syncHitTime_{n,j,j'} &= \inf\big\{k\geq 0:\ \mathbf{T}_k=n,\, \mathbf{S}_k = j,\, \mathbf{S}_k'=j' \big\},
	\\
	\syncHitEvent_{n,j,j'} &= \big\{\syncHitTime_{n,j,j'} <\infty\big\},
\end{aligned}
\end{equation}where \(\inf \varnothing = +\infty\) by convention.
Define also the measures
\begin{equation}
	\label{eq:def:sync_measures}
	Q_{i,i'} = P\big( \cdot \bgiven \mathbf{T}_0 = 0,\, \mathbf{S}_0 = i,\, \mathbf{S}_0' = i' \big),
\end{equation}
and the transition kernels
\begin{equation}
\label{eq:def:sync_trans_kernels}
\begin{gathered}
	q_{i,i'}(n,j,j') = Q_{i,i'}\big( \syncHitEvent_{n,j,j'} \big),
	\quad
	q_{i,i'}^+(n) = Q_{i,i'}\big( \mathbf{T}_{\IntersectionTime} > n \big)
	\\
	q_{i,i'}^+(n,j,j') = Q_{i,i'}\big( \syncHitEvent_{n,j,j'},\, \IntersectionTime > \syncHitTime_{n,j,j'} \big).
\end{gathered}
\end{equation}
Define \(\overleftarrow{Q}_{i,i'}\), \(\overleftarrow{q}_{i,i'}(n,j,j')\), \(\overleftarrow{q}_{i,i'}^+(n,j,j')\) to be the same quantities for the time reversed walk, which is the walk with time increments \(\mathbf{T}_1-\mathbf{T}_0\), and space increments \((\mathbf{S}_{0}-\mathbf{S}_{1}, \mathbf{S}_0'-\mathbf{S}_1')\).

The point of synchronized random walks, is that \(\big((\mathbf{S}_0,\mathbf{S}_0'),(\mathbf{S}_1,\mathbf{S}_1'),\dots\big)\) conditioned on \(\IntersectionTime = +\infty\) is a \(\Z^2\)-valued random walk in the cone \(\{(x_1,x_2)\in \R^2:\ x_1> x_2\}\), to which one can apply the analysis of~\cite{DenWac15}. If we were only looking at events measurable in terms of the synchronized walk, we would be in the setup of~\cite{DenWac15}. We therefore need to do some work to reduce estimates for the pair of directed walks we are interested in to estimates on events depending only on the synchronized walk. This kind of reduction is performed for example in~\cite{IofOttVelWac20}, and used extensively in~\cite{DAl24}.

\subsubsection*{Estimates for synchronized walks}

\begin{lemma}
	\label{lem:DoubleBridge:sync_trans_kernels:free_walk}
	1) There exist \(C,c\in (0,+\infty)\) such that for any \(n\geq 1\), \(i,i',j,j'\in \Z\),
	\begin{equation*}
		q_{i,i'}(n,j,j')
		\leq
		\tfrac{C}{n}e^{-c\frac{(i-j)^2 +(i'-j')^2}{n}}.
	\end{equation*}
	2) Let \(K\geq 1\). There exists \(c\in (0,+\infty)\) such that for any \(n\geq 1\), \(i,i',j,j'\in \Z\), with \(|i-j|\leq K\sqrt{n}\), and \(|i'-j'|\leq K\sqrt{n}\),
	\begin{equation*}
		q_{i,i'}(n,j,j')
		\geq
		\tfrac{1}{cn}.
	\end{equation*}
	Both statements also hold for \(\overleftarrow{q}\).
\end{lemma}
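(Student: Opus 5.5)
Both bounds are local limit theorem estimates for the synchronized walk $(\mathbf{T}_k,\mathbf{S}_k,\mathbf{S}_k')$ under $Q_{i,i'}$. By the properties listed above, its increments are i.i.d. with exponential tails. Moreover, $\mathbf{T}_k-\mathbf{T}_{k-1}\geq 1$, and the spatial increments are symmetric, because $X_{1,2}$ is symmetric conditionally on $X_{1,1}$ and the two walks are independent. The event $\syncHitEvent_{n,j,j'}$ is the event that the renewal-type process $(\mathbf{T}_k,\mathbf{S}_k,\mathbf{S}_k')$ ever visits $(n,j,j')$. So I will write
\[
q_{i,i'}(n,j,j')=\sum_{k\geq 1} Q_{0,0}\big(\mathbf{T}_k=n,\ \mathbf{S}_k=j-i,\ \mathbf{S}_k'=j'-i'\big),
\]
which is valid since each point is visited at most once, because $\mathbf{T}$ is strictly increasing. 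I will then treat this as the Green function of a three-dimensional random walk with a strictly increasing time coordinate, and evaluate it at a space-time point.

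\textbf{Step 1 (upper bound).} Split the sum according to $k$. If $|k-n/\mu|>n^{2/3}$, where $\mu=E[\mathbf{T}_1-\mathbf{T}_0]$, then $\mathbf{T}_k=n$ is a large deviation event for the time marginal. A Cramér bound on the exponential tails gives a contribution of order $e^{-cn^{1/3}}$ summed over $k$. For the remaining $k$, the bound $|\mathbf{S}_k|\leq \mathbf{T}_k$ forces $q=0$ unless $|i-j|,|i'-j'|\leq n$. In that regime, a three-dimensional local CLT for $(\mathbf{T}_k,\mathbf{S}_k,\mathbf{S}_k')$ gives
\[
Q_{0,0}(\cdots)\leq Ck^{-3/2}\exp\Big(-c\,\tfrac{(n-\mu k)^2+(i-j)^2+(i'-j')^2}{k}\Big).
\]
The main obstacle is that the local CLT error is uniform only in the bulk. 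In the moderate/large deviation regime $|i-j|\gg\sqrt{n\ln n}$, I would instead use an exponential Chebyshev bound on $\mathbf{S}_k$ (exponential tails with $k\asymp n$), which gives $e^{-c(i-j)^2/n}$ for $|i-j|\leq n$ after optimising the tilt, and I would combine it with the local estimate through a Gaussian-tilt (Bahadur–Rao-type) local bound. Summing the Gaussian factor in $n-\mu k$ over $k$ costs a factor $\sqrt n$, which turns $k^{-3/2}$ into $C/n$. The exponentially small error is absorbed because $(i-j)^2/n\leq n$.

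For aperiodicity, I will check that the increment law is not supported on a proper sublattice coset. This follows from the positive density of $X_1$ on $\fcone\cap\Z^2\setminus\{0\}$: the steps $(1,0)$, $(1,\pm1)$ for each walk generate the appropriate lattice. If there is a residual parity constraint, it only changes constants, or makes $q=0$, which is harmless for an upper bound.

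\textbf{Step 2 (lower bound).} Keep only the terms with $|k-n/\mu|\leq\sqrt n$. The local CLT now applies in its bulk regime, since $|i-j|,|i'-j'|\leq K\sqrt n$ and $k\asymp n$, so each term is at least $c_K n^{-3/2}$. Summing over the $\asymp\sqrt n$ values of $k$ gives at least $1/(c n)$. This requires the covariance matrix to be nondegenerate, which holds because the increments have full support on a cone, and it requires the target point to lie in the support lattice. Positive density on the cone lattice points makes the support the full $\Z^3$ modulo the constraint that $\mathbf{T}$ increases, so there is no parity restriction.

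\textbf{Step 3 (reversed walk).} The same argument applies verbatim to $\overleftarrow{q}$. The reversed walk has the same i.i.d. structure: its space increments are negated, and by the symmetry of the increments they have the same law.
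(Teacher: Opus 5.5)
Your lower bound and the bulk part of your upper bound follow the paper. The paper obtains both from Theorem~\ref{thm:OZ_asymp_from_LLT}, which is exactly the relative-error local limit theorem for $(\mathbf{T}_k,\mathbf{S}_k,\mathbf{S}'_k)$ summed over $k$ in a window around $n/\mu$. It applies this when $|i-j|,|i'-j'|\le n^{7/12}$.

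The problem is your treatment of the tail regime. You bound the terms with $|k-n/\mu|>n^{2/3}$ only through the time marginal, which gives $e^{-cn^{1/3}}$. You then assert that this is absorbed ``because $(i-j)^2/n\le n$''. That inequality absorbs an error of order $e^{-cn}$, not $e^{-cn^{1/3}}$. Once $|i-j|\gg n^{2/3}$, the target $\tfrac{C}{n}e^{-c(i-j)^2/n}$ is far smaller than $e^{-cn^{1/3}}$. So for such $i,j$ the off-window terms cannot be controlled by the deviation of $\mathbf{T}_k$ alone; the spatial deviation has to enter.

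The Bahadur--Rao-type local tilt you propose, to recover the prefactor $1/n$, is not needed. Assume without loss of generality $|i-j|\ge|i'-j'|$. For $|i-j|\ge n^{7/12}$ one has $e^{-c(i-j)^2/(2n)}\le e^{-cn^{1/6}/2}\le C/n$. Hence any non-local bound of the form $e^{-c(i-j)^2/n}$ already gives the claim after halving the constant.

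The paper gets such a bound without decomposing in $k$ at all:
\begin{itemize}
\item Since $\mathbf{T}_k-\mathbf{T}_0\ge k$, a visit at time $n$ happens at an index $k\le n$. Hence $q_{i,i'}(n,j,j')\le Q_{0,0}(\max_{k\le n}|\mathbf{S}_k|\ge |i-j|)$.
\item Doob's inequality for the submartingale $e^{\lambda\mathbf{S}_k}$, with $\lambda=\delta R/n$, gives $2e^{-cR^2/n}$ for $n^{7/12}\le R=|i-j|\le n$.
\item For $R>n$, $q=0$ by the cone constraint.
\end{itemize}
A union over $k\le n$ of Chernoff bounds on $\mathbf{S}_k$ works equally well. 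The extra factor $n$ is absorbed in the same way, and this is the repair your Step~1 needs.

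Finally, 2) can only hold for $n\ge n_0(K)$. If $n<K^2$ and $n<|i-j|\le K\sqrt n$, the cone constraint gives $q_{i,i'}(n,j,j')=0$. Neither your argument nor the paper's addresses small $n$, and only large $n$ is used later.
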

\begin{proof}
	The lower bound follows from \cite{AouOttVel24}, see Theorem~\ref{thm:OZ_asymp_from_LLT} in the appendix for details.
	The upper bound when both \(|i-j|,|i'-j'|\leq n^{7/12}\) follows again from the same statement. Here, \(\frac{7}{12}\) is any number in \((\frac{1}{2},\frac{2}{3})\).
	
	The upper bound for either \(|i-j|\geq n^{\frac{7}{12}}\) or \(|i'-j'|\geq n^{\frac{7}{12}}\) follows from Doob's submartingale inequality. Indeed, by symmetry we can assume that \(|i-j|\geq |i'-j'|\). Then, as \(\mathbf{T}_n- \mathbf{T}_0 \geq n\) a.s.
	\begin{equation*}
		q_{i,i'}(n,j,j')
		\leq
		Q_{0,0}\big( \max_{k=1,\dots,n} |\mathbf{S}_{k}| \geq |i-j|\big)
		\leq
		2Q_{0,0}\big( \max_{k=1,\dots,n} \mathbf{S}_{k} \geq |i-j|\big)
	\end{equation*}where we used a union bound and symmetry between \(\mathbf{S}\) and \(-\mathbf{S}\) in the last line.
	Now, for \(\lambda>0\), \(e^{\lambda \mathbf{S}_{k}}, k\geq 0\) is a submartingale. So, by Doob's inequality, for any \(R>0\),
	\begin{multline*}
		Q_{0,0}\big( \max_{k=1,\dots,n} \mathbf{S}_{k} \geq R\big)
		=
		Q_{0,0}\big( \max_{k=1,\dots,n} e^{\lambda\mathbf{S}_{k}} \geq e^{\lambda R}\big)
		\\
		\leq
		e^{-\lambda R}E_{Q_{0,0}}[e^{\lambda\mathbf{S}_{n}}]
		=
		e^{-\lambda R}E_{Q_{0,0}}[e^{\lambda\mathbf{S}_{1}}]^n.
	\end{multline*}
	We then have that, as \(\mathbf{S}_{1}\) has exponential tails under \(Q_{0,0}\), for \(\lambda\) small enough
	\begin{equation*}
		\ln\big(E_{Q_{0,0}}[e^{\lambda \mathbf{S}_{1}}]\big)
		\leq
		\tfrac{\lambda^2}{2}\mathrm{Var}_{Q_{0,0}}(\mathbf{S}_{1}) + C\lambda^3
		\leq
		c'\lambda^2,
	\end{equation*}with \(C,c'\in (0,+\infty)\).
	Thus, for \(0\leq R \leq n\), taking \(\lambda = \frac{\delta R}{n}\) for \(\delta >0\) small, one gets
	\begin{equation*}
		Q_{0,0}\big( \max_{k=1,\dots,n} \mathbf{S}_{k} \geq R\big)
		\leq
		\exp(-\tfrac{\delta R^2}{n} + c' \tfrac{\delta^2R^2}{n}  )
		\leq
		e^{-c'' \frac{R^2}{n}}.
	\end{equation*}
	Thus, when \(n^{7/12}\leq |i-j|\leq n\),
	\begin{equation*}
		q_{i,i'}(n,j,j')
		\leq
		2e^{-c'' \frac{|i-j|^2}{n}},
	\end{equation*}
	which implies the claimed bound.
	When \(|i-j|> n\), \(q_{i,i'}(n,j,j') = 0\) by the cone confinement property.
\end{proof}

\begin{lemma}
	\label{lem:DoubleBridge:sync_trans_kernels:asymp_ordering}
	There is \(c\in (0,+\infty)\) such that for any \(n\geq 1\), \(i,i'\in \Z\) with \(i> i'\),
	\begin{equation*}
		\tfrac{\min(i-i',\sqrt{n})}{c\sqrt{n}}
		\leq
		q_{i,i'}^+(n)
		\leq
		\tfrac{c\min(i-i',\sqrt{n})}{\sqrt{n}}.
	\end{equation*}
	The same holds for \(\overleftarrow{q}^+\).
\end{lemma}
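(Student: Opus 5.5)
The plan is to reduce the statement to a one-dimensional gambler's-ruin estimate for the difference walk. Under $Q_{i,i'}$ the synchronized increments $(\mathbf{T}_k-\mathbf{T}_{k-1},\mathbf{S}_k-\mathbf{S}_{k-1},\mathbf{S}_k'-\mathbf{S}_{k-1}')$ are i.i.d. with exponential tails (properties (1)--(5)). Hence $D_k:=\mathbf{S}_k-\mathbf{S}_k'$ is a $\Z$-valued random walk started at $d:=i-i'>0$. By the reflection symmetry of $p$, the increments $\mathbf{S}_1-\mathbf{S}_0$ and $\mathbf{S}_1'-\mathbf{S}_0'$ are each symmetric. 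They are also independent given $\mathbf{T}_1$, since the two walks $S,S'$ are independent. So $D$ has centred, non-degenerate increments with exponential tails, and $\IntersectionTime$ is exactly the first time $D_k\leq 0$. The event $\{\mathbf{T}_{\IntersectionTime}>n\}$ involves the random time $\mathbf{T}$. Since $\mathbf{T}_k\geq k$ and $\mathbf{T}_k-\mathbf{T}_0$ is a renewal process with exponentially-tailed gaps of mean $\mu\geq 1$, I will sandwich the event between step-count events:
\begin{equation*}
\{\IntersectionTime> n\}\subset\{\mathbf{T}_{\IntersectionTime}>n\}\subset\{\IntersectionTime> n/(2\mu)\}\cup\{\mathbf{T}_{\lfloor n/(2\mu)\rfloor}>n\}.
\end{equation*}
The last event has probability $e^{-cn}$ by a Chernoff bound, which is negligible compared with $1/\sqrt{n}$.

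This leaves a classical estimate. For a centred walk with finite variance (here even exponential moments) started at $d\geq1$, one has $P_d(\tau_{\leq 0}>k)\asymp \min(d,\sqrt{k})/\sqrt{k}$ uniformly in $d$ and $k$.

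For the upper bound, when $d\geq\sqrt{k}$ the claim is trivial since the probability is at most $1$. When $d<\sqrt{k}$, I use optional stopping for the martingale $D$ at $\tau=\tau_{\leq0}\wedge\sigma_{\geq 2\sqrt{k}}$. Exponential tails control the overshoot, which gives $P(\sigma_{\geq2\sqrt{k}}<\tau_{\leq0})\leq C d/\sqrt{k}$. On the complementary event, up to the exit time $\tau$ the walk is stuck in $(0,2\sqrt{k})$, a window of width $2\sqrt{k}$. By the CLT, from any point of the window the walk leaves it within $k$ steps with probability bounded below. Iterating this with the Markov property over blocks shows that staying in the window for $k$ steps has probability $\leq Ce^{-c}\cdot d/\sqrt{k}$. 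A neater alternative for this step is to use $E_d[\,\cdot\,]$ of the harmonic function $V(x)=x+E[\text{overshoot}]$, as in~\cite{DenWac15}.

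For the lower bound, I again use optional stopping, now on the exit of $(0,2\sqrt{k})$. First I reach height $\geq 2\sqrt{k}$ before hitting $(-\infty,0]$; since $D$ is a martingale with controlled overshoot, this has probability $\geq c\,d/\sqrt{k}$. Then from height $2\sqrt{k}$ the walk does not return below $0$ within $k$ steps; by Donsker or Kolmogorov's maximal inequality this has probability $\geq c$. If $d\geq\sqrt{k}$, only the second part is needed. These are exactly the Denisov--Wachtel one-dimensional cone estimates, and I would cite~\cite{DenWac15} (or standard fluctuation theory) with this sketch.

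For the time-reversed $\overleftarrow{q}^+$, the reversed walk has increments $-(\mathbf{S}_1-\mathbf{S}_0)$ and the same time increments. By the symmetry of $p$ these are equal in law to the forward ones, so the identical argument applies.

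The main obstacle is the uniformity, in particular the handling of overshoots and of the random time change $\mathbf{T}$. Exponential tails of the synchronized increments resolve both: overshoots are bounded in expectation, and $\mathbf{T}_k/k$ concentrates exponentially. The only genuinely new input is checking that $D$ has centred, non-degenerate increments, which follows from the symmetry and positive-density properties listed after Theorem~\ref{thm:OZ_atrc_inf_vol}.
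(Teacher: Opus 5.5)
Your proposal is correct and follows the paper's proof. You pass to the difference walk $\mathbf{S}-\mathbf{S}'$, sandwich $\{\mathbf{T}_{\mathcal{T}}>n\}$ between step-count events using $\mathbf{T}_k\geq k$ and exponential concentration of $\mathbf{T}$, and then invoke the classical bound $\min(d,\sqrt{k})/\sqrt{k}$ for survival of a centred walk above $0$, which the paper simply cites (Ott--Velenik, Feller).

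The one loose point is your block-iteration sketch of the upper bound. By itself it only yields a constant, not the factor $d/\sqrt{k}$. The standard route is optional stopping of $D_j^2-\sigma^2 j$ at the exit time $\tau$ of $(0,2\sqrt{k})$, which gives $E_d[\tau]\leq Cd\sqrt{k}$ and hence $P_d(\tau>k)\leq Cd/\sqrt{k}$. Since you, like the paper, ultimately rely on the cited classical estimate, this does not affect the argument.
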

\begin{proof}
	Note that the difference walk \(\mathbf{S}- \mathbf{S}'\) is a random walk on \(\Z\) with symmetric increments having exponential tails. Moreover, \(\IntersectionTime\) is the hitting time of \((-\infty,0]\) for this walk. It is well known that, see for example~\cite[Lemmata 3.6 and 3.8]{OttVel25}\footnote{These bounds are folklore and can already be deduced from the material presented in~\cite[Chapter XII]{Feller91}, but it is not so easy to find them stated this way in the literature, as most texts are focused on deriving a version of \(\lim_{k\to\infty} \frac{Q_{i,i'}( \IntersectionTime > k )}{Q_{i,i-1}( \IntersectionTime > k )}\), and sharp asymptotics for the denominator in the previous limit.}, that there is \(c\in (0,+\infty)\) such that for every \(k\geq 1\),
	\begin{equation*}
		\tfrac{\min(i-i',\sqrt{k})}{c\sqrt{k}}
		\leq
		Q_{i,i'}( \IntersectionTime > k )
		\leq
		\tfrac{c\min(i-i',\sqrt{k})}{\sqrt{k}}.
	\end{equation*}
	Now, for integers \(k> l\), \(\mathbf{T}_{k}-\mathbf{T}_{l}\geq k-l\) a.s.. So,
	\begin{equation*}
		q_{i,i'}^+(n) \geq Q_{i,i'}( \IntersectionTime > n ),
	\end{equation*}which gives the wanted lower bound. Then, there are \(a,c'>0\) such that \(Q_{i,i'}\big( \exists k\leq an:\ \mathbf{T}_k \geq n \big) \leq e^{-c'n}\), so
	\begin{equation*}
		q_{i,i'}^+(n) \leq e^{-c' n} + Q_{i,i'}( \IntersectionTime > an ),
	\end{equation*}which gives the wanted upper bound. The proof for \(\overleftarrow{q}^+\) is the same.
\end{proof}

\begin{lemma}
	\label{lem:DoubleBridge:sync_trans_kernels:ord_walkUB}
	There are \(C\in (0,+\infty)\) such that for any \(n\geq 1\), \(i,i',j,j'\in \Z\) with \(i> i'\), \(j>j'\),
	\begin{equation*}
		q_{i,i'}^+(n,j,j')
		\leq
		\tfrac{C\min(i-i',\sqrt{n})\min(j-j',\sqrt{n})}{n^2}.
	\end{equation*}
	The same holds for \(\overleftarrow{q}^+\).
\end{lemma}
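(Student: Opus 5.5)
Split the time interval in half and use the Markov property of the synchronized walk at the middle synchronized time. Let \(m=\lfloor n/2\rfloor\). On the event \(\syncHitEvent_{n,j,j'}\cap\{\IntersectionTime>\syncHitTime_{n,j,j'}\}\), the synchronized walk has a first synchronized time \(k_*\) with \(\mathbf{T}_{k_*}\geq m\). By property (5) of the synchronized walk, \(\mathbf{T}_{k_*}-m\) has exponential tails, so up to an error \(Ce^{-cn}\) we may assume \(\mathbf{T}_{k_*}=m+r\) with \(0\leq r\leq \sqrt{n}\). Decompose according to the value \((\mathbf{T}_{k_*},\mathbf{S}_{k_*},\mathbf{S}'_{k_*})=(m+r,l,l')\), with \(l>l'\). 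By the strong Markov property (i.i.d. increments), the probability is bounded by
\begin{equation*}
\sum_{r\leq \sqrt n}\sum_{l>l'} Q_{i,i'}\big(\mathbf{T}_{k_*}=m+r,\,\mathbf{S}_{k_*}=l,\,\mathbf{S}'_{k_*}=l',\,\IntersectionTime>k_*\big)\; q^+_{l,l'}(n-m-r,j,j') + Ce^{-cn}.
\end{equation*}

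\textbf{Step 1: the second factor.} Reverse time in the second half: \(q^+_{l,l'}(n-m-r,j,j')\) equals the probability that the reversed walk started from \((j,j')\) hits \((l,l')\) at time \(n-m-r\) while staying ordered. This is bounded by \(\overleftarrow{q}^+\)-type quantities. I would rather bound it uniformly in \((l,l')\) by \(\tfrac{C}{n}\cdot\tfrac{\min(j-j',\sqrt n)}{\sqrt n}\) via a second splitting. Split the reversed path at its quarter point. The first quarter (from \((j,j')\)) stays ordered, which costs \(\overleftarrow{q}^+_{j,j'}(n/4)\leq C\min(j-j',\sqrt n)/\sqrt n\) by Lemma~\ref{lem:DoubleBridge:sync_trans_kernels:asymp_ordering}. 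The remaining quarter must hit the exact point \((l,l')\), which costs at most \(C/n\) uniformly by Lemma~\ref{lem:DoubleBridge:sync_trans_kernels:free_walk}(1), after dropping the ordering constraint. This gives
\begin{equation*}
\sup_{l,l',r} q^+_{l,l'}(n-m-r,j,j')\leq \tfrac{C\min(j-j',\sqrt n)}{n^{3/2}}.
\end{equation*}

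\textbf{Step 2: the first factor.} Summing the first factor over \((r,l,l')\) gives at most \(Q_{i,i'}(\mathbf{T}_{\IntersectionTime}>m)=q^+_{i,i'}(m)\leq C\min(i-i',\sqrt n)/\sqrt n\), by Lemma~\ref{lem:DoubleBridge:sync_trans_kernels:asymp_ordering}. Multiplying the two bounds gives \(C\min(i-i',\sqrt n)\min(j-j',\sqrt n)/n^2\). The exponential error \(Ce^{-cn}\) is absorbed since the minima are at least \(1\). The statement for \(\overleftarrow{q}^+\) follows identically, by exchanging the roles of the walk and its reversal.

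\textbf{Main obstacle.} The delicate point is the overshoot at synchronized times. The synchronized times do not hit the deterministic times \(m\) or \(n/4\) exactly, so each splitting must be made at a random "first synchronized time after \(m\)." We then need that the residual time \(n-\mathbf{T}_{k_*}\) is still of order \(n\), so that the local bound \(C/n\) from Lemma~\ref{lem:DoubleBridge:sync_trans_kernels:free_walk} applies uniformly in the overshoot. This is handled by the exponential tails of the time increments (property (5)). We also need the time-reversal identity for the kernels, which holds because the increments \((\mathbf{T}_i-\mathbf{T}_{i-1},\mathbf{S}_i-\mathbf{S}_{i-1},\mathbf{S}'_i-\mathbf{S}'_{i-1})\) are i.i.d. For \(n\) smaller than a constant, the bound is trivial.
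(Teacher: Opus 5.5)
Your argument is correct and is essentially the paper's proof. The paper cuts simultaneously at the first synchronized time after \(n/5\) and the last one before \(4n/5\), bounds the middle piece by \(C/n\) via Lemma~\ref{lem:DoubleBridge:sync_trans_kernels:free_walk}, and bounds the two ends by \(q^+_{i,i'}(n/5)\) and \(\overleftarrow{q}^+_{j,j'}(n/5)\) via Lemma~\ref{lem:DoubleBridge:sync_trans_kernels:asymp_ordering}, which is the same three-piece decomposition as your nested half-then-reversed-quarter split. The only slip is that with the cutoff \(r\le\sqrt n\) the overshoot error is \(Ce^{-c\sqrt n}\) rather than \(Ce^{-cn}\); this is still negligible against the lower bound \(C/n^2\) of the right-hand side, or you can take the cutoff \(r\le\varepsilon n\) instead.
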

\begin{proof}
	As \(q_{i,i'}^+(n,j,j')\leq 1\), it is sufficient to consider \(n\) large which we implicitly do every time it is needed. We consider \(q^+\), the proof for \(\overleftarrow{q}^+\) is identical.
	We start with some considerations, and then divide the proof into several cases. For notational convenience, suppose \(n\) is a multiple of \(5\), and let \(L= \frac{n}{5}\). Then, define
	\begin{equation*}
		\tau_- = \min\{k\geq 0:\ \mathbf{T}_k \geq L\},
		\quad
		\tau_+ = \max\{k\geq 0:\ \mathbf{T}_k \leq 4L\}.
	\end{equation*}By exponential tails, the probability that \(\tau_-\geq 2L\) or that \(\tau_+\leq 3L\) is exponentially small in \(n\). Thus, using Markov's property,
	\begin{multline*}
		q_{i,i'}^+(n,i,i')
		=
		\mathrm{err}_n
		+
		\sum_{k=L}^{2L} \sum_{k'=3L}^{4L} \sum_{u<u'}\sum_{v<v'}
		\\
		Q_{i,i'}\big(\tau_-= k,\, \syncHitEvent_{k,u,u'},\, \tau_+ = k',\, \syncHitEvent_{k',v,v'},\, \mathbf{T}_{\IntersectionTime} >n,\, \syncHitEvent_{n,j,j'}  \big)
	\end{multline*}with \(\mathrm{err}_n \leq e^{-c'n}\). Now, recall \(\overleftarrow{Q}_{i,i'}\), \(\overleftarrow{\tau}_-\), and \(\overleftarrow{\syncHitEvent}_{k,i,i'}\) are defined as \(Q_{i,i'}\), \(\tau_-\) and~\(\syncHitEvent_{k,i,i'}\) but for the time reversed walk, so the Markov property gives
	\begin{multline*}
		Q_{i,i'}\big(\tau_-= k,\, \syncHitEvent_{k,u,u'},\, \tau_+ = k',\, \syncHitEvent_{k',v,v'},\, \mathbf{T}_{\IntersectionTime} >n,\, \syncHitEvent_{n,j,j'} \big)
		=
		\\
		Q_{i,i'}\big(\mathbf{T}_{\IntersectionTime} > \tau_-= k,\, \syncHitEvent_{k,u,u'} \big)
		q_{u,u'}^+(k'-k,v,v')
		\overleftarrow{Q}_{j,j'}\big(\mathbf{T}_{\IntersectionTime} > \overleftarrow{\tau}_-= n-k',\, \overleftarrow{\syncHitEvent}_{n-k',v,v'}\big).
	\end{multline*}
	As \(k'-k \geq L=n/5\),
	using the above considerations and the upper bound \(q_{u,u'}^+(k'-k,v,v')\leq q_{u,u'}(k'-k,v,v')\leq \frac{c}{n}\) from Lemma~\ref{lem:DoubleBridge:sync_trans_kernels:free_walk}, one gets
	\begin{align*}
		q_{i,i'}^+(n,j,j')
		&\leq
		e^{-c'n}
		+
		\tfrac{c}{n} Q_{i,i'}\big(\mathbf{T}_{\IntersectionTime} > \tau_-\in [L,2L] \big)
		\overleftarrow{Q}_{j,j'}\big(\mathbf{T}_{\IntersectionTime} > \overleftarrow{\tau}_-\in [L,2L]\big)
		\\
		&\leq
		e^{-c'n}
		+
		\tfrac{c}{n} q_{i,i'}^+(L)
		\overleftarrow{q}_{j,j'}^+(L),
	\end{align*}which gives the wanted bound using Lemma~\ref{lem:DoubleBridge:sync_trans_kernels:asymp_ordering}.
\end{proof}

\begin{lemma}
	\label{lem:DoubleBridge:sync_trans_kernels:ord_bridgeLB}
	There exist \(c>0, n_0\geq 1\) such that for \(n\geq n_0\) and \(|i|,|i'|,|j|,|j'|\leq \sqrt{n}\) with \(i<i'\), \(j<j'\),
	\begin{equation*}
		q^+_{i,i'}(n,j,j')
		\geq
		\tfrac{c(i-i')(j-j')}{n^2}.
	\end{equation*}
	The same holds for \(\overleftarrow{q}^+\).
\end{lemma}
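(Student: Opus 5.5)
We read the statement with the ordering of the surrounding lemmas, i.e.\ \(i>i'\), \(j>j'\), so that the lower bound reads \(c(i-i')(j-j')/n^2\). This is the natural counterpart of Lemma~\ref{lem:DoubleBridge:sync_trans_kernels:ord_walkUB}. The plan is to mirror the three-piece decomposition used in the proof of that upper bound. Set \(L=n/5\), and let \(\tau_-=\min\{k:\mathbf{T}_k\geq L\}\) and \(\tau_+=\max\{k:\mathbf{T}_k\leq 4L\}\). By the Markov property at synchronized times, and exactly as in that proof, \(q^+_{i,i'}(n,j,j')\) is bounded below by
\[
\sum_{k,k',u,u',v,v'} Q_{i,i'}\big(\mathbf{T}_{\IntersectionTime}>\tau_-=k,\ \syncHitEvent_{k,u,u'}\big)\,q^+_{u,u'}(k'-k,v,v')\,\overleftarrow{Q}_{j,j'}\big(\mathbf{T}_{\IntersectionTime}>\overleftarrow{\tau}_-=n-k',\ \overleftarrow{\syncHitEvent}_{n-k',v,v'}\big).
\]
We restrict the sum to ``good'' endpoints \((u,u')\) and \((v,v')\). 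These satisfy \(\delta\sqrt n\leq u-u'\leq K\sqrt n\), \(|u|,|u'|\leq K\sqrt n\), and the same constraints for \((v,v')\), where \(\delta\) is small and \(K\) is large.

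\textbf{Step 1 (entrance and exit pieces).} We show that
\[
Q_{i,i'}\big(\mathbf{T}_{\IntersectionTime}>\tau_-,\ (\mathbf{S}_{\tau_-},\mathbf{S}'_{\tau_-})\text{ good}\big)\geq c\,\frac{i-i'}{\sqrt n},
\]
and the analogous bound for the reversed walk started from \((j,j')\). Lemma~\ref{lem:DoubleBridge:sync_trans_kernels:asymp_ordering} gives survival up to \(\tau_-\) with probability of order \((i-i')/\sqrt n\), using \(|i|,|i'|\leq\sqrt n\) so that \(i-i'\leq 2\sqrt n\). It remains to see that a positive fraction of surviving paths end in a good position.
\begin{itemize}
\item The bounds \(|u|,|u'|\leq K\sqrt n\) fail only with small probability, by the Doob estimate from the proof of Lemma~\ref{lem:DoubleBridge:sync_trans_kernels:free_walk}.
\item For small separation \(u-u'<\delta\sqrt n\), we split at time \(L/2\) and apply the Markov property. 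The walk must first survive to time \(L/2\), which costs \(\asymp (i-i')/\sqrt n\). It must then land at separation \(<\delta\sqrt n\) after a further time of order \(n\). By the one-dimensional local estimate (Lemma~\ref{lem:DoubleBridge:sync_trans_kernels:free_walk}) combined with the survival upper bound for the reversed walk from Lemma~\ref{lem:DoubleBridge:sync_trans_kernels:asymp_ordering}, this costs at most \(C\delta^2\). Choosing \(\delta\) small gives the claim.
\end{itemize}
Alternatively, one can invoke the convergence of the conditioned walk to a meander, as in \cite{DenWac15}.

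\textbf{Step 2 (middle piece).} For good \((u,u')\) and \((v,v')\), and for \(m=k'-k\in[L,3L]\), we aim for \(q^+_{u,u'}(m,v,v')\geq c/n\). We split the time interval \([0,m]\) at \(m-\varepsilon n\).
\begin{itemize}
\item On \([0,m-\varepsilon n]\), Donsker's invariance principle for the difference walk \(\mathbf{S}-\mathbf{S}'\), together with the walk \(\mathbf{S}\), gives the following with probability at least \(c(\delta,K)>0\): the pair stays ordered with separation \(\geq\delta\sqrt n/2\), and it ends inside a window of size \(\sqrt{\varepsilon n}\) around \((v,v')\).
\item On the last stretch, Lemma~\ref{lem:DoubleBridge:sync_trans_kernels:free_walk}(2) gives a hitting probability \(\geq 1/(c\,\varepsilon n)\).
\item The probability that the pair touches during this last stretch, while still hitting \((v,v')\), is at most \(\tfrac{C}{\varepsilon n}e^{-c\delta^2/\varepsilon}\). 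This follows by combining Doob's inequality with the local upper bound of Lemma~\ref{lem:DoubleBridge:sync_trans_kernels:free_walk}(1), for instance by splitting once more at mid-stretch.
\end{itemize}
Taking \(\varepsilon\) small relative to \(\delta^2\) gives \(q^+\geq c/n\).

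\textbf{Step 3 (assembly).} Summing over good endpoints and over \(k\in[L,2L]\), \(k'\in[3L,4L]\), and combining Steps 1 and 2, yields
\[
\frac{c}{n}\cdot\frac{i-i'}{\sqrt n}\cdot\frac{j-j'}{\sqrt n}.
\]
One point needs care: the synchronized time must equal exactly \(k\) and \(n-k'\). We handle this by summing over all possible values of \(\tau_-\), so that nothing is lost beyond the \(e^{-cn}\) error from the tails of \(\tau_\pm\). The reversed statement for \(\overleftarrow q^+\) follows symmetrically. We expect the main obstacle to be Step 1, namely showing that survival comes with macroscopic separation at a positive conditional rate. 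The quickest route is to cite the meander limit of \cite{DenWac15} and \cite{DAl24}; the fallback is the elementary \(\delta^2\) estimate sketched above.
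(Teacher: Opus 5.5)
Your proposal is correct and is essentially the paper's argument. It uses a three-piece Markov decomposition: the middle piece between macroscopically separated endpoints contributes order \(1/n\), and each end piece contributes survival of order \((i-i')/\sqrt{n}\) (resp.\ \((j-j')/\sqrt{n}\)) together with a positive conditional chance of ending well separated. The differences from the paper are minor. The paper cuts at the exact times \(n/3\) and \(2n/3\) rather than at \(\tau_\pm\). It also only cites the two auxiliary facts that you sketch: \(q^+\geq c\,q\) for separated endpoints, and a positive probability of a good position given survival. Your reading \(i>i'\), \(j>j'\) is the intended one; as printed, \(q^+\) would vanish.

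The one point to tighten is in Step~1: the control of \(|u|,|u'|\) must hold conditionally on survival. The unconditional Doob bound \(e^{-cK^2}\) is not small compared with \((i-i')/\sqrt{n}\) when \(i-i'=O(1)\). Your own Markov split at \(L/2\), iterated over dyadic time scales with thresholds \(\asymp K\sqrt{n}\,2^{-m/4}\), supplies exactly this conditional bound.
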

\begin{proof}
	First, we have the following bounds. Let \(K>0\) be large enough, fixed. Then, there is \(c>0\) such that for any \(n\) large enough,
	\begin{itemize}
		\item for any \(i,i',j,j'\in \Z\) with \(|i|,|i'|,|j|,|j'|\leq 2K\sqrt{n}\), and \(i-i'\geq \sqrt{n}/K\), \(j-j'\geq \sqrt{n}/K \),
		\begin{equation}
			\label{eq:prf_of_lem:DoubleBridge:sync_trans_kernels:ord_bridgeLB:eq1}
			q_{i,i'}^+(n,j,j') \geq cq_{i,i'}(n,j,j'),
		\end{equation}
		\item for any \(\sqrt{n}\geq i-i'>0\),
		\begin{equation}
			\label{eq:prf_of_lem:DoubleBridge:sync_trans_kernels:ord_bridgeLB:eq2}
			Q_{i,i'}\big(\exists k\geq 1:\ \mathbf{T}_k =n,\ \tfrac{\sqrt{n}}{K}\leq \textbf{S}_k-\textbf{S}_k' \leq K\sqrt{n},\, |\textbf{S}_k- i|\leq K\sqrt{n}\bgiven \textbf{T}_{\calT} > n\big) \geq c.
		\end{equation}
	\end{itemize}
	The same holds for \(\overleftarrow{Q}\). We do not prove these statements, as they are direct adaptations of well known arguments for random walks on \(\Z\):~\eqref{eq:prf_of_lem:DoubleBridge:sync_trans_kernels:ord_bridgeLB:eq1} follows from a small ball estimate for the walk bridge, which can be proved using either the invariance principle or martingales methods; \eqref{eq:prf_of_lem:DoubleBridge:sync_trans_kernels:ord_bridgeLB:eq2} can be proved using a combination of moments bounds obtained by sub-martingale inequality (for the \(\leq K\sqrt{n}\) part), and the CLT (for the \(\geq \sqrt{n}/K\) part). For details see, for example,~\cite[proof of Lemma 6.1]{OttVel25}.
	
	We then combine these two inequalities with Lemma~\ref{lem:DoubleBridge:sync_trans_kernels:asymp_ordering} to obtain that for \(n\) large enough and \(i,i',j,j'\) as in the statement (assuming \(n\) is a multiple of \(3\) for notational convenience)
	\begin{align*}
		q^+_{i,i'}(n,j,j')
		&\geq
		\sum_{\substack{K\sqrt{n} \geq u-u'\geq \sqrt{n}/K\\ |u-i|\leq K\sqrt{n}}} \sum_{\substack{K\sqrt{n} \geq v-v'\geq \sqrt{n}/K\\ |v-j|\leq K\sqrt{n}}} q^+_{i,i'}(\tfrac{n}{3},u,u')q^+_{u,u'}(\tfrac{n}{3},v,v')q^+_{v,v'}(\tfrac{n}{3},j,j')
		\\
		&\geq
		\frac{c}{n}\sum_{\substack{K\sqrt{n} \geq u-u'\geq \sqrt{n}/K\\ |u-i|\leq K\sqrt{n}}} \sum_{\substack{K\sqrt{n} \geq v-v'\geq \sqrt{n}/K\\ |v-j|\leq K\sqrt{n}}} q^+_{i,i'}(\tfrac{n}{3},u,u')q^+_{v,v'}(\tfrac{n}{3},j,j')
		\\
		&\geq
		\frac{c}{n} q^+_{i,i'}(\tfrac{n}{3})\overleftarrow{q}^+_{j,j'}(\tfrac{n}{3})
		\geq
		\frac{c(i-i')(j-j')}{n^2},
	\end{align*}where we used~\eqref{eq:prf_of_lem:DoubleBridge:sync_trans_kernels:ord_bridgeLB:eq1} and Lemma~\ref{lem:DoubleBridge:sync_trans_kernels:free_walk} in the second line,~\eqref{eq:prf_of_lem:DoubleBridge:sync_trans_kernels:ord_bridgeLB:eq2} in the third line, and Lemma~\ref{lem:DoubleBridge:sync_trans_kernels:asymp_ordering} in the last inequality.
\end{proof}

\subsubsection*{Up-to-constants estimates: directed walks pair}

\begin{lemma}
	\label{lem:DoubleBridge:RW:single_OZ_asymp}
	There exist \(n_0\geq 0, c\in (0,+\infty)\) such that, with the notations above, for any \(x\in \Z^2\cap \fcone\) with \(x_1\geq \min(n_0, |x_2|^{2})\),
	\begin{equation*}
		\tfrac{1}{c \sqrt{x_1}}
		\leq
		P(T_x <\infty \given S_0 =0)
		\leq
		\tfrac{c}{ \sqrt{x_1}}.
	\end{equation*}
\end{lemma}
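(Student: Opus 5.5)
The plan is to reduce the statement to a local limit theorem for the directed walk $S_k=\sum_{i\le k}X_i$ (with $S_0=0$). First I would use the renewal decomposition
\begin{equation*}
	P(T_x<\infty\given S_0=0)=\sum_{k\geq 1}P(S_k=x).
\end{equation*}
This identity holds because $X_{i,1}\geq 1$ almost surely. Hence the walk visits $x$ at most once, and $\{T_x<\infty\}$ is the disjoint union over $k$ of the events $\{S_k=x\}$. Since $X_{i,1}\geq 1$, only $k\le x_1$ contributes. Write $\mu=E[X_{1,1}]\in[1,\infty)$ and $\sigma^2=\mathrm{Var}(X_{1,2})>0$. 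Because $p\circ\displace^{-1}$ has positive density on $\fcone\cap\Z^2\setminus\{0\}$, the increment law is aperiodic and irreducible on $\Z^2$. Since it also has exponential tails, a standard two-dimensional local CLT applies to $S_k$, which has mean $(k\mu,0)$ by the reflection symmetry. The effective covariance is nondegenerate: $X_{1,1}$ is non-constant, and $\mathrm{Cov}(X_{1,1},X_{1,2})=0$ by symmetry.

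For the upper bound, the LCLT (or a Gaussian-type bound in the spirit of Lemma~\ref{lem:DoubleBridge:sync_trans_kernels:free_walk}) gives
\begin{equation*}
	P(S_k=x)\le \tfrac{C}{k}\exp\!\Big(-c\tfrac{(x_1-k\mu)^2+x_2^2}{k}\Big).
\end{equation*}
This bound does not even need $|x_2|$ small. For $k$ away from $x_1/\mu$ I would instead use a Chernoff bound on the first coordinate, which gives exponential smallness in $|x_1-k\mu|$. Summing over $k$ in the window $|k-x_1/\mu|\le K\sqrt{x_1}$ gives at most $O(\sqrt{x_1})$ terms, each of size $O(1/x_1)$, so $P(T_x<\infty)\le c/\sqrt{x_1}$. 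Small $x_1$ is absorbed into the constant.

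For the lower bound, I read the hypothesis as $x_1\geq\max(n_0,|x_2|^2)$; under a literal min the lower bound would fail when $|x_2|$ is of order $x_1$. Under this reading, for $k$ with $|k\mu-x_1|\le\sqrt{x_1}$ the LCLT gives $P(S_k=x)\ge \tfrac{1}{Ck}$. Here the Gaussian exponent is bounded, because $x_2^2/k\le C$ and $(x_1-k\mu)^2/k\le C$, and we need $x_1\geq n_0$ so that the LCLT error term is dominated. There are of order $\sqrt{x_1}$ such $k$, which gives the bound $\tfrac{1}{c\sqrt{x_1}}$. Alternatively, the lower bound can be imported directly from Theorem~\ref{thm:OZ_asymp_from_LLT} (\cite{AouOttVel24}), as is done in Lemma~\ref{lem:DoubleBridge:sync_trans_kernels:free_walk}.

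The main obstacle I expect is obtaining a local limit estimate that is uniform in both coordinates. The sum over $k$ mixes the time and space scalings, so the precise statement needed is a LCLT for the two-dimensional walk evaluated at the point $x$ across all $k$ near $x_1/\mu$, with a relative error that is uniform as long as $|x_2|\le\sqrt{x_1}$. I would try first to cite the Appendix statement (Theorem~\ref{thm:OZ_asymp_from_LLT}), which presumably gives the sharp asymptotics $P(T_x<\infty)\sim C x_1^{-1/2}e^{-x_2^2/(2\sigma'^2x_1)}$ in the moderate-deviation range $|x_2|\le x_1^{7/12}$. That asymptotic directly yields both bounds in the stated regime, and the upper bound outside that range then follows from the Chernoff/Doob argument above.
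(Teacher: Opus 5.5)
Your proposal is correct and is essentially the paper's argument. The paper's proof just invokes Theorem~\ref{thm:OZ_asymp_from_LLT} with time increments $X_{i,1}$ and space increments $X_{i,2}$, and the proof of that theorem is exactly your decomposition $P(T_x<\infty)=\sum_k P(S_k=x)$, followed by the two-dimensional local limit theorem summed over $k$ near $x_1/\mu$. One caution on your upper bound: the paper's window for $k$ has width $x_1^{\alpha'}$ with $\alpha'\in(\tfrac12,\tfrac23)$, so a plain Chernoff bound handles the complement. With your narrower $K\sqrt{x_1}$ window, the range $K\sqrt{x_1}\le|k\mu-x_1|\le x_1^{\alpha'}$ must be handled with the local Gaussian bound carrying the $\tfrac1k$ prefactor. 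Chernoff alone only gives $O(\sqrt{x_1}\,e^{-cK^2})$ there, which is not $O(1/\sqrt{x_1})$.

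You are also right that the hypothesis must be read as $x_1\geq\max(n_0,|x_2|^2)$. With the literal minimum the lower bound fails for $|x_2|\gg\sqrt{x_1}$, and the paper's one-line citation of Theorem~\ref{thm:OZ_asymp_from_LLT} likewise only covers $|x_2|\le Kx_1^{\alpha}$ with $\alpha<\tfrac23$.
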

\begin{proof}
	This is a direct consequence of Theorem~\ref{thm:OZ_asymp_from_LLT}.
\end{proof}

In the next Lemma, \(10\) is an arbitrarily fixed constant that must be greater than \(2\). Recall that \(\calS_{a,b}= [a,b]\times \R\), see~\eqref{eq:def:slab_CP}.
\begin{lemma}
	\label{lem:DoubleBridge:RW:LB_diam_avoid}
	Let \(K>0\). There exist \(n_0\geq 0, K_0\geq 0, c\in (0,+\infty)\) such that, with the notations above, for any \(n\geq n_0\), \(x,x'\in \R^2\), \(y\in x+\Z^2\), \(y'\in x'+\Z^2\) with \(y_1-x_1= n\), \(|x_2|,|x_2'|,|y_2|,|y_2'|\leq \sqrt{n}\), \(x_2-x_2' \geq K_0\), \(y_2-y_2' \geq K_0\), \(|x_1-x_1'|\leq 10\), \(|y_1-y_1'|\leq 10\)
	\begin{equation*}
		P_{x,x'}\Big( \HitEvent_{y,y'},\, \bigcap_{x_1\leq k\leq y_1-1}\big\{\rmd(\DiaEnv\cap \slab_{k,k+1},  \DiaEnv'\cap \slab_{k,k+1}) > K+ \rmd(k,\{x_1,y_1\})^{1/6}\big\}\Big)
		\geq
		\frac{c}{n^2},
	\end{equation*}where \(\DiaEnv = \DiaEnv(S_0,\dots, S_{T_y})\), \(\DiaEnv'=\DiaEnv(S_0',\dots, S_{T_{y'}'}')\), and \(\rmd(\varnothing, A) = +\infty\) by convention.
\end{lemma}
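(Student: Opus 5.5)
The plan is to reduce the statement to the synchronized walk $(\mathbf{S},\mathbf{S}')$ and to use the bridge lower bound of Lemma~\ref{lem:DoubleBridge:sync_trans_kernels:ord_bridgeLB}. The price is a separation that grows like $\rmd(k,\{x_1,y_1\})^{1/6}$; since the walks typically separate like $\sqrt{\cdot}$, this should cost only a constant factor.

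\textbf{Step 1: localise the two ends.} I split the time interval into an initial piece $[x_1,x_1+M]$, a bulk piece, and a final piece $[y_1-M,y_1]$, with $M$ a large constant. On the initial piece I use finite energy of the increments (positive density on $\fcone\cap\Z^2$, Theorem~\ref{thm:OZ_atrc_inf_vol}). With probability bounded below, the walk $S$ makes a few steps going up at slope close to $1$, and $S'$ makes steps going down. This produces synchronized positions at a common time $\mathbf{T}_0\le x_1+M$ with $\mathbf{S}_0-\mathbf{S}_0'\ge K_1$. Here $K_1$ is large, and $K_0$ is chosen large enough that the diamonds of these few steps are at distance $>K$. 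The mismatch $|x_1-x_1'|\le10$ is absorbed into these finitely many steps. The final piece is handled the same way for the reversed walk, giving target synchronized positions $(j,j')$ with $j-j'\ge K_1$.

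\textbf{Step 2: bridge estimate in the bulk.} In the bulk I need the pair to hit $(n',j,j')$ with $n'=n-O(M)$ while staying ordered. Lemma~\ref{lem:DoubleBridge:sync_trans_kernels:ord_bridgeLB} gives probability $\ge c K_1^2/n^2$ for this. The walk must additionally respect the growing barrier: the synchronized gap must exceed $2K+2\rmd(k,\{x_1,y_1\})^{1/6}$ plus a local fluctuation. I handle this by redoing the three-piece decomposition from the proof of Lemma~\ref{lem:DoubleBridge:sync_trans_kernels:ord_bridgeLB}.
\begin{itemize}
\item On the first third, I use that the difference walk $\mathbf{S}-\mathbf{S}'$, conditioned to stay positive up to time $L$, stays above the curve $t\mapsto t^{1/6}$ (after a suitable shift) with probability bounded below uniformly. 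This is a standard fact for walks conditioned to stay positive, since their typical size is $t^{1/2}$ and a curve $g$ with $\sum_t g(t)^{\ldots}$ summable in the appropriate sense is avoided. Concretely, I would argue by a union bound over dyadic scales $2^\ell$: conditioned on positivity, the probability of dipping below $2^{\ell/6}$ in $[2^\ell,2^{\ell+1}]$ is $\lesssim 2^{\ell/6}/2^{\ell/2}$, which is summable. Starting from $K_1$ large makes the total small.
\item The middle third is controlled by~\eqref{eq:prf_of_lem:DoubleBridge:sync_trans_kernels:ord_bridgeLB:eq1} together with a small-ball requirement keeping the gap of order $\sqrt n\gg n^{1/6}$.
\item The last third is the time reversal of the first.
\end{itemize}

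\textbf{Step 3: from synchronized points to envelopes.} Between consecutive synchronized times, the diamonds are controlled by the time increments $\mathbf{T}_{i+1}-\mathbf{T}_i$, which have exponential tails. I intersect with the event that all increments near time $k$ are at most $\log$-size, or better at most $\epsilon\,\rmd(k,\{x_1,y_1\})^{1/6}+C$. A union bound shows this costs only a constant factor near the ends, and an $o(1)$ error in the bulk after conditioning via the Markov property. On this event, the diamond envelopes at abscissa $k$ lie within that distance of the synchronized positions, so a gap of $2(K+\rmd^{1/6})$ in $\mathbf{S}-\mathbf{S}'$ implies the required separation. Combining the pieces through the Markov property at synchronized times yields $\ge c/n^2$.

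\textbf{Main obstacle.} I expect the main obstacle to be Step 2's entropic-repulsion estimate, namely that the conditioned difference walk stays above the $t^{1/6}$ barrier with positive probability uniformly in $n$, and doing this in a way that keeps the conditioning on hitting $(n',j,j')$ consistent. I would first try the dyadic union bound, using Lemma~\ref{lem:DoubleBridge:sync_trans_kernels:ord_walkUB} to bound each dip event relative to the total $cK_1^2/n^2$.
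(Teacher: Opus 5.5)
Your outline is correct, and its key step takes a different route from the paper's proof.

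\textbf{What the paper does.} The paper starts with the same reduction as your Step~1. It shifts $x,x'$ into $\Z^2$, uses full support of the increments to pass to endpoints with $\tilde x_1=\tilde x_1'$ and $\tilde y_1=\tilde y_1'$, and bounds $\DiaEnv,\DiaEnv'$ by the envelopes of the synchronized walks. It then makes a single subtraction. From the ordered-bridge bound of Lemma~\ref{lem:DoubleBridge:sync_trans_kernels:ord_bridgeLB} it subtracts a union bound over \emph{every} abscissa $k$ of the events $B_k$: at a synchronized time $\mathbf{T}_t=k$, the gap is below $\min(k,\tilde n-k)^{1/6}+K+3(\mathbf{T}_{t+1}-\mathbf{T}_t)$. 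Each term is bounded by applying Lemma~\ref{lem:DoubleBridge:sync_trans_kernels:ord_walkUB} on both sides of $k$. For $k\le K_0/4$ the cone constraint forces a time increment of order $K_0$. The slack $3(\mathbf{T}_{t+1}-\mathbf{T}_t)$ plays the role of your Step~3, and no barrier estimate for a conditioned walk is used.

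\textbf{Why your route matters at exponent $1/6$.} Your block-wise accounting is what this exponent actually requires. The gap of the ordered bridge behaves like a Bessel-$3$ process. Hence the $k$-th term of a per-abscissa union bound is of order $k^{-1}$ relative to the main term $(\tilde x_2-\tilde x_2')(\tilde y_2-\tilde y_2')/n^2$. The sum therefore diverges logarithmically: a path lingering near the barrier for a time of order $k^{1/3}$ is counted that many times.

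Concretely, in the estimate following \eqref{eq:prf_of_lem:DoubleBridge:RW:LB_diam_avoid:eq7_smalll}, the paper drops the prefactor $Ck^{1/6}$, which comes from $j-j'\le 2k^{1/6}+5l$, and writes $C(\tilde x_2-\tilde x_2')k^{-13/12}$. Keeping the prefactor gives $k^{-11/12}$, which is not summable. A per-abscissa union bound of this kind can only close for exponents strictly below $1/6$.

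Your argument avoids this by stopping at the first dip in each dyadic block. Once per block it pays:
\begin{itemize}
\item the survival probability $q^+_{\tilde x_2,\tilde x_2'}(2^\ell)\lesssim(\tilde x_2-\tilde x_2')2^{-\ell/2}$, from Lemma~\ref{lem:DoubleBridge:sync_trans_kernels:asymp_ordering};
\item the return cost $\lesssim 2^{\ell/6}(\tilde y_2-\tilde y_2')/n^2$, from Lemma~\ref{lem:DoubleBridge:sync_trans_kernels:ord_walkUB}.
\end{itemize}
This gives the summable factor $2^{-\ell/3}$, with the first blocks handled by the large initial gap.

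\textbf{Points to watch when you write it out.}
\begin{itemize}
\item Keep the stopping-time and strong-Markov structure. Merely regrouping the per-$k$ sum into dyadic blocks gives $O(1)$ per block and fails again.
\item The dip event involving the next increment is not a stopping-time event. Split on whether that increment exceeds $\epsilon 2^{\ell/6}+C$, as in your Step~3. The large-increment part is controlled by exponential tails together with $E[(\mathbf{S}_t-\mathbf{S}_t')\mathbf{1}_{\mathcal{T}>t}]\le \tilde x_2-\tilde x_2'+C$.
\item Consequently, the barrier in Step~2 must carry a shift $K''\gg C$, not just $2K$.
\item If $x_1-x_1'\notin\Z$, there is no common synchronized time. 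First shift one walk by a vector of norm at most $1$, as the paper does.
\item The extra separation to $K_1$ in Step~1 is unnecessary, since $x_2-x_2'\ge K_0$ is already assumed.
\end{itemize}
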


\begin{proof}
	First, up to a shift of \(z\in \R^2\) with \(\norm{z}_{\infty}\leq 1\), one can assume that \(x,x'\in \Z^2\) at the cost af changing the value of \(K_0\).
	Then, using that the \(X_i,Y_i\)'s have full support in \(\fcone\cap \Z^2\), we can lower bound the wanted probability by a positive constant times
	\begin{equation*}
		P\big( \HitEvent_{\tilde{y},\tilde{y}'},\, \cap_{\tilde{x}_1\leq k\leq \tilde{y}_1-1}\{\dots \} \bgiven S_0 = \tilde{x},\, S_0' = \tilde{x}'\big),
	\end{equation*}where \(\tilde{x},\tilde{x}',\tilde{y},\tilde{y}'\) are at distance at most \(10\) from their non-tilde versions, and \(\tilde{x}_1 = \tilde{x}_1'\), \(\tilde{y}_1 = \tilde{y}_1'\).
	Now, one can look at the synchronized walk instead: \(\DiaEnv, \DiaEnv'\) are included in the diamond envelops of the synchronized walks, denoted \(\tilde{\DiaEnv}, \tilde{\DiaEnv}'\), so it is sufficient to lower bound
	\begin{equation*}
		P\Big( \syncHitEvent_{\tilde{n},\tilde{y}_2,\tilde{y}'_2},\, \cap_{0\leq k\leq \tilde{n}-1} A_k \Bgiven \mathbf{T}_0 = 0,\, \mathbf{S}_0 = \tilde{x}_2,\, \mathbf{S}_0' = \tilde{x}'_2\Big)
	\end{equation*}where \(n-20 \leq \tilde{n} \leq n+20\), we translated everything by \((-\tilde{x}_1,0)\), and we introduced
	\begin{equation*}
		A_k = \big\{\rmd(\tilde{\DiaEnv}\cap \slab_{k,k+1},  \tilde{\DiaEnv}'\cap \slab_{k,k+1}) > \min(k,\tilde{n}-k)^{1/6} + K\big\}.
	\end{equation*}
	Now, by the cone property of the increments, note that the union of the \(A_k^c\)'s is contained in the event \(\cup_{k=0}^{\tilde{n}-1} B_k\), see Figure~\ref{fig:diamond_far_to_walk_far}, with
	\begin{equation*}
		B_k = \big\{\exists t\geq 0:\ \mathbf{T}_t =k,\, \mathbf{S}_t - \mathbf{S}_{t}' < (\min(k, \tilde{n}-k) )^{1/6} + K + 3(\mathbf{T}_{t+1}-\mathbf{T}_{t})\big\}.
	\end{equation*}
	\begin{figure}
		\includegraphics[scale=0.7]{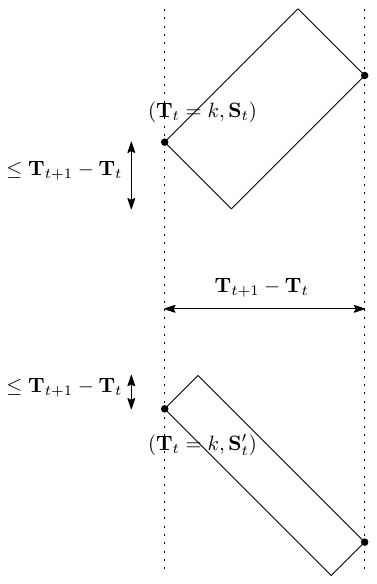}
		\caption{.}
		\label{fig:diamond_far_to_walk_far}
	\end{figure}
	
	Thus, by inclusion of events, a union bound, and Lemma~\ref{lem:DoubleBridge:sync_trans_kernels:ord_bridgeLB}
	\begin{align}
	\label{eq:prf_of_lem:DoubleBridge:RW:LB_diam_avoid:eq1}
		\nonumber &Q_{\tilde{x}_2,\tilde{x}_2'}\big( \syncHitEvent_{\tilde{n},\tilde{y}_2,\tilde{y}'_2},\, \cap_{0\leq k\leq \tilde{n}-1} A_k \big)
		\geq
		q_{\tilde{x}_2,\tilde{x}_2'}^+(\tilde{n},\tilde{y}_2,\tilde{y}'_2 ) - \sum_{k=0}^{\tilde{n}-1} Q_{\tilde{x}_2,\tilde{x}_2'}\big( \syncHitEvent_{\tilde{n},\tilde{y}_2,\tilde{y}'_2} ,\, B_k,\, \mathbf{T}_{\calT}> \tilde{n} \big)
		\\
		&\quad\geq
		\frac{C(\tilde{x}_2-\tilde{x}_2')(\tilde{y}_2-\tilde{y}_2')}{n^2} - \sum_{k=0}^{\tilde{n}-1} Q_{\tilde{x}_2,\tilde{x}_2'}\big( \syncHitEvent_{\tilde{n},\tilde{y}_2,\tilde{y}'_2} ,\, B_k,\, \mathbf{T}_{\calT}> \tilde{n} \big),
	\end{align}with \(C>0\) uniform over \(|\tilde{x}_2|,|\tilde{x}_2'|,|\tilde{y}_2|,|\tilde{y}_2'|\leq \sqrt{n}\).
	
	Let \(k_0\geq K^6\) large, and \(\epsilon>0\) small (\(\frac{1}{100}\) works). We then split the sum over \(k\) as sums over \(k\leq \tilde{n}/2\) and sum over \(k\geq \tilde{n}/2\).
	The latter sum is bounded similarly as the fist using the time reversed walk, so we only bound the first sum. We will further split it into a sum over \(k\leq k_0\) and \(k>k_0\). Partitioning over the values of \(t\) such that \(\mathbf{T}_t=k\), and over the values of \(\mathbf{S}_t'\), \(\mathbf{S}_t\), and \(\mathbf{T}_{t+1}\), we get
	\begin{align}
		\label{eq:prf_of_lem:DoubleBridge:RW:LB_diam_avoid:eq2}
		\nonumber Q_{\tilde{x}_2,\tilde{x}_2'}\big( &\syncHitEvent_{\tilde{n},\tilde{y}_2,\tilde{y}'_2} ,\, B_k^c,\, \mathbf{T}_{\calT}> \tilde{n} \big)
		=
		\sum_{t\geq 0} \sum_{i>i'}\sum_{l\geq 1}\mathds{1}_{k^{1/6} + K + 3l > i-i'} \cdot
		\\
		&
		Q_{\tilde{x}_2,\tilde{x}_2'}\big( \textbf{T}_t = k,\, \textbf{S}_t =i,\, \textbf{S}_t'=i',\, \textbf{T}_{t+1} = k+l,\, \syncHitEvent_{\tilde{n},\tilde{y}_2,\tilde{y}'_2} ,\, \mathbf{T}_{\calT}> \tilde{n} \big).
	\end{align}By Markov's property, this last expression is equal to
	\begin{equation*}
		\sum_{t\geq 0} \sum_{l\geq 1}\sum_{k^{1/6} + K + 3l>i-i'>0}
		q_{\tilde{x}_2,\tilde{x}_2'}^+(k,i,i')
		Q_{i,i'}\big( \textbf{T}_1 = l,\, \syncHitEvent_{\tilde{n}-k,\tilde{y}_2,\tilde{y}'_2} ,\, \mathbf{T}_{\calT}> \tilde{n}-k \big)
	\end{equation*}which, using the fact that \(\textbf{T}_{1}\) has exponential tails under \(Q_{i,i'}\) and the cone constraint, is less than
	\begin{align}
		\label{eq:prf_of_lem:DoubleBridge:RW:LB_diam_avoid:eq3}
		\nonumber &\sum_{l\geq 1}\sum_{k^{1/6} + K + 3l>i-i'>0}
		q_{\tilde{x}_2,\tilde{x}_2'}^+(k,i,i') \sum_{\substack{|j-i|,|j'-i'|\leq l\\ j> j'}}
		e^{-cl} q_{j,j'}^+( \tilde{n}-k-l,\tilde{y}_2,\tilde{y}'_2 )
		\\
		&\leq
		e^{-cn} + \sum_{l=1}^{\epsilon n} \sum_{k^{1/6} + K + 3l>i-i'>0}
		q_{\tilde{x}_2,\tilde{x}_2'}^+(k,i,i') \sum_{\substack{|j-i|,|j'-i'|\leq l\\ j> j'}}
		e^{-cl} q_{j,j'}^+( \tilde{n}-k-l,\tilde{y}_2,\tilde{y}'_2 )
		,
	\end{align}where \(c>0\). Now, by the cone constraint and the fact that \(\tilde{x}_2-\tilde{x}_2'\geq K_0\), \(q_{\tilde{x}_2,\tilde{x}_2'}^+(k,i,i') = 0\) whenever \(i-i' < K_0 - 2k\). This will fix our choice of \(k_0\) and the value of \(K_0\). Taking \(k_0 = \frac{K_0}{4}\) and \(K_0 \geq 8K\) large enough multiple of \(4\), one has that for \(k\leq k_0\),~\eqref{eq:prf_of_lem:DoubleBridge:RW:LB_diam_avoid:eq3} is less than
	\begin{align}
		\label{eq:prf_of_lem:DoubleBridge:RW:LB_diam_avoid:eq4}
		\nonumber &e^{-cn} + \tfrac{C(\tilde{y}_2-\tilde{y}'_2)}{n^2}\sum_{l=1}^{\epsilon n}e^{-cl}\sum_{\frac{K_0}{4} + 3l>i-i'\geq \frac{K_0}{2}}
		q_{\tilde{x}_2,\tilde{x}_2'}^+(k,i,i') \sum_{\substack{|j-i|,|j'-i'|\leq l\\ j> j'}} (j-j')
		\\
		\nonumber &\leq
		e^{-cn} + \tfrac{C(\tilde{y}_2-\tilde{y}'_2)}{n^2}\sum_{l= \frac{K_0}{12}}^{\epsilon n}l^2e^{-cl} \sum_{\substack{\frac{K_0}{4} + 3l>i-i'\geq \frac{K_0}{2}\\ |i-\tilde{x}_2|\leq k}} q_{\tilde{x}_2,\tilde{x}_2'}^+(k,i,i')(\tfrac{K_0}{4}+5l)
		\\
		\nonumber &\leq
		e^{-cn} + \tfrac{C(\tilde{y}_2-\tilde{y}'_2)(\tilde{x}_2-\tilde{x}'_2)}{n^2(k+1)^2} \sum_{l= \frac{K_0}{12}}^{\epsilon n}l^2e^{-cl} (\tfrac{K_0}{4}+5l)\sum_{\substack{\frac{K_0}{4} + 3l>i-i'\geq \frac{K_0}{2}\\ |i-\tilde{x}_2|\leq k}} (i-i')
		\\
		&\leq
		e^{-cn} + \tfrac{C(\tilde{y}_2-\tilde{y}'_2)(\tilde{x}_2-\tilde{x}'_2)}{n^2(k+1)} \sum_{l= \frac{K_0}{12}}^{\epsilon n}l^5 e^{-cl}
		\leq
		e^{-cn} + \tfrac{C(\tilde{y}_2-\tilde{y}'_2)(\tilde{x}_2-\tilde{x}'_2)}{n^2} e^{-cK_0}
		,
	\end{align}where \(C,c>0\) are independent of \(K_0,K\) (and change from line to line), and we used Lemma~\ref{lem:DoubleBridge:sync_trans_kernels:ord_walkUB}, and Lemma~\ref{lem:DoubleBridge:sync_trans_kernels:asymp_ordering}. Thus, from~\eqref{eq:prf_of_lem:DoubleBridge:RW:LB_diam_avoid:eq2},~\eqref{eq:prf_of_lem:DoubleBridge:RW:LB_diam_avoid:eq3},~\eqref{eq:prf_of_lem:DoubleBridge:RW:LB_diam_avoid:eq4}, one gets
	\begin{equation}
		\label{eq:prf_of_lem:DoubleBridge:RW:LB_diam_avoid:eq5}
		\sum_{k=0}^{k_0} Q_{\tilde{x}_2,\tilde{x}_2'}\big( \syncHitEvent_{\tilde{n},\tilde{y}_2,\tilde{y}'_2} ,\, B_k,\, \mathbf{T}_{\calT}> \tilde{n} \big)
		\leq
		CK_0e^{-cn} + \tfrac{C(\tilde{y}_2-\tilde{y}'_2)(\tilde{x}_2-\tilde{x}'_2)}{n^2} e^{-cK_0}.
	\end{equation}We then need to bound the sum in~\eqref{eq:prf_of_lem:DoubleBridge:RW:LB_diam_avoid:eq1} over \(k_0<k\leq \tilde{n}/2\). We can always assume \(K_0\) large enough, in particular, we can assume that \(k_0 = \frac{K_0}{4} \geq K^4\). There, proceeding similarly than in~\eqref{eq:prf_of_lem:DoubleBridge:RW:LB_diam_avoid:eq4},~\eqref{eq:prf_of_lem:DoubleBridge:RW:LB_diam_avoid:eq3} is less than
	\begin{equation}
		\label{eq:prf_of_lem:DoubleBridge:RW:LB_diam_avoid:eq6}
		e^{-cn} + \tfrac{C (\tilde{y}_2-\tilde{y}'_2) }{n^2}\sum_{l=1}^{\epsilon n} \sum_{2k^{1/6} + 3l>i-i'>0}
		q_{\tilde{x}_2,\tilde{x}_2'}^+(k,i,i')
		l^2 e^{-cl} (2k^{1/6} + 5l).
	\end{equation}Now, split the sum over \(l\) into \(l\leq k^{1/6}\) and \(l>k^{1/6}\). On the latter, we have the upper bound (assuming \(K_0\) large enough)
	\begin{multline}
		\label{eq:prf_of_lem:DoubleBridge:RW:LB_diam_avoid:eq7_largel}
		\sum_{l=k^{1/6}}^{\epsilon n} \sum_{2k^{1/6} + 3l>i-i'>0}
		q_{\tilde{x}_2,\tilde{x}_2'}^+(k,i,i')
		l^2 e^{-cl} (2k^{1/6} + 5l)
		\\
		\leq
		q_{\tilde{x}_2,\tilde{x}_2'}^+(k) \sum_{l=k^{1/6}}^{\epsilon n}
		l^2 e^{-cl} (2k^{1/6} + 5l)
		\leq
		e^{-ck^{1/6}}.
	\end{multline}
	For the sum over \(l\leq k^{1/6}\), we have the upper bound
	\begin{equation}
		\label{eq:prf_of_lem:DoubleBridge:RW:LB_diam_avoid:eq7_smalll}
		7k^{1/6} \sum_{l=1}^{k^{1/6}} \sum_{5k^{1/6}>i-i'>0}
		q_{\tilde{x}_2,\tilde{x}_2'}^+(k,i,i')
		l^2 e^{-cl}
		\leq
		C k^{1/6} \sum_{5k^{1/6}>i-i'>0}
		q_{\tilde{x}_2,\tilde{x}_2'}^+(k,i,i').
	\end{equation}Splitting the sum over \(i\) into sums over \(|i-\tilde{x}_2|\leq k^{7/12}\) and over \(|i-\tilde{x}_2|> k^{7/12}\), we have that~\eqref{eq:prf_of_lem:DoubleBridge:RW:LB_diam_avoid:eq7_smalll} is upper bounded by
	\begin{equation*}
		C k^{1/6}e^{-ck^{1/6}} + C k^{1/6} \sum_{\substack{5k^{1/6}>i-i'>0\\ |i-\tilde{x}_2|\leq k^{7/12}}}
		\tfrac{(i-i')(\tilde{x}_2-\tilde{x}_2')}{k^2}
		\leq
		C k^{1/6}e^{-ck^{1/6}} + \tfrac{C(\tilde{x}_2-\tilde{x}_2')}{k^{13/12}}
	\end{equation*}as the probability under \(Q_{\tilde{x}_2,\tilde{x}_2'}\) that there is \(t\geq 0\) such that \(\textbf{T}_t = k\) and \(|\textbf{S}_k-\tilde{x}_2| > k^{7/12}\) is less than \(e^{-ck^{1/6}}\) by large deviation estimates, and where we used Lemma~\ref{lem:DoubleBridge:sync_trans_kernels:ord_walkUB}. Using this bound on~\eqref{eq:prf_of_lem:DoubleBridge:RW:LB_diam_avoid:eq7_smalll}, combining the resulting inequality with~\eqref{eq:prf_of_lem:DoubleBridge:RW:LB_diam_avoid:eq7_largel} to bound~\eqref{eq:prf_of_lem:DoubleBridge:RW:LB_diam_avoid:eq6}, and thus~\eqref{eq:prf_of_lem:DoubleBridge:RW:LB_diam_avoid:eq3}, and~\eqref{eq:prf_of_lem:DoubleBridge:RW:LB_diam_avoid:eq2}, we get that for \(k_0<k\leq \tilde{n}\),
	\begin{align}
		\label{eq:prf_of_lem:DoubleBridge:RW:LB_diam_avoid:eq8}
		\nonumber Q_{\tilde{x}_2,\tilde{x}_2'}\big(\syncHitEvent_{\tilde{n},\tilde{y}_2,\tilde{y}'_2} ,\, B_k^c,\, \mathbf{T}_{\calT}> \tilde{n} \big)
		&\leq
		e^{-cn} + \tfrac{C (\tilde{y}_2-\tilde{y}'_2) }{n^2} \Big(C k^{1/6}e^{-ck^{1/6}} + \tfrac{C(\tilde{x}_2-\tilde{x}_2')}{k^{-13/12}} + e^{-ck^{1/6}}\Big)
		\\
		&\leq
		e^{-cn} + \tfrac{C (\tilde{y}_2-\tilde{y}'_2)(\tilde{x}_2-\tilde{x}_2') }{n^2 k^{13/12}}
	\end{align}where the last bound holds for \(K_0\) (and thus \(k_0\)) large enough. Thus, combining with~\eqref{eq:prf_of_lem:DoubleBridge:RW:LB_diam_avoid:eq5}, and mimicking the bounds for \(k>\tilde{n}/2\), we get
	\begin{align*}
		&\sum_{k=0}^{\tilde{n}-1} Q_{\tilde{x}_2,\tilde{x}_2'}\big( \syncHitEvent_{\tilde{n},\tilde{y}_2,\tilde{y}'_2} ,\, B_k,\, \mathbf{T}_{\calT}> \tilde{n} \big)
		\\
		&\qquad \leq
		2(CK_0+n)e^{-cn} +  \tfrac{C (\tilde{y}_2-\tilde{y}'_2)(\tilde{x}_2-\tilde{x}_2') }{n^2}\Big(e^{-cK_0} + \sum_{k\geq k_0}\frac{1}{k^{13/12}} \Big)
		\\
		&\qquad \leq
		e^{-cn} + \tfrac{C (\tilde{y}_2-\tilde{y}'_2)(\tilde{x}_2-\tilde{x}_2') }{n^2}\big(e^{-cK_0} +\tfrac{C}{K_0^{1/12}}\big),
	\end{align*}for \(n\) large. Using this in~\eqref{eq:prf_of_lem:DoubleBridge:RW:LB_diam_avoid:eq1} for \(K_0\) large enough gives the wanted bound.
\end{proof}

Recall \(\scale_n = \lceil \ln^2(n) \rceil\). The last Lemma will be used in the proof of Lemma~\ref{lem:DoubleBridge:entropic_repulsion} which proves a priori entropic repulsion between \(\calC\) and \(\calC'\).
\begin{lemma}
	\label{lem:DoubleBridge:RW:UB_avoid_but_close}
	There exist \(n_0\geq 1, c>0\) such that for any \(n\geq n_0\), \(x,x'\in \R^2\), \(y\in x+\Z^2\), \(y'\in x'+\Z^2\) with \(\norm{v_L-x} \leq 2\scale_n^2\), \(\norm{v_L'-x'} \leq 2\scale_n^2\), \(\norm{v_R-y} \leq 2\scale_n^2\), and \(\norm{v_R'-y'} \leq 2\scale_n^2\),
	\begin{equation*}
		P_{x,x'}^{y,y'}\Big(\big\{\forall k\ S_k \text{ is above } \partial_-\DiaEnv' \big\}
		\cap \big\{\exists k \text{ s.t. } S_k\in \slab_{-a_n,a_n},\ \rmd(S_k,\DiaEnv')\leq \scale_n \big\} \Big)
		\leq
		\frac{c}{\scale_n^{16} n},
	\end{equation*}where \(\DiaEnv\equiv \DiaEnv(S_0,\dots,S_{T_{y}})\), \(\DiaEnv'\equiv \DiaEnv(S_0',\dots,S_{T_{y'}'}')\), and \(a_n = n-\scale_n^{50}\).
\end{lemma}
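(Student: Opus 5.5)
The plan is to bound the joint probability $P_{x,x'}(\HitEvent_{y,y'}\cap\{\cdots\})$ by $C/(\scale_n^{16}n^2)$. Dividing by $P_{x,x'}(\HitEvent_{y,y'})\geq c/n$, which follows from Lemma~\ref{lem:DoubleBridge:RW:single_OZ_asymp} and independence of the two walks, then gives the claim. The endpoints are at vertical distance $O(\scale_n^2)$ from each other, since $v_L-v_L'=(1/2,1/2)$. The starting gap is therefore at most $O(\scale_n^2)$, so ordered-walk factors of the type $\min(i-i',\sqrt{n})$ at the two ends cost at most $\scale_n^2$ each.

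First reduce to synchronized walks. The event that $S_k$ stays above $\partial_-\DiaEnv'$ for all $k$ implies that the synchronized difference walk $\mathbf{S}-\mathbf{S}'$ stays above roughly $-C(\mathbf{T}_{t+1}-\mathbf{T}_t)$. After a local modification costing a constant factor (the same trick as in the proof of Lemma~\ref{lem:DoubleBridge:RW:LB_diam_avoid}, using exponential tails of the increments), this becomes $\IntersectionTime>$ final time, with the end points shifted by $O(\scale_n^2)$. The event $\{\exists k:\ S_k\in\slab_{-a_n,a_n},\ \rmd(S_k,\DiaEnv')\leq\scale_n\}$ similarly implies that there is a synchronized time $t$ with $\mathbf{T}_t=k\in[-a_n,a_n]$, shifted coordinates and all, such that $0<\mathbf{S}_t-\mathbf{S}'_t\leq\scale_n+3(\mathbf{T}_{t+1}-\mathbf{T}_t)$.

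Next, decompose over the first such time $k$ and the values $(i,i')$ there, and apply the Markov property exactly as in \eqref{eq:prf_of_lem:DoubleBridge:RW:LB_diam_avoid:eq2}--\eqref{eq:prf_of_lem:DoubleBridge:RW:LB_diam_avoid:eq3}. Each term is bounded by
\[
q^+_{x_2,x_2'}(k+n,i,i')\,e^{-cl}\,q^+_{j,j'}(n-k-l,y_2,y_2'),
\]
with $i-i'\lesssim\scale_n+l$ and $|j-i|,|j'-i'|\leq l$. Apply Lemma~\ref{lem:DoubleBridge:sync_trans_kernels:ord_walkUB} to both factors, with $d_1=k+n$ and $d_2=n-k$ the distances to the two ends, and sum over the $O((\scale_n+l)\sqrt{d_1})$ admissible values of $(i,i')$. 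The key input is that, after summing over the position of $i$, each factor contributes $\scale_n^2\cdot(\scale_n+l)/d^{3/2}$ rather than $1/d$. Concretely, for $k$ in the bulk the first factor summed over $i$ gives $\lesssim \scale_n^2(\scale_n+l)^2/d_1^{3/2}$ (one uses the Gaussian localisation from Lemma~\ref{lem:DoubleBridge:sync_trans_kernels:free_walk} to restrict $i$ to a window of size $\sqrt{d_1}$). The second factor gives $\lesssim \scale_n^2(\scale_n+l)/d_2^{2}$. Summing over $k\in[-a_n,a_n]$ with $d_1,d_2\geq\scale_n^{50}$ gives
\[
\sum_k \frac{\scale_n^{6}}{d_1^{3/2}d_2^{2}}\,\text{(or symmetric)} \lesssim \frac{\scale_n^{6}}{n^{2}}\cdot\Big(\sum_{d\geq \scale_n^{50}}d^{-3/2}\Big)\lesssim \frac{\scale_n^{6}}{n^2\scale_n^{25}},
\]
after splitting according to whether $k$ is near the left end, near the right end, or in the middle (in the middle both factors are $\lesssim n^{-3/2}$-type and the sum is $O(n\cdot n^{-3})$). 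The $l$-sums converge thanks to $e^{-cl}$. This yields $C\scale_n^{-19}n^{-2}\leq C\scale_n^{-16}n^{-2}$.

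The main obstacle is the first step: translating $\rmd(S_k,\DiaEnv')\leq\scale_n$ and ``above $\partial_-\DiaEnv'$'' into synchronized-walk events without losing the $\min(\cdot,\sqrt n)/\sqrt n$ gain. The synchronization is only exact after a shift of $\tilde x,\tilde y$, and diamond envelopes of the unsynchronized walk can protrude by an amount governed by increment sizes. I would handle this through the union over $\mathbf{T}_{t+1}-\mathbf{T}_t=l$ with $e^{-cl}$ weights, as in Lemma~\ref{lem:DoubleBridge:RW:LB_diam_avoid}. Near the endpoints the choice $a_n=n-\scale_n^{50}$ is exactly what makes the boundary sum small despite the $O(\scale_n^2)$ starting separations.
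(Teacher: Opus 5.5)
Your proposal is correct and follows essentially the paper's route. You reduce to synchronized walks, take a union bound over the synchronized time of closeness via the Markov property, and bound the forward and time-reversed ordered kernels by Lemma~\ref{lem:DoubleBridge:sync_trans_kernels:ord_walkUB}, localising the position to a window of width about $\sqrt{k}\,\scale_n$ with the rest killed by large deviations. You then sum $k^{-3/2}$ over $k\gtrsim\scale_n^{49}$ and divide by an order-$1/n$ lower bound on the conditioning.

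Three minor corrections:
\begin{itemize}
\item The ordering reduction is exact, up to the half-lattice shift of $S'$. At a synchronized time, $\partial_-\DiaEnv'$ pinches to the vertex of $S'$, so the event gives $\mathbf{S}_t\geq\mathbf{S}'_t$ directly, with no ``roughly''.
\item In an upper bound, the non-synchronized endpoints are handled by decomposing over the first and last synchronized points under a small-steps truncation. The full-support trick of Lemma~\ref{lem:DoubleBridge:RW:LB_diam_avoid} only yields lower bounds, so it cannot be used here.
\item Your aside that the bulk contributes $O(n\cdot n^{-3})$ would only give $O(n^{-2})$, with no $\scale_n^{-16}$ gain. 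Your displayed $d_1^{-3/2}d_2^{-2}$ bound already treats the bulk correctly.
\end{itemize}
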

\begin{proof}
	We always assume that \(n\) is large enough. First, observe that we can shift \(x,x'\) so that they belong to \(\Z^2\) if we consider instead \(\rmd(S_k,\DiaEnv')\leq 2\scale_n\) in the second part of the event. Now, for \(x,x'\in \Z^2\) the synchronized walk is well-defined, and the event \(A'=\big\{\forall k\ S_k \text{ is above } \partial_-\DiaEnv' \big\}\) is included in the event
	\begin{equation*}
		A = \big\{\forall k \ \mathbf{S}_k\geq \mathbf{S}_k'\big\}.
	\end{equation*}
	Now, introduce the last synchronized time of the bridges:
	\begin{equation*}
		\tau = \max\{k\geq 0:\ \mathbf{T}_{k}\leq \min(y_1,y_1')\}.
	\end{equation*}
	By the exponential tails of the synchronized walk increments, up to an error of order \(e^{-c\scale_n}\), we can assume that
	\begin{itemize}
		\item the maximal step of the synchronized walk is of sup-norm \(\scale_n\),
		\item \(|\mathbf{T}_{0}+n|\leq 3\scale_n^2\), and \(|\mathbf{T}_{\tau}-n|\leq 3\scale_n^2\).
	\end{itemize}
	Denote this event \(\mathrm{SmallSteps}\). Under \(\mathrm{SmallSteps}\), the event \(B'=\big\{\exists k \text{ s.t. } S_k\in \slab_{-a_n,a_n},\ \rmd(S_k,\DiaEnv')\leq \scale_n \big\}\) is included in the event
	\begin{equation*}
		B = \big\{ \exists k\geq 0:\  \mathbf{T}_k\in [-a_n,a_n] \text{ and } \mathbf{S}_k \leq \mathbf{S}_k' + 3\scale_n\big\}.
	\end{equation*}
	So,
	\begin{equation*}
		P_{x,x'}^{y,y'}(A'\cap B') \leq e^{-c\scale_n} + P_{x,x'}^{y,y'}(A\cap B\cap \mathrm{SmallSteps}).
	\end{equation*}
	Note that by our conditions on \(x,x',y,y'\), and the cone confinement property, under the event \(\mathrm{SmallSteps}\), we have that the values of \(\mathbf{S}_0,\mathbf{S}_0',\mathbf{S}_{\tau},\mathbf{S}_{\tau}'\) a.s. satisfy \(3\scale_n^2 \geq \mathbf{S}_0 > \mathbf{S}_0' \geq - 3\scale_n^2\), and \(3\scale_n^2 \geq \mathbf{S}_{\tau} > \mathbf{S}_{\tau}' \geq - 3\scale_n^2\).
	Let then \(u,u',v,v'\in \Z\) be such that \(3\scale_n^2 \geq u \geq u' \geq - 3\scale_n^2\), and \(3\scale_n^2 \geq v \geq v' \geq - 3\scale_n^2\), and \(\tilde{n}\in [2n-6\scale_n^2,2n]\). The claim will follow if we can show that for \(n\) large enough,
	\begin{equation}
		\label{eq:prf_lem:DoubleBridge:RW:UB_avoid_but_close:eq1}
		Q_{u,u'}\Big( \big\{ \IntersectionTime > \syncHitTime_{\tilde{n},v,v'}\big\} \cap \bigcup_{k=\scale_n^{49}}^{\tilde{n}-\scale_n^{49}} R_k \Bgiven \syncHitEvent_{\tilde{n},v,v'}\Big)
		\leq
		\frac{c}{\scale_n^{16} n},
	\end{equation}with \(c\) uniform over \(u,u',v,v',\tilde{n}\) as above, and
	\begin{equation*}
		R_k = \big\{\exists l\geq 0:\ \mathbf{T}_{l} = k,\, \mathbf{S}_l\leq \mathbf{S}_l'+\tfrac{1}{2}\scale_n\big\}.
	\end{equation*}
	By a union bound and the Markov property, the probability in~\eqref{eq:prf_lem:DoubleBridge:RW:UB_avoid_but_close:eq1} is less or equal to
	\begin{equation}
		\label{eq:prf_lem:DoubleBridge:RW:UB_avoid_but_close:eq2}
		\frac{1}{q_{u,u'}(\tilde{n},v,v')} \sum_{k= \scale_n^{49}}^{\tilde{n}-\scale_n^{49}}\sum_{w'\in \Z}\sum_{w= w'+1}^{w'+\scale_n} q_{u,u'}^+(k,w,w')\overleftarrow{q}_{v,v'}^+(\tilde{n}-k,w,w').
	\end{equation}
	Now, Lemma~\ref{lem:DoubleBridge:sync_trans_kernels:free_walk}
	implies that uniformly over \(u,u',v,v'\) as above,
	\begin{equation}
		\label{eq:prf_lem:DoubleBridge:RW:UB_avoid_but_close:eq3}
		q_{u,u'}(\tilde{n},v,v') \geq \frac{c}{n}.
	\end{equation}
	Then, by Lemma~\ref{lem:DoubleBridge:sync_trans_kernels:ord_walkUB}, and the fact that \(q^+,\overleftarrow{q}^+,q,\overleftarrow{q}\) are non-negative kernels with total mass less than \(1\), for any \( \scale_n^{49}\leq k\leq \tilde{n}/2\),
	\begin{equation}
	\label{eq:prf_lem:DoubleBridge:RW:UB_avoid_but_close:eq4}
	\begin{aligned}
		&\sum_{w'\in \Z}\sum_{w= w'+1}^{w'+\scale_n} q_{u,u'}^+(k,w,w')\overleftarrow{q}_{v,v'}^+(\tilde{n}-k,w,w')
		\\
		&\quad\leq
		\sum_{w':|w'-u'|\geq \sqrt{k}\scale_n}\sum_{w\in \Z} q_{u,u'}^+(k,w,w')
		+
		\sum_{w':|w'-u'|< \sqrt{k}\scale_n}\sum_{w= w'+1}^{w'+\scale_n} \frac{C6\scale_n^3}{k^2} \frac{C6\scale_n^3}{(\tilde{n}-k)^2}
		\\
		&\quad\leq
		Q_{u,u'}\big(\exists l\geq 1:\ \mathbf{T}_l = k,\, |\mathbf{S}'_l-u'|\geq \sqrt{k} \scale_n\big)
		+
		\frac{C \scale_n^7}{n^2}\sum_{w':|w'-u'|\leq \sqrt{k} \scale_n} \frac{1}{k^2}
		\\
		&\quad\leq
		Q_{u,u'}\big(\exists l\geq 1:\ \mathbf{T}_l = k,\, |\mathbf{S}'_l-u'|\geq \sqrt{k} \scale_n\big)
		+
		\frac{C \scale_n^8}{n^2} \frac{1}{k^{3/2}},
	\end{aligned}
	\end{equation}where we used Lemma~\ref{lem:DoubleBridge:sync_trans_kernels:ord_walkUB} together with \(w-w'\leq \scale_n \leq \sqrt{k}\), and \(u-u',v-v'\leq 6\scale_n^2\leq \sqrt{k}\) in the second line.
	We will use this in~\eqref{eq:prf_lem:DoubleBridge:RW:UB_avoid_but_close:eq2} to bound the sum over \(\scale_n^{49}\leq k\leq \tilde{n}/2\), and use that, by direct large deviation estimates, there is \(c>0\) such that
	\begin{equation}
		\label{eq:prf_lem:DoubleBridge:RW:UB_avoid_but_close:eq5}
		\sum_{k= \scale_n^{49}}^{n} Q_{u,u'}\big(\exists l\geq 1:\ \mathbf{T}_l = k,\, |\mathbf{S}'_l-u'|\geq \sqrt{k} \scale_n\big)
		\leq
		e^{-c\scale_n}.
	\end{equation}The cases \( \tilde{n}-\scale_n^{49}\geq k\geq \tilde{n}/2\) are bounded in the same way. Thus,  using~\eqref{eq:prf_lem:DoubleBridge:RW:UB_avoid_but_close:eq4},~\eqref{eq:prf_lem:DoubleBridge:RW:UB_avoid_but_close:eq5}, and~\eqref{eq:prf_lem:DoubleBridge:RW:UB_avoid_but_close:eq3} in~\eqref{eq:prf_lem:DoubleBridge:RW:UB_avoid_but_close:eq2}, we get that the probability in~\eqref{eq:prf_lem:DoubleBridge:RW:UB_avoid_but_close:eq1} is upper bounded by
	\begin{equation*}
		e^{-c\scale_n} + \frac{C\scale_n^8}{n}\sum_{k\geq \scale_n^{49}} \frac{1}{k^{3/2}}
		\leq
		e^{-c\scale_n} + \frac{C\scale_n^8}{n} \frac{1}{\scale_n^{49/2}}
		\leq
		e^{-c\scale_n} + \frac{C}{n \scale_n^{16}}.
	\end{equation*}This gives~\eqref{eq:prf_lem:DoubleBridge:RW:UB_avoid_but_close:eq1} as wanted.
\end{proof}

\subsection{Good clusters and proof strategy for Theorem~\ref{thm:coupling_with_avoiding_bridges}.}
\label{subsec:DoubleBridge:good_clusters}

The general idea of the proof is similar to the one of~\cite{DAl24}, which is itself similar to the one of~\cite[Theorem 4.3]{IofOttVelWac20}. We first prove that the typical geometry of \(\calC,\calC'\) is ``nice'' with high probability: the clusters have many cone points, and stay far away from one another. Then, we use mixing properties of ATRC measures to couple the clusters under \(\Phi_n(\cdot \given v_R\in \calC,v_R'\in \calC')\) to two \emph{independent} infinite volume clusters conditioned on (a strong version of) non-intersection. The main novelty of the proof compared to~\cite[Section 8]{DobGlaOtt25} is that we have to deal with the interaction between \(\calC,\calC'\) and we need to prove some form of a priori entropic repulsion. We proceed by adapting the method of~\cite{IofOttVelWac20} based on monotonicity (FKG) for FK percolation to the setup of ATRC measures.

For \(n\geq 1\), define the sets of good pairs of clusters, \(\goodCl_n\). A realization \((C,C')\) of \((\calC,\calC')\) under \(\Phi_n(\cdot \given v_R\in \calC,v_L\in \calC')\) is in \(\goodCl_n\) if all of the following are satisfied. For some \(\rho>0\) depending only on \(U,J\),
\begin{itemize}
	\item For every \(i\in [\scale_n -n,\dots, n-2\scale_n]\), both \(|\slabCP_{i,i+\scale_n}(C)| \geq \rho \scale_n\), and \(|\slabCP_{i,i+\scale_n}(C')| \geq \rho \scale_n\),
	\item \(\rmd_{\infty}(C\cap \calS_{\mathrm{in}},C'\cap \calS_{\mathrm{in}}) \geq \scale_n^2\), where \(\calS_{\mathrm{in}} = \slab_{\scale_n^{50} -n,\dots, n- \scale_n^{50}}\),
	\item \(C,C'\subset \R\times [-n,n]\).
\end{itemize}

\subsection{Clusters are good with high probability}
\label{subsec:DoubleBridge:good_clusters_high_proba}

\subsubsection*{Preliminaries}

We start with an a priori lower bound on \(\Phi_n( v_R\in \calC,\, v_R'\in \calC' )\). This will be fundamental to upper bound \(\Phi_n( A \given v_R\in \calC,\, v_R'\in \calC' )\) for suitable events \(A\). This Lemma plays the same role as Lemma~\cite[Lemma 8.4]{DobGlaOtt25}.

\begin{lemma}
	\label{lem:DoubleBridge:apriori_lower_bound}
	Let \(0<J<U\) satisfy \(\sinh 2J=e^{-2U}\). Then, there exist \(c>0, n_0\geq 1\) such that for any \(n\geq n_0\),
	\begin{equation*}
		\Phi_n\big( v_R\in \calC,\, v_R'\in \calC' \big)
		\geq
		\frac{c}{n^2} e^{-4\nu_1 n}.
	\end{equation*}
\end{lemma}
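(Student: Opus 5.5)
We argue by monotonicity followed by an infinite-volume computation. The plan is to lower bound the joint crossing probability by a product of two single-crossing probabilities, each of order $e^{-2\nu_1 n}/\sqrt{n}$, and then upgrade the product to an honest lower bound using the positive association of Lemma~\ref{lem:fkg_lattice_mod_atrc}. The difficulty is that the events $\{v_R\in\calC\}$ (increasing) and $\{v_R'\in\calC'\}$ (decreasing in $\omega_{\tau\tau'}$) are not positively correlated by FKG directly. The repulsiveness of Corollary~\ref{cor:repulsiveness} gives the right direction once we condition on the top-most path, but a mere product bound would lose nothing \emph{only} if the clusters were far apart. We therefore lower bound by the event that the two crossings are realised inside two disjoint, well-separated corridors, and we pay for the separation a random-walk non-intersection price of order $1/n$ relative to the product $1/(\sqrt n)^2$.

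Concretely, fix $K$ large. Let $\Lambda_+$ be the region above the line $y=K$ and $\Lambda_-$ the region below $y=-K$, joined to $v_L,v_R$ (respectively $v_L',v_R'$) by short corridors of bounded width near the endpoints. By finite energy (property~\ref{property:FinEne}), forcing the $O(K)$ edges near the four endpoints costs only a constant. Define $A$ as the event that $v_L$ and $v_R$ are connected in $\omega_\tau$ by a path whose bulk stays above height $K$. Define $B$ as the event that $v_L'$ and $v_R'$ are dual connected in $\omega_{\tau\tau'}^*$ by a path staying below $-K$, with $\calC'$ not reaching above $0$. First sample $\Gamma_+=\gamma$ with $\gamma\subset\{y\ge K\}$ in the bulk. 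Conditionally, by \eqref{eq:pathwise_stoch_dom} and Lemma~\ref{lem:DoubleBridge:relax_mATRC}, the law below $\gamma$ is sandwiched between ATRC measures. Since $B$ is decreasing, we cannot use domination in the favourable direction. Instead, we use ratio mixing: $B$ is supported at distance at least $K$ from $\gamma$ except near the endpoints, so $\Phi_n^\gamma(B)\ge c\,\Phi_n(B')$, where $B'$ is a version of $B$ in a slab. We then evaluate $\Phi_n(A)$ and $\Phi(B')$ through the OZ representation of Theorem~\ref{thm:OZ_atrc_inf_vol}, or Corollary~\ref{cor:fin_vol_paths_bridge_coupling}. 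Each crossing probability restricted to a half-plane is $\asymp e^{-2\nu_1 n}\cdot\frac{1}{\sqrt n}\cdot\frac{1}{\sqrt n}$: one $1/\sqrt n$ comes from the hitting probability of Lemma~\ref{lem:DoubleBridge:RW:single_OZ_asymp}, and the other from the probability that the walk stays on one side of the line (Lemma~\ref{lem:DoubleBridge:sync_trans_kernels:asymp_ordering} type bounds). This yields $c n^{-2}e^{-4\nu_1 n}$.

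A cleaner route, which we would try first, treats both crossings jointly as a pair of independent infinite-volume clusters. Using Corollary~\ref{cor:fin_vol_cluster_bridge_coupling} for each crossing, together with the mixing Lemma~\ref{lem:DoubleBridge:relax_mATRC}, we express $\Phi_n(v_R\in\calC,v_R'\in\calC')$ as at least a constant times $e^{-4\nu_1 n}$ times the probability, under two independent OZ bridges, that their diamond envelopes stay at mutual distance growing like $K+\rmd(k,\{x_1,y_1\})^{1/6}$. The condition on this event is precisely that the summed mixing errors $\sum_k e^{-c(K+k^{1/6})}$ remain below $0.1$, so that ratio mixing factorises the joint weight. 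Lemma~\ref{lem:DoubleBridge:RW:LB_diam_avoid} then bounds that bridge probability below by $c/n^2$, the hitting normalisations $P(T_x<\infty)\asymp n^{-1/2}$ being absorbed into the OZ prefactors.

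The main obstacle is justifying the factorisation of the joint probability into the product of two independent OZ weights on the separated event. The monotonicity of Corollary~\ref{cor:repulsiveness} must be combined with conditioning on $\Gamma_+=\gamma$ and then $\Gamma_-=\gamma'$, exploring from the outside so that the explored region is determined. We then apply the ratio mixing of Lemma~\ref{lem:DoubleBridge:relax_mATRC} to the cone-point structure of each cluster in the region between the two paths, ensuring that the separation $\rmd_k\ge K+k^{1/6}$ makes the product of the mixing corrections bounded below by a constant.
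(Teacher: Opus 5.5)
Your second (``cleaner'') route is the paper's proof: you pass to two independent infinite-volume clusters on a set of pairs whose separation grows like $K+\rmd(k,\{x_1,y_1\})^{1/6}$, factorise by ratio mixing, and conclude with Lemma~\ref{lem:DoubleBridge:RW:LB_diam_avoid}. The first route cannot give the stated rate, so drop it. A crossing whose bulk must stay above $y=K$, with endpoints at bounded height above that line, is a \emph{bridge} conditioned to stay on one side. That costs $\asymp 1/n$, not $1/\sqrt n$: Lemma~\ref{lem:DoubleBridge:sync_trans_kernels:asymp_ordering} is a free-endpoint estimate, and for pinned endpoints you should compare with Lemma~\ref{lem:DoubleBridge:sync_trans_kernels:ord_walkUB}. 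Confining each crossing to its own half-plane therefore costs $n^{-1}\cdot n^{-1}$ on top of the two hitting factors $n^{-1/2}$, which gives only $n^{-3}e^{-4\nu_1 n}$. The correct price is a single factor $n^{-1}$ for \emph{mutual} avoidance, which is what Lemma~\ref{lem:DoubleBridge:RW:LB_diam_avoid} encodes. Moreover, a separation guaranteed only to be of order $K$ along a length $2n$ leaves a summed ratio-mixing error of order $ne^{-cK}$, which is not small. The growing separation is exactly what makes it summable.

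The factorisation you flag as the main obstacle neither needs nor can use Corollary~\ref{cor:repulsiveness}. Conditioning on $\Gamma_+=\gamma$ makes the law below $\gamma$ dominate $\Phi_n$, which can only lower the probability of the decreasing dual crossing. Monotonicity therefore yields upper bounds only, which is how it is used in Lemmas~\ref{lem:DoubleBridge:cone_points} and~\ref{lem:DoubleBridge:entropic_repulsion}. The paper's argument is a pure inclusion of events:
\begin{enumerate}
\item By finite energy, at cost $\cstFinEne^{8K}$, connect $v_L,v_R$ in $\omega_\tau$ to $w_L=v_L+(K,K)$ and $w_R=v_R+(-K,K)$, and connect $v_L',v_R'$ in $\omega^*_{\tau\tau'}$ to $w_L'=v_L'+(K,-K)$ and $w_R'=v_R'+(-K,-K)$.
\item Require the cluster of $w_L$ to contain $w_R$ and lie in $\diam(w_L,w_R)$, and similarly for $w_L'$. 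This separates the two clusters at the walls, where $v_L$ and $v_L'$ are adjacent and no decoupling is possible. That is also why the finite-volume Corollary~\ref{cor:fin_vol_cluster_bridge_coupling}, whose boundary pieces start there, is not the right tool. It also keeps the events away from the box boundary, so Lemma~\ref{lem:DoubleBridge:relax_mATRC} replaces $\Phi_n$ by $\Phi$ up to a factor $\frac12$.
\item On separated pairs, the infinite-volume ratio mixing of~\cite[Theorem~3.3]{DobGlaOtt25} gives $\Phi(\calC_{w_L}(\omega_\tau)=C,\,\calC_{w_L'}(\omega^*_{\tau\tau'})=C')\ge\frac12\,\Phi(\calC_{w_L}(\omega_\tau)=C)\,\Phi(\calC_{w_L'}(\omega^*_{\tau\tau'})=C')$.
\item Apply Theorem~\ref{thm:OZ_atrc_inf_vol} to each factor and use that a cluster lies in the diamond envelope of its cone-points. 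This reduces everything to Lemma~\ref{lem:DoubleBridge:RW:LB_diam_avoid}.
\end{enumerate}
Finally, no hitting normalisation is ``absorbed into the OZ prefactors''. That lemma bounds the unconditioned probability of $\HitEvent_{y,y'}$ jointly with avoidance, so its $c/n^2$ already contains the two hitting factors $n^{-1/2}$.
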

\begin{proof}
	Let \(K\) be a large enough integer. Then, let \(w_L=v_L+(K,K)\), \(w_R= v_R + (-K,K)\), \(w_L'= v_L'+(K,-K)\), \(w_R' = v_R' + (-K,-K)\).
	Define the sets of clusters/dual clusters
	\begin{gather*}
		\mathrm{DiaCo} = \big\{C\ni w_L,w_R \text{ connected}:\ C \subset \diam(w_L,w_R) \big\},
		\\
		\mathrm{DiaCo}' = \big\{C'\ni w_L',w_R'\text{ connected}:\ C' \subset \diam(w_L',w_R') \big\}.
	\end{gather*}Now, by inclusion of events and finite energy, we have
	\begin{align*}
		\Phi_n\big( v_R\in \calC,\, v_R'\in \calC' \big)
		&\geq
		\Phi_n\big( v_R\xleftrightarrow{\omega_{\tau}} w_R,\, v_L\xleftrightarrow{\omega_{\tau}} w_L,\, v_R'\xleftrightarrow{\omega_{\tau\tau'}^*} w_R',\, v_L'\xleftrightarrow{\omega_{\tau\tau'}^*} w_L',
		\\
		&\qquad\qquad \calC_{w_L}(\omega_{\tau}) \in \mathrm{DiaCo},\, \calC_{w_L'}(\omega_{\tau\tau'}^*) \in \mathrm{DiaCo}' \big)
		\\
		&\geq
		(\cstFinEne)^{8K} \Phi_n\big( \calC_{w_L}(\omega_{\tau}) \in \mathrm{DiaCo},\, \calC_{w_L'}(\omega_{\tau\tau'}^*) \in \mathrm{DiaCo}' \big).
	\end{align*}
	
	Then, by Lemma~\ref{lem:DoubleBridge:relax_mATRC}, for \(K\) large enough,
	\begin{equation*}
		\frac{\Phi_n\big( \calC_{w_L}(\omega_{\tau}) \in \mathrm{DiaCo},\, \calC_{w_L'}(\omega_{\tau\tau'}^*) \in \mathrm{DiaCo}' \big)}{\Phi\big( \calC_{w_L}(\omega_{\tau}) \in \mathrm{DiaCo},\, \calC_{w_L'}(\omega_{\tau\tau'}^*) \in \mathrm{DiaCo}' \big)} \geq \frac{1}{2}.
	\end{equation*}Introduce then the set of pairs of clusters for \(K'<K\) large enough,
	\begin{multline*}
		\mathrm{SepCl}
		=
		\big\{ (C,C')\in \mathrm{DiaCo}\times \mathrm{DiaCo}':\\
		\forall x\in C,\, \rmd_{1}(x,C') \geq K' + (\min(n+K'+x_1,n-K'-x_1))^{1/6}\big\},
	\end{multline*}
	where~$\rmd_{1}$ denotes the~$L^1$ distance.
	Then, for any \((C,C')\in \mathrm{SepCl}\), the ratio mixing property of the ATRC model, see~\cite[Theorem~3.3]{DobGlaOtt25}, implies that for \(K'\) large enough,
	\begin{equation*}
		\Phi\big( \calC_{w_L}(\omega_{\tau}) = C,\, \calC_{w_L'}(\omega_{\tau\tau'}^*) = C' \big)
		\geq
		\tfrac{1}{2}\Phi\big( \calC_{w_L}(\omega_{\tau}) = C \big)\Phi\big( \calC_{w_L'}(\omega_{\tau\tau'}^*) = C' \big).
	\end{equation*}In particular, letting \(\calK \) be a random variable with law \(\Phi( \calC_{w_L}(\omega_{\tau}) \in \cdot )\), and \(\calK'\) be a random variable \emph{independent of \(\calK\)} with law \(\Phi( \calC_{w_L'}(\omega_{\tau\tau'}^*) \in \cdot )\), both defined on some probability space with measure \(P\),
	\begin{equation*}
		\Phi\big( (\calC_{w_L}(\omega_{\tau}), \calC_{w_L'}(\omega_{\tau\tau'}^*)) \in \mathrm{SepCl} \big)
		\geq
		\tfrac{1}{2} P\big((\calK,\calK') \in \mathrm{SepCl}\big).
	\end{equation*}
	Now, by Theorem~\ref{thm:OZ_atrc_inf_vol} and the fact that a cluster is included into the diamond envelop of any subset of its cone-points,
	\begin{multline*}
		e^{4\nu_1 n}P\big((\calK,\calK') \in \mathrm{SepCl}\big)
		\\
		\geq
		-e^{-cn} + cP_{w_L,w_L'}\big(\HitEvent_{w_R,w_R'},\ \cap_{k= w_L\cdot \rme_1}^{w_R\cdot \rme_1}\{\rmd(\DiaEnv\cap \slab_{k,k+1},  \DiaEnv'\cap \slab_{k,k+1}) > d_k\} \big)
	\end{multline*}where \(c>0\), \(d_k \equiv K' + \min(k-w_L\cdot \rme_1, w_R\cdot \rme_1 -k)^{1/6}\), and we used the notations of Section~\ref{subsec:DoubleBridge:RW_system}, and where \(\DiaEnv = \DiaEnv(S_0,\dots, S_{T_{w_R}})\), \(\DiaEnv'=\DiaEnv(S_0',\dots, S_{T_{w_R'}'}')\). Now, Lemma~\ref{lem:DoubleBridge:RW:LB_diam_avoid} provides the wanted bound on the last probability once \(K\) is taken large enough for any fixed \(K'\).
\end{proof}

\begin{lemma}
	\label{lem:DoubleBridge:apriori_upper_bound_single_crossing}
	Let \(0<J<U\) satisfy \(\sinh 2J=e^{-2U}\). Then, there exist \(c>0, c_0, n_0\geq 1\) such that for any \(n\geq n_0\), \(m_n\geq c_0n\), 
	\begin{equation*}
		\max\Big(\Phi_n\big( v_R\in \calC \big),\, \Phi_n\big(  v_R' \in \calC' \big) \Big)
		\leq
		\frac{c\ln^2(n)}{\sqrt{n}} e^{-2\nu_1 n}.
	\end{equation*}
\end{lemma}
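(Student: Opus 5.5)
We will prove the bound for \(\Phi_n(v_R\in\calC)\); the bound for \(\calC'\) is identical after exchanging the roles of \(\omega_\tau\) and \(\omega_{\tau\tau'}^*\) and replacing the increasing events by decreasing ones. The target \(\frac{\ln^2 n}{\sqrt n}e^{-2\nu_1 n}\) is, up to the logarithmic factor, the Ornstein--Zernike asymptotics \(\Phi(0\leftrightarrow x)\asymp |x|^{-1/2}e^{-\nu(x)}\) at distance \(2n\). The plan is therefore to reduce the finite-volume crossing probability to infinite-volume two-point functions between points near \(v_L\) and points near \(v_R\), with the boundary effects absorbed into the logarithmic factors.

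\textbf{Step 1 (boundary layer).} Set \(k=\lceil C\ln n\rceil\). On \(\{v_R\in\calC\}\), the cluster exits the \(k\)-neighbourhood of \(v_L\) at some vertex \(u\) at distance \(k\) from \(\partial\mathsf{B}_{n,m_n}\). Similarly, it enters the \(k\)-neighbourhood of \(v_R\) at some vertex \(w\).
\begin{itemize}
\item We first discard configurations in which \(\calC\) travels along the boundary, or reaches distance larger than \(C\ln n\) from \(\R\times\{0\}\) within the boundary layer. For this we use the inequality \(\Phi_n(a\xleftrightarrow{\omega_\tau}b)\le e^{-\nu(b-a)+o}\) for \(a,b\) in the layer, which follows from the sandwiching of \(\Phi_n\) by wired ATRC measures on slightly larger boxes and the exponential decay of \(\omega_\tau\)-connections under \(\atrc^{1,1}\). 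Here \(m_n\ge c_0n\) keeps the top and bottom boundaries irrelevant.
\item We then take a union bound over the pairs \((u,w)\). There are \(O(\ln^2 n)\) of them, which is the source of the \(\ln^2(n)\) factor.
\end{itemize}

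\textbf{Step 2 (ratio mixing).} For each pair \((u,w)\) we bound \(\Phi_n(u\leftrightarrow w \text{ inside }\mathsf{B}_{n-k,m_n})\). The first tool is Lemma~\ref{lem:DoubleBridge:relax_mATRC}, applied edge by edge along cone-point decompositions as in \cite[Lemma 8.4]{DobGlaOtt25}. Cruder but sufficient is monotonicity: \(\{u\leftrightarrow w\}\) is increasing, so by Lemma~\ref{lem:fkg_lattice_mod_atrc} it is dominated by the same event under the measure wired on the boundary layer. Property~(\ref{property:condATRC}) identifies that measure as \(\atrc^{1,1}_{\bbE_{n,m_n}}\). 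We then compare \(\atrc^{1,1}\) connections between points at distance \(k\) from the boundary to infinite-volume connections using ratio mixing \cite[Theorem~3.3]{DobGlaOtt25}. This gives, up to a constant factor, \(\Phi(u\leftrightarrow w)\le \frac{C}{\sqrt{n}}e^{-\nu(w-u)}\), by Theorem~\ref{thm:OZ_atrc_inf_vol} together with Lemma~\ref{lem:DoubleBridge:RW:single_OZ_asymp}. Finally \(\nu(w-u)\ge 2\nu_1 n - O(k)\), and the \(e^{O(k)}\) loss is absorbed by the exponential decay of the boundary pieces \(v_L\leftrightarrow u\) and \(w\leftrightarrow v_R\), whose cost \(e^{-\nu(u-v_L)}\) combines with the bulk cost through sharp triangle inequality for \(\nu\).

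\textbf{Main obstacle.} The delicate point is that mixing with wired boundary near the layer could inflate connection probabilities by more than a constant factor. The cleanest fix is to take \(u,w\) at distance \(C\ln n\) from the boundary, so that the mixing error is \(n^{-10}\). We must also check that the decomposition at \(u,w\) does not double count entropy beyond \(\mathrm{poly}(\ln n)\). This is handled by splitting according to the first and last cone-points in the slabs \(\slab_{-n,-n+k}\) and \(\slab_{n-k,n}\).
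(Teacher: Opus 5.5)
\textbf{There is a genuine gap: the boundary layer.} Your skeleton matches the paper's. You reduce $\{v_R\in\calC\}$ to crossings between two vertical windows of size $O(\ln n)$ at depth $k_n\asymp\ln n$, take a union bound over the $O(\ln^2 n)$ pairs, compare with the infinite-volume measure via Lemma~\ref{lem:DoubleBridge:relax_mATRC}, and conclude with the OZ bound of \cite[Theorem~3.6]{DobGlaOtt25}. However, the paper does not derive the boundary-layer reduction; it imports it from \cite[Lemma~8.2]{DobGlaOtt25}. That lemma says that, up to an error $\tfrac{c}{n}e^{-2\nu_1 n}$, the event $\{v_R\in\calC\}$ can be replaced by a crossing between $\{k_n-n\}\times\{-k_n,\dots,k_n\}$ and $\{n-k_n\}\times\{-k_n,\dots,k_n\}$ inside $\mathsf{B}_{n-k_n,c_0n/2}$. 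Your Step~1, meant to supply this input, does not work, because dominating $\Phi_n$ from above by a wired ATRC measure gives no decay near the boundary. Under $\atrc^{1,1}$ all boundary vertices are $\omega_\tau$-connected through the outside. So by FKG, $\atrc^{1,1}(a\leftrightarrow b)\ge \atrc^{1,1}(a\leftrightarrow\partial)\,\atrc^{1,1}(b\leftrightarrow\partial)$. For $a,b$ at distance $O(1)$ (resp.\ $O(\ln n)$) from the wired boundary this is bounded below by a constant (resp.\ by $n^{-O(1)}$), whatever $|b-a|$ is. Hence the inequality $\Phi_n(a\leftrightarrow b)\le e^{-\nu(b-a)+o(|b-a|)}$ that you claim for points of the layer cannot come from this domination. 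Nothing in your argument prevents $\calC$ from running along the left side or the top of the box, or from making vertical excursions inside the layer. Excluding this depends on the specific boundary weights $\tfrac{2}{\svcb-1}$ of $\matrc$: the boundary surface tension must not be smaller than the bulk one, which the introduction flags as a non-trivial issue. It does not follow from monotonicity, and this is exactly what the paper imports.

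\textbf{The absorption of the $e^{O(k)}$ loss is also unsupported.} The loss you flag is real. A union bound plus OZ for crossings of horizontal length $2n-2k$ gives $\frac{\ln^2 n}{\sqrt n}e^{-2\nu_1 n}e^{2\nu_1 k}$. Since the mixing error of Lemma~\ref{lem:DoubleBridge:relax_mATRC} is of order $n e^{-ck}$, $k$ must be a large multiple of $\ln n$, so this is a polynomial loss. Your fix combines the costs of $v_L\leftrightarrow u$ and $w\leftrightarrow v_R$ with the bulk cost through the triangle inequality for $\nu$. That presupposes a product upper bound $\Phi_n(v_L\leftrightarrow u,\ u\leftrightarrow w,\ w\leftrightarrow v_R)\lesssim \Phi_n(v_L\leftrightarrow u)\,\Phi(u\leftrightarrow w)\,\Phi_n(w\leftrightarrow v_R)$, which is a BK/Simon--Lieb-type decoupling. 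FKG gives the reverse inequality. Ratio mixing does not apply either: the three events share $u$ and $w$, and there is no buffer between them. Conditioning on the layer and dominating by a wired measure on the bulk box places $u,w$ next to a wired boundary, where mixing gives nothing. Your remark about splitting at the first and last cone-points in the end slabs would require the cone-point and density-swapping machinery of \cite[Section~8]{DobGlaOtt25}, which you do not provide. In the paper, all boundary-layer effects are delegated to the imported Lemma~8.2 before mixing, the union bound and OZ are applied. Any compensation of the boundary cost therefore has to come from that input, not from an estimate made after the union bound.
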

\begin{proof}
	By~\cite[Lemma 8.2]{DobGlaOtt25}, there are \(C,c>0\) such that for \(n\) large enough,
	\begin{multline*}
		\Phi_n\big( v_R\in \calC \big)
		\\
		\leq
		\Phi_n\big( \exists x,y \in \{-k_n,\dots,k_n\}:\ (k_n-n,x) \xleftrightarrow{\mathsf{B}_{n-k_n,c_0n/2} } (n-k_n,y) \big) + \frac{c}{n}e^{-2\nu_1 n}
	\end{multline*}where \(k_n = \lfloor C\ln(n) \rfloor\), and \(\mathsf{B}_{a,b} = \{-a,\dots, a\}\times \{-b,\dots, b\}\), and \(\xleftrightarrow{\mathsf{B} }\) means that the connection must take place in the configuration restricted to the edges with both endpoints in \(\mathsf{B}\). Now, Lemma~\ref{lem:DoubleBridge:relax_mATRC}, a union bound, and~\cite[Theorem~3.6]{DobGlaOtt25} give the Claim. The bound for \(\calC'\) is identical.
\end{proof}

We finish the preliminaries with three key observations: recall that \(\Gamma_+\) is the top-most path in \(\calC\) going from \(v_L\) to \(v_R\), and \(\Gamma_-\) is the lowest path in \(\calC'\) going from \(v_L'\) to \(v_R'\).
\begin{enumerate}[label= \roman*), ref=\theenumii \roman*)]
	\item\label{obs:DoubleBridge:monotone_cond} First, by Corollary~\ref{cor:repulsiveness}, for any realization \(\gamma\) of \(\Gamma_+\), and \(\gamma^*\) of \(\Gamma_-\),
	\begin{equation*}
		\Phi_n^{\gamma} \succcurlyeq \Phi_n,
		\qquad
		\Phi_n^{\gamma^*} \preccurlyeq \Phi_n.
	\end{equation*}
	\item\label{obs:DoubleBridge:cones_points_are_decreasing} As was pointed out in~\cite{IofOttVelWac20}, if \(A\) is an event decreasing in \(\slabCP_{-n,n}(\calC)\), then \(\mathds{1}_A\mathds{1}_{v_R\leftrightarrow v_L}\) is an increasing function. Indeed, the function \(\mathds{1}_{v_R\leftrightarrow v_L}\) is non-zero only if there is a connection from \(v_L\) to \(v_R\), which has to cross \(\slab_{-n,n}\). Now, adding edges to any existing connection can only remove cone-points in \(\slab_{-n,n}\).
	\item\label{obs:DoubleBridge:sup_norm_bound} Finally, by Lemma~\ref{lem:DoubleBridge:apriori_upper_bound_single_crossing}, and the monotonicity noted in the first point, the function going from the image of \(\Gamma_-\) to \(\R\) given by \(\gamma^*\mapsto \Phi_n^{\gamma^*}(v_R\in \calC)\) is upper bounded by \(\frac{c\ln^2(n)}{\sqrt{n}} e^{-2\nu_1 n}\).
\end{enumerate}

\subsubsection*{Crossings contain many cone-points}

We start with the easy part of the job: showing that \(\calC,\calC'\) contain many well-distributed cone-points.
\begin{lemma}
	\label{lem:DoubleBridge:cone_points}
	Let \(0<J<U\) satisfy \(\sinh 2J=e^{-2U}\). Then, there exist \(\rho >0, c>0, c_0\geq 0, n_0\geq 1\) such that for any \(n\geq n_0\), \(m_n\geq c_0n\), and any \(n-\scale_n \leq i\leq n-2\scale_n\),
	\begin{equation*}
		\Phi_n\big( \slabCP_{i,i+\scale_n}(\calC) < \rho \scale_n,\, v_R\in \calC,\, v_R'\in \calC' \big)
		\leq
		e^{-c\scale_n-4\nu_1n}.
	\end{equation*}The same holds for \(\calC'\).
\end{lemma}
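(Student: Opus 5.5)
The idea is to condition on the lower crossing and use the monotonicity observations listed just before the statement, so as to reduce to a single-crossing cone-point estimate of the type in \cite[Section 8]{DobGlaOtt25}. Write $A$ for the event $\{|\slabCP_{i,i+\scale_n}(\calC)| < \rho\scale_n\}$. We decompose over the realisation $\gamma^*$ of $\Gamma_-$:
\begin{equation*}
\Phi_n\big(A,\, v_R\in\calC,\, v_R'\in\calC'\big)=\sum_{\gamma^*}\Phi_n(\Gamma_-=\gamma^*)\,\Phi_n^{\gamma^*}\big(A,\, v_R\in\calC\big).
\end{equation*}
This uses that, given $\Gamma_-=\gamma^*$, the cluster $\calC$ is determined by the edges above $\gamma^*$. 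Here $\calC$ lies above $\Gamma_-$, and $\calC$ cannot share a primal edge crossed by $\gamma^*$ because $\omega_\tau\subseteq\omega_{\tau\tau'}$.

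By observation~\ref{obs:DoubleBridge:cones_points_are_decreasing}, $\mathds{1}_A\mathds{1}_{v_R\in\calC}$ is increasing. Observation~\ref{obs:DoubleBridge:monotone_cond} gives $\Phi_n^{\gamma^*}\preccurlyeq\Phi_n$, hence
\begin{equation*}
\Phi_n^{\gamma^*}(A,\, v_R\in\calC)\le\Phi_n(A,\, v_R\in\calC).
\end{equation*}
I would then bound the right-hand side by the single-crossing cone-point estimate, as in the proof of \cite[Theorem 8.1]{DobGlaOtt25} via Corollary~\ref{cor:fin_vol_cluster_bridge_coupling} and \cite[Lemma 8.2]{DobGlaOtt25}. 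The target is
\begin{equation*}
\Phi_n(A,\, v_R\in\calC)\le e^{-c\scale_n}\,\frac{1}{\sqrt n}\,e^{-2\nu_1 n}.
\end{equation*}
To get it, first localise the crossing to the bulk $\mathsf{B}_{n-k_n,c_0n/2}$, with the boundary error $\tfrac cn e^{-2\nu_1 n}$ from \cite[Lemma 8.2]{DobGlaOtt25}. Then apply ratio mixing (Lemma~\ref{lem:DoubleBridge:relax_mATRC}) to pass to the infinite-volume measure $\Phi$. There the OZ decomposition (Theorem~\ref{thm:OZ_atrc_inf_vol}) writes the cluster as a concatenation of i.i.d. irreducible pieces whose displacements have exponential tails. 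Having fewer than $\rho\scale_n$ cone-points in a slab of width $\scale_n$ forces either a piece of length $\gtrsim\scale_n$, or too few renewals; by a standard large-deviation bound for renewal counts, both have probability $e^{-c\scale_n}$ relative to the total weight $\Phi(v_L\leftrightarrow v_R)\asymp n^{-1/2}e^{-2\nu_1 n}$. The boundary error term must also be handled. Since $\scale_n=\ln^2 n$, $e^{-c\scale_n}$ beats any polynomial, so in this localisation step the factor $\tfrac1n$ should be replaced by the sharper exponentially small bound that holds for crossings using edges near the boundary, which I take from \cite[Lemma 8.2]{DobGlaOtt25}.

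Summing over $\gamma^*$ gives a bound $\Phi_n(v_R'\in\calC')\,e^{-c\scale_n}n^{-1/2}e^{-2\nu_1 n}$. Lemma~\ref{lem:DoubleBridge:apriori_upper_bound_single_crossing} then bounds $\Phi_n(v_R'\in\calC')$ by $\tfrac{c\ln^2 n}{\sqrt n}e^{-2\nu_1 n}$. The total is $e^{-c\scale_n}\tfrac{\ln^2n}{n}e^{-4\nu_1 n}\le e^{-c'\scale_n-4\nu_1 n}$, adjusting $c$.

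For $\calC'$ the argument is symmetric: decompose over $\Gamma_+=\gamma$ and use $\Phi_n^\gamma\succcurlyeq\Phi_n$. The event that $\calC'$ has few cone-points and $v_R'\in\calC'$ is increasing in the dual configuration, hence decreasing in $(\omega_\tau,\omega_{\tau\tau'})$, so its probability under $\Phi_n^\gamma$ is at most its probability under $\Phi_n$. The main obstacle is the single-crossing bound with the $e^{-c\scale_n}$ gain holding uniformly, including slabs near the left and right ends where the boundary pieces $\eta_L,\eta_R$ live. If the corollary's error $e^{-ck_n}$ with $k_n\asymp\scale_n$ is not directly sufficient there, I would rerun the renewal argument of \cite[Section 8]{DobGlaOtt25} on the event itself rather than through the TV coupling.
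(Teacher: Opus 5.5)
Your argument is the paper's: decompose over $\Gamma_-=\gamma^*$, use that $\mathds{1}_A\mathds{1}_{v_R\in\calC}$ is increasing together with $\Phi_n^{\gamma^*}\preccurlyeq\Phi_n$ to bound the probability by $\Phi_n(A,\,v_R\in\calC)\,\Phi_n(v_R'\in\calC')$, and conclude with Lemma~\ref{lem:DoubleBridge:apriori_upper_bound_single_crossing}. The one difference is the single-crossing input: the paper takes $\Phi_n(A,\,v_R\in\calC)\le e^{-2\nu_1 n-c\scale_n}$ directly from \cite[Lemma 8.6]{DobGlaOtt25}, for all admissible slabs including those near the ends, so your rederivation and the extra $n^{-1/2}$ factor are unnecessary; as you suspected, that rederivation via Corollary~\ref{cor:fin_vol_cluster_bridge_coupling} gives no control over cone points within distance $\scale_n^2$ of the ends, where the boundary pieces live.
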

\begin{proof}
	Let \(A = \{\slabCP_{i,i+\scale_n}(\calC) < \rho \scale_n\}\). Then, \(\mathds{1}_A\mathds{1}_{v_R\in \calC}\) is an increasing function as observed in Observation~\ref{obs:DoubleBridge:cones_points_are_decreasing}. So, by Observation~\ref{obs:DoubleBridge:monotone_cond},
	\begin{multline*}
		\Phi_n\big( A,\, v_R\in \calC,\, v_R'\in \calC' \big)
		=
		\sum_{\gamma^*} \Phi_n\big(\Gamma_- = \gamma^* \big)\Phi_n^{\gamma^*}(A,\, v_R\in \calC)
		\\
		\leq
		\Phi_n(A,\, v_R\in \calC)\sum_{\gamma^*} \Phi_n\big(\Gamma_- = \gamma^* \big)
		=
		\Phi_n(A,\, v_R\in \calC)\Phi_n(v_R'\in \calC').
	\end{multline*}where the sums are over simple paths \(\gamma^*:v_L'\to v_R'\). Now, by~\cite[Lemma~8.6]{DobGlaOtt25}, \(\Phi_n(A,\, v_R\in \calC) \leq e^{-2\nu_1 n -c\scale_n}\) for some \(c>0\) as soon as \(\rho >0\) is small enough, and \(n_0\) is large enough. Also, Lemma~\ref{lem:DoubleBridge:apriori_upper_bound_single_crossing} gives that \(\Phi_n(v_R'\in \calC')\leq Ce^{-2\nu_1 n}\).
\end{proof}

\subsubsection*{Entropic repulsion}

We are now left with the difficult part of the job: showing that \(\calC,\calC'\) stay ``far away'' from one another. We follow the same general idea as~\cite{IofOttVelWac20}. For \(C,C'\) subsets of \(\R^2\), introduce the ``modified distance''
\begin{equation*}
	\mathrm{Mdist}_n(C,C') = \rmd_{\infty}\big(C\cap \slab_{\scale_n^{50}-n,n-\scale_n^{50}},\, C'\cap \slab_{\scale_n^{50}-n,n-\scale_n^{50}}\big).
\end{equation*}
Here, \(50\) is an arbitrarily fixed constant, taken large enough so that everything works out, and the \(13\) in the next lemma is the consequence of this choice of constant.
\begin{lemma}
	\label{lem:DoubleBridge:entropic_repulsion}[Entropic repulsion.]
	Let \(0<J<U\) satisfy \(\sinh 2J=e^{-2U}\). Then, there exist \(c>0, c_0\geq 0, n_0\geq 1\) such that for any \(n\geq n_0\), \(m_n\geq c_0n\),
	\begin{equation*}
		\Phi_n\big( \mathrm{Mdist}_n(\calC,\calC')\leq \scale_n^2,\, v_R\in \calC,\, v_R'\in \calC' \big)
		\leq
		\frac{1}{n^2\scale_n^{13}}e^{-4\nu_1 n}.
	\end{equation*}
\end{lemma}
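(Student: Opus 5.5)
Following \cite{IofOttVelWac20}, the plan is to condition on the lower crossing, use monotonicity to replace the conditioned measure by an unconditioned one, and then reduce to the random walk estimate of Lemma~\ref{lem:DoubleBridge:RW:UB_avoid_but_close}. The closeness event is not monotone, so it must first be localised. On the event in the statement, we may assume the cone-point property of Lemma~\ref{lem:DoubleBridge:cone_points} and the $\sup$-norm confinement of both clusters; the excluded configurations cost at most $n e^{-c\scale_n-4\nu_1 n}$, which is negligible.

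\textbf{Step 1: decomposition over the lower path.} We decompose over realisations $\gamma^*$ of $\Gamma_-$, writing
\[
\Phi_n(\cdot)=\sum_{\gamma^*}\Phi_n(\Gamma_-=\gamma^*)\,\Phi_n^{\gamma^*}(\cdot).
\]
Given $\gamma^*$, the event $\{\mathrm{Mdist}_n(\calC,\calC')\leq\scale_n^2\}$ is implied by the event that $\calC$ comes within distance roughly $\scale_n^2$ of $\gamma^*$ inside $\calS_{\mathrm{in}}$. This uses that $\calC$ lies above $\calC'$, so closeness to $\calC'$ forces closeness to its lower boundary path only up to the vertical extent of $\calC'$ near that point. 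To control that extent, we intersect with the event that $\calC'$ has $\rho\scale_n$ cone-points in every slab of width $\scale_n$. Then $\calC'$ is contained in the diamond envelope of its cone-points, which have vertical spread at most $\scale_n$ locally. So closeness to $\calC'$ implies closeness to $\gamma^*$ at distance $\lesssim\scale_n^2$.

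\textbf{Step 2: removing the conditioning.} Under $\Phi_n^{\gamma^*}$, the conditioned measure is dominated by $\Phi_n$ restricted above $\gamma^*$ (Observation~\ref{obs:DoubleBridge:monotone_cond}). The event that $v_R\in\calC$, that $\calC$ stays above $\gamma^*$, and that $\calC$ touches the $\scale_n^2$-neighbourhood of $\gamma^*$ in the bulk is not increasing. To handle this, I would follow \cite{IofOttVelWac20} and \cite{DAl24}: replace $\gamma^*$ by the top boundary $\partial_-$ of the diamond envelope of the cone-points of $\calC'$, and use mixing (Lemma~\ref{lem:DoubleBridge:relax_mATRC}) to decouple. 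The total-variation mixing lets us replace $\Phi_n^{\gamma^*}$ on the region above a distance $\scale_n$ from $\gamma^*$ by $\Phi$, at multiplicative cost $1+o(1)$. We then apply Corollary~\ref{cor:fin_vol_cluster_bridge_coupling} and Theorem~\ref{thm:OZ_atrc_inf_vol} to express the $\calC$-part as an OZ chain. Symmetrically, the law of $\calC'$ is represented through its OZ chain by Corollary~\ref{cor:fin_vol_cluster_bridge_coupling}, using $\Phi_n(v_R'\in\calC')\leq c\ln^2(n)n^{-1/2}e^{-2\nu_1 n}$.

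\textbf{Step 3: reduction to walks.} Combining Steps 1 and 2, the probability is bounded by $e^{-4\nu_1 n}$ times $P_{x,x'}(\HitEvent_{y,y'})$ times the conditional probability in Lemma~\ref{lem:DoubleBridge:RW:UB_avoid_but_close}. The boundary pieces have displacements at most $\scale_n^2$ outside the bulk, which is exactly the tolerance allowed for the endpoints in that lemma. By Lemma~\ref{lem:DoubleBridge:RW:single_OZ_asymp}, the hitting factor is $O(1/n)$. Lemma~\ref{lem:DoubleBridge:RW:UB_avoid_but_close} contributes $c/(\scale_n^{16}n)$, with $\scale_n$ replaced by $\scale_n^2$ through a minor change of constants. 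This gives roughly $\scale_n^{-16}n^{-2}e^{-4\nu_1n}$, with polylog losses from the boundary summation; these are absorbed because $16>13$ leaves room for a factor $\scale_n^{3}$.

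\textbf{Main obstacle.} The hard step is Step 2: getting genuine independence between $\calC$ and $\calC'$ while they are close, since the ATRC measure is only weakly Markov. The first thing to try is to use repulsiveness only one-sidedly, namely by an upper bound via domination, applied to the increasing event $\{v_R\in\calC\}\cap\{\text{few cone-points}\}$. The geometric touching event would then be read off from the diamond envelopes built from cone-points, which are increasing-compatible objects. Mixing would only be used far from the touching region, losing the claimed polylog factors.
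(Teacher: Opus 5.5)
\textbf{There is a genuine gap in Step~2, which is the step the whole argument rests on.} You assert that the event ``$v_R\in\calC$, $\calC$ stays above $\gamma^*$, and $\calC$ comes within $2\scale_n^2$ of $\gamma^*$ in the bulk'' is not increasing, and you try to get around this with mixing. Mixing cannot do this job.
\begin{itemize}
\item Lemma~\ref{lem:DoubleBridge:relax_mATRC} compares $\Phi_n^{\gamma^*}$ with $\Phi$ only for events supported far (at distance $\gg\ln n$) from $\gamma^*$.
\item The event you must bound is decided by edges within distance $2\scale_n^2$ of $\gamma^*$, since the crossing of $\calC$ passes there.
\item For the same reason, mixing gives no independence between $\calC$ and $\calC'$ at the place where they are close. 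Yet your Step~3 silently uses such independence when it multiplies two single-bridge hitting probabilities.
\end{itemize}
Your ``main obstacle'' paragraph does not close this. The increasing event $\{v_R\in\calC\}\cap\{\text{few cone-points}\}$ is the one from Lemma~\ref{lem:DoubleBridge:cone_points}, not the touching event.

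\textbf{The missing idea: replace ``stays above $\gamma^*$'' by an increasing condition that carries the same information, so no mixing is needed.} Fix a realisation $\gamma^*$ of $\Gamma_-$. Let $A(\gamma^*)$ be the set of connected $C\ni v_L,v_R$ such that $\mathrm{Mdist}_n(C,\gamma^*)\le 2\scale_n^2$ and $C$ contains a simple path from $v_L$ to $v_R$ using only edges above $\gamma^*$.
\begin{itemize}
\item The event $\{v_R\in\calC\in A(\gamma^*)\}$ is increasing.
\item Under $\Phi_n^{\gamma^*}$ it coincides with the event you want, because $\calC$ cannot cross the $\omega_{\tau\tau'}$-closed (hence $\omega_\tau$-closed) path $\gamma^*$.
\item Since $\Phi_n^{\gamma^*}\preccurlyeq\Phi_n$ (Observation~\ref{obs:DoubleBridge:monotone_cond}), you get $\Phi_n^{\gamma^*}\big(v_R\in\calC,\ \mathrm{Mdist}_n(\calC,\gamma^*)\le 2\scale_n^2\big)\le\Phi_n\big(v_R\in\calC\in A(\gamma^*)\big)$.
\item Summing over $\gamma^*$ yields the product structure $\Phi_n(v_R'\in\calC')\,E\big[\Phi_n(v_R\in\calC\in A(\Gamma_-))\big]$, with $\Gamma_-$ sampled from $\Phi_n(\cdot\given v_R'\in\calC')$.
\end{itemize}
You then couple $\Gamma_-$ to an OZ path by Corollary~\ref{cor:fin_vol_paths_bridge_coupling}; its error term is controlled by the sup-norm bound of Observation~\ref{obs:DoubleBridge:sup_norm_bound}. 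Independently, you couple $\calC$ to an OZ cluster by Corollary~\ref{cor:fin_vol_cluster_bridge_coupling}.

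\textbf{The ``above'' information survives this weakening.} Cone-points of $\calC$ lie on every $v_L\to v_R$ path, hence above $\gamma^*$. This is exactly the event $\{\forall k,\ S_k\text{ above }\partial_-\DiaEnv'\}$ in Lemma~\ref{lem:DoubleBridge:RW:UB_avoid_but_close}. That constraint is indispensable: without it, two independent bridges started $O(\scale_n^2)$ apart come within $\scale_n$ of each other with probability of order one. So it must be kept in increasing form, not dropped or handled by mixing.

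With this step in place, your Steps~1 and~3 go through essentially as in the paper. The prefactor is $\Phi_n(v_R\in\calC)\,\Phi_n(v_R'\in\calC')\le C\scale_n^2 n^{-1}e^{-4\nu_1 n}$, by Lemma~\ref{lem:DoubleBridge:apriori_upper_bound_single_crossing}.
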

\begin{proof}
	Let \(h:\mathrm{Image}(\calC)\times \mathrm{Image}(\mathrm{\Gamma_-})\to\{0,1\}\) be the function
	\begin{equation*}
		h(C,\gamma^*) = \begin{cases}
			1 & \text{ if } \mathrm{Mdist}_n(C,\gamma^*)\leq 2\scale_n^2,
			\\
			0 & \text{ else}.
		\end{cases}
	\end{equation*}In particular, \(C\mapsto h(C,\gamma^*)\) is non-decreasing.
	Note that by Lemma~\ref{lem:DoubleBridge:cone_points} and the cone confinement property, it is sufficient to prove, see Figure~\ref{fig:distance_clusters_path}, that
	\begin{equation*}
		\Phi_n\big( h(\calC,\Gamma_-) = 1,\, v_R\in \calC,\, v_R'\in \calC' \big)
		\leq
		\frac{C}{n^2 \scale_n^{13}}e^{-4\nu_1 n}.
	\end{equation*}
	
	\begin{figure}
		\includegraphics[scale=0.6]{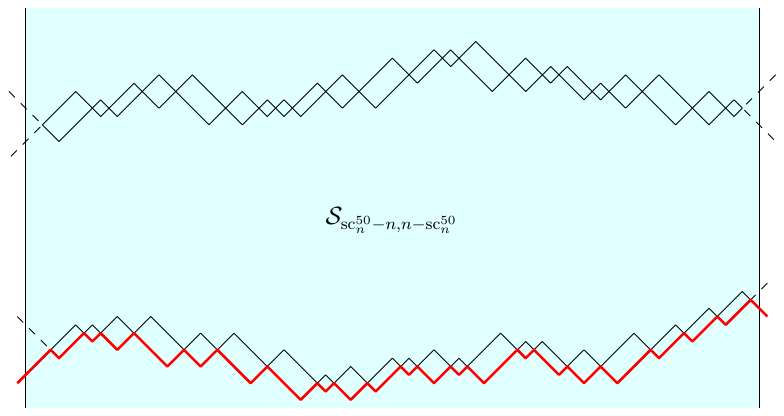}
		\caption{If the distance between two cone points is less than \(\scale_n\), the distance between any point of the lower cluster and the lower boundary of its diamond envelop (in red) is at most \(\scale_n\). Thus, controlling the distance between the top cluster and the lowest path in the lower cluster (which is above the red curve) is sufficient to control the distance between the two clusters.}
		\label{fig:distance_clusters_path}
	\end{figure}
	
	Let \(\bndMeas_n,\tilde{\bndMeas}_n\) be the measures on \(\SetRootMarkBackCont\times \SetRootMarkForwCont\) given by Corollaries~\ref{cor:fin_vol_cluster_bridge_coupling} and~\ref{cor:fin_vol_paths_bridge_coupling} with \(k_n = \scale_n\) respectively. Let \((\tilde{\OZRVchain^v})_{v\in \Z^2}\) be an independent copy of \((\OZRVchain^v)_{v\in \Z^2}\). Let \((\calK_L,\calK_R)\sim \bndMeas_n\), \((\calK_L',\calK_R')\sim \tilde{\bndMeas}_n\) be independent and independent of \((\tilde{\OZRVchain^v})_{v\in \Z^2}\) and \((\OZRVchain^v)_{v\in \Z^2}\). Introduce
	\begin{gather*}
		V = v_R-v_L-\displace(\calK_L) - \displace(\calK_R),
		\quad
		V' = v_R'-v_L'-\displace(\calK'_L) - \displace(\calK'_R),
		\\
		\calK = v_L + \calK_L\concatenate \OZRVcluster^V \concatenate\calK_R,
		\quad
		\calP = v_L' + \Gamma_-\big(\calK_L'\concatenate \widetilde{\OZRVcluster}^{V'} \concatenate\calK_R').
	\end{gather*}Then, using Corollary~\ref{cor:fin_vol_paths_bridge_coupling} to couple \(\Gamma_-(\calC')\) with \(\calP\), and Observation~\ref{obs:DoubleBridge:sup_norm_bound} to control the sup-norm in the error term of Corollary~\ref{cor:fin_vol_paths_bridge_coupling}, we get that
	\begin{multline}
		\label{eq:prf:lem_entropic_rep:eq1}
		\Phi_n\big( h(\calC,\Gamma_-) = 1,\, v_R\in \calC,\, v_R'\in \calC' \big)
		\\
		\leq
		\Phi_n(v_R'\in \calC')e^{-2\nu_1 n}e^{-c\scale_n}
		+ \Phi_n(v_R'\in \calC')E\Big[ \Phi_n^{\calP}\big[\mathds{1}_{v_R\in \calC} h(\calC,\calP) \big]\Big].
	\end{multline}
	For any \(\gamma'\) realisation of \(\calP\), introduce
	\begin{multline*}
		A(\gamma')
		=
		\big\{C\ni v_L,v_R\text{ connected}:\, h(C,\gamma')=1 ,
		\\
		\text{ and } \exists \gamma\subset C\cap \gamma'_{\shortuparrow} \text{ simple path from } v_L \text{ to } v_R\big\}.
	\end{multline*}
	This is an increasing event. Now, by Observation~\ref{obs:DoubleBridge:monotone_cond}, for any \(\gamma'\) realisation of \(\calP\),
	\begin{equation*}
		\Phi_n^{\gamma'}\big[\mathds{1}_{v_R\in \calC} h(\calC,\gamma') \big]
		=
		\Phi_n^{\gamma'}\big[\mathds{1}_{v_R\in \calC} \mathds{1}_{A(\gamma')}(\calC) \big]
		\leq
		\Phi_n\big(v_R\in \calC \in A(\gamma') \big).
	\end{equation*}Then, by Corollary~\ref{cor:fin_vol_cluster_bridge_coupling},
	\begin{equation*}
		\Phi_n\big(v_R\in \calC \in A(\gamma^*) \big)
		\leq
		\Phi_n(v_R\in \calC)P\big( \calK\in A(\gamma^*) \big) + \Phi_n(v_R\in \calC)e^{-c\scale_n}.
	\end{equation*}
	In particular, by these observations and Lemma~\ref{lem:DoubleBridge:apriori_upper_bound_single_crossing}, the R.H.S. of~\eqref{eq:prf:lem_entropic_rep:eq1} is less than
	\begin{equation*}
		e^{-4\nu_1 n- c\scale_n} + \Phi_n(v_R'\in \calC')\Phi_n(v_R\in \calC)  P\big( \calK\in A(\calP) \big).
	\end{equation*}By Lemma~\ref{lem:DoubleBridge:apriori_upper_bound_single_crossing}, and the value of \(\scale_n\), it remains to show that \(P\big( \calK\in A(\calP) \big) \leq \frac{C}{\scale_n^{15} n}\).
	Let \(\OZRVwalk^v, \tilde{\OZRVwalk^v}\) be the walks sampled according to \(\OZRVchain^v\) and~\(\tilde{\OZRVchain^v}\) respectively.
	Introduce
	\begin{equation*}
		S_0 = v_L+\displace(\calK_L),\quad \tilde{S}_0 = v_L'+\displace(\calK_L'),
		\quad
		S_i = S_0 + \displace(\OZRVwalk^V_i),\quad \tilde{S}_i = \tilde{S}_0 + \displace(\widetilde{\OZRVwalk}^V_i).
	\end{equation*}
	Now, from the exponential tails of the step distribution of \(\OZwalk\), we can assume that, up to an error of order \(e^{-c\scale_n}\), the maximal step size of \(S,\tilde{S}\) under \(\OZRVchain^V, \tilde{\OZRVchain}^{V'}\) is of size \(\frac{1}{2}\scale_n\) (a.s.-uniformly over the values of \(V,V'\)). Moreover, as \(\calK,\calP\) are included in the diamond envelop of the corresponding walks, the event \(\calK\in A(\calP)\) intersected with the event that all step sizes are less than \(\frac{1}{2}\scale_n\) is included in the event
	\begin{multline*}
		\big\{\forall k\, S_k \text{ is above } \partial_-\DiaEnv(\tilde{S}_0,\dots,\tilde{S}_{T_{V'}}) \big\}
		\\
		\cap \big\{\exists k \text{ such that } S_k\in \slab_{\scale_n^{50}-n,n-\scale_n^{50}},\ \rmd(S_k,\DiaEnv(\tilde{S}_0,\dots,\tilde{S}_{T_{V'}}))\leq \scale_n \big\}.
	\end{multline*}
	Note that \(\bndMeas_n,\tilde{\bndMeas}_n\) are supported on graphs/paths with displacement sup-norm at most \(\scale_n^2\).
	This allows to use Lemma~\ref{lem:DoubleBridge:RW:UB_avoid_but_close} and the claim follows.
\end{proof}

\subsubsection*{Clusters are good with high probability}

Combining all the previous Lemmas, we obtain the wanted control over the probability to see good pairs of clusters.
\begin{lemma}
	\label{lem:DoubleBridge:good_cluster_have_proba_one}
	Let \(0<J<U\) satisfy \(\sinh 2J=e^{-2U}\). Then, there exist \(c_0\geq 0, n_0\geq 1\) such that for any \(n\geq n_0\), \(m_n\geq c_0n\),
	\begin{equation*}
		\Phi_n\big( (\calC,\calC')\in \goodCl_n \bgiven v_R\in \calC,\, v_R'\in \calC' \big)
		\geq
		1-\frac{1}{\scale_n^{12}}.
	\end{equation*}
\end{lemma}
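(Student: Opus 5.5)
The plan is a union bound over the three defining conditions of \(\goodCl_n\), with each bad event estimated jointly with \(\{v_R\in\calC,\,v_R'\in\calC'\}\). Each such estimate is then divided by the a priori lower bound of Lemma~\ref{lem:DoubleBridge:apriori_lower_bound}, \(\Phi_n(v_R\in\calC,\,v_R'\in\calC')\geq \frac{c}{n^2}e^{-4\nu_1 n}\). The exponential factors \(e^{-4\nu_1 n}\) cancel, so each bad event must have joint probability of order \(o(\scale_n^{-12})\cdot n^{-2}e^{-4\nu_1 n}\).

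\emph{Cone points.} For each of the \(O(n)\) starting positions \(i\) of a strip \(\slab_{i,i+\scale_n}\), Lemma~\ref{lem:DoubleBridge:cone_points} bounds the joint probability by \(e^{-c\scale_n-4\nu_1 n}\), and the same holds for \(\calC'\). A union bound over \(i\) and over the two clusters gives \(O(n)e^{-c\scale_n-4\nu_1n}\). Since \(\scale_n=\lceil\ln^2 n\rceil\), we have \(e^{-c\scale_n}=n^{-c\ln n}\), which beats any power of \(n\). After dividing by the lower bound, this contribution is therefore \(o(n^{-10})\).

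\emph{Separation.} The second condition of \(\goodCl_n\) is exactly \(\mathrm{Mdist}_n(\calC,\calC')\geq\scale_n^2\), up to the strict versus non-strict inequality, which is absorbed by using \(\leq \scale_n^2\) in the bad event. Lemma~\ref{lem:DoubleBridge:entropic_repulsion} gives joint probability at most \(\frac{1}{n^2\scale_n^{13}}e^{-4\nu_1 n}\). Dividing by \(\frac{c}{n^2}e^{-4\nu_1n}\) yields \(\frac{1}{c\,\scale_n^{13}}\), which is at most \(\frac{1}{2\scale_n^{12}}\) once \(n\) is large. This is the dominant term and the reason the exponents are \(13\) versus \(12\).

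\emph{Vertical confinement.} The condition \(C,C'\subset\R\times[-n,n]\) needs a separate crude argument, which I expect to be the only real (though mild) obstacle since no lemma above states it. The plan is to use the same FKG decoupling as in the proof of Lemma~\ref{lem:DoubleBridge:cone_points}.
\begin{itemize}
\item For \(\calC\), the event \(\{\calC\not\subset\R\times[-n,n]\}\cap\{v_R\in\calC\}\) is increasing. Conditioning on \(\Gamma_-=\gamma^*\) and using Observation~\ref{obs:DoubleBridge:monotone_cond}, it is dominated by \(\Phi_n(\calC\not\subset \R\times[-n,n],\,v_R\in\calC)\,\Phi_n(v_R'\in\calC')\).
\item The first factor is bounded by \(e^{-2\nu_1 n-cn}\). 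This comes from ratio mixing (Lemma~\ref{lem:DoubleBridge:relax_mATRC}) together with the fact that \(\nu\) is a norm with \(\nu(\rme_1)=\nu_1\): a connection from \(v_L\) to \(v_R\) passing through height \(\pm n\) costs at least \(2\nu((n,n))>2\nu_1n+cn\) by strict convexity/the sharp triangle inequality. The same estimate is available in the single-crossing analysis of~\cite[Section 8]{DobGlaOtt25}.
\item The second factor is at most \(e^{-2\nu_1 n}\) by Lemma~\ref{lem:DoubleBridge:apriori_upper_bound_single_crossing}.
\item \(\calC'\) is treated symmetrically with the reverse domination \(\Phi_n^{\gamma}\succcurlyeq\Phi_n\).
\end{itemize}
This error is exponentially small after normalisation. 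Summing the three contributions gives \(1-\scale_n^{-12}\) for \(n\) large, choosing \(c_0\) and \(n_0\) as the maxima of those in the invoked lemmas.
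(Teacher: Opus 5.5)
Your argument is correct and is the paper's own: normalise by Lemma~\ref{lem:DoubleBridge:apriori_lower_bound}, then union-bound the failure of the cone-point condition over \(O(n)\) strips with Lemma~\ref{lem:DoubleBridge:cone_points}, and the failure of separation with Lemma~\ref{lem:DoubleBridge:entropic_repulsion}, which gives the dominant \(C\scale_n^{-13}\) term. The one difference is that the paper gives no separate argument for \(C,C'\subset\R\times[-n,n]\) and treats it as part of ``the definition of \(\goodCl_n\)''. Enough cone-points in every strip already confine each cluster to the cones at its extreme cone-points together with the diamonds between consecutive ones, and these sit inside \(\diam(v_L,v_R)\), resp.\ \(\diam(v_L',v_R')\); so your extra FKG/strict-convexity step is not needed. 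As written, that step's appeal to Lemma~\ref{lem:DoubleBridge:relax_mATRC} is also loose, since that lemma gives no product bound for \(v_L\leftrightarrow x\leftrightarrow v_R\) near the boundary. A super-polynomial bound of the kind you need does, however, follow from Corollary~\ref{cor:fin_vol_cluster_bridge_coupling}.
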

\begin{proof}
	We will upper bound \(\Phi_n\big( (\calC,\calC')\notin \goodCl_n \bgiven v_R\in \calC,\, v_R'\in \calC' \big)\). First, by Lemma~\ref{lem:DoubleBridge:apriori_lower_bound},
	\begin{equation*}
		\Phi_n\big( (\calC,\calC')\notin \goodCl_n\bgiven v_R\in \calC,\, v_R'\in \calC' \big)
		\leq
		cn^2e^{4\nu_1 n}\Phi_n\big( (\calC,\calC')\notin \goodCl_n,\, v_R\in \calC,\, v_R'\in \calC' \big).
	\end{equation*}Then, by Lemmas~\ref{lem:DoubleBridge:cone_points}, and~\ref{lem:DoubleBridge:entropic_repulsion}, the definition of \(\goodCl_n\), and a union bound,
	\begin{equation*}
		\Phi_n\big( (\calC,\calC')\notin \goodCl_n,\, v_R\in \calC,\, v_R'\in \calC' \big)
		\leq
		e^{-4\nu_1 n}\Big(2n e^{-c\scale_n} + \frac{C}{n^{2} \scale_n^{13}}\Big),
	\end{equation*}which implies the claim.
\end{proof}

\subsection{Good cluster decomposition and density swapping}
\label{subsec:DoubleBridge:density_swap}

Before turning to the proof of Theorem~\ref{thm:coupling_with_avoiding_bridges}, we define a suitable decomposition of the clusters \(\calC,\calC'\) under the event that they are good.
We then prove a suitable bound on relative densities of the ``inside parts'' of the clusters law in finite and infinite volume. This part is similar to~\cite[Section 8.6]{DobGlaOtt25}.
To shorten notations, let
\begin{equation}
	\label{eq:DoubleBridge:def:a_n_connection_events}
	a_n = 2\scale_n^{50},
	\quad
	\connection = \{v_R\in \calC\},
	\quad
	\connection' = \{v_R'\in \calC'\}.
\end{equation}
For a connected subgraph \(C\) of \(\Z^2\) or its dual, define the following.
\begin{itemize}
	\item \(W_L(C)\) is the leftmost cone-point of \(C\) in \(\slab_{a_n -n, a_n+\scale_n -n}\) if it exists, and \(W_L(C) = \dagger\) otherwise.
	\item \(W_R(C)\) is the rightmost cone-point of \(C\) in \(\slab_{n-a_n -\scale_n, n-a_n}\) if it exists, and \(W_R(C) = \dagger\) otherwise.
\end{itemize}
Note that under \((\calC,\calC')\in \goodCl_n\), one has that \(W_L(\calC),W_L(\calC')\in \slab_{a_n -n, a_n -n + \scale_n}\), and \(W_R(\calC),W_R(\calC')\in \slab_{n-a_n -\scale_n, n-a_n}\).
Under the event \((\calC,\calC')\in \goodCl_n\), define
\begin{itemize}
	\item \(\calK_L = \calC\cap (W_L(\calC)+\bcone)\), \(\calK_R = \calC\cap (W_R(\calC)+\fcone)\), \(\calK = \calC\cap \diam(W_L(\calC),W_R(\calC))\),
	\item \(\tilde{\calK}_L = \calK_L-v_L\), \(\tilde{\calK}_R = \calK_R-W_R(\calC)\), \(\tilde{\calK} = \calK - W_L(\calC)\),
\end{itemize}
so that
\begin{equation*}
	\calC = \calK_L\cup \calK \cup \calK_R = v_L + \tilde{\calK}_L\concatenate \tilde{\calK} \concatenate \tilde{\calK}_R,
\end{equation*}and define \(\calK_L^*,\calK_R^*,\calK^*,\tilde{\calK}_L^*,\tilde{\calK}_R^*,\tilde{\calK}^*\) similarly using \(\calC'\) in place of \(\calC\) and \(v_L'\) in place of \(v_L\). If \(W_{L/R}(\calC/\calC')\) are not defined, set all of those to be equal to \(\dagger\).

\begin{lemma}
	\label{lem:DoubleBridge:density_swap}
	Let \(0<J<U\) satisfy \(\sinh 2J=e^{-2U}\). There exists \(n_0\geq 1, c,c_0\in (0,+\infty)\) such that for any \(n\geq n_0\), \(m_n\geq c_0n\), any \(\eta_L, \eta, \eta_R,\eta_L', \eta', \eta_R'\) such that
	\begin{itemize}
		\item \((v_L+\eta_L\concatenate \eta\concatenate \eta_R, v_L'+\eta_L'\concatenate \eta'\concatenate \eta_R') \in \goodCl_n\),
		\item \(W_L(v_L+\eta_L\concatenate \eta \concatenate \eta_R) = v_L + \displace(\eta_L)\), \(W_R(v_L+\eta_L\concatenate \eta \concatenate \eta_R) = v_L + \displace(\eta_L\concatenate \eta)\),
		\item \(W_L(v_L'+\eta_L'\concatenate \eta' \concatenate \eta_R') = v_L' + \displace(\eta_L')\), \(W_R(v_L'+\eta_L'\concatenate \eta' \concatenate \eta_R') = v_L' + \displace(\eta_L'\concatenate \eta')\),
	\end{itemize}
	\begin{equation*}
		\Big|\frac{\Phi_n\big( \tilde{\calK} = \eta,\, \tilde{\calK}^* = \eta' \bgiven \tilde{\calK}_L = \eta_L,\, \tilde{\calK}_R = \eta_R,\, \tilde{\calK}_L^* = \eta_L',\, \tilde{\calK}_R^* = \eta_R',\, \connection,\, \connection' \big)}{\Phi\big( \tilde{\calK} = \eta \bgiven \tilde{\calK}_L = \eta_L,\, \tilde{\calK}_R = \eta_R,\, \connection \big)\Phi\big( \tilde{\calK}^* = \eta' \bgiven \tilde{\calK}_L^* = \eta_L',\, \tilde{\calK}_R^* = \eta_R',\, \connection' \big)} -1 \Big|
		\leq
		e^{-c\scale_n}.
	\end{equation*}
\end{lemma}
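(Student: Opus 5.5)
The plan is to write both numerator and denominator as ratios of probabilities of ``cluster-shape'' events. Each factor then becomes a ratio of two probabilities of the same local event, one in finite and one in infinite volume, and Lemma~\ref{lem:DoubleBridge:relax_mATRC} together with the ratio mixing of $\Phi$ (\cite[Theorem~3.3]{DobGlaOtt25}) shows that these ratios are $1+O(e^{-c\scale_n})$. This follows \cite[Section~8.6]{DobGlaOtt25} for a single cluster. The new ingredient is the factorisation between the two clusters, which uses that under $\goodCl_n$ the clusters are at distance at least $\scale_n^2$ in the bulk slab.

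\textbf{Step 1: reduction to cluster events.} Set $C=v_L+\eta_L\concatenate\eta\concatenate\eta_R$ and $C'=v_L'+\eta_L'\concatenate\eta'\concatenate\eta_R'$. On $\connection\cap\connection'$, the event $\{\tilde\calK_L=\eta_L,\tilde\calK=\eta,\tilde\calK_R=\eta_R,\dots\}$ is exactly $\{\calC=C,\calC'=C'\}$, provided the cone-point conditions on $W_{L/R}$ in the hypotheses hold. So the numerator equals
\[
\frac{\Phi_n(\calC=C,\calC'=C')}{\sum_{\zeta,\zeta'}\Phi_n(\calC=C_\zeta,\calC'=C'_{\zeta'})},
\]
where the sum runs over admissible middle pieces $\zeta,\zeta'$ compatible with the prescribed ends and with $W_{L/R}$. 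The denominator factors are written in the same way with $\Phi$ and single clusters. It therefore suffices to show that for all admissible middle pieces (all of them give good pairs, since goodness inside $\calS_{\mathrm{in}}$ is imposed through the conditioning, or one restricts to such pieces at negligible cost)
\[
\Phi_n(\calC=C,\calC'=C')=\kappa_n(\eta_L,\eta_R,\eta_L',\eta_R')\,\Phi(\calK=\eta\text{-part},\dots)\,\Phi(\dots)\,(1+O(e^{-c\scale_n})),
\]
with a prefactor $\kappa_n$ that does not depend on the middle pieces; it cancels in the ratio. Concretely, write $\{\calC=C\}$ as ``all edges of $C$ open in $\omega_\tau$ and all edges of $\partial C$ closed'', and similarly for $\calC'$ in $\omega_{\tau\tau'}^*$. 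Then condition first on the end pieces, i.e.\ the configuration in the regions $(W_L+\bcone)\cup(W_R+\fcone)$ and their boundaries, and then on the middle. The middle events are supported in the slab $\slab_{a_n-n,\,n-a_n}$, at distance at least $\scale_n^{50}$ from $\partial\mathsf{K}$ horizontally and, since $C,C'\subset\R\times[-n,n]$ and $m_n\ge c_0n$, far from it vertically.

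\textbf{Step 2: splitting the two middles.} The middle events for $C$ and $C'$ are local events supported near $\diam(W_L,W_R)$ and $\diam(W_L',W_R')$. Inside the bulk slab these two supports are at distance at least $\scale_n^2$, and near the slab ends they are separated by the cone geometry at the cone-points. Ratio mixing of $\Phi$ with edge-by-edge error $e^{-c\,\rmd}$ gives decoupling with error $\sum_{e} e^{-c\,\rmd(e,\text{other})}$.

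This sum is the main obstacle. Edges close to $W_L,W_L'$ are only separated by roughly $|W_L\cdot\rme_2-W_L'\cdot\rme_2|$, and the slab is outside $\calS_{\mathrm{in}}$. I would handle this by using the diamond/cone confinement: at horizontal distance $t$ from $W_L$, the distance between the two diamonds grows linearly in $t$ beyond the initial vertical gap. That gap should be at least $\scale_n^2$ by goodness, applied at the point $a_n-n=2\scale_n^{50}-n$, which lies inside $\calS_{\mathrm{in}}$. The sum is then $\lesssim\sum_{t}|{\rm edges}|\,e^{-c(\scale_n^2+t)}\le e^{-c\scale_n}$.

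\textbf{Step 3: finite versus infinite volume.} For the dependence between the middle and the ends, and between finite and infinite volume, I would apply Lemma~\ref{lem:DoubleBridge:relax_mATRC} with $F$ the middle supports and $(F')^c$ the boundary of $\mathsf{K}$. The resulting ratio is bounded by $n^2 e^{-c\scale_n^{50}}$. Conditioning on the end events is handled by the Markov-type factorisation through the cone points, exactly as in \cite[Lemma~8.7]{DobGlaOtt25}: the conditional law of the middle given the ends depends on the ends only through local pieces at $W_L,W_R$. This dependence is the same in $\Phi_n$ and in $\Phi$, so it cancels in the ratio. Collecting all error factors gives the claimed bound $e^{-c\scale_n}$.
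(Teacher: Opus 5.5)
\textbf{The gap is in Step~3, and Step~2 inherits it.} The content of the lemma is the decoupling of a cluster's middle piece from its \emph{own} end pieces. The middle event (edges of $\eta$ open in $\omega_\tau$, its edge boundary closed) shares the cone points $W_L,W_R$ with the end events, and the end events reach $\partial\mathsf{K}$. Lemma~\ref{lem:DoubleBridge:relax_mATRC} only compares $\Phi_n(A\given B)$ with $\Phi(A)$ when $B$ lives on $(F')^c$ and $A$ is supported deep inside $F'$. If you take $B$ to be the end events, the error sum is of order one, because of the edges of $\eta$ next to $W_L$. If you take $(F')^c$ near $\partial\mathsf{K}$ only, as you do, you compare $\Phi_n(A_M)$ with $\Phi(A_M)$, which is not the conditional quantity you need.

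The ``Markov-type factorisation through the cone points'' you invoke to close this does not exist for the ATRC. Fixing $\omega_\tau$ on $C$ and on its edge boundary forces $\omega_{\tau\tau'}$ open along $C$; in $\mathsf{K}^1$ this wires all of $C$ to the boundary, which is not the case under $\Phi$. It also leaves $\omega_{\tau\tau'}$ free on the edge boundary, and the region around $C$ is not split into a left and a right part at a cone point. So the influence of $\partial\mathsf{K}$, transmitted along the fixed end pieces, does not simply cancel.

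What is available is the approximate decoupling of \cite[Lemma~8.7]{DobGlaOtt25}. It goes through a decoupling region containing many cone points of the cluster; here these are the $\geq\rho\scale_n$ cone points per slab of width $\scale_n$ guaranteed by $\goodCl_n$, and they are the source of the rate $e^{-c\scale_n}$. Your plan never uses this part of $\goodCl_n$. Moreover, that argument does not apply verbatim: its rectangular annulus contains conditioned pieces of $\calC'$, since $\eta_L'$ ends at $W_L'$, in the same slab as $W_L$.

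The missing idea, which is the paper's proof, has two parts. First, make the decoupling region depend on $\calC'$, using the $\geq\scale_n^2$ separation so that it avoids the envelope of $\calC'$ (Figure~\ref{fig:Density_swapping}). Second, argue sequentially: first treat the law of $\tilde{\calK}$ given the four ends \emph{and} $\tilde{\calK}^*=\eta'$, then that of $\tilde{\calK}^*$ given the ends. Your Step~2, meant to remove $\calC'$ beforehand, is itself a mixing statement under a conditioning adjacent to both middles ($\eta_L,\eta_L'$ even touch near $v_L,v_L'$). It therefore needs the same missing mechanism and cannot be read off from ratio mixing of $\Phi$.

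\textbf{Secondary points.} The separation you treat as the main obstacle is not one. Both middles lie in $\slab_{a_n-n,n-a_n}\subset\calS_{\mathrm{in}}$, so $\goodCl_n$ directly gives distance $\geq\scale_n^2$ from each middle to the whole other cluster; the part of the other cluster outside $\calS_{\mathrm{in}}$ is at horizontal distance $\geq\scale_n^{50}$. Your argument for this separation is also incorrect: $\diam(W_L,W_R)$ and $\diam(W_L',W_R')$ open towards each other and overlap, so their distance does not grow with $t$.

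In Step~1, it is false that all admissible middles give good pairs, since only the ends are conditioned on and the middles may come arbitrarily close. Nor can you restrict to good pairs at negligible cost on the infinite-volume side. Under the product of the two single-cluster conditionals, good pairs have probability $O(\scale_n^{100}/n)$: two independent bridges with polylogarithmic end gaps rarely avoid each other (cf.\ Lemma~\ref{lem:DoubleBridge:sync_trans_kernels:ord_walkUB}). Hence your $\kappa_n$ cancels only if both sides are normalised over $\goodCl_n$. That density comparison on $\goodCl_n$ is what your factorisation can deliver, and it is the form in which the lemma is used in the proof of Theorem~\ref{thm:coupling_with_avoiding_bridges}.
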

\begin{proof}
	\begin{figure}
		\includegraphics[scale=0.6]{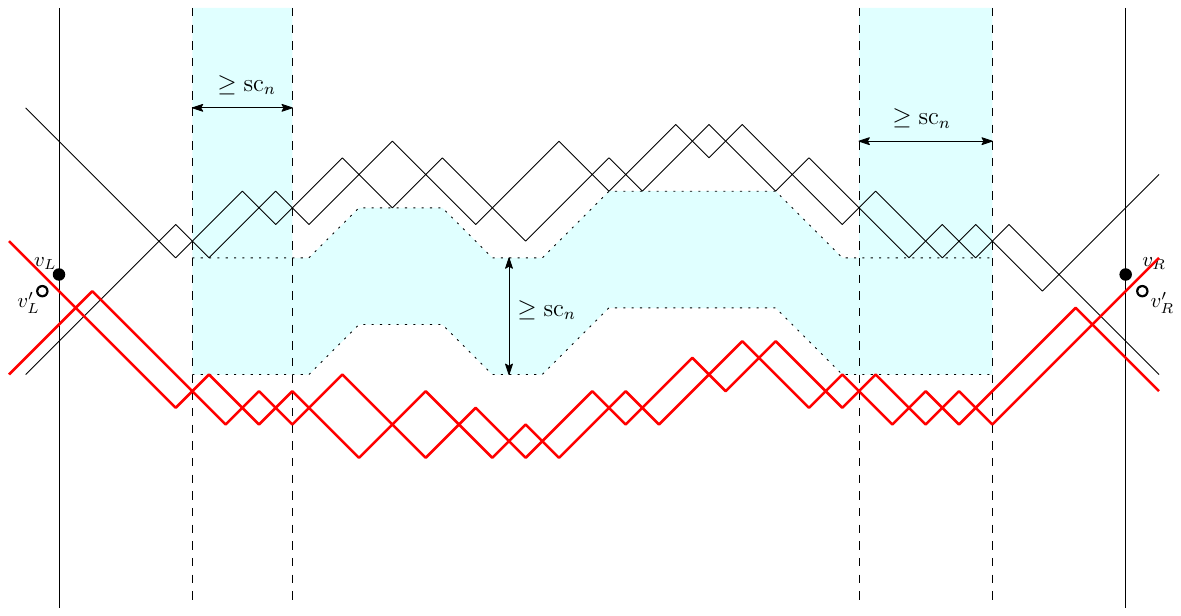}
		\caption{The lower dual cluster (envelop in red) stays well separated from the top primal cluster. This allows for the presence of a wide decoupling region (in light blue), containing many (\(\geq \rho \scale_n\)) cone-points of the top cluster.}
		\label{fig:Density_swapping}
	\end{figure}
	The proof is a straightforward adaptation of the one of~\cite[Lemma 8.7]{DobGlaOtt25}: first, one proves that
	\begin{equation*}
		\frac{\Phi_n\big( \tilde{\calK} = \eta \bgiven \tilde{\calK}_L = \eta_L,\, \tilde{\calK}_R = \eta_R,\, \tilde{\calK}_L^* = \eta_L',\, \tilde{\calK}_R^* = \eta_R',\, \tilde{\calK}^* = \eta',\, \connection,\, \connection' \big)}{\Phi\big( \tilde{\calK} = \eta \bgiven \tilde{\calK}_L = \eta_L,\, \tilde{\calK}_R = \eta_R,\, \connection \big)}
	\end{equation*}is at distance at most \(e^{-c\scale_n}\) from \(1\). Then, one shows that
	\begin{equation*}
		\frac{\Phi_n\big( \tilde{\calK}^* = \eta' \bgiven \tilde{\calK}_L = \eta_L,\, \tilde{\calK}_R = \eta_R,\, \tilde{\calK}_L^* = \eta_L',\, \tilde{\calK}_R^* = \eta_R',\, \connection,\, \connection' \big)}{\Phi\big( \tilde{\calK}^* = \eta' \bgiven \tilde{\calK}_L^* = \eta_L',\, \tilde{\calK}_R^* = \eta_R',\, \connection' \big)}
	\end{equation*}is at distance at most \(e^{-c\scale_n}\) from \(1\). The arguments are carried away exactly as in the proof of~\cite[Lemma 8.7]{DobGlaOtt25}. The only change in the proof compare to~\cite[Lemma 8.7]{DobGlaOtt25} is than one has to use a cluster-dependent annuli, see Figure~\ref{fig:Density_swapping}, instead of a rectangular annuli.
\end{proof}

\subsection{Concluding the proof of Theorem~\ref{thm:coupling_with_avoiding_bridges}}
\label{subsec:DoubleBridge:proof_of_main_thm}

As in~\cite[Section 8.7]{DobGlaOtt25}, we will proceed by successively comparing different sampling procedure for \(\calC,\calC'\). The last Lemma of the previous section motivates the definition of intermediate measures on the primal and dual clusters:
\begin{equation}
	\label{eq:DoubleBridge:def:factoClusterMeas}
	\begin{gathered}
		\condPrimalClusterMeas_{\eta_L}^{\eta_R}(A)
		=
		\Phi\big( \calC\in A \bgiven \tilde{\calK}_L = \eta_L,\, \tilde{\calK}_R = \eta_R,\, \connection \big),
		\\
		\condDualClusterMeas_{\eta_L}^{\eta_R}(A)
		=
		\Phi\big(\calC'\in A \bgiven \tilde{\calK}_L^* = \eta_L,\, \tilde{\calK}_R^* = \eta_R,\, \connection' \big).
	\end{gathered}
\end{equation}
Also introduce the ``factorized/OZ'' version of these measures:
\begin{equation}
	\label{eq:DoubleBridge:def:OZDecCond}
	\begin{gathered}
		\OZDec_{\eta_L,\eta_R}(B)
		\coloneqq 
		\frac{\int d\OZDec(M;\gamma_0^{M+1}) \mathds{1}_{B}(v_L+\bar{\gamma}) \mathds{1}_{\displace(\bar{\gamma}) = v_R-v_L}\mathds{1}_{\bar{\gamma}\sim \eta_L} \mathds{1}_{\bar{\gamma}\sim \eta_R} }{\int d\OZDec(M;\gamma_0^{M+1}) \mathds{1}_{\displace(\bar{\gamma}) = v_R-v_L}\mathds{1}_{\bar{\gamma}\sim \eta_L} \mathds{1}_{\bar{\gamma}\sim \eta_R}},
		\\
		\OZDec_{\eta_L,\eta_R}^*(B)
		\coloneqq 
		\frac{\int d\OZDec(M;\gamma_0^{M+1}) \mathds{1}_{B}(v_L'+\bar{\gamma}) \mathds{1}_{\displace(\bar{\gamma}) = v_R'-v_L'}\mathds{1}_{\bar{\gamma}\sim^* \eta_L} \mathds{1}_{\bar{\gamma}\sim^* \eta_R} }{\int d\OZDec(M;\gamma_0^{M+1}) \mathds{1}_{\displace(\bar{\gamma}) = v_R'-v_L'}\mathds{1}_{\bar{\gamma}\sim^* \eta_L} \mathds{1}_{\bar{\gamma}\sim^* \eta_R}},
	\end{gathered}
\end{equation}
where \(\bar{\gamma} = \gamma_0\concatenate \dots \concatenate \gamma_{M+1}\). We also defined the following events:
\begin{itemize}
	\item \(\bar{\gamma}\sim \eta_L\) if \(\displace(\eta_L)\in \CPts(\bar{\gamma})\) and \(\eta_L = \bar{\gamma} \cap (\displace(\eta_L) + \bcone)\),
	\item \(\bar{\gamma}\sim \eta_R\) if \(v_R-\displace(\eta_R)\in \CPts(v_L+\bar{\gamma})\) and \(v_R-\displace(\eta_R) + \eta_R = (v_L+\bar{\gamma}) \cap (v_R-\displace(\eta_R) + \fcone)\),
	\item \(\bar{\gamma}\sim^* \eta_L\), \(\bar{\gamma}\sim^* \eta_R\) are defined similarly with \(v_L',v_R'\) in place of \(v_L,v_R\).
\end{itemize}
Note that whilst the chains in the support of \(\OZDec\) are rooted at \(0\), the ones in the support of \(\OZDec_{\eta_L,\eta_R}\) are rooted at \(v_L\).

The next Lemma is a  particular case of~\cite[Lemma 8.9]{DobGlaOtt25}.
\begin{lemma}
	\label{lem:DoubleBridge:condClusters_to_condOZ}
	Let \(0<J<U\) satisfy \(\sinh 2J=e^{-2U}\). There exist \(n_0\geq 1, c,c_0\in (0,+\infty)\) such that for any \(n\geq n_0\), \(m_n\geq c_0n\), any \(\eta_L\in \SetRootMarkBackCont, \eta_R \in \SetRootMarkForwCont\) satisfying \(\norm{\displace(\eta_L)}_{\infty},\norm{\displace(\eta_R)}_{\infty} \leq n^{1/4}\),
	one has
	\begin{equation*}
		\tvd\big(\condPrimalClusterMeas_{\eta_L}^{\eta_R}, \OZDec_{\eta_L,\eta_R} \big) \leq e^{-cn},
		\quad
		\tvd\big(\condDualClusterMeas_{\eta_L}^{\eta_R}, \OZDec_{\eta_L,\eta_R}^* \big) \leq e^{-cn}.
	\end{equation*}
\end{lemma}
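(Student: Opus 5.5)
The lemma is stated to be a particular case of \cite[Lemma 8.9]{DobGlaOtt25}, so the plan is to check that the hypotheses of that result hold here and then reproduce its structure. Both measures in question are infinite-volume objects: $\condPrimalClusterMeas_{\eta_L}^{\eta_R}$ is defined through $\Phi=\atrc$ and does not involve $\Phi_n$ or the second crossing. We therefore only need the infinite-volume OZ decomposition, Theorem~\ref{thm:OZ_atrc_inf_vol}, applied with $x=v_R-v_L=(2n+2,0)$. The dual statement follows by the same argument, using the version of Theorem~\ref{thm:OZ_atrc_inf_vol} for $\omega_{\tau\tau'}^*$ clusters that is available in \cite{DobGlaOtt25}.

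\textbf{Step 1: reduce to one connection.} Write $\condPrimalClusterMeas_{\eta_L}^{\eta_R}(A)$ as a ratio
\[
\frac{\Phi[\mathds{1}_A(\calC)\mathds{1}_{\calC\sim(\eta_L,\eta_R)}\mathds{1}_{v_R\in\calC}]}{\Phi[\mathds{1}_{\calC\sim(\eta_L,\eta_R)}\mathds{1}_{v_R\in\calC}]}.
\]
Here $\calC\sim(\eta_L,\eta_R)$ is the event that $W_L(\calC)=v_L+\displace(\eta_L)$, that $\calC\cap(W_L+\bcone)=v_L+\eta_L$, and the analogous statement on the right. This is a function of $\calC$ bounded by $1$. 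Translating so that $v_L$ sits at the origin, apply Theorem~\ref{thm:OZ_atrc_inf_vol} to the numerator with $f=\mathds{1}_A\mathds{1}_{\cdot\sim(\eta_L,\eta_R)}$ and to the denominator with $f=\mathds{1}_{\cdot\sim(\eta_L,\eta_R)}$. After multiplying by $e^{\nu_1 x_1}$, each quantity equals the corresponding $\OZDec$ integral up to an additive error $e^{-c x_1}\le e^{-2cn}$.

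\textbf{Step 2: lower-bound the denominator.} This is the main obstacle, since the additive error must be converted into a relative one. The constraint $\bar\gamma\sim\eta_L$ forces $\gamma_0=\eta_L$, or $\eta_L$ to be the concatenation of $\gamma_0$ with the first few diamonds, up to the definitional detail that $W_L$ is the \emph{leftmost} cone point in the prescribed strip. The same holds on the right. Consequently the denominator factorizes, up to the weights of $\eta_L$ and $\eta_R$, into a sum over diamond chains with displacement $x-\displace(\eta_L)-\displace(\eta_R)$. That sum is $\OZ(T_v<\infty)\ge c n^{-1/2}$ by Lemma~\ref{lem:DoubleBridge:RW:single_OZ_asymp}, which applies because $|v_2|\le 2n^{1/4}\ll\sqrt{n}$.

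For the weights of $\eta_L$ and $\eta_R$ themselves, the finite-energy lower bound~\eqref{eq:OZ:fin_ene_exp_dec} gives only $e^{-c'|\eta_L|}$, which may be as small as $e^{-c'' n^{1/2}}$ since $|\eta_L|$ can be of order $n^{1/2}$. This is still $\gg e^{-2cn}$. I expect this point to be where the hypothesis $\norm{\displace(\eta_{L,R})}_\infty\le n^{1/4}$ is used, and where one must be careful that the cone-point constraint inside the $\scale_n$-strip does not kill the mass. If $\eta_L$ is not itself in the support of $p_L$, I would decompose it into a $p_L$-piece followed by irreducible diamonds, each having weight at least $e^{-c'|\cdot|}$.

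\textbf{Step 3: conclude.} Combining the two steps, numerator and denominator each agree with their $\OZDec$ counterparts up to a relative error $e^{-2cn+c''\sqrt n}\le e^{-cn}$. Hence $\sup_A|\condPrimalClusterMeas_{\eta_L}^{\eta_R}(A)-\OZDec_{\eta_L,\eta_R}(A)|\le e^{-cn}$, which is the claimed total-variation bound.
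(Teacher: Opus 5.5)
Your architecture matches the argument the paper relies on (it gives no proof and only cites \cite[Lemma 8.9]{DobGlaOtt25}). You write $\condPrimalClusterMeas_{\eta_L}^{\eta_R}$ as a ratio, apply Theorem~\ref{thm:OZ_atrc_inf_vol} to numerator and denominator, and turn the additive error $e^{-c(2n+2)}$ into a relative one by lower-bounding the denominator by the finite-energy weight of the end-pieces times $\OZwalk(T_v<\infty)\geq c n^{-1/2}$. The paper bounds analogous normalisations the same way in Lemma~\ref{lem:DoubleBridge:LB_GCl_product_meas}.

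\textbf{The gap.} Your Step~2 claims that $|\eta_L|$ is at most of order $n^{1/2}$, and this does not follow from the hypotheses. An element of $\SetRootMarkBackCont$ is only backward confined, $\eta_L\subset\displace(\eta_L)+\bcone$, with no restriction to the left of its marked point $0$. Under the infinite-volume measure $\Phi$ the cluster of $v_L$ can extend to the left of $v_L$. So the conditioning $\tilde{\calK}_L=\eta_L$ may prescribe an arbitrarily large set while $\norm{\displace(\eta_L)}_\infty\leq n^{1/4}$ holds; likewise $\eta_R\subset\fcone$ may extend arbitrarily far to the right of $v_R$.

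A spanning tree of $v_L+\eta_L$ must be $\omega_\tau$-open, so finite energy bounds your rescaled denominator by $e^{\nu_1(2n+2)}e^{-c'|\eta_L|}$. For $|\eta_L|\geq Cn$ this is below the additive error of Theorem~\ref{thm:OZ_atrc_inf_vol}, and the ratio argument gives nothing. Since that theorem is an additive statement uniform in $f$, no black-box use of it can handle conditionings this small. For such $\eta_L$ you would have to go inside the OZ construction: show the decomposition is exact on the event that a renewal occurs, and that, conditionally on the two end-pieces, the bulk fails to renew only with relative probability $e^{-cn}$.

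\textbf{How to close it.} The lemma is only applied, through the measure $r_1$ in the proof of Theorem~\ref{thm:coupling_with_avoiding_bridges}, to end-pieces of the finite-volume clusters. There $v_L$ sits on the left side of the box, so $\eta_L\subset\{x:x_1\geq 0\}\cap(\displace(\eta_L)+\bcone)$. Symmetrically, $\eta_R\subset\fcone\cap\{x:x_1\leq\displace(\eta_R)\cdot\rme_1\}$. These triangles contain $O(\norm{\displace(\eta_{L,R})}_\infty^2)$ sites, hence $|\eta_L|,|\eta_R|\leq Cn^{1/2}$. This is the same mechanism behind the bound $|\eta|\leq C\scale_n^{100}$ in Lemma~\ref{lem:DoubleBridge:LB_GCl_product_meas}. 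If you add this confinement as a hypothesis and note that it holds on the support of $r_1$, your Steps~1--3 go through.

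\textbf{Two minor points.}
\begin{enumerate}
\item In Step~3 only the denominator needs a relative bound. Write $N,D$ for the rescaled $\Phi$-quantities and $N',D'$ for their $\OZDec$ counterparts. Then $|N/D-N'/D'|\leq 2e^{-c(2n+2)}/D$, since $N'\leq D'$.
\item The constraint $\bar\gamma\sim\eta_L$ does not \emph{force} the junctions of the chain to decompose $\eta_L$, since $\displace(\eta_L)$ may lie inside a reducible piece. You only use it as a restriction for a lower bound, which is legitimate.
\end{enumerate}
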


We will need the following lower bounds on the probabilities of seeing good clusters under different sampling procedures. Introduce the product measures
\begin{gather*}
	\condPrimalDualClusterMeas_{\eta_L,\eta_L'}^{\eta_R,\eta_R'} \equiv \condPrimalClusterMeas_{\eta_L}^{\eta_R}\otimes \condDualClusterMeas_{\eta_L'}^{\eta_R'},
	\quad
	\OZDec^2_{\eta_L,\eta_R,\eta_L',\eta_R'} \equiv \OZDec_{\eta_L,\eta_R}\otimes \OZDec_{\eta_L',\eta_R'}^*.
\end{gather*}From properties of the total variation distance, see Lemma~\ref{lem:app:tot_var_dist_prop}, for any \(\eta_L,\eta_R,\eta_L',\eta_R'\) as in the statement of Lemma~\ref{lem:DoubleBridge:condClusters_to_condOZ},
\begin{equation}
	\label{eq:DoubleBridge:tot_var_dist_product_meas}
	\tvd\big(\condPrimalDualClusterMeas_{\eta_L,\eta_L'}^{\eta_R,\eta_R'}, \OZDec^2_{\eta_L,\eta_R,\eta_L',\eta_R'}\big)
	\leq
	e^{-cn}.
\end{equation}
\begin{lemma}
	\label{lem:DoubleBridge:LB_GCl_product_meas}
	Let \(0<J<U\) satisfy \(\sinh 2J=e^{-2U}\). There exist \(n_0\geq 1, c,c_0\in (0,+\infty)\) such that for any \(n\geq n_0\), \(m_n\geq c_0n\), any \(\eta_L,\eta_L'\in \SetRootMarkBackCont, \eta_R,\eta_R' \in \SetRootMarkForwCont\) satisfying
	\begin{equation*}
		\Phi_n\big(\tilde{\calK}_L = \eta_L,\, \tilde{\calK}_R = \eta_R,\, \tilde{\calK}_L^* = \eta_L',\, \tilde{\calK}_R^* = \eta_R' \bgiven (\calC,\calC')\in \goodCl_n \big)
		>0,
	\end{equation*}
	one has
	\begin{equation*}
		\condPrimalDualClusterMeas_{\eta_L,\eta_L'}^{\eta_R,\eta_R'}\big(\goodCl_n \big)
		\geq
		e^{-c\scale_n^{100}},
		\quad
		\OZDec^2_{\eta_L,\eta_R,\eta_L',\eta_R'}\big(\goodCl_n\big)
		\geq
		e^{-c\scale_n^{100}}.
	\end{equation*}
\end{lemma}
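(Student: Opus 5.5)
The plan is to prove the bound first for the factorized OZ measure $\OZDec^2_{\eta_L,\eta_R,\eta_L',\eta_R'}$, and then transfer it to $\condPrimalDualClusterMeas$. The transfer is immediate: by \eqref{eq:DoubleBridge:tot_var_dist_product_meas} the two product measures are $e^{-cn}$-close in total variation, and $e^{-cn}\ll e^{-c\scale_n^{100}}$ since $\scale_n^{100}=\ln^{200}(n)\cdots$. Strictly, $\scale_n^{100}\gg n$ would make this comparison vacuous, but here $\scale_n^{100}$ is polylogarithmic, so $e^{-cn}$ is negligible. The positivity hypothesis is used only to extract deterministic information on $(\eta_L,\eta_R,\eta_L',\eta_R')$. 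The displacements are at most $a_n+\scale_n$ horizontally and, by the cone constraint, of the same order vertically, so they are $\le n^{1/4}$ and Lemma~\ref{lem:DoubleBridge:condClusters_to_condOZ} applies. The endpoints $W_L,W_L'$ lie in $\slab_{a_n-n,a_n-n+\scale_n}$ and $W_R,W_R'$ in $\slab_{n-a_n-\scale_n,n-a_n}$. Finally, compatibility with a good pair forces $W_L$ to be above $W_L'$ by at least $\scale_n^2$, and likewise for $W_R,W_R'$. Indeed, the tips lie in $\calS_{\mathrm{in}}$, since $a_n=2\scale_n^{50}>\scale_n^{50}$, where the good-pair distance constraint holds.

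Under $\OZDec^2$ the two middle pieces are independent OZ chains, namely concatenations of diamonds with law $p$, conditioned to hit $W_R$ and $W_R'$ respectively, with the tips fixed. The construction is to force a deterministic ``tube'' configuration by finite energy at the ends, and use random walk estimates in the middle. Concretely, I would use the lower bound $p(\gamma')\ge e^{-c'|\gamma'|}$ from \eqref{eq:OZ:fin_ene_exp_dec}. Near each endpoint, over a horizontal stretch of length $\scale_n^{3}$ say, I make the primal chain go straight with step $(1,1)$ or $(1,0)$ diamonds. Each diamond is a single edge or a tiny confined graph with only its two cone-points. In this way the primal and dual chains separate vertically to distance about $\scale_n^{3}\ge \sqrt{\cdot}$ scale, and they stay inside $\R\times[-n,n]$ with many cone points. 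The cost is at most $e^{-C\scale_n^{3}}$, which is much larger than $e^{-c\scale_n^{100}}$. The normalization in \eqref{eq:DoubleBridge:def:OZDecCond} is a hitting probability of order $n^{-1/2}$ each (Lemma~\ref{lem:DoubleBridge:RW:single_OZ_asymp}), so it only helps, since dividing by it increases the probability.

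For the bulk, between the two forced stretches, I apply Lemma~\ref{lem:DoubleBridge:RW:LB_diam_avoid}, rescaled: the starting and ending points are separated by at least $K_0$, indeed by $\scale_n^2$, with vertical coordinates within $\sqrt{n}$ after recentering. It gives probability at least $c/n^2$ that both walks hit their targets while the diamond envelopes stay separated by $K+\rmd(k,\{x_1,y_1\})^{1/6}$. To obtain the separation $\scale_n^2$, I instead start the bulk walks at vertical gap $\scale_n^3$, so that the required separation is inherited. I would handle this by conditioning on the walk staying at distance $\ge\scale_n^2$, using Lemma~\ref{lem:DoubleBridge:sync_trans_kernels:ord_bridgeLB} with the shifted difference walk, which again gives a polynomial lower bound. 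The cone-point density condition (at least $\rho\scale_n$ cone-points in every window of length $\scale_n$) and confinement to $[-n,n]$ fail with probability $e^{-c\scale_n}$ times $n$ (union bound with exponential tails), which is negligible against the polynomial lower bounds $n^{-2}$, i.e.\ $e^{-O(\ln n)}$. Multiplying the pieces gives $\OZDec^2(\goodCl_n)\ge e^{-C\scale_n^3}n^{-C}\ge e^{-c\scale_n^{100}}$.

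The main obstacle I expect is the positivity-based reduction at the tips: the fixed pieces $\eta_L,\eta_L'$ may have tips only barely $\scale_n^2$ apart and may be geometrically awkward. It must then be checked that, in a good pair, the added diamonds immediately after the tips cannot intersect $\eta_L$ or $\eta_L'$ themselves. By cone confinement this holds since new pieces lie in $W_L+\fcone$ and $W_L'+\fcone$ to the right. So the remaining issue is only to keep the two forward cones' contents apart initially, which is done by the forced diverging straight segments.
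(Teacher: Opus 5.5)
Your overall architecture is right: prove the bound for $\OZDec^2$, transfer it by \eqref{eq:DoubleBridge:tot_var_dist_product_meas}, and combine finite energy with a polynomial random-walk lower bound in the bulk. But the central step, that under $\OZDec^2$ ``the two middle pieces are independent OZ chains conditioned to hit $W_R$ and $W_R'$, with the tips fixed'', is not justified, and in this framework it is false. The event $\bar\gamma\sim\eta_L$ in \eqref{eq:DoubleBridge:def:OZDecCond} only says that $\displace(\eta_L)$ is a cone-point of $\bar\gamma$ and that $\bar\gamma\cap(\displace(\eta_L)+\bcone)=\eta_L$. It does not make $\displace(\eta_L)$ a concatenation point of the chain. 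Theorem~\ref{thm:OZ_atrc_inf_vol} does not forbid $p$ from charging diamonds with internal cone-points, so the diamond containing the tip may stick out to its right. This is exactly what $k_-$, $\xi_{\eta_L}$ and the $\Forget$ events are for. If the tip were always a renewal point, $\xi_{\eta_L}$ would equal $\eta_L$ and Lemma~\ref{lem:DoubleBridge:condOZ_to_ForgetCondOZ} would hold with probability one.

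Consequently the normalizer is not ``(weight of $\eta_L$)(weight of $\eta_R$)$\times$(hitting probability $\asymp n^{-1/2}$)'', and the boundary weights do not cancel between numerator and denominator. Under the conditional law, what immediately follows $W_L$ is the uncontrolled remainder of a straddling diamond. So ``append straight diamonds right after the tip'' does not give a lower bound. A warning sign is that your final bound $e^{-C\scale_n^3}n^{-C}$ beats the stated $e^{-c\scale_n^{100}}$. The exponent $100$ is precisely the price of the boundary pieces, which you never pay. Indeed, $\eta_L$ lives in $\{x_1\ge 0\}\cap(\displace(\eta_L)+\bcone)$ with $\|\displace(\eta_L)\|_\infty=O(\scale_n^{50})$, so it has $O(\scale_n^{100})$ edges, and likewise for $\eta_R,\eta_L',\eta_R'$.

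The fix is the paper's route: do not condition. Write $\OZDec^2(\goodCl_n)$ as a ratio of unnormalized $\OZDec^{\otimes 2}$-masses and bound each denominator crudely by $\rmC$, the total mass of chains with a prescribed displacement. In the numerator, restrict to chains in which $\eta_L$ (resp.\ $\eta_R,\eta_L',\eta_R'$) is reproduced exactly as a $p_L$-piece followed by irreducible $p$-diamonds, cut at its cone-points. By \eqref{eq:OZ:fin_ene_exp_dec} and its analogues for $p_L,p_R$, this costs $e^{-c(|\eta_L|+|\eta_R|+|\eta_L'|+|\eta_R'|)}\ge e^{-c\scale_n^{100}}$. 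The tips are then concatenation points, so $\bar\gamma\sim\eta_L$ holds automatically, and the remaining pieces are genuinely fresh, unconditioned $\OZwalk$ increments.

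Your tube-plus-bulk argument can then be run on these fresh increments: forced diverging segments to reach a gap of order $\scale_n^3$, then a shifted-barrier bridge estimate. This is actually more careful than the paper's sketch about obtaining separation $\scale_n^2$ rather than $\scale_n$ near the tips. Two minor corrections:
\begin{enumerate}
\item There is no lattice diamond of displacement $(1,1)$, since any lattice path from $0$ to $(1,1)$ leaves $\diam(0,(1,1))$. Use instead, for example, the irreducible diamonds of displacement $(2,\pm 1)$.
\item Lemma~\ref{lem:DoubleBridge:RW:LB_diam_avoid} requires $|x_1-x_1'|\le 10$. Since $W_L$ and $W_L'$ may differ horizontally by up to $\scale_n$, the tube lengths must be chosen to synchronize the starting abscissae.
\end{enumerate}
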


\begin{proof}
	Note that by~\eqref{eq:DoubleBridge:tot_var_dist_product_meas}, it is sufficient to prove any of the two bounds. We prove the second. Let
	\begin{equation*}
		v = v_R-v_L,\quad v' = v_R'-v_L',
		\quad
		\tilde{v} = v_R-v_L -\displace(\eta_L)-\displace(\eta_R),\quad \tilde{v}' = v_R'-v_L'-\displace(\eta_L')-\displace(\eta_R').
	\end{equation*}First, by definition, for any event \(B\),
	\begin{multline*}
		\OZDec_{\eta_L,\eta_R,\eta_L',\eta_R'}^2\big(B\big)
		=
		\\
		\frac{\OZDec^{\otimes 2}\big(B,\, \{\displace(\bar{\gamma}) = v,\, \bar{\gamma}\sim \eta_L,\, \bar{\gamma}\sim \eta_R\}\times \{\displace(\bar{\gamma}) = v',\, \bar{\gamma}\sim^* \eta_L',\, \bar{\gamma}\sim^* \eta_R'\}\big)}{\OZDec\big(\displace(\bar{\gamma}) = v,\, \bar{\gamma}\sim \eta_L,\, \bar{\gamma}\sim \eta_R \big)\OZDec\big(\displace(\bar{\gamma}) = v',\, \bar{\gamma}\sim^* \eta_L',\, \bar{\gamma}\sim^* \eta_R' \big)}
	\end{multline*}where it is good to remember that \(\OZDec\) is a measure but not a probability measure. Then,
	\begin{equation*}
		\OZDec\big(\displace(\bar{\gamma}) = v,\, \bar{\gamma}\sim \eta_L,\, \bar{\gamma}\sim \eta_R \big)
		\leq
		\rmC,
	\end{equation*}with \(\rmC\) the constant given by Theorem~\ref{thm:OZ_atrc_inf_vol}, and similarly for \(\OZDec\big(\displace(\bar{\gamma}) = v',\, \bar{\gamma}\sim^* \eta_L',\, \bar{\gamma}\sim^* \eta_R' \big)\).
	Now, by the finite energy property of \(p_L,p_R,p\), see Theorem~\ref{thm:OZ_atrc_inf_vol}, and the constraint on \(\eta_L,\eta_R,\eta_L',\eta_R'\),
	\begin{align*}
		&\frac{1}{\rmC^2}\OZDec^{\otimes 2}\big(\goodCl_n,\, \{\displace(\bar{\gamma}) = v,\, \bar{\gamma}\sim \eta_L,\, \bar{\gamma}\sim \eta_R\}\times \{\displace(\bar{\gamma}) = v',\, \bar{\gamma}\sim^* \eta_L',\, \bar{\gamma}\sim^* \eta_R'\}\big)
		\\
		&\geq
		e^{-c(|\eta_L|+ |\eta_L'| + |\eta_R| + |\eta_R'|)}
		\\
		&\quad \cdot \OZwalk^{\otimes 2}\big(T_{\tilde{v}}<\infty,\, T_{\tilde{v}'}'<\infty,\, \rmd(\DiaEnv(v_L+\displace(\eta_L)+S),\DiaEnv(v_L'+\displace(\eta_L')+S')) \geq \scale_n\big)
		\\
		&\geq
		e^{-c\scale_n^{100}}\OZwalk^{\otimes 2}\big(\dots\big),
	\end{align*}for some \(c>0\), where \(S,T\) refer to the first marginal of the measure \(\OZwalk^{\otimes 2}\), \(S',T'\) refer to the second, and we used that \(\norm{\displace(\eta_L)}_{\infty},\norm{\displace(\eta_R)}_{\infty},\norm{\displace(\eta_L')}_{\infty},\norm{\displace(\eta_R')}_{\infty}\leq 2\scale_n^{50}\) by assumption. Now, proceeding as in the proof of Lemma~\ref{lem:DoubleBridge:RW:UB_avoid_but_close}, one gets
	\begin{equation*}
		\OZwalk^{\otimes 2}\big(T_{\tilde{v}}<\infty,\, T_{\tilde{v}'}'<\infty,\, \rmd(\DiaEnv(v_L+\displace(\eta_L)+S),\DiaEnv(v_L'+\displace(\eta_L')+S')) \geq \scale_n\big)
		\geq
		\frac{c}{n^2},
	\end{equation*}for some \(c>0\) (note that we need a way worse bound than this). This concludes the proof.
\end{proof}
From~\eqref{eq:DoubleBridge:tot_var_dist_product_meas}, Lemma~\ref{lem:DoubleBridge:LB_GCl_product_meas}, and properties of the total variation distance, see again Lemma~\ref{lem:app:tot_var_dist_prop}, for any \(\eta_L,\eta_R,\eta_L',\eta_R'\) as in the statement of Lemma~\ref{lem:DoubleBridge:LB_GCl_product_meas},
\begin{equation}
	\label{eq:DoubleBridge:tot_var_dist_product_meas_cond_on_good}
	\tvd\big(\condPrimalDualClusterMeas_{\eta_L,\eta_L'}^{\eta_R,\eta_R'}(\cdot \given \goodCl_n), \OZDec^2_{\eta_L,\eta_R,\eta_L',\eta_R'}(\cdot \given \goodCl_n)\big)
	\leq
	e^{-cn}.
\end{equation}

Now, for \(\eta_L\in \SetRootMarkBackCont,\eta_R\in \SetRootMarkForwCont\), and \(\gamma_0^{M+1}\sim \OZDec_{\eta_L,\eta_R}\), define the random variables
\begin{gather*}
	\xi_{\eta_L} = \begin{cases}
		\dagger & \text{ if } v_L+\eta_L\not\subset \bar{\gamma}_0^{M+1},
		\\
		-v_L+ \bar{\gamma}_0^{k_-} & \text{ else},
	\end{cases}
	\quad
	\xi_{\eta_R} = \begin{cases}
		\dagger & \text{ if } v_R-\displace(\eta_R) +\eta_R \not\subset \bar{\gamma}_0^{M+1},
		\\
		\bar{\gamma}_{k_+}^{M+1}  & \text{ else},
	\end{cases}
\end{gather*}where \(\bar{\gamma}_i^j = \gamma_i\concatenate \dots \concatenate \gamma_j\), and
\begin{equation*}
	k_- = \min\{k\geq 0:\ v_L+\eta_L\subset \bar{\gamma}_0^k \},
	\quad
	k_+ = \max\{k\geq 0:\ v_R-\displace(\eta_R) +\eta_R \subset \bar{\gamma}_k^{M+1} \}.
\end{equation*}Also introduce the events
\begin{multline*}
	\Forget_{\eta_L,\eta_R}=
	\big\{\rmT_{\eta_L}\neq \dagger,\, \norm{\displace(\gamma_0\concatenate\dots\concatenate \gamma_{\rmT_{\eta_L}})}_{\infty} \leq \scale_n^{110} \big\}
	\\
	\cap \big\{\rmT_{\eta_R}\neq \dagger,\, \norm{\displace(\gamma_{\rmT_{\eta_R}}\concatenate\dots\concatenate \gamma_{M+1})}_{\infty} \leq \scale_n^{110}\big\},
\end{multline*}and define similarly \(\xi_{\eta_L}^*,\xi_{\eta_R}^*\), and \(\Forget_{\eta_L,\eta_R}^*\) using \(\gamma_0^{M+1}\sim \OZDec_{\eta_L,\eta_R}^*\) instead. Note that \(110\) is as arbitrary as our choice of \(50\) in the previous powers of \(\scale_n\): it just needs to be larger than \(2\cdot 50\).

Then, the next Lemma is a particular case of~\cite[Lemma 8.10]{DobGlaOtt25}.
The number \(54\) is again arbitrary and needs to be larger than \(50\) and less than \(110/2\).
\begin{lemma}
	\label{lem:DoubleBridge:condOZ_to_ForgetCondOZ}
	Let \(0<J<U\) satisfy \(\sinh 2J=e^{-2U}\). There are \(n_0\geq 1, c,c_0\in (0,+\infty)\) such that for any \(n\geq n_0\), \(m_n\geq c_0n\), any \(\eta_L\in \SetRootMarkBackCont, \eta_R \in \SetRootMarkForwCont\) satisfying \(\norm{\displace(\eta_L)}_{\infty},\norm{\displace(\eta_R)}_{\infty} \leq \scale_n^{54}\),
	one has
	\begin{equation*}
		\OZDec_{\eta_L,\eta_R}(\Forget_{\eta_L,\eta_R}) \geq 1-e^{-c\scale_n^{108}},
		\quad
		\OZDec_{\eta_L,\eta_R}^*(\Forget_{\eta_L,\eta_R}^*) \geq 1-e^{-c\scale_n^{108}}.
	\end{equation*}
\end{lemma}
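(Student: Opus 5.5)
The plan is to show that under $\OZDec_{\eta_L,\eta_R}$ the chain $\gamma_0^{M+1}$ contains the prescribed ends $v_L+\eta_L$ and $v_R-\displace(\eta_R)+\eta_R$ as initial and final pieces of displacement at most $\scale_n^{110}$, except with probability $e^{-c\scale_n^{108}}$. I treat the left end only; the right end follows by reflection (time-reversal of the chain), and the dual statement is identical with $\sim^*$.

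\textbf{Step 1: reduce to a renewal event for an unconditioned chain.} Write $\OZDec_{\eta_L,\eta_R}(\Forget^c)$ as a ratio. The denominator is the $\OZDec$-mass of $\{\displace(\bar\gamma)=v_R-v_L,\ \bar\gamma\sim\eta_L,\ \bar\gamma\sim\eta_R\}$. By finite energy of $p_L,p,p_R$ (Theorem~\ref{thm:OZ_atrc_inf_vol}), this mass is at least $e^{-C\scale_n^{54}}\cdot n^{-1/2}$. Indeed, one can realise $\eta_L$ as a concatenation of a bounded number of prescribed diamond pieces up to cost $e^{-c'|\eta_L|}$ with $|\eta_L|\le C\scale_n^{108}$. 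This is too costly for Step 2, so instead I keep the constraint $\bar\gamma\sim\eta_L$ in both numerator and denominator and only use the renewal structure.

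The observation is the following. Whenever $\bar\gamma\sim\eta_L$, the point $v_L+\displace(\eta_L)$ is a cone-point of $v_L+\bar\gamma$. The definition $k_-=\min\{k:\ v_L+\eta_L\subset\bar\gamma_0^k\}$ then selects the first renewal index $k$ at which the partial concatenation already contains $\eta_L$. Therefore $\Forget^c$ on the left side means that the smallest $k$ with $\displace(\bar\gamma_0^k)\cdot\rme_1\ge\displace(\eta_L)\cdot\rme_1$ has $\|\displace(\bar\gamma_0^k)\|_\infty>\scale_n^{110}$.

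\textbf{Step 2: estimate by overshoot.} Decompose at the first renewal after abscissa $\displace(\eta_L)\cdot\rme_1\le\scale_n^{54}$. By the product structure \eqref{eq:def:OZDec}, the numerator factorises as follows:
\begin{itemize}
\item a sum over the prefix up to the renewal index straddling the abscissa;
\item the (unconstrained except for the endpoint) remainder, which is bounded using Lemma~\ref{lem:DoubleBridge:RW:single_OZ_asymp}.
\end{itemize}
For $\|\displace(\bar\gamma_0^{k_-})\|_\infty>\scale_n^{110}$ we need one of two things. Either a single step $\gamma_i$, $i\le k_-$, has $\|\displace(\gamma_i)\|_\infty\ge\scale_n^{110}/2$, which by the exponential tail \eqref{eq:OZ:fin_ene_exp_dec} and a union bound over $\le\scale_n^{54}$ steps costs $e^{-c\scale_n^{110}}$. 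Or, since $X_{i,1}\ge1$, at most $\scale_n^{54}$ steps have total displacement $>\scale_n^{110}/2$, which has the same cost. Also $\gamma_0$ has exponential tails under $p_L$.

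Comparing with the same decomposition for the denominator, the remainder factors coincide up to the endpoint shift. By Lemma~\ref{lem:DoubleBridge:RW:single_OZ_asymp} and Lemma~\ref{lem:DoubleBridge:sync_trans_kernels:free_walk}, shifting the starting point of a length-$\asymp n$ walk by at most $\scale_n^{110}\ll\sqrt n$ changes hitting probabilities by at most a polynomial factor $n^{C}$. Hence the ratio is at most $n^{C}e^{C\scale_n^{54}}e^{-c\scale_n^{110}}\le e^{-c\scale_n^{108}}$.

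\textbf{Main obstacle.} The delicate point is making the prefix/remainder factorisation compatible with the constraint $\bar\gamma\sim\eta_L$, which is not a local event on a single piece. The key fact here is that it is determined by $\bar\gamma_0^{k_-}$ together with the cone condition at $\displace(\eta_L)$. Beyond that point the condition only says that the remainder stays in $\displace(\eta_L)+\fcone$, and this is automatic for concatenated diamond-confined pieces. If this factorisation is awkward, I would instead invoke the abstract estimate underlying \cite[Lemma 8.10]{DobGlaOtt25}. That estimate is exactly this renewal/overshoot bound, and here it only needs the input $\|\displace(\eta_{L,R})\|\le\scale_n^{54}$, with $2\cdot54<110$ leaving room for the losses.
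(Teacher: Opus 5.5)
The structure of your argument is right: locate the piece straddling the abscissa of $\displace(\eta_L)$, observe that $\bar\gamma\sim\eta_L$ is determined by the prefix up to $k_-$, and kill a huge straddling piece by exponential tails plus a union bound over at most $\scale_n^{54}+2$ indices. Your fallback, importing \cite[Lemma 8.10]{DobGlaOtt25}, is in fact the paper's entire proof. The self-contained argument, however, breaks at the denominator.

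\textbf{The gap.} Since $\bar\gamma\sim\eta_L$ forces $\bar\gamma\cap(\displace(\eta_L)+\bcone)=\eta_L$ \emph{exactly}, every admissible chain must produce the specific graph $\eta_L$. Finite energy only bounds this cost below by $e^{-c'|\eta_L|}$, and it can genuinely be that small. Moreover $|\eta_L|$ can be of order $\scale_n^{108}$ (the area of a triangle of side $\scale_n^{54}$), not $\scale_n^{54}$. So the factor $e^{C\scale_n^{54}}$ in your final ratio has no justification.

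Keeping the constraint in both numerator and denominator does not make this cost cancel. The two differ exactly in how the last portion of $\eta_L$ is produced: inside one huge straddling piece in the numerator, versus by pieces ending near $\displace(\eta_L)$ in the denominator. You therefore still need an absolute lower bound on the weight of the short realisations. Your ``shift by at most $\scale_n^{110}$'' comparison also points the wrong way: in the numerator the prefix has displacement $>\scale_n^{110}$, possibly of order $n$.

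\textbf{The repair} is the bound you discarded. A cost of $e^{-C\scale_n^{108}}$ is \emph{not} too much, because the numerator gains $e^{-c\scale_n^{110}}$ and $\scale_n^{110}/\scale_n^{108}=\scale_n^2\to\infty$. This is precisely the reason for the condition $2\cdot 54<110$ that you quote at the end.

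For the numerator: on $\{\bar\gamma\sim\eta_L\}\cap\Forget_{\eta_L,\eta_R}^c$, some piece $\gamma_j$ with $j\le\scale_n^{54}+1$ has $\norm{\displace(\gamma_j)}_\infty\ge\scale_n^{110}-\scale_n^{54}$. Drop all other constraints and use that the directed partial sums hit a given site at most once. The total $\OZDec$-weight is then at most $C\scale_n^{54}e^{-c\scale_n^{110}}$, and no shift comparison is needed.

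For the denominator: restrict to chains that begin with the cone-point decomposition of $\eta_L$ and end with that of $\eta_R$, with an arbitrary middle bridge. By the finite energy in Theorem~\ref{thm:OZ_atrc_inf_vol}, the two ends cost at least $e^{-C(|\eta_L|+|\eta_R|)}$. By Lemma~\ref{lem:DoubleBridge:RW:single_OZ_asymp}, the middle bridge has weight at least $c n^{-1/2}$.

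The ratio is then at most $\sqrt n\,\scale_n^{54}e^{-c\scale_n^{110}+C\scale_n^{108}}\le e^{-c\scale_n^{108}}$. One caveat: the bound $|\eta_L|\le C\scale_n^{108}$ uses that $\eta_L\subset\{x_1\ge 0\}$, which holds for $\tilde{\calK}_L$. The displacement hypothesis alone does not control $|\eta_L|$, since a backward-confined graph may extend arbitrarily far to the left.
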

From this Lemma and Lemma~\ref{lem:DoubleBridge:LB_GCl_product_meas}, we get that for any \(\eta_L,\eta_R,\eta_L',\eta_R'\) as in the statement of Lemma~\ref{lem:DoubleBridge:LB_GCl_product_meas},
\begin{equation}
	\label{eq:DoubleBridge:LB_Forget}
	\OZDec^2_{\eta_L,\eta_R,\eta_L',\eta_R'}(\Forget_{\eta_L,\eta_R} \times \Forget_{\eta_L,\eta_R}^* \given \goodCl_n)
	\geq
	1 - 2\frac{e^{-c\scale_n^{108}}}{e^{-c'\scale_n^{100}}}
	\geq
	1- e^{-c\scale_n^{108}},
\end{equation}where we used a union bound.

We can now conclude the proof of Theorem~\ref{thm:coupling_with_avoiding_bridges}.
\begin{proof}[Proof of Theorem~\ref{thm:coupling_with_avoiding_bridges}]
	Introduce the boundary conditions measures on \((\SetRootMarkBackCont\times \SetRootMarkForwCont)^2\) with density \(r_1,r_2\) given by
	\begin{equation*}
		r_1(\eta_L,\eta_R,\eta_L',\eta_R')
		=
		\Phi_n\big(\tilde{\calK}_L=\eta_L,\,\tilde{\calK}_L^*=\eta_L',\, \tilde{\calK}_R=\eta_R,\, \tilde{\calK}_R^*=\eta_R' \bgiven (\calC,\calC')\in \goodCl_n\big),
	\end{equation*}and
	\begin{multline*}
		r_2(\zeta_L,\zeta_R,\zeta_L',\zeta_R')
		=
		\sum_{\eta_L,\eta_R,\eta_L',\eta_R'} r_1(\eta_L,\eta_R,\eta_L',\eta_R')
		\\
		\cdot
		\OZDec_{\eta_L,\eta_R,\eta_L',\eta_R'}^2 \big( \{\xi_L = \zeta_L,\, \xi_R=\zeta_R\}\times \{\xi_L^* = \zeta_L',\, \xi_R^*=\zeta_R'\} \bgiven 	\Forget_{\eta_L,\eta_R,\eta_L',\eta_R'}^2 \cap \goodCl_n \big),
	\end{multline*}with \(\Forget_{\eta_L,\eta_R,\eta_L',\eta_R'}^2 \equiv \Forget_{\eta_L,\eta_R}\times 	\Forget_{\eta_L,\eta_R}^*\). By definition, \(r_1\) is supported on graphs with sup-norm displacement at most \(2\scale_n^{50}\) and which satisfy \((v_L+\displace(\eta_L) - v_L' - \displace(\eta_L'))\cdot \rme_2 \geq \scale_n\) and \((v_R-\displace(\eta_R) - v_R' - \displace(\eta_R'))\cdot \rme_2 \geq \scale_n\), and \(r_2\) on graphs with sup-norm displacement at most \(\scale_n^{110}\) with the same constraint on the second coordinate of their displacement.
	
	First, by Lemma~\ref{lem:DoubleBridge:good_cluster_have_proba_one}, \(A\mapsto \Phi_n\big((\calC,\calC')\in A \bgiven \connection,\connection'\big)\) is at total variation distance at most \(\frac{1}{\scale_n^{11}}\) from \(A\mapsto \Phi_n\big((\calC,\calC')\in A \bgiven \goodCl_n\big)\). Then, by Lemma~\ref{lem:DoubleBridge:density_swap}, this last measure is at total variation distance at most \(e^{-c\scale_n}\) from
	\begin{equation}
		A\mapsto \sum_{\eta_L,\eta_L'\in \SetRootMarkBackCont} \sum_{\eta_R,\eta_R'\in \SetRootMarkForwCont}  r_1(\eta_L,\eta_R,\eta_L',\eta_R') \condPrimalDualClusterMeas_{\eta_L,\eta_L'}^{\eta_R,\eta_R'}(A \given \goodCl_n),
	\end{equation}which, by~\eqref{eq:DoubleBridge:tot_var_dist_product_meas_cond_on_good}, is at total variation distance at most \(e^{-cn}\) from
	\begin{equation}
		A\mapsto \sum_{\eta_L,\eta_L'\in \SetRootMarkBackCont} \sum_{\eta_R,\eta_R'\in \SetRootMarkForwCont}  r_1(\eta_L,\eta_R,\eta_L',\eta_R') \OZDec^2_{\eta_L,\eta_R,\eta_L',\eta_R'}(A \given \goodCl_n).
	\end{equation}By~\eqref{eq:DoubleBridge:LB_Forget}, this measure is in turn at total variation distance at most \(e^{-c\scale_n^{108}}\) from
	\begin{equation*}
		A\mapsto \sum_{\eta_L,\eta_L'\in \SetRootMarkBackCont} \sum_{\eta_R,\eta_R'\in \SetRootMarkForwCont}  r_1(\eta_L,\eta_R,\eta_L',\eta_R') \OZDec^2_{\eta_L,\eta_R,\eta_L',\eta_R'}\big(A \given \goodCl_n\cap \Forget_{\eta_L,\eta_R,\eta_L',\eta_R'}^2\big).
	\end{equation*}But, partitioning with respect to the realization of \(\xi_L,\xi_R,\xi_L^*,\xi_R^*\), this last measure is equal to the probability measure 
	\begin{align*}
		&A\mapsto \sum_{\eta_L,\eta_L'\in \SetRootMarkBackCont} \sum_{\eta_R,\eta_R'\in \SetRootMarkForwCont}  r_2(\eta_L,\eta_R,\eta_L',\eta_R') P\big( (v_L+\eta_L\concatenate \bar{\gamma}_1^{M}\concatenate \eta_R, v_L'+\eta_{L}'\concatenate \bar{\zeta}_{1}^{M'}\concatenate \eta_{R}') \in A
		\\
		&\phantom{A\mapsto \sum_{\eta_L,\eta_L'\in \SetRootMarkBackCont} \sum_{\eta_R,\eta_R'\in \SetRootMarkForwCont}  r_2(\eta_L,\eta_R,\eta_L',\eta_R') P\big(}  \bgiven \rmd(v_L +\displace(\eta_L)+ \bar{\gamma}_1^{M}, v_L'+\displace(\eta_L') + \bar{\zeta}_{1}^{M'}) \big)
	\end{align*}where \(\gamma_1^M\sim \OZ_{v_R-v_L-\displace(\eta_L)-\displace(\eta_R)}\) and \(\zeta_1^{M'}\sim \OZ_{v_R'-v_L'-\displace(\eta_L')-\displace(\eta_R')}\) are independent, and we denoted
	\begin{equation*}
		\bar{\gamma}_1^M = \gamma_1\concatenate \dots \concatenate \gamma_M,
		\quad
		\bar{\zeta}_1^{M'} = \zeta_1\concatenate \dots \concatenate \zeta_{M'}.
	\end{equation*}
	This is the form of the measure claimed in Theorem~\ref{thm:coupling_with_avoiding_bridges}.
\end{proof}

\appendix

\section{Local Limit Theorem and consequences}

\begin{theorem}
	\label{thm:LLT_in_Zd}
	Let \(d\geq 1\). Let \(X_1,X_2,\dots\) be an independent family of identically distributed random variables taking values in \(\Z^d\). Define \(S_n = \sum_{k=1}^n X_k\). Assume that there are \(\mu\in \R^d\), \(\delta>0\) such that
	\begin{equation*}
		E(X_1) = \mu,
		\quad
		\Sigma>0,
		\quad
		\sup_{t\in \R^d:\norm{t}\leq \delta}E(e^{t\cdot X_1}) <\infty,
		\quad
		\big|E(e^{\rmi \theta \cdot X_1})\big|<1, \ \forall\, \theta\in [-\pi,\pi]^d\setminus \{0\},
	\end{equation*}where \(\Sigma_{ij} = \mathrm{Cov}(X_{1,i},X_{1,j})\), and \(X_{k,i} = X_k \cdot \rme_i\). Let \(\delta_n\) be a positive decreasing sequence with \(\delta_n \to 0\). Then, for any \(n\geq 1\), \(x\in \Z^d\) with \(\norm{x-\mu n}_{\infty} \leq \delta_n n^{2/3}\),
	\begin{equation*}
		P(S_n = x) = \frac{1}{\sqrt{(2\pi n)^d \det(\Sigma)} }e^{-\frac{1}{2n}(x-\mu n)\cdot \Sigma^{-1}(x-\mu n)}(1+o_n(1)),
	\end{equation*}where the \(o_n(1)\) is uniform over \(x\) as above.
\end{theorem}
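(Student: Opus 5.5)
The statement is the classical local limit theorem in the moderate-deviation window $\norm{x-\mu n}_\infty\le \delta_n n^{2/3}$. I will prove it by Fourier inversion combined with a saddle-point (exponential tilting) correction.

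\textbf{Inversion formula.} Since $X_1$ is $\Z^d$-valued, we have
\[
P(S_n=x)=(2\pi)^{-d}\int_{[-\pi,\pi]^d}\varphi(\theta)^n e^{-\rmi\theta\cdot x}\,d\theta,
\qquad \varphi(\theta)=E(e^{\rmi\theta\cdot X_1}).
\]
Because $y:=x-\mu n$ can be as large as $n^{2/3}\gg\sqrt n$, expanding $\varphi$ around $0$ alone would miss cubic corrections. Note that the target Gaussian factor $e^{-y\cdot\Sigma^{-1}y/2n}$ is then itself exponentially small, of order up to $e^{-c\delta_n^2 n^{1/3}}$, so the error must be relative.

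\textbf{Exponential tilting.} Use the finite exponential moments near $0$. Let $\Lambda(t)=\ln E(e^{t\cdot X_1})$ for $\norm t\le\delta$, and pick $t=t_{x,n}$ solving $\nabla\Lambda(t)=x/n$. This is possible by the inverse function theorem, since $\nabla^2\Lambda(0)=\Sigma>0$ and $x/n-\mu=O(\delta_n n^{-1/3})$ is small. The solution satisfies $t=\Sigma^{-1}y/n+O(\norm{y}^2/n^2)$.

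Under the tilted law $P_t$ (density $e^{t\cdot X-\Lambda(t)}$), we have
\[
P(S_n=x)=e^{-n(t\cdot x/n-\Lambda(t))}\,P_t(S_n=x),
\]
and $x$ is exactly the $P_t$-mean of $S_n$.

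\textbf{The exponent.} Let $I(x/n)=\sup_t\{t\cdot x/n-\Lambda(t)\}$ be the Legendre transform. Taylor expansion gives
\[
nI(x/n)=\frac{1}{2n}\,y\cdot\Sigma^{-1}y+O\!\left(\frac{\norm y^3}{n^2}\right).
\]
The cubic term is $O(\delta_n^3)=o(1)$, and this is precisely where the $n^{2/3}$ window is needed.

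\textbf{Centered local limit theorem under $P_t$.} It remains to show $P_t(S_n=x)=(2\pi n)^{-d/2}\det(\Sigma_t)^{-1/2}(1+o(1))$, uniformly over small $t$, where $\Sigma_t=\nabla^2\Lambda(t)=\Sigma+O(\norm t)$; hence $\det\Sigma_t\to\det\Sigma$.

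I will split the inversion integral under $P_t$ into three regions.
\begin{enumerate}
\item For $\norm\theta\le n^{-1/2}\log n$, expand $\ln\varphi_t(\theta)=\rmi\theta\cdot x/n-\tfrac12\theta\cdot\Sigma_t\theta+O(\norm\theta^3)$, with the $O$ uniform in $t$ by the moment bounds. This produces the Gaussian integral up to a factor $1+o(1)$.
\item For $n^{-1/2}\log n\le\norm\theta\le\eta$, use $|\varphi_t(\theta)|\le e^{-c\norm\theta^2}$, which makes this region negligible.
\item For $\norm\theta\ge\eta$, use $\sup_{\norm\theta\ge\eta,\ \norm t\le\delta'}|\varphi_t(\theta)|<1$. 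This holds by aperiodicity at $t=0$, continuity of $(t,\theta)\mapsto\varphi_t(\theta)$ and compactness, so the region contributes an exponentially small amount.
\end{enumerate}

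\textbf{Main obstacle.} The delicate step is uniformity in $t$: both the cubic remainder and the aperiodicity bound must hold uniformly over $\norm t\le\delta'$. This is handled by continuity in $t$ and compactness, as above.

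The $o_n(1)$ depends on the sequence $\delta_n$ only through $\delta_n^3$, $\norm t\lesssim \delta_n n^{-1/3}$, and $1/\sqrt n$. Combining the exponent and the centered estimate gives the claim.
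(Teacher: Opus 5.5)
Your proof is correct. The paper gives no proof of this statement. It quotes it in the appendix as the classical lattice local limit theorem in the moderate-deviation window, of Richter--Petrov type, and uses it only as a black box in the proof of Theorem~\ref{thm:OZ_asymp_from_LLT}. So there is no route in the paper to compare yours with.

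Exponential tilting to the saddle point, followed by a three-region Fourier inversion for the centered walk, is the standard argument. You also identify the right reason tilting is needed:
\begin{itemize}
\item Expanding $\varphi$ at $0$ only gives an additive error $o(n^{-d/2})$. That is useless when the Gaussian factor is as small as $e^{-c\delta_n^2 n^{1/3}}$.
\item After tilting, the error becomes multiplicative. The only price is the cubic term $O(\|y\|^3/n^2)=O(\delta_n^3)$ in $nI(x/n)$, and this is exactly what confines the window to $o(n^{2/3})$.
\end{itemize}

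Make three small points explicit when you write it up:
\begin{itemize}
\item $\ln\varphi_t$ is well defined on $\|\theta\|\le\eta$, uniformly in $t$, because $|\varphi_t(\theta)-1|\le\|\theta\|\,E_t\|X_1\|$.
\item The bound $|\varphi_t(\theta)|\le e^{-c\|\theta\|^2}$ in your second region follows by taking real parts in the same uniform cubic expansion.
\item In the third region, $\delta'$ is chosen after $\eta$. This is harmless, since $\|t_{x,n}\|\lesssim\delta_n n^{-1/3}\to0$.
\end{itemize}
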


The next Theorem is a particular case of the computation of~\cite[section 8]{AouOttVel24}.
\begin{theorem}
	\label{thm:OZ_asymp_from_LLT}
	Let \(d\geq 1\). Let \((T_1,X_1),(T_2,X_2),\dots\) be an independent family of identically distributed random variables taking values in \(\{1,2,\dots\}\times \Z^d\). Define \(\calT_n = \sum_{k=1}^n T_i\), \(S_n = \sum_{k=1}^n X_k\). Assume \(P(T_1=1)>0\), that \(E(X_1  \given T_1 = t) = 0\) for all \(t\) in the support of \(T_1\), and that there are \(\mu\geq 1\), \(\delta>0\) such that for any \(\lambda\in \R^{d+1}:\norm{\lambda}\leq \delta\)
	\begin{equation*}
		E(T_1) = \mu,
		\quad
		\Sigma>0,
		\quad
		E(e^{\lambda\cdot (T_1,X_1)}) <\infty,
		\quad
		\big|E(e^{\rmi \theta \cdot X_1})\big|<1, \ \forall\, \theta\in [-\pi,\pi]^d\setminus \{0\},
	\end{equation*}where \(\Sigma_{ij} = \mathrm{Cov}(X_{1,i},X_{1,j})\), and \(X_{k,i} = X_k \cdot \rme_i\). Let \(\alpha\in [\frac{1}{2},\frac{2}{3})\) and \(K>0\). Then, for \(n\geq 1\), \(x\in \Z^d\) with \(\norm{x}_{\infty} \leq K n^{\alpha}\),
	\begin{equation*}
		P\big(\exists k\geq 1:\ T_k = n,\, S_k = x\big) = \frac{1}{\mu\sqrt{(2\pi n)^d \det(\Sigma)} }e^{-\frac{1}{2n}x\cdot \Sigma^{-1}x}(1+o_n(1)),
	\end{equation*}where the \(o_n(1)\) is uniform over \(x\) as above.
\end{theorem}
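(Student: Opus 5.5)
The plan is to write the hitting probability as a sum over the number of steps,
\[
P\big(\exists k\geq 1:\ \calT_k=n,\ S_k=x\big)=\sum_{k\geq 1}P\big(\calT_k=n,\ S_k=x\big),
\]
which is exact because \(T_i\geq 1\) makes \(k\mapsto\calT_k\) strictly increasing. Each summand is then estimated with the \((d+1)\)-dimensional local limit theorem, Theorem~\ref{thm:LLT_in_Zd}, applied to the i.i.d. vectors \((T_i,X_i)\in\Z^{1+d}\).

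\textbf{Setting up the local limit theorem.} The vectors \((T_i,X_i)\) have mean \((\mu,0)\), since \(E(X_1\given T_1=t)=0\) for every \(t\). The same hypothesis gives \(\mathrm{Cov}(T_1,X_{1,i})=0\), so the covariance matrix is block diagonal, \(\mathrm{diag}(\sigma_T^2,\Sigma)\). Exponential moments hold by assumption.

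\begin{itemize}
\item If \(\sigma_T^2=0\), then \(T\equiv 1\) because \(P(T_1=1)>0\). Only \(k=n\) contributes, and the claim is exactly the \(d\)-dimensional Theorem~\ref{thm:LLT_in_Zd}.
\item Otherwise I must check the aperiodicity condition \(|E(e^{\rmi(sT_1+\theta\cdot X_1)})|<1\) for \((s,\theta)\neq 0\). I expect this to be the main obstacle.
\end{itemize}

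For \(\theta=0\), I would combine \(P(T_1=1)>0\) with an additional non-lattice property of \(T_1\). For \(\theta\neq 0\), I would use the hypothesis on \(X_1\) together with conditioning on \(T_1=1\).

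If joint aperiodicity genuinely fails, I would pass to a sublattice. Concretely, I would rewrite the local limit theorem on the minimal lattice generated by the support, and check that the density factor compensates the lattice index. This bookkeeping is where I expect to spend the most care.

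\textbf{The main window.} Fix a small \(\epsilon>0\) and consider \(k\) with \(|k\mu-n|\leq n^{1/2+\epsilon}\). There, \(\norm{(n,x)-k(\mu,0)}_\infty\leq n^{1/2+\epsilon}+Kn^{\alpha}\ll k^{2/3}\), since \(\alpha<2/3\). So Theorem~\ref{thm:LLT_in_Zd} applies uniformly and gives
\[
P(\calT_k=n,\,S_k=x)=\frac{(1+o(1))\,e^{-\frac{(n-k\mu)^2}{2k\sigma_T^2}}\,e^{-\frac{1}{2k}x\cdot\Sigma^{-1}x}}{\sqrt{(2\pi k)^{d+1}\sigma_T^2\det\Sigma}}.
\]
I then replace \(k\) by \(n/\mu\) in the \(x\)-dependent factors. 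The relative error in the exponent is of order \(n^{2\alpha}\,n^{1/2+\epsilon}/n^2=o(1)\), because \(2\alpha-3/2+\epsilon<0\) for \(\epsilon\) small. This is exactly where the restriction \(\alpha<2/3\) is needed a second time.

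Summing the remaining Gaussian in \(k\) amounts to a Riemann sum:
\[
\sum_k e^{-(n-k\mu)^2/(2k\sigma_T^2)}\approx \sqrt{2\pi n\sigma_T^2/\mu^3},
\]
which cancels the extra \(\sqrt{2\pi k\sigma_T^2}\) in the denominator and leaves the prefactor \(1/\mu\). Collecting terms gives the stated Gaussian form. The normalisation of \(n\) versus \(k\approx n/\mu\) in the \(x\)-Gaussian is absorbed into how \(\Sigma\) is normalised, and I would double-check this against~\cite[Section 8]{AouOttVel24}.

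\textbf{Tails.} For \(|k\mu-n|>n^{1/2+\epsilon}\), a Chernoff bound on \(\calT_k\), using the exponential moments, gives
\[
P(\calT_k=n)\leq e^{-cn^{2\epsilon}}\quad\text{for } k\leq 2n/\mu,
\]
and exponentially small bounds in \(k\) for larger \(k\). Only \(k\leq n\) matter anyway, since \(\calT_k\geq k\). The total tail contribution is therefore \(o(n^{-d/2})\) uniformly in \(x\). It is negligible even for \(\norm{x}_\infty=Kn^{\alpha}\), because \(e^{-x\cdot\Sigma^{-1}x/(2n)}\geq e^{-Cn^{2\alpha-1}}\) while the tail is \(e^{-cn^{2\epsilon}}\); choosing \(\epsilon>\alpha-1/2\) makes the tail win. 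Note that \(\epsilon\) must still satisfy \(2\alpha-3/2+\epsilon<0\) from the previous step, and both constraints can be met since \(\alpha<2/3\).
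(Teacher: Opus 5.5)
Your route is the paper's: decompose over the number of steps $k$, apply the $(d+1)$-dimensional local limit theorem to $(T_i,X_i)$ with block-diagonal covariance, discard large $|k\mu-n|$ by a Chernoff bound, and sum the Gaussian in $k$ to get the factor $1/\mu$. Your window and your choice of $\epsilon$ are consistent.

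The two points you left open are genuine gaps, however, and under the hypotheses as written they cannot be closed. The paper's proof passes over both.

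\textbf{Aperiodicity.} Nothing in the hypotheses gives $|E(e^{\rmi(sT_1+\theta\cdot X_1)})|<1$ for $(s,\theta)\neq 0$. It fails at $(s,\theta)=(\pi,0)$ when $T_1\in\{1,3\}$. Moreover, the hypothesis on $X_1$ concerns its unconditional law, so conditioning on $T_1=1$ gives nothing. Hence the paper's justification (``as $P(T_1=1)>0$ and the $X_i$'s are aperiodic'') is not valid.

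Your sublattice fallback is the right repair, but it works exactly when the support of $(T_1,X_1)$ generates $\Z^{d+1}$ as a group. In that case:
\begin{itemize}
\item let $L$ be the lattice generated by differences of support points; it has full rank because the covariance is nondegenerate;
\item $\Z^{d+1}/L$ is cyclic of order $m=[\Z^{d+1}:L]$, generated by the class of any support point $y_0$;
\item $(\mathcal{T}_k,S_k)\in ky_0+L$, and the walk is strongly aperiodic relative to $L$, so the local limit theorem holds on that coset with an extra factor $m$;
\item $(n,x)$ lies in the coset for $k$ in a single residue class mod $m$, and summing every $m$-th term of a Gaussian varying on scale $\sqrt n$ cancels the $m$.
\end{itemize}
Without generation the statement is false. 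Take $d=1$ and $(T_1,X_1)$ uniform on $\{(1,-1),(1,1),(2,0)\}$. All hypotheses hold: $P(T_1=1)=\tfrac23$, $E(X_1\,|\,T_1)=0$, and $|E(e^{\rmi\theta X_1})|=|\tfrac13+\tfrac23\cos\theta|<1$ for $\theta\neq 0$. Yet $\mathcal{T}_k+S_k$ is always even, so the probability is $0$ whenever $n+x$ is odd. In the paper's applications the step law has full support in the cone, so generation does hold there.

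\textbf{Normalisation.} Your computation keeps $k$ in the Gaussian, as it must. What it yields is $\frac{1}{\mu}(2\pi n/\mu)^{-d/2}\det(\Sigma)^{-1/2}e^{-\frac{\mu}{2n}x\cdot\Sigma^{-1}x}$, i.e.\ the stated formula with $\Sigma$ replaced by $\Sigma/\mu$, the covariance per unit time. With $\Sigma=\mathrm{Cov}(X_1)$ as in the statement, this is not absorbed by any convention. For $\mu>1$ the two expressions differ by a factor $\mu^{d/2}$ already at $x=0$; take, for example, $T_1$ uniform on $\{1,2\}$ and independent of $X_1$. The paper reaches the stated form only by writing $n$ in place of $k$ in the output of Theorem~\ref{thm:LLT_in_Zd}.

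So the correct statement needs the generation hypothesis and $\Sigma/\mu$. With those two amendments your argument is a complete proof. The paper uses this theorem only through up-to-constant bounds (Lemmas~\ref{lem:DoubleBridge:sync_trans_kernels:free_walk} and~\ref{lem:DoubleBridge:RW:single_OZ_asymp}), so neither correction affects the rest of the paper.
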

\begin{proof}
	Note that if \(P(T_1=1)\), the claim follows from Theorem~\ref{thm:LLT_in_Zd}. Suppose \(P(T_1=1)<1\). The variables \(Y_i = (Y_{i,0},Y_{i,1},\dots,Y_{i,d}) = (T_i,X_{i,1},\dots,X_{i,d})\) are i.i.d. random variables with exponential tails, and have an aperiodic distribution on \(\Z^{d+1}\), as \(P(T_1=1)>0\), and the \(X_i\)'s are aperiodic. Moreover, by our assumptions, one has
	\begin{equation*}
		\Sigma'
		\coloneqq
		\big(\mathrm{Cov}(Y_{1,i},Y_{1,j})\big)_{i,j=0}^{d}
		=
		\begin{pmatrix}
			\sigma^2 & 0 \\
			0 & \Sigma
		\end{pmatrix}
		>0,
	\end{equation*}with \(\sigma^2 = \mathrm{Var}(T_1)>0\). So, we can apply Theorem~\ref{thm:LLT_in_Zd} to the \(Y_i\)'s. Let \(\alpha' \in (\alpha, \frac{2}{3})\). Now, from straightforward large deviation bounds (exponential Chebychev + Taylor expansion of the log-moment generating function of \(T_1-E(T_1)\)),
	\begin{equation*}
		P\big(|T_k-k\mu| \geq R\big)
		\leq
		\begin{cases}
			2e^{-c \frac{R^2}{k}} & \text{ if } R\leq \rho k,
			\\
			e^{-c'R} & \text{ if } R> \rho k,
		\end{cases}
	\end{equation*}
	for some \(\rho,c,c'>0\), and every \(k,R>0\), and so
	\begin{equation*}
		P\big(\exists k \notin [\tfrac{n}{\mu}- n^{\alpha'}, \tfrac{n}{\mu} + n^{\alpha'} ]:\ T_k = n\big)
		\leq
		Ce^{-c n^{2\alpha'-1}}
	\end{equation*}for some \(C,c>0\). Thus,
	\begin{equation*}
		P\big(\exists k\geq 1:\ T_k = n,\, S_k = x\big)
		=
		\mathrm{err}_n + \sum_{\frac{n}{\mu} - n^{\alpha'} \leq k \leq \frac{n}{\mu} + n^{\alpha'}} P\big(T_k = n,\, S_k = x\big)
	\end{equation*}with \(0\leq \mathrm{err}_n\leq Ce^{-cn^{2\alpha'-1}}\), which is negligible in view of the target estimate. Now, by Theorem~\ref{thm:LLT_in_Zd}, the sum in the last display is equal to
	\begin{equation*}
		(1+o_n(1))\sum_{\frac{n}{\mu} - n^{\alpha'} \leq k \leq \frac{n}{\mu} + n^{\alpha'}} \frac{1}{\sqrt{(2\pi n)^{d+1} \sigma^2 \det(\Sigma)} }e^{-\frac{(k\mu - n)^2}{2n \sigma^2}}e^{-\frac{1}{2n} x\cdot \Sigma^{-1}x}
	\end{equation*}with \(o_n(1)\) uniform over \(\norm{x}\leq n^{\alpha}\), as
	\begin{equation*}
		\det(\Sigma') = \sigma^2 \det(\Sigma),
		\quad
		(\Sigma')^{-1}
		=
		\begin{pmatrix}
			\sigma^{-2} & 0 \\
			0 & \Sigma^{-1}
		\end{pmatrix}.
	\end{equation*}This is then equal to
	\begin{multline*}
		(1+o_n(1)) \frac{1}{\sqrt{(2\pi n)^{d} \det(\Sigma)} } e^{-\frac{1}{2n} x\cdot \Sigma^{-1}x} \frac{1}{\sqrt{2\pi n \sigma^2} }\int_{\frac{n}{\mu} - n^{\alpha'}}^{\frac{n}{\mu} + n^{\alpha'}}e^{-\frac{(x\mu - n)^2}{2n \sigma^2}} dx
		\\
		=
		(1+o_n(1)) \frac{1}{\sqrt{(2\pi n)^{d} \det(\Sigma)} } e^{-\frac{1}{2n} x\cdot \Sigma^{-1}x} \frac{1}{\mu \sqrt{2\pi} }\int_{- \frac{\mu}{\sigma}n^{\alpha'-\frac{1}{2}}}^{\frac{\mu}{\sigma} n^{\alpha'-\frac{1}{2}}}e^{-\frac{x^2}{2}} dx
		\\
		=
		(1+o_n(1)) \frac{1}{\mu\sqrt{(2\pi n)^{d} \det(\Sigma)} } e^{-\frac{1}{2n} x\cdot \Sigma^{-1}x}
	\end{multline*}as wanted.
\end{proof}

\section{Imported result on Ordered Bridges}

We state here the result that we used in Step~3 of the proof of Theorem~\ref{thm:invariance_princ_FK}.
We import it from~\cite{DAl24} which is largely based on the results of~\cite{DenWac15,DurWac20}. We use the notations of Section~\ref{subsec:DoubleBridge:RW_system}.
Recall
\begin{equation*}
	P_{x,x'}^{y,y'} = P(\cdot \given S_0=x,\, S_0'=x',\, \HitEvent_{y,y'}),
\end{equation*}and define the linear interpolations of \((S,S')\) under \(P_{x,x'}^{y,y'}\) for \(-n\leq x_1<y_1\leq n\), \(-n\leq x_1'<y_1'\leq n\): for \(t\in [-n,n]\), let \(\mathrm{Li}_n(t)\) be the value of the second coordinate of the linear interpolation of \(((-n,0), S_0,\dots,S_{T_y},(n,0))\) corresponding to the first coordinate \(t\), and define similarly \(\mathrm{Li}'(t)\).
\begin{theorem}
	\label{thm:InvPrinc_AvoidingBridges}
	Let \(\delta_n,n\geq 1\) be a decreasing sequence that converges to \(0\). Then, there exists \(\sigma
	>0\) such that uniformly over sequences \(x_n,y_n\in \Z^2\), \(x_n',y_n'\in (1/2,1/2)+\Z^2 \) with
	\begin{itemize}
		\item \(\delta_n n^{1/2} \geq x_n\cdot \rme_2 > x_n'\cdot \rme_2 +\ln^2(n) \geq -\delta_n n^{1/2}\), \(\delta_n n^{1/2} \geq y_n\cdot \rme_2 > y_n'\cdot \rme_2 +\ln^2(n) \geq -\delta_n n^{1/2}\),
		\item \(-n\leq x_n\cdot \rme_1, x_n'\cdot \rme_1 \leq  -n +\delta_n n^{1/2}\), \(n-\delta_n n^{1/2}\leq y_n\cdot \rme_1, y_n'\cdot \rme_1 \leq n\),
	\end{itemize}the following holds. If \(S,S'\) are sampled under \(P_{x_n,x_n'}^{y_n,y_n'}(\cdot \given \DiaEnv\cap \DiaEnv' = \varnothing)\), where \(\DiaEnv = \DiaEnv(S_0,\dots, S_{T_{y_n}})\), \(\DiaEnv'=\DiaEnv(S_0',\dots, S_{T_{y_n'}'}')\), then
	\begin{equation*}
		\Big(\frac{1}{\sqrt{n}} \big(\mathrm{Li}_n(2tn-n), \mathrm{Li}_n'(2tn-n)\big)\Big)_{t\in [0,1]} \xrightarrow{law} \Big(\sigma \mathrm{BW}_t^{(2)}\Big)_{t\in [0,1]}
	\end{equation*}where \(\mathrm{BW}^{(2)}\) is the Brownian watermelon with 2 bridges, and the convergence is in \(C([-1,1],\R^2)\). The uniformity means that the families of measures are uniformly tight, and the convergence of finite dimensional moments is uniform.
\end{theorem}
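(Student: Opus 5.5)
The statement is the invariance principle for a pair of directed random-walk bridges conditioned on disjointness of their diamond envelopes. The plan is to reduce it to the known watermelon invariance principle for walks in a Weyl chamber, via the synchronized walk $(\mathbf{S},\mathbf{S}')$ of Section~\ref{subsec:DoubleBridge:RW_system}. Conditioned to stay ordered, this synchronized walk is a $\Z^2$-valued random walk with i.i.d.\ increments, restricted to the cone $\{x_1>x_2\}$. For such walks, \cite{DenWac15,DurWac20} give the harmonic function $V$, the local asymptotics of the killed bridge kernels, and convergence of the rescaled conditioned bridge to the Dyson/watermelon process; \cite{DAl24} carried out exactly this reduction for directed OZ walks. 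Our increments satisfy the hypotheses of \cite[Section 5]{DAl24}: they are supported on $\fcone$ with positive density, have exponential tails, and are symmetric in $\rme_2$ given $\rme_1$. So the bulk of the proof is checking that those hypotheses apply and handling the gap between the envelope-disjointness event and the ordering event.

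\textbf{Step 1: compare the conditioning event with ordering.} Set $\mathrm{Ord}_{\mathrm{sync}}=\{\mathbf{S}_k>\mathbf{S}'_k\ \forall k\}$, the event that the synchronized walks stay ordered. We have $\{\DiaEnv\cap\DiaEnv'=\varnothing\}\subset \mathrm{Ord}_{\mathrm{sync}}$. Conversely, $\mathrm{Ord}_{\mathrm{sync}}$ together with a separation condition $\mathbf{S}_k-\mathbf{S}'_k\ge 3(\mathbf{T}_{k+1}-\mathbf{T}_k)$ implies disjointness of the envelopes, as in the events $B_k$ of Lemma~\ref{lem:DoubleBridge:RW:LB_diam_avoid}. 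The aim is to show that, under $P^{y_n,y_n'}_{x_n,x_n'}(\cdot\given \DiaEnv\cap\DiaEnv'=\varnothing)$, with probability $1-o(1)$ the gap $\mathbf{S}-\mathbf{S}'$ exceeds $\ln^2 n$ after a time of order $\ln^{C}n$ from either end. The computation is the one in Lemma~\ref{lem:DoubleBridge:RW:UB_avoid_but_close}: decompose at the first time the gap is small, and bound the contribution with Lemmas~\ref{lem:DoubleBridge:sync_trans_kernels:ord_walkUB} and~\ref{lem:DoubleBridge:sync_trans_kernels:ord_bridgeLB}. Since the endpoints have initial gap $\ge \ln^2 n$, the matching lower bound $c(x_2-x_2')(y_2-y_2')/n^2$ of Lemma~\ref{lem:DoubleBridge:sync_trans_kernels:ord_bridgeLB} dominates the error terms. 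Consequently, the law of the synchronized path under the envelope conditioning is $o(1)$-close in total variation to a mixture, over boundary pieces of polylogarithmic length, of ordered synchronized bridges between points whose gaps are at least of order $\ln^2 n$.

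\textbf{Step 2: invariance principle for ordered bridges.} Apply the Denisov--Wachtel results to the synchronized bridge conditioned to stay in the Weyl chamber. The time variable $\mathbf{T}$ is itself random, so there are two sub-steps. First, use a local limit statement in the style of Theorem~\ref{thm:OZ_asymp_from_LLT} for the space-time increments $(\mathbf{T}_i-\mathbf{T}_{i-1},\mathbf{S}_i-\mathbf{S}_{i-1},\mathbf{S}'_i-\mathbf{S}'_{i-1})$ to turn hitting time $\mathbf{T}=\tilde n$ into step-count asymptotics. Second, use the $V(x)V(y)/n^{2}\cdot n^{-1}$ asymptotics of the killed bridge, uniform over starting gaps in $[\ln^2n,\delta_n\sqrt n]$, to obtain convergence of finite-dimensional distributions to $\sigma\,\mathrm{BW}^{(2)}$. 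Uniformity in $x_n,y_n$ comes from continuity of the limit at zero gap, since the rescaled gaps tend to $0$. Tightness follows from Lemma~\ref{lem:DoubleBridge:sync_trans_kernels:free_walk} together with the ordered bound of Lemma~\ref{lem:DoubleBridge:sync_trans_kernels:ord_walkUB}, which control the modulus of continuity through a Kolmogorov-type increment estimate. Finally, transfer from the synchronized walk to the linear interpolations $\mathrm{Li}_n,\mathrm{Li}_n'$: between synchronized times the walks move by $O(\ln n)$ with high probability (exponential tails), which is negligible after dividing by $\sqrt n$; the extra endpoints $(\pm n,0)$ lie within $\delta_n\sqrt n$.

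\textbf{Main obstacle.} I expect Step 1 to be the hardest part: establishing uniformly over the allowed endpoints that conditioning on envelope disjointness, rather than on plain ordering, changes the law only by $o(1)$ near the ends, where the gap is small and the two conditionings genuinely differ. The first thing to try is to import \cite[Section 5]{DAl24} verbatim, since it treats precisely this envelope-versus-ordering comparison for OZ walks. If the uniformity there is insufficient, fall back on the explicit decomposition estimates above, using the fact that the $\ln^2 n$ initial gap makes the conditioning probabilities comparable, via Lemmas~\ref{lem:DoubleBridge:sync_trans_kernels:ord_walkUB} and~\ref{lem:DoubleBridge:sync_trans_kernels:ord_bridgeLB}, to within constants.
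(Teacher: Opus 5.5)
Your proposal follows essentially the same route as the paper, whose proof simply imports \cite[Theorem 5.5]{DAl24}; that theorem rests on the synchronized-walk reduction to a walk in a Weyl chamber and on \cite{DenWac15,DurWac20}, so your Steps~1--2 unpack what that reference does. The one point you miss, which the paper singles out as the only adaptation needed, is that \(x_n',y_n'\in(1/2,1/2)+\Z^2\): \(S'\) then lives on the shifted lattice and the synchronized times your Steps~1--2 rely on do not exist, so you must first translate \(S'\) by \((1/2,1/2)\), which changes the avoidance event only at distance \(O(1)\) and is harmless because the initial and final gaps exceed \(\ln^2 n\).
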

\begin{proof}
	This is a straightforward adaptation of the proof of~\cite[Theorem 5.5]{DAl24}: the only thing needed is to shift the second walk by \((1/2,1/2)\), which is indistinguishable from the non-shifted case, as we took the starting points at large distance from one another. The uniformity is direct by the uniformity of the results imported in~\cite{DAl24} from~\cite{DenWac15}, and~\cite{DurWac20}.
\end{proof}

\section{Elementary facts on total variation distance}
\label{app:tot_var_dist}

We gather here some elementary results on total variation distance. Recall that for two finite measures \(\mu,\nu\) on a finite or countable set \(\Omega\) (with discrete sigma-algebra), one can define their total variation distance by
\begin{equation*}
	\tvd(\mu,\nu)
	\coloneqq
	\sup_{A\subset \Omega} |\mu(A) - \nu(A)|
	=
	\tfrac{1}{2}\sum_{\omega\in \Omega} |\mu(\omega) - \nu(\omega)|
	=
	\sup_{\norm{f}_{\infty} \leq 1} \bigl|\mu[f] -\nu[f]\bigr|
\end{equation*}where the second supremum is over functions \(f:\Omega\to \R\), and we denoted \(\mu[f] = \int f d\mu\). In the case of \(\mu,\nu\) probability measures, \(\tvd(\mu,\nu) = \min_{\pi\in \Pi(\mu,\nu)} \pi(X\neq Y)\) where \(\Pi(\mu,\nu)\) is the set of couplings of \(\mu\) and \(\nu\), and \((X,Y)\sim \pi\).

\begin{lemma}
	\label{lem:app:tot_var_dist_prop}
	Let \(\epsilon,\delta>0\). Let \(\Omega_1,\Omega_2\) be finite or countable sets. Let \(\mu,\mu'\) be probability measures on \((\Omega_1,\calP(\Omega_1))\), and \(\nu,\nu'\) be probability measures on \((\Omega_2,\calP(\Omega_2))\). Then, one has the following.
	\begin{enumerate}
		\item If \(\tvd(\mu,\mu')\leq \epsilon\) and \(\tvd(\nu,\nu')\leq \epsilon\), then \(\tvd(\mu\otimes \nu,\mu'\otimes \nu')\leq 2\epsilon\).
		\item If \(\tvd(\mu,\mu')\leq \epsilon\) and \(A\subset \Omega_1\) satisfies \(\mu(A)\geq \delta\), \(\mu'(A)\geq \delta\), then \(\tvd(\mu_A,\mu'_A)\leq \epsilon(\frac{1}{\delta}+\frac{1}{\delta^2})\), where \(\mu_A \equiv \mu(\cdot \given A)\).
	\end{enumerate}
\end{lemma}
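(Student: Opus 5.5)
Both claims are elementary facts about total variation distance, and I would prove them directly from the three equivalent expressions for \(\tvd\) recalled just above, using no structure of the underlying spaces.

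For item (1), I would insert the intermediate measure \(\mu'\otimes\nu\) and apply the triangle inequality:
\[
\tvd(\mu\otimes\nu,\mu'\otimes\nu')\leq \tvd(\mu\otimes\nu,\mu'\otimes\nu)+\tvd(\mu'\otimes\nu,\mu'\otimes\nu').
\]
Each term is then a one-marginal comparison. Using the \(\tfrac12\ell^1\) formula,
\[
\tfrac12\sum_{\omega_1,\omega_2}|\mu(\omega_1)-\mu'(\omega_1)|\,\nu(\omega_2)=\tvd(\mu,\mu')\leq\epsilon,
\]
since \(\nu\) is a probability measure. The second term is handled the same way, with \(\mu'\) now the probability measure that sums out. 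Adding the two bounds gives \(2\epsilon\). Alternatively, one could glue the optimal couplings of each marginal independently and use a union bound on the disagreement event, which gives the same result.

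For item (2), I would fix \(B\subset\Omega_1\) and write
\[
\mu_A(B)-\mu'_A(B)=\frac{\mu(A\cap B)-\mu'(A\cap B)}{\mu(A)}+\mu'(A\cap B)\Big(\frac{1}{\mu(A)}-\frac{1}{\mu'(A)}\Big).
\]
The first term is bounded by \(\epsilon/\delta\), because \(|\mu(A\cap B)-\mu'(A\cap B)|\leq\epsilon\) and \(\mu(A)\geq\delta\). In the second term, \(\mu'(A\cap B)\leq 1\) and \(|1/\mu(A)-1/\mu'(A)|=|\mu'(A)-\mu(A)|/(\mu(A)\mu'(A))\leq\epsilon/\delta^2\). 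Taking the supremum over \(B\) then gives \(\epsilon(\tfrac1\delta+\tfrac1{\delta^2})\). This bound could be sharpened by using \(\mu'(A\cap B)\leq\mu'(A)\), which would give \(2\epsilon/\delta\), but the stated weaker bound is all that is required.

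Neither step presents a real obstacle; the computation in (2) is the only point that needs any care. I will also note that the bounds in both items are exactly the forms applied earlier, in \eqref{eq:DoubleBridge:tot_var_dist_product_meas} and \eqref{eq:DoubleBridge:tot_var_dist_product_meas_cond_on_good}.
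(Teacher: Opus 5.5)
Your proposal is correct and follows essentially the same route as the paper. For (1) the paper inserts \(\mu\otimes\nu'\) rather than \(\mu'\otimes\nu\) and bounds each term setwise via sections \(B_\omega\) instead of the \(\tfrac12\ell^1\) formula, an inessential difference. For (2) your decomposition and bounds coincide with the paper's, and your observation that \(\mu'(A\cap B)\leq\mu'(A)\) sharpens the bound to \(2\epsilon/\delta\) is also correct.
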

\begin{proof}
	For the first claim, for \(B\subset \Omega_1\times \Omega_2\), \(\omega\in \Omega_1\) let \(B_{\omega} = \{\eta\in \Omega_2:\ (\omega,\eta)\in B\}\). Then,
	\begin{equation*}
		|\mu\otimes \nu(B)- \mu\otimes \nu'(B) | \leq \sum_{\omega\in \Omega_1} \mu(\omega) |\nu(B_{\omega})-\nu'(B_{\omega})|
		\leq \epsilon.
	\end{equation*}In the same fashion, \(|\mu\otimes \nu'(B)- \mu'\otimes \nu'(B) | \leq \epsilon\). Triangle inequality concludes.
	
	To get the second claim, for \(B\subset \Omega_1\), one has
	\begin{align*}
		|\mu_A(B)-\mu_A'(B)|
		&\leq
		\frac{|\mu(A\cap B)-\mu'(A\cap B)|}{\mu(A)} + \mu'(A\cap B)\Big|\frac{1}{\mu(A)} - \frac{1}{\mu'(A)}\Big|
		\\
		&\leq
		\frac{\epsilon}{\delta} + \frac{|\mu'(A)- \mu(A)|}{\mu(A)\mu'(A)|}
		\leq
		\frac{\epsilon}{\delta} + \frac{\epsilon}{\delta^2}. \qedhere
	\end{align*}
\end{proof}

\bibliographystyle{amsalpha}
\bibliography{biblicomplete}

\end{document}